\documentclass[11pt,letter]{amsart}

\usepackage{amssymb,amsmath,epsfig,graphics,mathrsfs,enumerate,verbatim}
\usepackage[pagebackref,colorlinks=true,linkcolor=blue,citecolor=blue]{hyperref}
\usepackage{fancyhdr}
\usepackage{hyperref}
\hypersetup{
 colorlinks   = true,
 urlcolor     = blue,
 linkcolor    = blue,
 citecolor   = red ,
 bookmarksopen=true
}

\usepackage{amsmath}
\usepackage{amsfonts}
\usepackage{amssymb}
\usepackage{amsthm}
\usepackage{epsfig,graphics,mathrsfs}
\usepackage{graphicx}
\usepackage{dsfont}

\usepackage[usenames, dvipsnames]{color}

\usepackage{hyperref}

\usepackage[a4paper,
bindingoffset=0in,
left=0.8in,
right=0.8in,
top=1.3in,
bottom=1.3in,
footskip=.25in]{geometry}

\def \N {\mathbb{N}}

\def \phi {\varphi}

\def \RN {\mathbb{R}_+^{d+1}}
\def \R {\mathbb{R}}

\def \G{\Gamma}
\newcommand{\Ba}{\mathscr B_z^{(a)}}

\def \vf{\varphi}

\newcommand{\Rd}{\mathbb R^d}

\newcommand{\p}{\partial}

\newcommand{\la}{\lambda}

\numberwithin{equation}{section}

\newcommand{\beq}{\begin{equation}}
\newcommand{\bea}[1]{\begin{array}{#1} }
\newcommand{\eeq}{ \end{equation}}
\newcommand{\ea}{ \end{array}}

\newcommand{\ve}{\varepsilon}

\newcommand{\sa}{\langle}
\newcommand{\da}{\rangle}
\newcommand{\Xs}{\mathcal X}
\newcommand{\XT}{\mathcal{X}_T}

\newcommand{\C}{\mathbb{C}}

\newcommand{\snn}{\mathscr S^+}

\newtheorem{theorem}{Theorem}[section]
\newtheorem{lemma}[theorem]{Lemma}
\newtheorem{proposition}[theorem]{Proposition}
\newtheorem{corollary}[theorem]{Corollary}
\newtheorem{remark}[theorem]{Remark}
\newtheorem{definition}[theorem]{Definition}

\numberwithin{equation}{section}
\begin{document}


\title[]{Strichartz estimates for Schr\"odinger equations with nonlinear boundary interactions}

\keywords{Schr\"odinger equation in a half-space; nonlinear Neumann boundary condition; Bessel operator; Strichartz estimates; Duhamel formula; boundary interactions; weighted dispersive estimates; well-posedness; mass-critical Schr\"odinger equation; extension problems; nonlocal NLS with memory.}
	
\subjclass{35Q55, 35Q41, 35B45, 42B37, 47D06}
	
\date{}
	
\begin{abstract}
We study a Schr\"odinger equation in the upper half-space with a nonlinear Neumann boundary interaction driven by the Bessel operator $\Ba$; the natural range of the parameter for the boundary interaction is $-1<a<1$, although part of the linear theory holds for all $a>-1$. The problem arises naturally as an extension formulation for a nonlocal NLS with memory and can also be interpreted as a Schr\"odinger evolution with a nonlinear singular source concentrated on a codimension-one interface.

We first develop a complete linear theory for the associated inhomogeneous problem with nonhomogeneous Neumann data. A central ingredient is a new Duhamel representation formula that separates bulk and boundary dynamics and identifies the precise role of the boundary propagator. Using this formula, we establish sharp Strichartz estimates adapted to the geometry of the half-space and the singular structure induced by the Bessel operator. The analysis reveals a basic dichotomy between the regimes $a\ge 0$ and $-1<a<0$: in the former, bulk and boundary exhibit a unified dispersive behavior, whereas in the latter the dispersive structure becomes anomalous, requiring weighted estimates and distinct functional frameworks for the bulk and boundary contributions.

As an application of the linear theory, we prove well-posedness results for the nonlinear problem in the $L_a^2$-mass critical and subcritical regimes. For $a\ge 0$, we obtain global well-posedness for sufficiently small critical data, together with local and global results in the subcritical case. In the anomalous range $-1<a<0$, we establish, on every finite time interval, existence and uniqueness for critical data satisfying a smallness condition whose threshold depends on the length of the interval, as well as local well-posedness in the subcritical regime. The results provide a unified dispersive framework for Schr\"odinger equations with nonlinear boundary interactions and singular extension structures associated with nonlocal-in-time dynamics.
\end{abstract}

\author{Nicola Garofalo}
\address{School of Mathematical and Statistical Sciences\\ Arizona State University}\email[Nicola Garofalo]{nicola.garofalo@asu.edu}

\author{Gigliola Staffilani}
\address{Department of Mathematics\\ Massachusetts Institute of Technology\\
Cambridge, Massachusetts, 02141\\USA}\email[Gigliola Staffilani]{gigliola@math.mit.edu}

\thanks{G. Staffilani is funded in part by the NSF grant DMS-2306378 and the Simons Foundation through the
Simons Collaboration Grant on Wave Turbulence.}
	
\maketitle
	
\tableofcontents

\section{Introduction}

In this paper we study dispersive properties and  well-posedness for a Schr\"odinger equation in a half-space with a nonlinear Neumann boundary interaction. More precisely, given $-1<a<1$, $\mu\in \C$ and $p>1$ (several of the linear results below hold in the larger range $a>-1$, and we will indicate when this is the case), we analyze the following Cauchy problem in the half-space $\RN = \Rd_x \times \R^+_z$: 
\begin{equation}\label{cp02}
\begin{cases}
\p_t U - i(\Delta_x U + \Ba U) = F(X,t), & \ \ X=(x,z)\in \mathbb{R}^{d+1}_+,\ t\in \mathbb R, \\
\displaystyle \lim_{z\to 0^+} z^a \p_z U(X,t) = -\mu |U(x,0,t)|^{p-1}U(x,0,t), & \ \ x\in \Rd,\ t\in \R, \\
U(X,0)=u_0(X), 
\end{cases}
\end{equation}
where we have denoted with $\Ba := \p_{zz}  + \frac az \p_z$
the singular Bessel operator on the half-line $z>0$. 

The interest of \eqref{cp02} is motivated by two complementary interpretations which are further elucidated in Section \ref{S:mot}, where we also provide several references:
\begin{itemize}
\item[(i)] on the one hand, when $a=0$, extending $U$ evenly across the boundary $z=0$ leads to a Schr\"odinger evolution in the whole space with a nonlinear source concentrated on the codimension-one hypersurface $\{z=0\}$. More generally, for arbitrary $a>-1$, we obtain a nonlinear singular interaction supported on the interface  coupled to a singular Bessel operator; 
\item[(ii)] on the other hand, and more fundamentally for the purposes of this work, \eqref{cp02} arises naturally as the extension problem associated with a nonlocal NLS with memory, in a manner reminiscent of the extension procedure of Caffarelli and Silvestre for fractional Laplacians.
In a forthcoming work we will use the theory developed here to analyze this nonlocal problem.
\end{itemize}

The present paper develops a complete dispersive framework for \eqref{cp02}. The main difficulty is that the interaction between the boundary geometry and the Bessel operator produces two genuinely different dispersive regimes. When $a\ge 0$, the equation behaves in many respects like a classical Schr\"odinger evolution: the propagator obeys the natural scaling $(X,t)\to (\la X,\la^2 t)$ of the equation, bulk and boundary exhibit compatible dispersion, and the corresponding Strichartz estimates fit into a unified framework. In contrast, when $-1<a<0$, the dispersive structure becomes anomalous. The bulk propagator no longer behaves according to the above scaling laws, weighted estimates become unavoidable, and the natural functional spaces involve sums and intersections of Lebesgue spaces. One of the principal goals of the paper is to identify and quantify this dichotomy.

Our analysis begins with the linear problem with nonhomogeneous Neumann data,
\begin{equation}\label{nozeroin}
\begin{cases}
\partial_tU-i(\Delta_x U+\Ba U)=F(X,t),\\
\displaystyle \lim_{z\to0^+} z^a\partial_zU(X,t)=\Phi(x,t),\\
U(X,0)=u_0(X).
\end{cases}
\end{equation}
A central contribution of the paper is the derivation of a new Duhamel representation formula for \eqref{nozeroin}. This formula separates the evolution into bulk and boundary components and identifies the precise role played by the boundary propagator. More precisely, we show that solutions can be represented as
\[
U= \mathbb T_a^\star(u_0)+\mathbb D_a(F)+\Theta_a^\star(\Phi),
\]
where the operators $\mathbb T_a^\star$ and $\mathbb D_a$ describe the bulk evolution, while $\Theta_a^\star$ encodes the boundary interaction. This decomposition lies at the heart of the paper and provides the mechanism through which the bulk and boundary dispersive behaviors can be analyzed separately.

Building on this representation formula, we establish sharp Strichartz estimates adapted to the geometry of the problem. The estimates reveal a striking asymmetry between the bulk and boundary dynamics. In the range \(a\geq0\), the bulk theory obeys the natural scaling relation
\[
\frac{2}{q}+\frac{d}{r}+\frac{a+1}{m}
=\frac{d+a+1}{2},
\]
leading to a  family of triples $(q,r,m)$ of admissible exponents. In the anomalous regime \(-1<a<0\), however, the relevant estimates no longer follow the scaling dictated by dimensional analysis. Instead, the bulk propagator behaves as though \(a=0\), but only after introducing a nontrivial weight in the transverse variable \(z\). The resulting estimates involve discrepant dispersive rates and require a substantially different functional framework.

An additional difficulty concerns traces on the boundary. Although the Strichartz estimates provide uniform control in the transverse variable $z$, such bounds do not by themselves guarantee convergence to the boundary manifold $\{z=0\}$. Since the nonlinearity in \eqref{cp02} acts precisely through the boundary trace $U(x,0,t)$, establishing the existence and continuity of traces becomes an essential step in the nonlinear analysis. We resolve this issue through a separate trace theorem which allows the nonlinear fixed-point argument to close in the critical spaces.

As an application of the linear theory, we prove local and global well-posedness results for \eqref{cp02} in the $L_a^2$-mass critical and subcritical regimes. In the range $a\ge 0$, we obtain global well-posedness for sufficiently small critical data, together with local and global results in the subcritical case. In the anomalous range $-1<a<0$, we prove, on every finite time interval, existence and uniqueness for critical data satisfying a smallness condition whose threshold depends on the length of the interval, as well as local well-posedness in the subcritical regime.

The present work extends and substantially generalizes our previous analysis in \cite{GS}, where we studied the one-dimensional problem associated with the Bessel Schr\"odinger equation on the half-line. It is also related to recent investigations of nonlinear Schr\"odinger equations with boundary nonlinearities in the case $a=0$, especially in one spatial dimension, i.e., when $d=1$. However, the singular extension structure considered here leads to a considerably richer dispersive theory and requires new analytical tools that do not appear in the case $a=0$.

\medskip


\subsection{Motivation and references}\label{S:mot}


In the opening discussion we have mentioned the two complementary interpretations  (i) and (ii) for problem \eqref{cp02}. We next provide more specific context for both aspects, along with relevant references. The interpretation (i) is  equivalent to an interface problem in the whole space $\R^{d+1}$ with a nonlinear point source (or sink) concentrated on the codimension-one hypersurface $\{z = 0\}$. For instance, if $a = 0$, and $U$ solves \eqref{cp02}, then the even extension in $z$, $\bar U(x,z,t) := U(x,|z|,t)$, solves the nonlinear interaction problem 
\begin{equation}\label{cp02e}
\begin{cases}
\p_t \bar U - i(\Delta_x \bar U + \p_{zz} \bar U) = \bar F(X,t) + 2 i \mu |\bar U|^{p-1} \bar U\ \delta_{\{z=0\}},  
\\
\bar U(X,0)=\bar u_0(X), 
\end{cases}
\end{equation}
where now $X = (x,z)\in \R^{d+1},\ t\in \R$. In \eqref{cp02e} we have let $\bar F(x,z,t) = F(x,|z|,t)$, $\bar u_0(x,z) = u_0(x,|z|)$, and indicated with $\delta_{\{z=0\}}$ a Dirac delta on the hyperplane $\p \RN\times \R\subset \R^{d+1}\times \R$. When $a\not= 0$, the even extension solves the analogous interface problem for the operator $\Delta_x + \frac{1}{|z|^a}\p_z(|z|^a\p_z\, \cdot\,)$, which combines the classical Schr\"odinger evolution with that of the singular Bessel propagator on the half-line. In this case the interface term must be interpreted in the weak sense with respect to the measure $|z|^a\,dX$: for every test function $\vf$, the interaction contributes the term $2i\mu\int_{\R}\int_{\Rd}|\bar U|^{p-1}\bar U(x,0,t)\,\overline{\vf}(x,0,t)\,dx\,dt$ to the weak formulation.
There exists a large literature on $\delta$-potentials problems, especially in connection with Bose-Einstein condensates and the Gross-Pitaevskii equation (corresponding to $p=3$ in the NLS, see \cite{PS}), but they differ from \eqref{cp02e} in the fact that the Dirac delta plays the role of a potential. By this we mean that the right-hand side is of the type $\gamma |\bar U|^{q-1} \bar U  + i \mu \delta_{\{z=0\}} \bar U$. We only quote some of the relevant references in this connection: \cite{ABD, ABD2, AT, FTDSK, AGHH, HMZ, FOO, HZ, KS, DP, BV, GMW, CFN, CFN23, GI1, GI2, OST3, MSa, GIS}. 
The linear Schr\"odinger equation in a half-space with linear Robin conditions was studied in \cite{Au}, the NLS with linear Dirichlet, Neumann or Robin boundary  conditions on the half-line and in a half-plane was  studied in \cite{SB, Ho, ET, BSZ, RSZ, HK19, HiM1, HiM2, OST2}, see also \cite{CK} for a study of the generalized KdV on a half-line with Dirichlet conditions. For studies of the NLS in the complement of a non-trapping, or a convex obstacle, we refer to \cite{TsuY, BGT,  An, KVZ, KVZ2, Iv}, for exterior domains see \cite{Tsu}. Recently, there has been a growing interest in NLS in a half-line or half-plane with nonlinear boundary conditions (Dirichlet or Neumann). This corresponds to the situation $a=0$ and $d = 0, 1$ in \eqref{cp02}. We refer the reader to the works \cite{AD, HK19, HKO, HOS} and their bibliographies.

The second interpretation (ii) for the problem \eqref{cp02} is connected to the following nonlocal NLS with memory
\begin{equation}\label{ost3}
\begin{cases}
(\p_t u - i \Delta_x u)^{s}  = \la |u|^{p-1} u,\ \ \ \ \ \ \ \ \ \ \ x\in \Rd, t>0,
\\
u(x,t) = u_0(x,t),\ \ \ \ \ \ \ \ \ \ \ \ \ \ \ \ \ \ \ \ \ \ \ x\in \Rd, t\le 0,
\end{cases}
\end{equation}
where $s\in (0,1)$, $\la\in \C$ and $p>1$. 
In a work in preparation, we study Strichartz estimates and the well-posedness of \eqref{ost3} via an \emph{extension problem} in the half-space which is precisely the problem \eqref{cp02}, 
in which we take $a = 1-2s\in (-1,1)$. 
This scenario is clearly reminiscent of the celebrated extension procedure of Caffarelli and Silvestre for the fractional powers of the Laplacian $(-\Delta)^s$, see \cite{CS}. However, as the present work shows, the problem \eqref{cp02} is considerably different since the extension operator is itself of Schr\"odinger type and it thus involves oscillatory phenomena in which positivity plays no role. For related works on extensions involving the wave operator, we refer the reader to \cite{KST, EGV}.

\section{Description of the work and statement of the results} 

In this long section we discuss the more technical aspects of the work and also state the main results. It will be helpful for the reader to become familiar right from the start with some of the relevant notation. We continue to indicate with $X = (x,z), Y = (y,\zeta)$ generic points in the bulk space $\RN$, whereas the notation $X_0 = (x,0), Y_0 = (y,0)$ is used for points of the boundary 
$\p \RN$. Points in $\RN\times \R$ will be denoted by
$(X,t), (Y,\tau)$. 
 As customary, $\mathscr S(\R^{d+1})$ is the Schwartz class in the variables $(x,t)\in \R^{d+1}$, whereas we  denote by $\mathscr S(\RN)$ and $\mathscr S(\RN\times \R)$ the restrictions to the half-spaces $\RN$ and $\RN\times \R$ of the Schwartz functions in $\R^{d+1}_{(x,z)}$ and $\R^{d+1}_{(x,z)}\times \R_t$ respectively. We indicate by  $d\omega_a(X) = z^a dx dz$ the invariant measure for the differential operator $\Delta_x +\Ba$ in $\RN$. We mean by this that given $U, V\in C^\infty_0(\RN)$, we have 
 \[
\int_{\RN} \overline V(\Delta_x U + \Ba U)d\omega_a(X) = \int_{\RN} U\ \overline{(\Delta_x V + \Ba V)}d\omega_a(X).
\]
We denote with $L^m_{a,z} = L^m(\R^+,z^a dz)$, $1\le m<\infty$. When $m=\infty$, we understand that $L^\infty_{a,z} = L^\infty_z$. In this work we routinely use mixed norm spaces $L^m_{a,z} L^r_x$, $L^q_t L^r_x$ and $L^m_{a,z} L^q_t L^r_x$. Such classes are discussed in detail in Section \ref{S:adapted}. When $m=r$ we write $L^m_a(\RN)$ for the class $L^m_{a,z} L^m_x$. For instance, $L^2_a(\RN) = L^2_{a,z} L^2_x  = L^2(\RN,d\omega_a(X))$. When $m=\infty$, we simply write $L^\infty_z L^q_t L^r_x$. The notation $C^b_z L^q_t L^r_x$ indicates the space $C_z L^q_t L^r_x\cap L^\infty_z L^q_t L^r_x$ of continuous bounded functions on $[0,\infty)$ (continuity up to $z=0$ included) with values in the Banach space $L^q_t L^r_x$, normed by the (true) supremum over $z\ge 0$; see \eqref{Cz}.  

\subsection{A key Duhamel formula} To understand the problem \eqref{cp02} we first analyze the linear problem \eqref{nozeroin} with a nonzero Neumann condition.
Our first main result, Theorem \ref{T:U}, shows that, given $u_0\in \mathscr S(\RN)$, $F\in \mathscr S(\RN\times \R)$ and $\Phi\in \mathscr S(\R^{d+1})$, then for any $-1<a<1$, the problem \eqref{nozeroin} admits a unique \emph{mild solution} given by the following generalized Duhamel formula
\begin{align}\label{nicea1in}
U(X,t) & = \int_{\RN} \mathbb S_a(X,Y,t) u_0(Y)d\omega_a(Y) +  \int_0^t \int_{\RN} \mathbb S_a(X,Y,t-\tau) F(Y,\tau) d\omega_a(Y) d\tau
\\
& - i \int_0^{t} \int_{\Rd}  \mathbb S_a(X,Y_0,t-\tau) \Phi(y,\tau) dy d\tau,
\notag
\end{align}
where we have denoted by
\begin{align*}
\mathbb S_a(X,Y,t)
&= \frac{e^{-i\,\operatorname{sgn}(t)\,\frac{(d+a+1)\pi}{4}}}{2^{\frac{2d+a+1}{2}}\pi^{\frac d2}}  
\frac{e^{i\,\frac{z^2+\zeta^2+|x-y|^2}{4t}}}{|t|^{\frac{d+a+1}{2}}}\left(\frac{z\zeta}{2|t|}\right)^{\frac{1-a}{2}}
J_{\frac{a-1}{2}}\!\left(\frac{z\zeta}{2|t|}\right),
\qquad t\neq 0,
\end{align*}
the Schr\"odinger propagator with zero Neumann condition, see Section \ref{S:propa}. Some of the basic properties of this function are:
\begin{itemize}
\item[(1)] $\overline{\mathbb S_a(X,Y,t)} = \mathbb S_a(X,Y,-t)$;
\item[(2)] $\mathbb S_a(\la X,\la Y,\la^2 t) = \mathbb S_a(X,Y,t)$,\ \ \ \ $\la>0$;
\item[(3)]
$\mathbb S_a(X,Y_0,t) :=\underset{\zeta\to 0^+}{\lim} \mathbb S_a(X,Y,t)
=
\frac{ e^{-i\,\operatorname{sgn}(t)\,\frac{(d+a+1)\pi}{4}}}{2^{d+a}\pi^{\frac d2}\Gamma\!\left(\frac{a+1}{2}\right)}
\frac{e^{i \frac{z^2+|x-y|^2}{4t}}}{|t|^{\frac{d+a+1}{2}}},
\qquad t\neq 0$.
\end{itemize}
In particular, unlike what happens for $\mathbb S_a(X,Y,t)$ when $X, Y\in \RN$, the boundary restriction $\mathbb S_a(X,Y_0,t)$ showcases one single dispersive regime $|t|^{-\frac{d+a+1}{2}}$ for all $a>-1$.
This fact is responsible for the surprising uniformity of boundary Strichartz estimates in Section \ref{S:bdry} which, unlike the estimates in the bulk in Section \ref{S:stri}, do not distinguish between the two regimes $-1<a<0$ and $0\le a < 1$.  

Formula \eqref{nicea1in} plays a pervasive role in this work and it will be convenient, henceforth,
 to express the mild solution \eqref{nicea1in} to the linear Cauchy problem \eqref{nozeroin} in the following form
\begin{equation}\label{U0}
U(X,t) = \mathbb T^\star_a(u_0)(X,t) + \mathbb D_a(F)(X,t) + \Theta^\star_a(\Phi)(X,t),
\end{equation}
where the operators $\mathbb T^\star_a, \mathbb D_a$ ($a>-1$), and $\Theta_a^\star$ ($-1<a<1$) are respectively defined by:
\begin{equation}\label{Tstar}
\mathbb T_a^\star(u_0)(X,t) := \int_{\RN} \mathbb S_a(X,Y,t) u_0(Y)d\omega_a(Y),
\end{equation}
\begin{equation}\label{Da}
\mathbb D_a(F)(X,t) := \int_0^t \int_{\RN} \mathbb S_a(X,Y,t-\tau) F(Y,\tau) d\omega_a(Y) d\tau,
\end{equation}
and
\begin{equation}\label{Thetastar}
\Theta_a^\star(\Phi)(X,t) := -i \int_0^{t} \int_{\Rd}  \mathbb S_a(X,Y_0,t-\tau) \Phi(y,\tau) dy d\tau.
\end{equation}
We emphasize that the mild solution \eqref{U0} satisfies the weighted Neumann condition
\[
\underset{z\to 0^+}{\lim} z^a \p_z U(X,t) = \Phi(x,t),\ \ \ \ x\in \Rd,\ t\not= 0,
\]
but it is not a classical solution in general. For instance, if $f\in \mathscr S(\R^{d+1})$ and $f$ is even in $z$, and if $h\in C_0^\infty[0,\infty)$, with $h(z) \equiv 1$ for $0\le z \le 1$, then for the initial datum
\begin{equation}\label{piccolou}
u_0(X) = f(X) + \frac{z^{1-a}}{1-a} h(z)\in  \mathscr S(\RN),\ \ \ \ -1<a<1,
\end{equation}
forcing term $F = 0$ and Neumann datum $\Phi = 0$, the solution given by \eqref{U0} satisfies   
\[
\underset{z\to 0^+}{\lim} z^a \p_z U(X,t) = \begin{cases}0,\ \ \ \text{for}\ t\not= 0,
\\
1,\ \ \ \text{for}\ t = 0.
\end{cases}
\]

\begin{remark}\label{R:signcheck}
It is worth verifying directly that the boundary term in \eqref{U0} produces the Neumann datum; this also explains the factor $-i$ in \eqref{Thetastar}. Take $\Phi\equiv c$ constant (the general case follows by an entirely similar computation) and $t>0$. By \eqref{piccoloSa},
\[
\Theta_a^\star(c)(z,t) = -i\, \frac{2^{-a}e^{-i\frac{(a+1)\pi}4}}{\Gamma(\frac{a+1}2)}\, c \int_0^t \frac{e^{i\frac{z^2}{4\sigma}}}{\sigma^{\frac{a+1}2}}\, d\sigma .
\]
Differentiating in $z$ and substituting $u = \frac{z^2}{4\sigma}$, one finds
\[
z^a\p_z \Theta_a^\star(c)(z,t) = -i\,\frac{2^{-a}e^{-i\frac{(a+1)\pi}4}}{\Gamma(\frac{a+1}2)}\, c\ \frac{i}{2}\, 2^{a+1} \int_{\frac{z^2}{4t}}^\infty u^{\frac{a+1}2-1} e^{iu}\, du\ \underset{z\to 0^+}{\longrightarrow}\ -i\cdot i\, c = c,
\]
where in the limit we have used \eqref{ei} in Lemma \ref{L:cis} with $\mu = \frac{a+1}2$, $b=1$. The bulk terms in \eqref{U0} have vanishing weighted Neumann trace by \eqref{dzpazero}, so the representation \eqref{U0} indeed satisfies $\lim_{z\to0^+}z^a\p_zU = \Phi$.
\end{remark}


\subsection{Admissible triples} We next introduce the relevant notion of admissible triple. As it will be clear from the next definition, in the problem \eqref{nozeroin} there exist two different regimes: $a\ge 0$ and $-1<a<0$. 

\begin{definition}\label{D:ac3}
Let $r, m\ge 2$. The triple $(q,r,m)$ is called \emph{admissible} if it satisfies:  
\begin{equation}\label{ac3}
\frac 2q + \frac dr + \frac{a+1}m = \frac{d +a+1}2,\ \ \ \ \ \text{when}\ \  a\ge 0;
\end{equation}
or
\begin{equation}\label{ac0}
\frac 2q + \frac dr +\frac 1m = \frac{d+1}2, \ \ \ \\ \ \ \ \ \text{when}\ \  -1<a< 0.
\end{equation}
\end{definition}

Note that the triple $(\infty,2,2)$ is admissible for any $a>-1$. Concerning Definition \ref{D:ac3} some comments are in order.
First, in Section \ref{S:ad} we show that \emph{for any} $a>-1$ the condition \eqref{ac3} is necessary for the following Strichartz estimate to hold for the solution to \eqref{nozeroin} with forcing term $F = 0$ and zero Neumann condition $\Phi=0$:
\begin{equation}\label{apriori0}
||\mathbb T^\star_a(u_0)||_{L^m_{a,z} L^q_t L^r_x } \le C ||u_0||_{L^2_a(\RN)}.
\end{equation}
This follows in a standard way from the scalings $(X,t)\to (\la X,\la^2 t)$. However, in the anomalous range $-1<a<0$ the operator $\mathbb T^\star_a$ does not obey these scalings. Instead, the equation \eqref{ac0} suggests that it behaves as if $a = 0$, except that a weight appears in the relevant Strichartz estimates and the spaces $L^m_{a,z} L^q_t L^r_x $ must be replaced by sums and intersections of Lebesgue spaces. 
  
A second comment is that, since we want the solution \eqref{U0} to be continuous and bounded up to the boundary manifold $\p \RN\times \R$, we need to have  $m=\infty$ in \eqref{apriori0}. In such case, 
from \eqref{ac3}, \eqref{ac0} we see that admissibility of the triple $(q,r,\infty)$ means
\begin{equation}\label{acinfty}
a\ge 0:\ \ \ \ \ \frac 2q +\frac dr = \frac{d+a+1}2;\ \ \ \ \ \ \  -1<a<0:\ \ \ \ \ 
\frac 2q +\frac dr = \frac{d+1}2.
\end{equation}
We next discuss these two cases separately. 

\vskip 0.2in

\noindent \underline{Case} $a\ge 0$. Since in such case we must have $d+a-1\ge 0$, two possibilities present:
\begin{equation}\label{a2}
\text{(a)}\ \ \ \ \ d+a-1>0,\ \ \ \ \ \ \  \text{or}\ \ \ \ \ \ \text{(b)}\ \ \ \ d+a-1=0.
\end{equation}
\begin{itemize}
\item[(a)] If $d+a-1>0$ and $q>2$, then if $(q,r,\infty)$ is admissible, the first equation in \eqref{acinfty} implies that it must be $2\le r < \frac{2d}{d+a-1}$. In such situation we are able to establish unified Strichartz estimates (bulk$+$boundary). If $q = 2$, then the triple $(2,r,\infty)$ is admissible iff $r = \frac{2d}{d+a-1}$. The open end-point $(2,\frac{2d}{d+a-1},\infty)$ is the counterpart of the Keel-Tao result for $(2,\frac{2d}{d-2})$ when $d\ge 3$, see \cite{KT, Taodis}, but the situation is complicated by the fact that in the $z$-variable we have $m=\infty$ in \eqref{apriori0}.  
\item[(b)] If $d+a-1=0$, we must necessarily have $d = 1$ and $a=0$, and the first equation in \eqref{acinfty} becomes
\[
\frac 2q +\frac 1r = 1.
\] 
This situation has been treated in the interesting recent work \cite{OST3}, and our results recover theirs for all triples $(q,r,\infty)$ satisfying $q>2$, or equivalently $r<\infty$. At the end-point $(2,\infty,\infty)$ the estimate \eqref{apriori0} fails in view of the Montgomery-Smith counterexample in \cite{MS}.  
\end{itemize}

\vskip 0.2in

\noindent \underline{Case} $-1<a<0$. Again, we have two possibilities:
 \begin{equation}\label{a20}
\text{(c)}\ \ \ \ \ d>1,\ \ \ \ \ \text{or}\ \ \ \ \ \text{(d)}\ \ \ \ \ d=1.
\end{equation}
\begin{itemize}
\item[(c)] If $d>1$ and $q>2$, then the second equation in \eqref{acinfty} implies that, if $(q,r,\infty)$ is admissible, we must have $2\le r < \frac{2d}{d-1}$. In such situation, our best possible Strichartz estimates contain a weight in the $z$ variable and present a discrepancy between bulk and boundary.
If instead $q=2$, then for $(2,r,\infty)$ to be admissible, we must have $r = \frac{2d}{d-1}$. The end-point $(2,\frac{2d}{d-1},\infty)$ when $-1<a<0$ is presently open. 
\item[(d)] If $d =1$, the second equation in \eqref{acinfty} implies that $(q,r,\infty)$ is admissible if
\[
\frac 2q + \frac 1r = \frac 12.
\] 
Since $q = 2$ is not allowed, our result cover all admissible triples $(q,r,\infty)$.
\end{itemize}

\begin{remark}\label{R:rr}
For future use, it will be important to keep in mind that the triple $(r,r,\infty)$ is always admissible, and we have: 
\begin{equation}\label{same}
q = r = \frac{2(d+2)}{d+a+1},\ \ \ \ \ \ \ \ \ \ \ q' = r' = \frac{2(d+2)}{d+3-a},\ \ \ \ \text{when}\ \ a\ge 0,
\end{equation}
and
\begin{equation}\label{same0}
q = r = \frac{2(d+2)}{d+1},\ \ \ \ \ \ \ \ \ \ \ q' = r' = \frac{2(d+2)}{d+3},\ \ \ \ \text{when}\ \ -1<a< 0.
\end{equation}
\end{remark}

\vskip 0.2in

\subsection{Strichartz estimates} We next  state our main results concerning the linear problem \eqref{nozeroin}, beginning with the case $a\ge 0$. In both Theorems \ref{T:main} and \ref{T:main2} below, given $u_0\in \mathscr S(\RN)$, $F \in \mathscr S(\RN\times \R)$ and $\Phi\in \mathscr S(\R^{d+1})$, we indicate with $U$ the unique mild solution to the problem \eqref{nozeroin} constructed in Theorem \ref{T:U}, see formulas \eqref{nicea1in} or \eqref{U0}.

\begin{theorem}\label{T:main}
Let $0\le a<1$ and suppose that the triple $(q,r,\infty)$ is admissible, i.e., that the first equation in \eqref{acinfty} holds. If $d+a-1>0$ we require that $r<\frac{2d}{d+a-1}$, if instead $d+a-1=0$, we assume that $(q,r,\infty)\not= (2,\infty,\infty)$.
 Then there exists a constant $C = C(d,a,r)>0$ such that the following Strichartz estimates hold for the solution $U$ to the problem \eqref{nozeroin}:
\begin{align}\label{Umain}
||U||_{L^\infty_t L^2_a(\RN)} \le C\ \left[||u_0||_{L^2_a(\RN)} + ||F||_{L^1_t L^2_{a}(\RN)} + ||\Phi||_{L^{q'}_t L^{r'}_x} \right],
\end{align}
and also
\begin{align}\label{Umain2}
||U||_{L^\infty_z L^q_t L^r_x } \le C\ \left[||u_0||_{L^2_a(\RN)} +  ||F||_{L^1_{a,z} L^{q'}_t L^{r'}_x } + ||\Phi||_{L^{q'}_t L^{r'}_x} \right].
\end{align}
\end{theorem}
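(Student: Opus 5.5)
We split $U$ according to \eqref{U0} as $U=\mathbb T^\star_a(u_0)+\mathbb D_a(F)+\Theta^\star_a(\Phi)$ and estimate each piece separately in both norms of \eqref{Umain} and \eqref{Umain2}. The plan is a $TT^\star$ argument built on a dispersive estimate that is uniform in the transverse variable.

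\textbf{Step 1: a uniform dispersive bound for $\mathbb S_a$.} For $a\ge 0$ the function $s\mapsto s^{\frac{1-a}{2}}J_{\frac{a-1}{2}}(s)$ is bounded on $[0,\infty)$. Near $0$ it tends to a constant. At infinity it behaves like $s^{-a/2}$, which stays bounded precisely because $a\ge0$. Hence, from the explicit formula for $\mathbb S_a$,
\[
\sup_{z,\zeta\ge0}\ |\mathbb S_a(X,Y,t)|\le C|t|^{-\frac{d+a+1}{2}}.
\]
Property (3) gives the same bound for $\mathbb S_a(X,Y_0,t)$. Unitarity of $\mathbb T^\star_a(t)$ on $L^2_a$ follows from the self-adjointness of $\Delta_x+\Ba$ with respect to $d\omega_a$. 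This already yields the $L^\infty_tL^2_a$ bound for $\mathbb T^\star_a(u_0)$. For $\mathbb D_a(F)$ with $F\in L^1_tL^2_a$ we use Minkowski's inequality together with unitarity.

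\textbf{Step 2: the $L^\infty_zL^q_tL^r_x$ bound for $\mathbb T^\star_a$.} Fix $z$ and consider the operator $T_z u_0=\mathbb T^\star_a(u_0)(\cdot,z,\cdot)$, which maps $L^2_a$ to functions of $(x,t)$. Its kernel satisfies
\[
T_zT_{z'}^\star G(x,t)=\int\!\!\int K_{z,z'}(x-y,t-\tau)\,G(y,\tau)\,dy\,d\tau,
\qquad
K_{z,z'}(\cdot,t)=\int_0^\infty \mathbb S_a\,\overline{\mathbb S_a}\,\zeta^a\,d\zeta .
\]
By the semigroup property this equals $\mathbb S_a((x,z),(y,z'),t-\tau)$. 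Hence we get $L^1_x\to L^\infty_x$ decay $|t|^{-\frac{d+a+1}2}$ uniformly in $z,z'$, and an $L^2_x\to L^2_x$ bound. The $L^2_x$ bound holds because for fixed $z,z'$ the $x$-Fourier multiplier is $e^{-it|\xi|^2}$ times a one-dimensional Bessel kernel in $z$. That Bessel kernel is bounded in absolute value by $C|t|^{-\frac{a+1}{2}}$, which would only give a weaker estimate.

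To fix this, interpolate between $L^1\to L^\infty$ with rate $|t|^{-\frac{d+a+1}{2}}$ and $L^2\to L^2$ with rate $|t|^{-\frac{a+1}2}$. This gives $L^{r'}_x\to L^r_x$ decay of order $|t|^{-\frac{a+1}{2}-d(\frac12-\frac1r)}$. Admissibility \eqref{acinfty} makes this exponent exactly $2/q$. Since $q>2$, Hardy–Littlewood–Sobolev in $t$ gives $\|T_zT_{z'}^\star G\|_{L^q_tL^r_x}\le C\|G\|_{L^{q'}_tL^{r'}_x}$ uniformly in $z,z'$. This yields both $T_z:L^2_a\to L^q_tL^r_x$ and the bound for $\mathbb D_a$ by the Christ–Kiselev lemma, which applies since $q'<q$. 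The $\mathbb D_a$ bound comes after integrating in $z'$ against $z'^a\,dz'$, which produces the $L^1_{a,z}L^{q'}_tL^{r'}_x$ norm. The case $d+a-1=0$ is handled identically with $q>2$, so the endpoint is excluded.

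\textbf{Step 3: the boundary term.} Here $\Theta^\star_a(\Phi)(\cdot,z,\cdot)$ equals $-i$ times the retarded version of $T_zT_0^\star\Phi$ with kernel $\mathbb S_a(X,Y_0,t-\tau)$. This kernel is exactly the $z'=0$ case of Step 2. So the same HLS and Christ–Kiselev argument gives $\|\Theta^\star_a\Phi\|_{L^\infty_zL^q_tL^r_x}\le C\|\Phi\|_{L^{q'}_tL^{r'}_x}$.

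For the energy bound we use duality. We have $\|\Theta^\star_a\Phi(t)\|_{L^2_a}^2=\langle T_0T_0^\star\Phi,\Phi\rangle$ restricted to $\tau<t$, and this is controlled by the Step 2 estimate at $z=z'=0$. A Christ–Kiselev-type argument, or the standard $TT^\star$ energy argument, then yields \eqref{Umain}.

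\textbf{Main obstacle.} The delicate point is the intermediate $L^2_x$ decay $|t|^{-\frac{a+1}{2}}$ for the fixed-$z$ kernel, which is needed to interpolate and to land exactly on the exponent $2/q$. This needs a careful uniform bound on the 1-D Bessel factor, valid only for $a\ge0$. Establishing it, including near $z=0$ where the bound must be consistent with continuity up to the boundary, is where I expect the real work to lie.
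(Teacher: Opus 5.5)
Your proposal is correct and follows essentially the paper's route. Both use the decomposition \eqref{U0}, unitarity plus Minkowski for the $L^\infty_tL^2_a$ bounds of the bulk terms, and a $TT^\star$/Hardy--Littlewood--Sobolev argument driven by the combined decay $|t|^{-\frac{a+1}2-d(\frac12-\frac1r)}=|t|^{-\frac2q}$, with $|S_a(z,0,t)|\simeq|t|^{-\frac{a+1}2}$ for the boundary term, exactly as in Theorems \ref{T:Unistar}, \ref{T:Tstarbulk} and \ref{T:V}.

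Christ--Kiselev is unnecessary here. Your bound holds in operator norm pointwise in $t-\tau$, so the truncation $0<\tau<t$ costs nothing, as in \eqref{opa5}.

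The step that actually needs care is not the Bessel bound, which is just \eqref{good} and is already settled in your Step 1. It is the kernel-level group identity you invoke for $T_zT_{z'}^\star$ with point evaluations in $z$, which holds only as an improper integral. The paper sidesteps this in the bulk by pairing against $L^1_{a,z}L^{q'}_tL^{r'}_x$ test functions. At $z'=0$ it justifies the identity by the Gaussian regularization in the proof of \eqref{ok} (cf.\ Lemma \ref{L:piccoloSa}).
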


Our next result covers the anomalous range $-1<a<0$.
We mention that in Theorem \ref{T:main2} we are using the short notation $L^{q+q_\infty}_t$ for the sum space $L^{q}_t+L^{q_\infty}_t$, and $L^{q'\cap q'_\infty}_t$ for the intersection space $L^{q'}_t \cap L^{q'_\infty}_t$.
Moreover, the weight $k$ is the one in Definition \ref{D:mn}, see also Proposition \ref{P:dis}. 

\begin{theorem}\label{T:main2}
Let $-1<a<0$. Assume that $(q_\infty,r,\infty)$ and $(q,r,\infty)$ respectively satisfy the first and second equations in \eqref{acinfty}, i.e., 
\begin{equation}\label{uffaa}
\frac{2}{q_\infty} + \frac dr = \frac{d+a+1}2,\ \ \ \ \text{and}\ \ \ \ \ \frac 2{q} + \frac dr = \frac{d+1}2. 
\end{equation} 
Moreover, if $d>1$ we require that $2\le r < \frac{2d}{d-1}$, whereas when $d = 1$ we assume that $(q,r,\infty)\not= (2,\infty,\infty)$.
Then there exists a constant $C = C(d,a,r)>0$ such that the following Strichartz estimates hold for the solution $U$ to the problem \eqref{nozeroin}:
\begin{align}\label{Umain20}
||U||_{L^\infty_t L^2_a(\RN)} \le C\ \left[||u_0||_{L^2_a(\RN)} + ||F||_{L^1_t L^2_{a}(\RN)} + ||\Phi||_{L^{q'_\infty}_t L^{r'}_x} \right],
\end{align}
and also
\begin{align}\label{Umain22}
||U\ k||_{L^\infty_z L^{q+q_\infty}_t L^r_x } \le C\ \left[||u_0||_{L^2_a(\RN)} +  ||F \ k^{-1}||_{L^1_{a,z} L^{q'\cap q'_\infty}_t L^{r'}_x } + ||\Phi||_{L^{q'_\infty}_t L^{r'}_x} \right].
\end{align}
\end{theorem}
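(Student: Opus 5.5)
The plan is to use the decomposition \eqref{U0}, $U=\mathbb T_a^\star(u_0)+\mathbb D_a(F)+\Theta_a^\star(\Phi)$, and to bound each of the three pieces separately in the two norms of \eqref{Umain20} and \eqref{Umain22}.

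\textbf{The energy estimate \eqref{Umain20}.} The bulk terms are handled first. Since $\mathbb T_a^\star$ is the unitary group generated by the self-adjoint operator $\Delta_x+\Ba$ on $L^2_a(\RN)$ with zero Neumann condition, we have $\|\mathbb T_a^\star u_0\|_{L^\infty_tL^2_a}=\|u_0\|_{L^2_a}$. Minkowski's inequality then gives $\|\mathbb D_a F\|_{L^\infty_tL^2_a}\le\|F\|_{L^1_tL^2_a}$.

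For the boundary term I would argue by duality. For fixed $t$, $\Theta_a^\star(\Phi)(\cdot,t)$ is the adjoint of the map $V\mapsto \mathbb T_a^\star(V)(x,0,\tau)$, restricted to $0<\tau<t$. Indeed, by property (1) of $\mathbb S_a$, $\mathbb S_a(X,Y_0,t-\tau)$ is the kernel of the boundary trace of the propagator. Hence it suffices to prove the boundary Strichartz bound
\[
\|\mathbb T_a^\star(V)(\cdot,0,\cdot)\|_{L^{q_\infty}_tL^r_x}\le C\|V\|_{L^2_a},
\]
which is a $TT^\star$ statement. The composite kernel is $\int_{\RN}\mathbb S_a(X_0,Y,s)\overline{\mathbb S_a(X_0',Y,s')}\,d\omega_a(Y)=\mathbb S_a(X_0,X_0',s-s')$, by the group property. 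By property (3), this kernel obeys the single dispersive bound $|t|^{-(d+a+1)/2}$ for every $a>-1$. Interpolating this dispersive bound with unitarity and applying Hardy--Littlewood--Sobolev in $t$ (or Keel--Tao) gives the estimate exactly for $\frac{2}{q_\infty}+\frac dr=\frac{d+a+1}2$ with $q_\infty>2$. This is why $q'_\infty$, and not $q'$, appears on $\Phi$.

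\textbf{The weighted estimate \eqref{Umain22}.} For $\Theta_a^\star(\Phi)$ I would estimate $\|\Theta_a^\star\Phi(\cdot,z,\cdot)\|_{L^{q_\infty}_tL^r_x}$ uniformly in $z$ directly. For fixed $z$, the kernel $\mathbb S_a(X,Y_0,t-\tau)$ is, up to the unimodular factor $e^{iz^2/4(t-\tau)}$, the same boundary kernel. Writing it as $\mathbb T_a^\star$ (evaluated at height $z$) composed with the adjoint of the trace map, and using the Christ--Kiselev lemma to handle the truncation $\tau<t$, gives the bound uniformly in $z$. Since $k$ is bounded (it is a weight of the type $\min(1,\cdot)$ from Definition~\ref{D:mn}) and $L^{q_\infty}\subset L^{q+q_\infty}$, this piece is controlled.

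For $\mathbb T_a^\star u_0$, the weighted dispersive estimate of Proposition~\ref{P:dis} is the key input. As I understand it, it bounds $k(z)k(\zeta)|\mathbb S_a(X,Y,t)|$ by $|t|^{-(d+1)/2}$ for $|t|\gtrsim$ scale and by $|t|^{-(d+a+1)/2}$ in the complementary regime. Splitting the kernel into these two pieces and running $TT^\star$ in $L^\infty_z L^{q}_tL^r_x$ produces the sum space $L^{q}_t+L^{q_\infty}_t$ on the output side and, dually, the intersection space on $F$. The argument uses $\sup_z$ on one side, $L^1_{a,z}$ on the other, and HLS in time for each piece with its own exponent. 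The inhomogeneous term $\mathbb D_a F$ follows from the same split kernel estimate, using Minkowski in $z$ (so $L^1_{a,z}$ on $F$) and Christ--Kiselev for the retarded integral, which is valid since $q>2$ excludes the endpoint.

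\textbf{Main obstacle.} The hard step is the $\mathbb T_a^\star$ estimate with $m=\infty$. The $TT^\star$ argument naturally yields $L^\infty_z$ on the output only if the dual side is $L^1_{a,z}$, so the two-regime kernel must be estimated uniformly in $z,\zeta$ with the weight $k$ absorbing the growth of $(z\zeta/|t|)^{(1-a)/2}J_{(a-1)/2}$ when $a<0$. I would first try to bound the Bessel factor by $C\min\{1,(z\zeta/|t|)^{-a/2}\}$ and check that this is compatible with $k$.
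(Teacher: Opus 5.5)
Your overall plan matches the paper's. The decomposition \eqref{U0} is the same, and so is unitarity plus Minkowski for the bulk part of \eqref{Umain20}. For $\Theta_a^\star$ in \eqref{Umain20} you use the boundary-to-boundary kernel $\mathbb S_a(X_0,Y_0,s)$, of modulus $c|s|^{-\frac{d+a+1}2}$, followed by HLS. That is exactly the kernel in the paper's expansion \eqref{theta}. Your $TT^\star$ composition identity is not absolutely convergent in $Y$, and the paper justifies the $z$-integral with the damping $e^{-\varepsilon z^2}$. The weighted bulk terms go through Proposition \ref{P:dis} and Lemma \ref{L:HLS}, as in the paper.

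\textbf{The step that fails is your bound for $\Theta_a^\star(\Phi)$ in \eqref{Umain22}.} Factoring $\Phi\mapsto\Theta_a^\star(\Phi)(\cdot,z,\cdot)$ through ``$\mathbb T_a^\star$ evaluated at height $z$'' requires
\[
\sup_{z>0}\|\mathbb T_a^\star(w)(\cdot,z,\cdot)\|_{L^{q_\infty}_tL^r_x}\le C\|w\|_{L^2_a(\RN)},
\]
and this is false for $-1<a<0$. Near $z=1$ and for short times the flow is a small perturbation of free Schr\"odinger in $\R^{d+1}$. Take data $\varepsilon^{-\frac{d+1}2}\varphi(x/\varepsilon)\psi((z-1)/\varepsilon)$, whose $L^2_a$ norm is $\approx1$. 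At height $z=1$ the solution then has $L^{q_\infty}([0,\varepsilon^2];L^r_x)$ norm of order $\varepsilon^{-\frac{d+1}2+\frac2{q_\infty}+\frac dr}=\varepsilon^{\frac a2}\to\infty$. This is precisely why the bulk estimate needs the weight and the sum space, as you yourself use for $\mathbb T_a^\star(u_0)$. The composite operator is bounded, but your factorization does not prove it, because its first factor is unbounded.

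The repair is the observation you already made, but used with moduli. By \eqref{grandeSa}, $|\mathbb S_a(X,Y_0,s)|=c|s|^{-\frac{d+a+1}2}$ for every $z\ge0$. So Proposition \ref{P:fundis} and \eqref{uffaa} give
\[
\|\Theta_a^\star(\Phi)(\cdot,z,t)\|_{L^r_x}\le C\int_{\R}\frac{\|\Phi(\cdot,\tau)\|_{L^{r'}_x}}{|t-\tau|^{\frac{a+1}2+d(\frac12-\frac1r)}}\,d\tau=C\int_{\R}\frac{\|\Phi(\cdot,\tau)\|_{L^{r'}_x}}{|t-\tau|^{\frac2{q_\infty}}}\,d\tau .
\]
HLS with $0<\frac2{q_\infty}<1$ then yields \eqref{okk} for $(q_\infty,r)$, uniformly in $z$. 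The truncation $0<\tau<t$ is harmless, so no Christ--Kiselev is needed; the paper proves the same bound by duality. Then $k\le1$ and $L^{q_\infty}_t\hookrightarrow L^{q+q_\infty}_t$ finish the argument, as you say.

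\textbf{Smaller corrections.}

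\emph{The regimes of Proposition \ref{P:dis} are reversed.} \eqref{weight} gives $k(z)k(\zeta)|\mathbb S_a(X,Y,t)|\le C|t|^{-\frac d2}(|t|^{-\frac{a+1}2}+|t|^{-\frac12})$. That is the $(d+1)$-dimensional rate $|t|^{-\frac{d+1}2}$ for $|t|\le1$ and $|t|^{-\frac{d+a+1}2}$ for $|t|\ge1$. The short-time rate is the same local behaviour behind the counterexample above. So in Lemma \ref{L:HLS} one takes $\gamma_1=\frac2q$ and $\gamma_\infty=\frac2{q_\infty}$.

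\emph{The Bessel factor needs $\max$, not $\min$.} For $a<0$, $w^{\frac{1-a}2}J_{\frac{a-1}2}(w)$ is bounded near $0$ and grows like $w^{-\frac a2}$ at infinity, so the correct bound is $C\max\{1,(z\zeta/|t|)^{-\frac a2}\}$. Since $k(z)z^{-\frac a2}\le1$, this bound multiplied by $k(z)k(\zeta)$ gives \eqref{weight}. Your ``main obstacle'' is therefore already settled by Proposition \ref{P:dis}, which the paper simply quotes.

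\emph{Christ--Kiselev is not needed for $\mathbb D_a(F)$.} Once the kernel is bounded in modulus and Minkowski is applied in $\zeta$, the restriction $\tau<t$ costs nothing. This is how the paper obtains \eqref{crucial}.
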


Concerning Theorem \ref{T:main2} we explicitly remark that the hypothesis $-1<a<0$  implies $0<a+1<1$, and therefore  the equations in \eqref{uffaa} give $q<q_\infty$. 
Since by Definition \ref{D:mn} we have $0<k(z)\le 1$ for every $z>0$, and therefore $k^{-1}\ge 1$, it ensues that the second inequality in Theorem \ref{T:main2} is a stronger estimate than the one in which the norm $||\Phi||_{L^{q'_\infty}_t L^{r'}_x}$ in the right-hand side is replaced by the larger one $||\Phi k^{-1}||_{L^{q'\cap q'_\infty}_t L^{r'}_x}$.

\subsection{Well-posedness in the nonlinear Cauchy problem} We finally turn to the question of well-posedness in the problem \eqref{cp02}. Before stating the appropriate definition of mild solution to \eqref{cp02} we recall the representation \eqref{U0}.

\begin{definition}\label{solution}
Let $\mu\in \C\setminus\{0\}$. Assume that $0\le a<1$ and that $(q,r,\infty)$ is admissible, with $(q,r,\infty)\not= (2,\infty,\infty)$ when $d=1$ and $a = 0$. Given $u_0\in L^2_a(\RN)$ and $F\in L^1_t L^2_a(\RN) \cap L^1_{a,z} L_t^{q'} L^{r'}_x$, we say that a function $U$ is a \emph{mild solution} to the Cauchy problem \eqref{cp02} if $U\in C(\R,L^2_a(\RN))$, $z\to U(\cdot,z,\cdot)$ belongs to $C^b_z L^q_tL^r_x$, and it satisfies the identity 
\begin{align}\label{sol1}
U(X,t) & = \mathbb T^\star_a(u_0)(X,t) +  \mathbb D_a(F)(X,t) - \mu\, \Theta^\star_a(|U|^{p-1} U(\cdot,0,\cdot))(X,t).
\end{align}
Note that, in view of \eqref{Thetastar} and the Neumann property of the representation \eqref{U0}, the identity \eqref{sol1} encodes precisely the boundary condition $\lim_{z\to 0^+} z^a\p_z U = -\mu|U|^{p-1}U$ in \eqref{cp02}.
If instead $-1<a<0$, given $0<T<\infty$, $u_0\in L^2_a(\RN)$ and $F\in L^1_T L^2_a(\RN)$ such that 
$F  k^{-1}\in L^1_{a,z} L^{q'\cap q'_\infty}_T L^{r'}_x $, we say that $U$ is a \emph{mild solution} on $[0,T]$ if $U\in C([0,T],L^2_a(\RN))$, $z \to U(\cdot,z,\cdot)k(z)$ belongs to $C^b_z L^q_T L^r_x$, and $U$ satisfies the identity \eqref{sol1}.
\end{definition}

We next introduce a key notion for the well-posedness of problem \eqref{cp02}. For its explanation, we refer the reader to Section \ref{S:well}.

\begin{definition}\label{D:mass}
We say that the problem \eqref{cp02} is $L^2_a$-\emph{mass critical} if
\begin{equation}\label{pcpos}
p = p_c := 1 + \frac{2(1-a)}{d+a+1},\ \ \ \text{when}\ \ \ 0\le a<1,  
\end{equation}
and
\begin{equation}\label{pcneg}
p = p_c := 1 + \frac{2}{d+1},\ \ \ \ \ \text{when}\ \ \ -1<a<0.
\end{equation}
We say it is \emph{subcritical} if $1<p< p_c$ and  \emph{supercritical} if $p> p_c$.
\end{definition}

We note in passing that, when $a=0$ and $d =1$, then according to \eqref{pcpos} the problem \eqref{cp02} is $L^2$ critical when $p_c=2$. This recovers the critical exponent in \cite[Theorems 1.1 \& 1.2]{OST3}.   

\begin{remark}\label{R:pc}
When the triple $(r,r,\infty)$ is admissible, the exponent $q=r$ is uniquely identified by the equations \eqref{same} or \eqref{same0}. One verifies that in either case we have $q=r = p_c+1$, the critical exponent in \eqref{pcpos}, \eqref{pcneg}. We also note that, if we set $\tilde q := p_c q'$ and $\tilde r := p_c r' $, we have (see also Lemma \ref{L:mass})
\[
\tilde q = \tilde r = p_c+1 = q = r.
\]
This will be important in the proof of $L^2_a$-mass critical case in Theorems \ref{T:well} and \ref{T:nega}.
\end{remark}

We are ready to state our main results about well-posedness for the problem \eqref{cp02}.

\begin{theorem}\label{T:well}
Let $0\le a<1$ and, given $\mu\in \C$ and $1<p\le p_c$, consider the problem \eqref{cp02}. Assume that $r = p+1$, and the triple $(q,r,\infty)$ is admissible with $q>2$. Suppose that $u_0\in L^2_a(\RN)$ and that  $F\in L^1_t L^2_a(\RN)\cap L^1_{a,z} L^{q'}_t L^{r'}_x $.
\begin{itemize}
\item[(1)] \underline{\emph{The $L^2_a$-mass critical case}}. If $p = p_c$ in \eqref{pcpos}, then with $q = r = p_c+1$ given by \eqref{same} there exists $\ve_0 = \ve_0(d,a,\mu)>0$ such that, if
\[
||u_0||_{L^2_a(\RN)} + ||F||_{L^1_t L^2_a(\RN)} + ||F||_{L^1_{a,z} L^{r'}_t L^{r'}_x} \le \ve_0,
\]
then there exists a global in time mild solution 
$U\in C(\R,L^2_a(\RN)) \cap C^{b}_z L^r_t L^r_x $ 
of the problem \eqref{cp02}, unique in the ball $\{||U||_{\Xs}\le 2\ve_0\}$ of the space $\Xs$ in \eqref{space}.
\item[(2)] \underline{\emph{The $L^2_a$-mass subcritical case}}. If $1<p<p_c$, then for any $\mu\in \C$ there exists a small $T = T(d,a,p,|\mu|,||u_0||_{L^2_a(\RN)},F)>0$ such that the problem \eqref{cp02} admits a mild solution $U\in C([0,T],L^2_a(\RN)) \cap C^{b}_z L^q_T L^r_x$, unique in $\XT$. 
If $F= 0$ and $\Im(\mu) = 0$, such $U$ can be extended to a mild solution on $[-M,M]$ for every $M>0$.
\end{itemize}
\end{theorem}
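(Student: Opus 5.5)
The plan is a contraction mapping argument for the operator
\[
\mathcal N(U) := \mathbb T^\star_a(u_0) + \mathbb D_a(F) - \mu\,\Theta^\star_a\big(|U|^{p-1}U(\cdot,0,\cdot)\big),
\]
working in the space $\Xs$ of functions with finite norm
\[
||U||_{\Xs} = ||U||_{L^\infty_t L^2_a(\RN)} + ||U||_{L^\infty_z L^q_t L^r_x},
\]
together with continuity in $z$ with values in $L^q_tL^r_x$ and continuity in time with values in $L^2_a$. The linear control comes directly from Theorem \ref{T:main}. Applying \eqref{Umain} and \eqref{Umain2} with $\Phi = -\mu|U|^{p-1}U(\cdot,0,\cdot)$ gives
\[
||\mathcal N(U)||_{\Xs}\le C\Big[||u_0||_{L^2_a} + ||F||_{L^1_tL^2_a} + ||F||_{L^1_{a,z}L^{q'}_tL^{r'}_x} + |\mu|\, \big|\big|\,|U|^{p-1}U(\cdot,0,\cdot)\big|\big|_{L^{q'}_tL^{r'}_x}\Big].
\]
The nonlinear boundary term is then estimated with H\"older in $x$ and $t$, using the trace of $U$ at $z=0$. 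Since $U(\cdot,0,\cdot)$ is controlled by the $L^\infty_z L^q_tL^r_x$ norm through continuity up to $z=0$, we need the trace to be meaningful. For this I will invoke the trace theorem mentioned in the introduction: for $U$ in the range of the three operators, $z\mapsto U(\cdot,z,\cdot)$ is continuous into $L^q_tL^r_x$ on $[0,\infty)$. I will prove continuity first for Schwartz data and pass to the limit by density together with the uniform bounds \eqref{Umain2}.

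\emph{Critical case.} Take $q=r=p_c+1$. By Remark \ref{R:pc} we have $p_c r' = r$, so
\[
\big|\big|\,|U|^{p-1}U(\cdot,0,\cdot)\big|\big|_{L^{r'}_{t,x}} = ||U(\cdot,0,\cdot)||_{L^r_{t,x}}^{p_c}\le ||U||_{\Xs}^{p_c}.
\]
For differences, H\"older yields
\[
\big|\big|\,|U|^{p-1}U-|V|^{p-1}V\big|\big|_{L^{r'}}\le C\big(||U||_{\Xs}^{p_c-1}+||V||_{\Xs}^{p_c-1}\big)\,||U-V||_{\Xs}.
\]
Hence, on the ball $\{||U||_{\Xs}\le 2\ve_0\}$, the map $\mathcal N$ sends the ball into itself and is a contraction once $C|\mu|(2\ve_0)^{p_c-1}$ is small; here $C\ve_0\le \ve_0$ after normalizing constants. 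The fixed point is global in time because all norms are taken over $t\in\R$. Uniqueness holds in the ball, and $U\in C(\R,L^2_a)$ follows from continuity of the Duhamel integrals, checked first on Schwartz data and then extended by density.

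\emph{Subcritical case.} Take $r=p+1$ and $q$ admissible with $q>2$; since $p<p_c$ this forces $q>r$. I will work in $\XT$, defined as $\Xs$ with time restricted to $[0,T]$, and apply H\"older in time:
\[
\big|\big|\,|U|^{p-1}U\big|\big|_{L^{q'}_TL^{r'}_x}\le T^{\theta}\,||U(\cdot,0,\cdot)||_{L^q_TL^r_x}^{p},\qquad \theta = 1-\frac{p+1}{q}>0.
\]
Here $pr'=r$, and $\theta>0$ is equivalent to $q>p+1$, which is exactly subcriticality. The contraction then closes in a ball of radius $2C(||u_0||+\text{norms of }F)$ for $T$ small depending on these quantities. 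To obtain the global extension when $F=0$ and $\Im\mu=0$, I will prove conservation of the $L^2_a$ mass. Formally, pairing the equation with $\bar U$ in $d\omega_a$ and integrating by parts gives
\[
\frac{d}{dt}||U||^2_{L^2_a} = 2\Re\Big(-i\int_{\Rd}\bar U\,\lim_{z\to 0^+} z^a\p_zU\,dx\Big) = 2\Re\Big(i\mu\int_{\Rd}|U|^{p+1}(x,0,t)\,dx\Big)=0
\]
for real $\mu$, the sign of the boundary term being immaterial. To make this rigorous I will regularize the data and nonlinearity and pass to the limit via the Lipschitz bounds above. Since the existence time depends only on $||u_0||_{L^2_a}$, iterating forward and backward then covers $[-M,M]$ for any $M$.

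\textbf{Main obstacle.} I expect the delicate point to be the trace and continuity step: showing that $\Theta^\star_a$ applied to $L^{q'}_tL^{r'}_x$ data lands in $C^b_z L^q_tL^r_x$, and not merely in $L^\infty_z$, so that $U(\cdot,0,\cdot)$ is well defined and fixed points belong to the stated class. A secondary difficulty is justifying mass conservation for mild solutions, since these are not classical solutions.
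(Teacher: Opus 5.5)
Your contraction scheme is the paper's proof. You use the same map and the same spaces $\Xs$ and $\XT$, with the genuine supremum in $z$ so that the trace $U(\cdot,0,\cdot)$ is controlled. You rely on the linear bounds of Theorem~\ref{T:main} together with the trace results of Theorems~\ref{T:gap} and \ref{T:gapbulk}, and on the identities $pr'=r$ and, in the critical case, $pq'=q$. In the subcritical case you use H\"older in time with the gain $T^{1-\frac{p+1}{q}}$, whose positivity is exactly $p<p_c$. One small omission: you only get uniqueness in the ball, and uniqueness in all of $\XT$ needs the standard short-time iteration of the difference estimate, which is routine.

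The step that would not go through as sketched is the rigorous mass conservation behind the global extension. You propose to regularize the data and the nonlinearity of the \emph{nonlinear} problem and pass to the limit with the Lipschitz bounds. That gives convergence of the approximate solutions, but not the identity for the approximants. For that you need them to be classical: differentiable in $t$, with $\Ba U$ and the flux $z^a\p_zU$ meaningful and decaying enough to integrate by parts up to $z=0$. Nothing available gives persistence of regularity for the nonlinear flow through $\Theta^\star_a$.

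The missing idea is to linearize, as the paper does in Lemma~\ref{L:masscon}. Take $U=\mathbb T^\star_a(u_0)+\Theta^\star_a(\Phi)$ with an \emph{arbitrary} Neumann datum $\Phi\in L^{q'}_tL^{r'}_x$ and boundary trace $u$. The paper proves $||U(t)||^2_{L^2_a(\RN)}=||u_0||^2_{L^2_a(\RN)}+2\int_0^t\int_{\Rd}\Im(\bar u\,\Phi)\,dx\,ds$. Both sides are continuous in $(u_0,\Phi)$ by \eqref{ok}, \eqref{okk}, \eqref{bulk00}, H\"older and the trace theorems. On a dense class ($u_0$ band-limited, $\Phi\in C_0^\infty(\Rd\times(0,\infty))$), the representation \eqref{nicea1} yields a classical solution with the $z$-decay your formal computation needs. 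Only then is the fixed datum $\Phi=-\mu|u|^{p-1}u\in L^{q'}_tL^{r'}_x$ inserted, so no regularity of the nonlinear problem is ever required.
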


\vskip 0.2in

Next, we consider the regime $-1<a<0$. In the statement of Theorem \ref{T:nega}, given a time interval $[0,T]$, the reader should keep in mind the mixed norms defined by \eqref{LmixT}.

\begin{theorem}\label{T:nega}
Let $-1<a<0$ and, given $0<T<\infty$, $\mu\in \C$ and $1<p\le p_c$, consider the problem \eqref{cp02} on $\RN\times[0,T]$. Let $r = p+1$ and suppose that the triples $(q_\infty,r,\infty)$ and $(q,r,\infty)$ satisfy the equations \eqref{uffaa}, with $q>2$. Let $u_0\in L^2_a(\RN)$ and $F\in L^1_T L^2_a(\RN)$ be such that 
$F  k^{-1}\in L^1_{a,z} L^{q'\cap q'_\infty}_T L^{r'}_x $.
\begin{itemize}
\item[(1)] \underline{\emph{The $L^2_a$-mass critical case}}. 
If $p=p_c$, there exists $\ve_0 = \ve_0(d,a,\mu,T)>0$ such that, if   
\[
 C \max\{1,T^{\frac{1}{q}-\frac{1}{q_\infty}}\} \left[\|u_0\|_{L^2_a(\RN)} + ||F||_{L^1_T L^2_a(\RN)} + (1+T^{\frac{1}{q}-\frac{1}{q_\infty}}) ||F k^{-1}||_{L^1_{a,z} L^{q'\cap q'_\infty}_T L^{r'}_x}\right]\le \varepsilon_0,
\]
then there exists a mild solution $U\in C([0,T],L^2_a(\RN))$ to the problem \eqref{cp02}, such that $U k\in C^b_z L^q_T L^r_x$, unique in the ball $\{||U||_{\XT}\le 2\ve_0\}$ of the space $\XT$ in \eqref{Xneg}.
\item[(2)] \underline{\emph{The $L^2_a$-mass subcritical case}}. If $1<p<p_c$, then for any $\mu\in \C$ it is possible to select a small $0<T_0\le T$, depending only on $d, a, p, |\mu|, \|u_0\|_{L^2_a(\RN)}$ and the norms of $F$, for which there exists a mild solution $U\in C([0,T_0],L^2_a(\RN))$ to the problem \eqref{cp02} in $\RN\times [0,T_0]$, such that $U k\in C^b_z L^q_{T_0} L^r_x$, unique in $\mathcal X_{T_0}$.
\end{itemize}
\end{theorem}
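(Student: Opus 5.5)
The proof will be a contraction argument on the Duhamel formulation \eqref{sol1}, using the linear estimates of Theorem \ref{T:main2} together with the trace theorem that gives continuity in $z$ up to $z=0$.

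\textbf{Function space and solution map.} For $0<T<\infty$ the space is $\XT$ in \eqref{Xneg}, normed by
\[
\|U\|_{\XT}:=\|U\|_{L^\infty_T L^2_a(\RN)}+\|U k\|_{L^\infty_z L^{q}_T L^r_x}.
\]
Two facts about time-localized norms will be used.
\begin{itemize}
\item On $[0,T]$, Hölder gives $\|G\|_{L^{q}_T+L^{q_\infty}_T}\ge \min\{1,T^{\frac1{q_\infty}-\frac1q}\}\|G\|_{L^q_T}$. Hence \eqref{Umain22} bounds $\|Uk\|_{L^\infty_zL^q_TL^r_x}$ at the cost of the factor $\max\{1,T^{\frac1q-\frac1{q_\infty}}\}$.
\item For the boundary datum, $\|\Phi\|_{L^{q'_\infty}_T L^{r'}_x}\le T^{\frac1{q'_\infty}-\frac1{q'}}\|\Phi\|_{L^{q'}_TL^{r'}_x}$. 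This is valid because $q<q_\infty$ gives $q'_\infty<q'$, and the exponent equals $\frac1q-\frac1{q_\infty}$.
\end{itemize}
Since $k\le1$ and $k(0)=1$ (Definition \ref{D:mn}), the boundary trace $U(\cdot,0,\cdot)$ is controlled by $\sup_z\|Uk\|_{L^q_TL^r_x}$, with continuity at $z=0$ supplied by the trace theorem.

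Define
\[
\mathcal N(U)=\mathbb T^\star_a(u_0)+\mathbb D_a(F)-\mu\Theta^\star_a(|U|^{p-1}U(\cdot,0,\cdot)).
\]

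\textbf{Estimating the nonlinear term.} With $r=p+1$, Hölder gives
\[
\||U|^{p-1}U(\cdot,0,\cdot)\|_{L^{q'}_TL^{r'}_x}\le T^{\theta}\|U(\cdot,0,\cdot)\|^p_{L^{q}_TL^r_x},
\]
where $r'p=r$ and $\frac1{q'}=\frac pq+\theta$. In the critical case $q=r=p_c+1$, Remark \ref{R:pc} gives $\theta=0$. In the subcritical case $\theta>0$, because $\frac2q=\frac{d+1}2-\frac d{p+1}$ forces $p/q<1/q'$ precisely when $p<p_c$. Combining with \eqref{Umain20} and \eqref{Umain22} yields
\[
\|\mathcal N(U)\|_{\XT}\le C_T\big[\|u_0\|+\text{(norms of }F)\big]+C|\mu|\,C_T'\,T^{\theta}\|U\|_{\XT}^p,
\]
where $C_T,C_T'$ are the powers of $\max\{1,T^{\frac1q-\frac1{q_\infty}}\}$ appearing in the hypothesis of (1). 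The difference estimate is
\[
\|\mathcal N(U)-\mathcal N(V)\|\le C|\mu|C_T'T^\theta\big(\|U\|^{p-1}+\|V\|^{p-1}\big)\|U-V\|,
\]
obtained from $\big||U|^{p-1}U-|V|^{p-1}V\big|\lesssim(|U|^{p-1}+|V|^{p-1})|U-V|$ and Hölder.

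\textbf{Closing the argument.}
\begin{itemize}
\item In case (1), the hypothesis makes the linear part at most $\ve_0$. Choosing $\ve_0$ small relative to $C|\mu|C'_T$ makes $\mathcal N$ a contraction on the ball $\{\|U\|_{\XT}\le2\ve_0\}$. The dependence of $\ve_0$ on $T$ is therefore unavoidable, since $C'_T$ grows with $T$.
\item In case (2), restrict to $T_0\le\min\{T,1\}$, so that $C_{T_0}$ and $C'_{T_0}$ are bounded. Take the radius $R=2C[\cdots]$ and choose $T_0$ so that $C|\mu|T_0^\theta R^{p-1}<1/2$.
\end{itemize}
Continuity in time with values in $L^2_a$ and continuity in $z$ of $Uk$ are inherited from the linear theory (Theorem \ref{T:U} and density). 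This yields a mild solution in the sense of Definition \ref{solution}.

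\textbf{Main obstacle.} The delicate point is that \eqref{Umain22} controls $Uk$ only in the sum space $L^{q+q_\infty}_T$, while the nonlinearity must be estimated in the boundary norm $L^{q'_\infty}_T$. Bookkeeping the exponent $\frac1q-\frac1{q_\infty}$ correctly is what produces exactly the prefactors in the smallness condition of (1). The second delicate point is verifying that the trace theorem makes the boundary value $U(\cdot,0,\cdot)$ a genuine continuous extension of $z\mapsto U(\cdot,z,\cdot)k(z)$ in $L^q_TL^r_x$, so that the fixed point is taken in a closed subset of $C^b_z$.
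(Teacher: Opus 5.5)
Your proposal is correct and follows the paper's proof essentially step for step. Both arguments run a contraction for the Duhamel map in $\XT$ using \eqref{Umain20}--\eqref{Umain22}. Both pass the boundary datum from $L^{q'}_T$ to $L^{q'_\infty}_T$ by H\"older, with gain $T^{\frac1q-\frac1{q_\infty}}$, and return from $L^{q+q_\infty}_T$ to $L^q_T$ via Lemma \ref{L:sumT}, at the cost of $\max\{1,T^{\frac1q-\frac1{q_\infty}}\}$. In the subcritical case both use the extra factor $T_0^{\frac1{q'}-\frac pq}$ on $T_0\le\min\{T,1\}$. The points you leave implicit are routine and are also handled only briefly in the paper: extending $\Phi$ and $F$ by zero outside $[0,T]$ before applying the whole-line linear estimates; citing Theorems \ref{T:gap} and \ref{T:gapbulk}, rather than Theorem \ref{T:U}, for the $z$-continuity; and the standard argument upgrading uniqueness from the ball to all of $\mathcal X_{T_0}$ in case (2).
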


\begin{remark}\label{R:critnega}
Since $\gamma := \frac 1q - \frac{1}{q_\infty}>0$, the threshold $\ve_0$ in part (1) of Theorem \ref{T:nega} degrades as $T\to\infty$; on the other hand, for fixed data the smallness condition is automatically satisfied once $T$ is small enough. Part (1) should therefore be read as a quantified local result, in which the guaranteed existence time is controlled by the size of the data. This is a manifestation of the fact, discussed after Definition \ref{D:ac3}, that in the range $-1<a<0$ the problem has no exact scaling.
\end{remark}

\vskip 0.2in

\noindent {\bf Organization of the paper.}
The paper is organized as follows. In Section \ref{S:prep} we collect preliminary material from our work \cite{GS}, including properties of the Bessel operator, the associated weighted functional framework, dispersive estimates on the half-line, and a generalized Hardy--Littlewood--Sobolev lemma that plays a key role in the anomalous regime $-1<a<0$.

In Section \ref{S:propa} we study the Schr\"odinger propagator associated with homogeneous Neumann conditions in the upper half-space. We establish its basic structural properties, including symmetry, scaling, unitarity, and the corresponding Duhamel formula. We also discuss the distinction between mild and classical solutions and identify the admissible scaling relations underlying the Strichartz theory.

Section \ref{S:cpn} is devoted to the linear problem with nonhomogeneous Neumann boundary data. There we derive the fundamental representation formula \eqref{nicea1in}, equivalently written as \eqref{U0}, which separates the bulk evolution from the boundary forcing term. This formula constitutes the backbone of the paper and is the starting point for all subsequent dispersive estimates.

In Section \ref{S:stri} we establish bulk Strichartz estimates for the propagator. The analysis reveals a sharp distinction between the cases \(a\geq0\) and \(-1<a<0\): in the former one obtains estimates compatible with the natural scaling of the equation, whereas in the latter weighted estimates and mixed sum/intersection spaces become unavoidable.

Section \ref{S:bdry} is devoted to boundary Strichartz estimates. In contrast with the bulk theory, the boundary propagator exhibits a remarkably uniform dispersive behavior for all $a>-1$. Exploiting this structure, we prove estimates that are independent of the dichotomy governing the bulk dynamics.

The existence of traces on the boundary is addressed in Section \ref{S:cont}. Although the estimates of Section \ref{S:bdry} provide uniform control in the transverse variable $z$, they do not by themselves guarantee convergence to the boundary. The main result of this section establishes the existence and continuity of boundary traces in the appropriate functional setting.

In Section \ref{S:main} we combine the estimates obtained in the previous sections to prove Theorems \ref{T:main} and \ref{T:main2}. Finally, Section \ref{S:well} is devoted to the nonlinear problem. There we establish local and global well-posedness results in the mass-critical and subcritical regimes, completing the proof of Theorems \ref{T:well}
 and \ref{T:nega}. In the closing Section \ref{S:res} we use our Strichartz estimates to prove a theorem a la Tomas-Stein for the restriction of the Fourier-Hankel transform to a paraboloid.


\vskip 0.2in


\section{Preparatory material}\label{S:prep}

In this section we collect some material from our work \cite{GS} that will be used in the rest of the paper. We also introduce the relevant notation and function spaces.

We denote by 
\begin{equation}\label{Ba}
\mathscr B_z^{(a)} = \p_{zz}  + \frac az \p_z = z^{-a}\p_z(z^a\p_z),\ \ \ \ \ \ \ a>-1,
\end{equation}
the Bessel operator on the half-line $\R^+_z$, with its associated invariant measure $d\omega_a(z) = z^a dz$. As it is well-known,  $-\Ba$ is nonnegative on $L^2_a = L^2(\R^+, d\omega_a)$ when restricted to $C^\infty_0(\R^+)$, as one has
\begin{equation}\label{pos}
\sa -\Ba \psi,\psi\da = - \int_0^\infty \bar \psi\ \Ba \psi\  d\omega_a = \int_0^\infty |\p_z \psi|^2 d\omega_a.
\end{equation}
It is also known, see e.g. \cite{GS}, that 
\begin{equation}\label{sa}
\Ba\ \text{is self-adjoint in}\ L^2_a\ \ \Longleftrightarrow\  
a\in (-\infty,-1] \cup [3,\infty).
\end{equation}
For $a$ in the range in \eqref{sa} no boundary conditions are necessary. If instead $-1<a<3$, we consider the Cauchy problem 
\begin{equation}\label{cp0}
u_t - i \Ba u = F(z,t),\ \ \ \ \ u(z,0) = \vf(z),
\end{equation}
with the additional Neumann condition 
\begin{equation}\label{neu}
\underset{z\to 0^+}{\lim}\ z^a \p_z u = 0.
\end{equation}

The following fundamental solution for \eqref{cp0}, \eqref{neu} was found in \cite[Prop. 1.1]{GS}: 
\begin{equation}\label{SaS0}
S_a(z,\zeta,t)
=
\frac{e^{-i\,\operatorname{sgn}(t)\,\frac{(a+1)\pi}{4}}}{(2|t|)^{\frac{a+1}{2}}}
\left(\frac{z\zeta}{2|t|}\right)^{\frac{1-a}{2}}
J_{\frac{a-1}{2}}\!\left(\frac{z\zeta}{2|t|}\right)
\,e^{i \frac{z^2+\zeta^2}{4t}}, \quad t \neq 0.
\end{equation}
Notice that $S_a(z,\zeta,t)=S_a(\zeta,z,t)$. Moreover, $S_a(z,\zeta,t)$ satisfies the Neumann condition 
\begin{equation}\label{dzpazero}
\underset{z\to 0^+}{\lim} z^a \p_z S_a(z,\zeta,t) = 0,
\end{equation}
for any $\zeta>0$ and $t\not=0$. Keeping in mind that for any $\nu\in \R$ it holds: 
\begin{equation}\label{asy0}
\underset{z\to 0}{\lim}\ z^{\nu} J_{-\nu}(z) = \frac{2^{\nu}}{\G(-\nu+1)},
\end{equation}
we have for $a>-1$,
\begin{equation}\label{piccoloSa}
S_a(z,0,t):=\lim_{\zeta\to 0} S_a(z,\zeta,t)
=
\frac{2^{-a}}{\Gamma\!\left(\frac{a+1}{2}\right)}
\frac{e^{-i\,\operatorname{sgn}(t)\,\frac{(a+1)\pi}{4}}}{|t|^{\frac{a+1}{2}}}
\,e^{i \frac{z^2}{4t}}, \quad t \neq 0.
\end{equation}

Using the kernel \eqref{SaS0}, for every $t\in \R$ we define a linear operator $S_a(t)$ according to the rule
\begin{equation}\label{Sat}
S_a(t)\vf(z) =
\int_0^\infty S_a(z,\zeta,t) \vf(\zeta) d\omega_a(\zeta).
\end{equation}
Since $S_a(z,\zeta,t) = S_a(\zeta,z,t)$, letting $z\to 0^+$ in \eqref{Sat} and using \eqref{piccoloSa} we obtain the \emph{boundary evaluation} of $S_a(t)$,
\begin{equation}\label{Sat0}
S_a(t)\vf(0) = \int_0^\infty S_a(z,0,t)\, \vf(z)\, z^a dz,\ \ \ \ \ \ t\not= 0.
\end{equation}
Elementary as it is, the identity \eqref{Sat0} will be used repeatedly in Section \ref{S:bdry}, and we have therefore recorded it once and for all.

From \cite[Prop. 1.1]{GS} we have
\begin{proposition}\label{P:repa}
The solution of the Cauchy problem \eqref{cp0} admits the Duhamel representation 
\begin{equation}\label{kernel}
u(z,t) =  S_a(t)\vf(z) + \int_0^t S_a(t-\tau)(F(\cdot,\tau))(z) d\tau,
\end{equation}
\end{proposition}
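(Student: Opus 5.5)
The plan is to verify the Duhamel representation \eqref{kernel} directly, reducing everything to three properties of the kernel $S_a$ in \eqref{SaS0}. These are: (i) for fixed $\zeta>0$ it solves $\p_t S_a - i\Ba_z S_a=0$ for $t\neq 0$ and satisfies the Neumann condition \eqref{dzpazero}; (ii) $S_a(t)\vf\to\vf$ as $t\to 0$ for, say, $\vf\in C^\infty_0$ or in $L^2_a$; and (iii) the group law $S_a(t)S_a(s)=S_a(t+s)$, with $S_a(t)$ unitary on $L^2_a$.

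\textbf{Step 1: the diagonalization.} The cleanest way to obtain all three is through the Hankel transform of order $\nu=\frac{a-1}{2}$. The functions $\phi_\la(z)=(\la z)^{\frac{1-a}{2}}J_{\frac{a-1}{2}}(\la z)$ are the eigenfunctions $\Ba\phi_\la=-\la^2\phi_\la$ that satisfy the Neumann condition. The latter holds because $z^{\frac{1-a}{2}}J_{\frac{a-1}{2}}$ is an even power series in $z$, so $z^a\p_z\phi_\la=O(z^{a+1})\to 0$. The corresponding transform $\mathcal H_a$ is unitary on $L^2_a$, up to a normalization constant, and $\mathcal H_a^2=I$. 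I then define $S_a(t)=\mathcal H_a e^{-it\la^2}\mathcal H_a$. This makes (ii) and (iii) immediate, and it also gives the semigroup identity.

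\textbf{Step 2: identifying the kernel.} The kernel of $\mathcal H_a e^{-it\la^2}\mathcal H_a$ is computed by Weber's second exponential integral
\[
\int_0^\infty e^{-p^2\la^2}J_\nu(\la z)J_\nu(\la\zeta)\la\,d\la=\frac{1}{2p^2}e^{-\frac{z^2+\zeta^2}{4p^2}}I_\nu\!\left(\frac{z\zeta}{2p^2}\right),
\]
continued analytically to $p^2=it+\ve$ with $\ve\to 0^+$. Using $I_\nu(-iy)=e^{-i\nu\pi/2}J_\nu(y)$, this reproduces \eqref{SaS0} together with the phase $e^{-i\operatorname{sgn}(t)\frac{(a+1)\pi}{4}}$.

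\textbf{Step 3: the homogeneous and inhomogeneous parts.} For the homogeneous part, $u=S_a(t)\vf$ solves the equation and the Neumann condition, by differentiating under the integral for $\vf$ nice, using (i). For the forcing term, set $w(t)=\int_0^t S_a(t-\tau)F(\tau)\,d\tau$. Then
\[
\p_t w = F(t) + \int_0^t i\Ba S_a(t-\tau)F(\tau)\,d\tau,
\]
where the first term comes from (ii) and the second from (i). The Neumann trace of $w$ vanishes by \eqref{dzpazero}, applied under the integral with dominated convergence. Uniqueness among $L^2_a$ (or mild) solutions with this Neumann condition follows from the energy identity \eqref{pos}: the difference of two solutions $v$ satisfies $\frac{d}{dt}\|v\|_{L^2_a}^2=2\Re\, i\sa \Ba v,v\da=0$. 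Here the boundary term $z^a\p_z v\,\bar v|_{z=0}$ vanishes precisely because of the Neumann condition.

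\textbf{Main obstacle.} I expect the main difficulty to be justifying the analytic continuation in Weber's formula to purely imaginary $p^2$, where the integral is only conditionally convergent, together with the interchanges of derivative and integral near $\tau=t$, where the kernel is singular like $|t-\tau|^{-\frac{a+1}{2}}$. To handle the continuation I would regularize with $e^{-\ve\la^2}$, prove every statement for $t-i\ve$, and pass to the limit in $L^2_a$ using unitarity. For the interchanges I would treat the Duhamel term in the weak sense, pairing with test functions and using the self-adjointness of $\Ba$ under the Neumann condition, which avoids pointwise differentiation at the singular time.
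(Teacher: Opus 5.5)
The paper gives no argument for this proposition. It is imported from \cite[Prop. 1.1]{GS}, as are the kernel \eqref{SaS0} and the facts you rebuild in Step 1: unitarity and the group law (Proposition \ref{P:uni}), the involution and isometry of $\mathcal H_\nu$, and the multiplier identity \eqref{hank}. Your proposal is therefore a self-contained substitute for a citation, and it is correct.

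\textbf{The kernel computation.} With the paper's normalization $\mathcal H_\nu(\psi)(\la)=\int_0^\infty(\la z)^{-\nu}J_\nu(\la z)\psi(z)z^a\,dz$, $\nu=\frac{a-1}2$, no extra constant is needed. In the kernel of $\mathcal H_\nu e^{-it\la^2}\mathcal H_\nu$ the weight collapses to $\la^{a-2\nu}=\la$, so Weber's integral applies as written. It produces $(2p^2)^{-(\nu+1)}e^{-\frac{z^2+\zeta^2}{4p^2}}\,w^{-\nu}I_\nu(w)$ with $w=\frac{z\zeta}{2p^2}$. The factor $w^{-\nu}I_\nu(w)$ is even and entire, and $(2(\ve+it))^{-(\nu+1)}\to(2|t|)^{-\frac{a+1}2}e^{-i\operatorname{sgn}(t)\frac{(a+1)\pi}4}$. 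So you recover \eqref{SaS0} for both signs of $t$ with no branch ambiguity. Passing $\ve\to0$ is harmless for compactly supported data.

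\textbf{Step 3 does more work than necessary.} This is exactly where your stated main obstacle sits. Once the diagonalization is available, it is simpler to transform the equation than to verify the formula. Let $e_\la(z):=(\la z)^{-\nu}J_\nu(\la z)$, and take a solution bounded near $z=0$ and decaying at infinity.
\begin{enumerate}
\item Green's identity gives $\mathcal H_\nu(\Ba u)=-\la^2\mathcal H_\nu(u)$. The boundary term at the origin is $\bigl[e_\la\, z^a\p_z u-u\,z^a\p_z e_\la\bigr]_{z=0}$, and it vanishes by \eqref{neu} together with $z^a\p_z e_\la=O(z^{a+1})$.
\item Hence $\p_t\hat u=-i\la^2\hat u+\hat F$ for each $\la$, whose unique solution is the Duhamel formula on the transform side.
\item Inverting gives \eqref{kernel}.
\end{enumerate}
This yields existence of the representation and uniqueness at once. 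It uses the Neumann condition precisely where it matters, and it avoids both the energy argument and differentiating the Duhamel integral near $\tau=t$.

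\textbf{If you keep the verification route.} Note that \eqref{dzpazero} is only a pointwise statement for each fixed $t\neq0$. To pass it under $\int_0^t d\tau$ you need a bound on $z^a\p_z S_a(s)F(\cdot,\tau)(z)$ that is uniform in small $s$. This holds for data that are even in $z$ or band-limited, but it has to be checked. Moreover, as the paper stresses after \eqref{piccolou}, for data that are not even in $z$ the Neumann condition can only be asserted for $t\neq0$.
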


The following is \cite[Prop. 3.1]{GS}.

\begin{proposition}\label{P:uni}
For every $t\in \R$, we have $S_a(t): L^2_a\to L^2_a$ unitarily, in the sense that for any $t\in \R$
\[
||S_a(t)\vf||_{L^2_a} = ||\vf||_{L^2_a}.
\]
Moreover, $S_a(t)$ is a group.
\end{proposition}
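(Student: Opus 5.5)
Since Proposition \ref{P:uni} is quoted from \cite[Prop. 3.1]{GS}, I would give a self-contained argument through the Hankel transform, which diagonalizes $\Ba$ with the Neumann condition \eqref{neu}. Set $\nu=\frac{a-1}{2}>-1$ and define the modified Hankel transform
\[
\mathcal H_a\vf(\xi)=c_a\int_0^\infty (z\xi)^{-\nu}J_{\nu}(z\xi)\,\vf(z)\,z^a dz .
\]
The constant $c_a$ is chosen so that $\mathcal H_a$ is an isometry of $L^2_a$ onto itself and is its own inverse; this is the classical Hankel--Plancherel theorem, valid for every order $\nu>-1$. The function $\psi_\xi(z)=(z\xi)^{-\nu}J_\nu(z\xi)$ satisfies $\Ba\psi_\xi=-\xi^2\psi_\xi$. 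By \eqref{asy0} it is regular at $z=0$, and $z^a\p_z\psi_\xi\to0$, so it is exactly the Neumann eigenfunction.

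\textbf{Key step.} I would show that for $t\neq0$ the kernel \eqref{SaS0} can be written as
\[
S_a(z,\zeta,t)=c_a^2\int_0^\infty e^{-it\xi^2}\psi_\xi(z)\psi_\xi(\zeta)\,\xi^a d\xi .
\]
This follows from Weber's second exponential integral
\[
\int_0^\infty e^{-p\xi^2}J_\nu(z\xi)J_\nu(\zeta\xi)\,\xi\,d\xi=\frac1{2p}e^{-\frac{z^2+\zeta^2}{4p}}I_\nu\!\left(\frac{z\zeta}{2p}\right),\qquad \Re p>0.
\]
One analytically continues to $p=it+\varepsilon$, lets $\varepsilon\to0^+$, and uses $I_\nu(-iw)=e^{-i\nu\pi/2}J_\nu(w)$. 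The factors $(z\zeta)^{-\nu}$ then produce $\left(\frac{z\zeta}{2|t|}\right)^{\frac{1-a}2}$ together with the phase $e^{-i\operatorname{sgn}(t)(a+1)\pi/4}$. I expect this identity, and especially the bookkeeping of constants and phases, to be the main obstacle. The integral converges only conditionally at $p=it$, so I would work throughout with the regularization $p=it+\varepsilon$.

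With the key step in hand, for $\vf\in C_0^\infty(\R^+)$ and Fubini applied at the regularized level, I get
\[
S_a(t)\vf=\mathcal H_a\big(e^{-it\xi^2}\mathcal H_a\vf\big).
\]
Unitarity then follows from Plancherel:
\[
\|S_a(t)\vf\|_{L^2_a}=\|e^{-it\xi^2}\mathcal H_a\vf\|_{L^2_a}=\|\vf\|_{L^2_a}.
\]
The regularized operators $e^{-\varepsilon\xi^2}$ are contractions, so the identity passes to the limit $\varepsilon\to0$. By density, $S_a(t)$ extends to a unitary operator on $L^2_a$, and $S_a(0)=I$ holds since $\mathcal H_a^2=I$.

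For the group property, the multiplier picture gives directly
\[
S_a(t)S_a(s)=\mathcal H_a e^{-it\xi^2}\mathcal H_a\mathcal H_a e^{-is\xi^2}\mathcal H_a=\mathcal H_a e^{-i(t+s)\xi^2}\mathcal H_a=S_a(t+s).
\]
Together with $S_a(-t)=S_a(t)^{-1}=S_a(t)^*$, which is consistent with $\overline{S_a}(t)=S_a(-t)$ and the symmetry of the kernel, this shows that $\{S_a(t)\}_{t\in\R}$ is a unitary group. Strong continuity in $t$ follows from dominated convergence on the Hankel side.
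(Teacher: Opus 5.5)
Your argument is correct and follows the route behind the cited result \cite{GS}: the paper records that $\mathcal H_{\frac{a-1}2}$ is an involutive isometry of $L^2_a$ and that $\mathcal H_{\frac{a-1}2}(S_a(t)\vf)=e^{-it\zeta^2}\mathcal H_{\frac{a-1}2}(\vf)$ (see \eqref{hank}), and unitarity and the group law follow from this exactly as you describe. Your bookkeeping also checks out: with the paper's normalization $c_a=1$, and Weber's formula at $p=it+0$ reproduces \eqref{SaS0} exactly, including the phase $e^{-i\operatorname{sgn}(t)\frac{(a+1)\pi}4}$.
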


We also need the following. 
\begin{definition}\label{D:mn}
For $-1<a<0$, we define 
\[
k(z) = \min\{1,z^{\frac a2}\},\ \ \ \ \ \text{and}\ \ \ \ \ \ \max\{z^{\frac a2},z^a\} = k(z)^{-1} z^a.
\]  
\end{definition}

Notice that $0<k(z)\le 1$ for any $z>0$, and that moreover $k\equiv 1$ on the interval $[0,1]$.
We recall the following \cite[Prop. 3.4]{GS}.

\begin{proposition}[Dispersive estimate on $\R^+_z$]\label{P:dis}
Suppose that $a\ge 0$. There exists $C(a)>0$ such that for any $t\in \R\setminus\{0\}$ we have 
\begin{equation}\label{good}
||S_a(t) \vf||_{L^\infty(\R^+)}\le C(a) |t|^{-\frac{a+1}2} ||\vf||_{L^1_a}.
\end{equation}
If instead $-1<a<0$, then there exists $C(a)>0$ such that for any $t\in \R\setminus\{0\}$ we have 
\begin{align}\label{weight}
||S_a(t)\vf\ k||_{L^\infty(\R^+)} & \le C(a)\ \left\{|t|^{-\frac{a+1}2} + |t|^{-\frac{1}2}\right\} ||\vf\ k^{-1}||_{L^1_a(\R^+)}.
\end{align} 
\end{proposition}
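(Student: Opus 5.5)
We need to prove the dispersive estimate on the half-line, Proposition \ref{P:dis}. The plan is to bound the kernel \eqref{SaS0} pointwise and then integrate against $\vf$. Indeed,
\[
|S_a(t)\vf(z)| \le \int_0^\infty |S_a(z,\zeta,t)|\,|\vf(\zeta)|\,\zeta^a d\zeta,
\]
so it suffices to estimate $|S_a(z,\zeta,t)|$. The phase factors have modulus one, hence
\[
|S_a(z,\zeta,t)| = (2|t|)^{-\frac{a+1}2}\, G\!\left(\frac{z\zeta}{2|t|}\right),\qquad G(s):= s^{\frac{1-a}2}|J_{\frac{a-1}2}(s)|.
\]
Everything therefore reduces to the behavior of $G$ on $(0,\infty)$. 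Set $\nu=\frac{a-1}2$. We use two standard facts:
\begin{enumerate}
\item[(i)] by \eqref{asy0}, $G(s)\to 2^{\frac{1-a}{2}}/\Gamma(\frac{a+1}2)$ as $s\to0^+$, so $G$ is bounded near $0$;
\item[(ii)] the large-argument asymptotics $|J_\nu(s)|\le C s^{-1/2}$ for $s\ge1$ give $G(s)\le C s^{\frac{1-a}2-\frac12}=C s^{-\frac a2}$ for $s\ge1$.
\end{enumerate}
By continuity in between, $G(s)\le C\min\{1,s^{-a/2}\}$ when $a\ge0$, and $G(s)\le C\max\{1,s^{-a/2}\}$ when $-1<a<0$.

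\emph{Case $a\ge0$.} Here $G$ is bounded on $(0,\infty)$. Consequently $|S_a(z,\zeta,t)|\le C|t|^{-\frac{a+1}2}$ uniformly in $z,\zeta$, and integrating against $|\vf|\zeta^a d\zeta$ gives \eqref{good} immediately.

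\emph{Case $-1<a<0$.} Now $-a/2>0$, so $G$ grows like $s^{|a|/2}$ and
\[
|S_a(z,\zeta,t)|\le C|t|^{-\frac{a+1}2}\Big(1+\Big(\frac{z\zeta}{|t|}\Big)^{-\frac a2}\Big)=C\Big(|t|^{-\frac{a+1}2}+|t|^{-\frac12}z^{-\frac a2}\zeta^{-\frac a2}\Big).
\]
We multiply by $k(z)=\min\{1,z^{a/2}\}$ and use two elementary inequalities, $k(z)\le 1$ and $k(z)z^{-a/2}\le 1$; the second holds because for $z\le1$ we have $z^{-a/2}\le1$, while for $z\ge1$ we have $k\,z^{-a/2}=1$. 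This gives
\[
k(z)|S_a(z,\zeta,t)|\le C\big(|t|^{-\frac{a+1}2}+|t|^{-\frac12}\big)\max\{1,\zeta^{-\frac a2}\}.
\]
Finally $\max\{1,\zeta^{-a/2}\}=k(\zeta)^{-1}$: for $\zeta\ge1$ it equals $\zeta^{-a/2}=k^{-1}$, and for $\zeta\le1$ it equals $1=k^{-1}$. Integrating against $|\vf(\zeta)|\zeta^a d\zeta$ then yields \eqref{weight}.

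The only delicate point is the uniform bound on $G$ together with the correct matching of the growth $s^{-a/2}$ to the weights. Once the two Bessel asymptotics (i) and (ii) are invoked, the weight bookkeeping above is routine. Note also that this pointwise argument is valid for every $t\ne0$, with no need for oscillatory cancellation.
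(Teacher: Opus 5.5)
Your proof is correct, and it follows essentially the same route the paper relies on. The paper only cites \cite{GS} for this proposition, but the same argument appears in Step 3 of the proof of Theorem \ref{T:gapbulk}: the pointwise bound $|s^{-\nu}J_\nu(s)|\le C(a)\bigl(1+s^{-\frac a2}\bigr)$, which is uniformly bounded when $a\ge0$, combined with the weight bookkeeping $k(z)z^{-\frac a2}\le 1$ and $\max\{1,\zeta^{-\frac a2}\}=k(\zeta)^{-1}$, exactly as you do it.
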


Finally, we denote by
\begin{equation}\label{S}
S(x,y,t) =  (4\pi i t)^{-\frac d2} e^{i \frac{|x-y|^2}{4t}}
\end{equation}
the kernel of the Schr\"odinger propagator in $\Rd$.
This means that, given $\vf\in \mathscr S(\Rd)$, the function
\[
f(x,t) = S(t)\vf(x) := \int_{\Rd} S(x,y,t) \vf(y) dy,
\]
represents the unique solution of the Cauchy problem 
\[
\p_t f - i \Delta f = 0,\ \ \ \ \ \ f(x,0) = \vf(x).
\]
We recall the well-known dispersive estimate satisfied by $\{S(t)\}_{t\in \R}$, see \cite{GV, GV2, SuSu, Caze}.

\begin{proposition}\label{P:fundis}
For every $r\ge 2$ there exists $C(d,r)>0$ such that
\[
||S(t)\vf||_{L^r(\Rd)}\le C(d,r)|t|^{-d(\frac 12 - \frac 1r)}\ ||\vf||_{L^{r'}(\Rd)},
\]
for every $\vf\in L^{r'}(\Rd)$.
\end{proposition}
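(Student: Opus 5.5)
This is the classical Euclidean dispersive estimate for the free Schr\"odinger group on $\Rd$; the standard route is Riesz--Thorin interpolation between an $L^1\to L^\infty$ bound read off from the kernel \eqref{S} and the $L^2$ isometry.

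\emph{The $L^1\to L^\infty$ endpoint.} Since the phase in \eqref{S} is unimodular, $|S(x,y,t)| = (4\pi|t|)^{-\frac d2}$ for every $x,y$. Hence for $\vf\in\mathscr S(\Rd)$
\[
||S(t)\vf||_{L^\infty(\Rd)}\le (4\pi|t|)^{-\frac d2}||\vf||_{L^1(\Rd)},\qquad t\neq 0 .
\]

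\emph{The $L^2$ endpoint.} Take the Fourier transform in $x$. Then $S(t)$ is the multiplier $e^{-it|\xi|^2}$, which is unimodular. Plancherel therefore gives $||S(t)\vf||_{L^2}=||\vf||_{L^2}$. Equivalently, the Gaussian integral identifies $S(t)$ with this multiplier on $\mathscr S(\Rd)$.

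\emph{Interpolation.} Let $\theta=1-\frac 2r\in[0,1]$, so that
\[
\frac1r=\frac{1-\theta}{2}+\frac{\theta}{\infty},\qquad \frac1{r'}=\frac{1-\theta}{2}+\frac{\theta}{1}.
\]
Riesz--Thorin, applied to the operator $S(t)$ at fixed $t\neq0$, yields
\[
||S(t)\vf||_{L^r}\le (4\pi|t|)^{-\frac d2\theta}||\vf||_{L^{r'}}.
\]
Since $\frac d2\theta = d\left(\frac12-\frac1r\right)$, this is the claimed estimate with $C(d,r)=(4\pi)^{-d(\frac12-\frac1r)}$.

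Finally, $\mathscr S(\Rd)$ is dense in $L^{r'}$ because $r'<\infty$. The bound therefore extends $S(t)$ uniquely to all $\vf\in L^{r'}(\Rd)$. For $r'\le 2$ this extension is consistent with the $L^2$ definition of $S(t)$.

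The only step that needs any care is the unitarity on $L^2$, that is, matching the convolution kernel \eqref{S} with the multiplier $e^{-it|\xi|^2}$. This reduces to the standard computation of the Fourier transform of a complex Gaussian, done by analytic continuation in the variance parameter. The result is in any case classical and recorded in the cited references.
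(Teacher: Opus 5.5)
Your proof is correct: the $L^1\to L^\infty$ bound from $|S(x,y,t)|=(4\pi|t|)^{-\frac d2}$, the $L^2$ unitarity, and Riesz--Thorin with $\theta=1-\frac 2r$ give exactly the decay exponent $d(\frac12-\frac1r)$. The paper gives no proof of this proposition and simply cites it as classical, and your interpolation argument is the standard proof in the references it cites.
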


\vskip 0.2in


\subsection{Function spaces}\label{S:adapted} Given exponents $1\le q, r < \infty$, we will denote by $L^q_t L^r_x$ the Banach space of measurable functions $\Phi:\Rd\times \R\to \overline \C$ such that 
\[
||\Phi||_{L^q_t L^r_x} = \left(\int_\R ||\Phi(\cdot,t)||^q_{L^r_x} dt\right)^{\frac 1q} = \left(\int_\R \left(\int_{\Rd} |\Phi(x,t)|^r dx\right)^{\frac qr} dt\right)^{\frac 1q} < \infty.
\] 
If $1\le r, m< \infty$ we will denote by $L^m_{a,z}L^r_x $ the Banach space of measurable functions $f:\RN\to \overline{\C}$ such that 
\begin{equation}\label{Lrp}
||f||_{L^m_{a,z}L^r_x } := \left(\int_0^\infty \left(\int_{\Rd} |f(x,z)|^r dx\right)^{\frac mr} d\omega_a(z)\right)^{\frac 1m} = \left(\int_0^\infty ||f(\cdot,z)||^m_{L^r_x} d\omega_a(z)\right)^{\frac 1m} < \infty.
\end{equation}
When $r = m$ we write $L^r_a(\RN)$ instead of $L^r_{a,z}L^r_x $. In such case, the norm is given by
\[
||u||_{L^r_a(\RN)} = \left(\int_{\RN} |u(x,t)|^r d\omega_a(X)\right)^{\frac 1r},
\]
where, by slightly abusing the notation, we have let
\begin{equation}\label{me}
d\omega_a(X) = z^a dx dz = z^a dX.
\end{equation}
Since the context will be clear, there will be no risk of confusion between the two measures $d\omega_a(z)$ and $d\omega_a(X)$. 
In a similar way, given $1\le q, r, m< \infty$ we will denote by $L^m_{a,z} L^q_t L^r_x$ the Banach space of measurable functions $F:\RN\times \R\to \overline{\C}$ such that  
\begin{equation}\label{Lqrp}
||F||_{L^m_{a,z} L^q_t L^r_x} :=  \left(\int_0^\infty ||F(\cdot,z,\cdot)||^m_{L^q_t L^r_x} z^a dz\right)^{\frac 1m} < \infty.
\end{equation}
By standard tools one can show that, when none of the exponents equals $\infty$, the spaces $\mathscr S(\RN)$ and $\mathscr S(\RN\times \R)$ are respectively dense in $L^m_{a,z} L^r_x$ and $L^m_{a,z} L^q_t L^r_x$. 
When one or more of the exponents $q, r, m$ equals $\infty$, the corresponding norms must be replaced by essential suprema. When $m=\infty$, we understand that $L^\infty_{a,z} = L^\infty_z$. We will also indicate with 
\begin{equation}\label{Cz}
C^b_z L^q_t L^r_x =  C_z L^q_t L^r_x \cap L^\infty_z L^q_t L^r_x
\end{equation}
the space of the functions $F:\RN\times \R\to \overline{\C}$ such that $z\to F(\cdot,z,\cdot)$ is continuous and bounded in $z\ge 0$, with values in the Banach space $L^q_t L^r_x$. Such space is endowed with the norm $\underset{z\ge 0}{\sup} ||F(\cdot,z,\cdot)||_{L^q_t L^r_x} = ||F||_{L^\infty_z L^q_t L^r_x}$.  

In the proof of Theorem \ref{T:nega} we will need to restrict the Lebesgue spaces in \eqref{Lqrp} to a time interval $[0,T]$. For this purpose, we will use the notation $L^m_{a,z} L^q_T L^r_x$ to indicate the Banach space of measurable functions $F:\RN\times [0,T]\to \overline{\C}$ such that
\begin{equation}\label{LmixT}
||F||_{L^m_{a,z} L^q_T L^r_x} :=  \left(\int_0^\infty ||F(\cdot,z,\cdot)||^m_{L^q_T L^r_x} z^a dz\right)^{\frac 1m} < \infty,
\end{equation}
where we have let 
\[
||F(\cdot,z,\cdot)||_{L^q_T L^r_x} = \left(\int_0^T ||F(\cdot,z,t)||^q_{L^r_x} dt\right)^{\frac 1q},
\]
and continue to use the notation \eqref{Lqrp} for the mixed norms on the whole line $t\in \R$.

In the anomalous regime $-1<a<0$ the linear theory produces estimates in the sum space $L^{q+q_\infty}_t = L^q_t + L^{q_\infty}_t$, whereas the fixed point argument for the nonlinear problem is run in $L^q_T$. On a finite time interval the two are the same space. Since this is the reason why Theorem \ref{T:nega} is confined to a finite interval, and the source of the degeneration of its constants as $T\to \infty$, we record the elementary fact explicitly.

\begin{lemma}\label{L:sumT}
Let $0<T<\infty$ and $2<q<q_\infty<\infty$, and set $\gamma = \frac 1q - \frac 1{q_\infty}>0$. Then, for every Banach space $E$ and every $f$ measurable on $[0,T]$ with values in $E$, one has $L^{q+q_\infty}_T E = L^q_T E$, with
\begin{equation}\label{sumT}
||f||_{L^{q+q_\infty}_T E} \le ||f||_{L^{q}_T E} \le \max\{1,T^{\gamma}\}\ ||f||_{L^{q+q_\infty}_T E}.
\end{equation}
\end{lemma}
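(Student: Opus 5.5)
The proof consists of two one-sided bounds, each using only the definition of the sum norm and Hölder's inequality on the finite interval $[0,T]$. Recall that
\[
\|f\|_{L^{q+q_\infty}_T E}=\inf\{\|g\|_{L^q_T E}+\|h\|_{L^{q_\infty}_T E}:\ f=g+h\},
\]
the infimum taken over all decompositions with $g\in L^q_T E$ and $h\in L^{q_\infty}_T E$.

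\emph{First inequality.} Given $f\in L^q_T E$, take the trivial decomposition $g=f$, $h=0$. This gives $\|f\|_{L^{q+q_\infty}_T E}\le \|f\|_{L^q_T E}$ immediately, and in particular $L^q_T E\subset L^{q+q_\infty}_T E$.

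\emph{Second inequality.} The key ingredient is Hölder's inequality on $[0,T]$ with the exponent pair $\frac{q_\infty}{q}$ and its conjugate. Since $q<q_\infty$ and $\frac 1q-\frac1{q_\infty}=\gamma$, for every $h\in L^{q_\infty}_T E$ we get
\[
\|h\|_{L^q_T E}=\Big(\int_0^T\|h(t)\|_E^q\,dt\Big)^{1/q}\le T^{\gamma}\,\|h\|_{L^{q_\infty}_T E}.
\]
Now let $f=g+h$ be any admissible decomposition. By the triangle inequality in $L^q_T E$ and the bound above,
\[
\|f\|_{L^q_T E}\le \|g\|_{L^q_T E}+T^{\gamma}\|h\|_{L^{q_\infty}_T E}\le \max\{1,T^\gamma\}\big(\|g\|_{L^q_T E}+\|h\|_{L^{q_\infty}_T E}\big).
\]
Taking the infimum over all decompositions yields the right-hand inequality in \eqref{sumT}. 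It also shows that every element of the sum space lies in $L^q_T E$, so the two spaces coincide as sets and their norms are equivalent.

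There is no real obstacle; the argument is elementary. The only points needing care are:
\begin{itemize}
\item Measurability of $\|f(t)\|_E$: this is implicit in the Bochner-space framework used for $L^q_T E$.
\item The factor $\max\{1,T^\gamma\}$: it enters only through the $h$-component, which is exactly why the constant degenerates as $T\to\infty$, consistent with Remark~\ref{R:critnega}.
\end{itemize}
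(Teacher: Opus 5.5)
Your proposal is correct and matches the paper's proof step for step. For the first inequality you both use the trivial decomposition $f=f+0$. For the second you both use H\"older on $[0,T]$ with exponents $\frac{q_\infty}{q}$ and its conjugate to get $\|h\|_{L^q_T E}\le T^{\gamma}\|h\|_{L^{q_\infty}_T E}$, then the triangle inequality, then the infimum over all decompositions $f=g+h$.
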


\begin{proof}
The first inequality follows by choosing the decomposition $f = f + 0$ in the definition of the sum norm. For the second one, let $f = g+h$ with $g\in L^q_T E$ and $h\in L^{q_\infty}_T E$. Since $q<q_\infty$, H\"older inequality on $[0,T]$ with exponents $\frac{q_\infty}{q}$ and $\left(\frac{q_\infty}{q}\right)'$ gives
\[
||h||_{L^q_T E} \le T^{\frac 1q - \frac 1{q_\infty}}\ ||h||_{L^{q_\infty}_T E} = T^{\gamma}\ ||h||_{L^{q_\infty}_T E}.
\]
Therefore
\[
||f||_{L^q_T E} \le ||g||_{L^q_T E} + T^{\gamma} ||h||_{L^{q_\infty}_T E} \le \max\{1,T^{\gamma}\}\left[||g||_{L^q_T E} + ||h||_{L^{q_\infty}_T E}\right],
\]
and it suffices to take the infimum over all such decompositions of $f$.
\end{proof}

\noindent
We stress that the constant in the right-hand side of \eqref{sumT} blows up as $T\to \infty$, and that no such equivalence holds on the whole line.

It will be useful to keep in mind how the norms \eqref{Lrp} and \eqref{Lqrp} respectively change under the following change of scale
\[
f_\la(X) = f(\la X) = f(\la x,\la z),\ \ \ \ \ \ \ \ \ F_\la(X,t) = F(\la X,\la^2 t).
\]  
We have
\begin{equation}\label{rescale}
||f_\la||_{L^m_{a,z} L^r_x} = \la^{- (\frac dr + \frac{a+1}m)}\ ||f||_{L^m_{a,z} L^r_x},\ \ \ \ \ \ \ ||F_\la||_{L^m_{a,z} L^q_t L^r_x } = \la^{- (\frac 2q + \frac dr + \frac{a+1}m)}\ ||F||_{L^m_{a,z} L^q_t L^r_x }.
\end{equation} 

Finally, we use the brief notation $L^{q_1+q_2}$ to indicate the
Banach space $L^{q_{1}} + L^{q_{2}}$ of measurable functions $f$ such that $f = f_{1} + f_{2}$, with $f_{1}\in L^{q_{1}}$ and
$f_{2} \in L^{q_{2}}$, endowed with the norm
\begin{align*}
\|f\|_{L^{q_{1}+q_{2}}}=\inf_{f=f_{1}+f_{2},f_{1}\in L^{q_{1}},f_{2} \in L^{q_{2}}}\|f_{1}\|_{L^{q_{1}}}+\|f_{2}\|_{L^{q_{2}}}.
\end{align*}
We also use the short notation $L^{p\cap q}$ to denote the Banach space $L^{p}\cap L^{q}$, endowed with the norm
\begin{align*}
\|f\|_{L^{p\cap q}}=\|f\|_{L^{p}}+\|f\|_{L^{q}}.
\end{align*}
We recall that $(L^{p+q})'=L^{p' \cap q'}$. We respectively indicate by
$L^m_{a,z} L^{q_1+q_2}_t L^r_x$ and $L^m_{a,z} L^{q_1\cap q_2}_t L^r_x $ the space of measurable functions $F:\RN\times \R\to \overline \C$ such that
\begin{align}\label{LmixmixT}
||F||_{L^m_{a,z} L^{q_1+q_2}_t L^r_x} & :=  \left(\int_0^\infty ||F(\cdot,z,\cdot)||^m_{L^{q_1+q_2}_t L^r_x} z^a dz\right)^{\frac 1m} < \infty
\\
||F||_{L^m_{a,z} L^{q_1\cap q_2}_t L^r_x} & :=  \left(\int_0^\infty ||F(\cdot,z,\cdot)||^m_{L^{q_1\cap q_2}_t L^r_x} z^a dz\right)^{\frac 1m} < \infty,
\notag
\end{align}
For measurable functions $U:\RN\times [0,T]\to \overline \C$, similarly to \eqref{LmixT} we will write $L^m_{a,z} L^{q_1+q_2}_T L^r_x$ and $L^m_{a,z} L^{q_1\cap q_2}_T L^r_x$.

\subsection{A generalized Hardy-Littlewood-Sobolev lemma}\label{S:genHLS}

 We close this section with a generalization of the Hardy-Littlewood-Sobolev theorem. For its proof see for instance \cite[Lemma 5.1]{BG}. 

\begin{lemma}\label{L:HLS}
Let $0<\gamma_{1}, \gamma_\infty <1$, $C_1,C_2>0$. Let $K:\mathbb{R}\to \mathbb{R}$ be such that
\begin{align*}
|K(t)|\le \begin{cases}
\frac{C_1}{|t|^{\gamma_{1}}} \ \ \ \ \text{if }\ |t|\le 1,
\\
\frac{C_2}{|t|^{\gamma_{\infty}}} \ \ \ \ \text{if}\ |t|\ge1.
\end{cases}
\end{align*}
Let $1<p<q<\infty$ , $1< p_{\infty}, q_{\infty} <\infty$ be exponents such that
\begin{align*}
\gamma_{1}=1+\frac{1}{q}-\frac{1}{p}, \ \ \ \text{and}\ \ \ \ \gamma_{\infty}\ge1+\frac{1}{q_{\infty}}-\frac{1}{p_{\infty}}.
\end{align*}  
Then one has
\begin{align}\label{meglio}
\|f\star K\|_{L^{q+q_\infty}}\le C \|f\|_{L^{p\cap p_{\infty}}}.
\end{align}
\end{lemma}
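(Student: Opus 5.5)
The strategy is to split the kernel into a local and a global piece, $K = K_1 + K_\infty$ with $K_1 = K\,\mathbf 1_{\{|t|\le 1\}}$ and $K_\infty = K\,\mathbf 1_{\{|t|>1\}}$. This gives the decomposition
\[
f\star K = f\star K_1 + f\star K_\infty .
\]
By the definition of the sum norm,
\[
\|f\star K\|_{L^{q+q_\infty}}\le \|f\star K_1\|_{L^q}+\|f\star K_\infty\|_{L^{q_\infty}} .
\]
Since $\|f\|_{L^p}+\|f\|_{L^{p_\infty}}=\|f\|_{L^{p\cap p_\infty}}$, it suffices to prove two separate bounds: $\|f\star K_1\|_{L^q}\le C\|f\|_{L^p}$ and $\|f\star K_\infty\|_{L^{q_\infty}}\le C\|f\|_{L^{p_\infty}}$.

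For the local piece, note that $|K_1(t)|\le C_1|t|^{-\gamma_1}$ for all $t$, after extending the bound trivially by zero outside $[-1,1]$. Hence $|f\star K_1|\le C_1\,|f|\star|t|^{-\gamma_1}$. Here $0<\gamma_1<1$, $1<p<q<\infty$ and $\gamma_1 = 1+\frac1q-\frac1p$, which is exactly the one-dimensional Hardy–Littlewood–Sobolev condition. The classical HLS inequality therefore yields $\|f\star K_1\|_{L^q}\le C\|f\|_{L^p}$.

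For the global piece, I would use Young's inequality,
\[
\|f\star K_\infty\|_{L^{q_\infty}}\le \|K_\infty\|_{L^s}\|f\|_{L^{p_\infty}},\qquad 1+\frac1{q_\infty}=\frac1s+\frac1{p_\infty}.
\]
The hypothesis on $\gamma_\infty$ says precisely that $\gamma_\infty\ge \frac1s$, i.e. $s\gamma_\infty\ge1$. For $|t|>1$ we have $|K_\infty|^s\le C_2^s|t|^{-s\gamma_\infty}$, which is integrable at infinity when $s\gamma_\infty>1$. This leaves two things to check. First, that $s\in[1,\infty]$: from $\frac1s = 1+\frac1{q_\infty}-\frac1{p_\infty}$ we get $\frac1s\le\gamma_\infty<1$, and $\frac1s>0$ provided $p_\infty>1$ and $q_\infty<\infty$, so $s\in(1,\infty)$, which is fine.

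The main obstacle is the borderline case $\gamma_\infty = \frac1s$, where $|t|^{-s\gamma_\infty}=|t|^{-1}$ is not integrable at infinity and Young fails. There I would instead apply HLS again. Since $|K_\infty(t)|\le C_2|t|^{-\gamma_\infty}$ on all of $\R$ and $0<\gamma_\infty<1$, it follows that $|f\star K_\infty|\le C_2|f|\star|t|^{-\gamma_\infty}$. The equality $\gamma_\infty = 1+\frac1{q_\infty}-\frac1{p_\infty}$ then places us in the HLS regime, which forces $p_\infty<q_\infty$ because $\gamma_\infty<1$. Thus the strict-inequality case is handled by Young's inequality and the equality case by HLS. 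Combining the two bounds gives \eqref{meglio}.
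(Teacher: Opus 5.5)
Your proof is correct and complete. Splitting $K = K\mathbf 1_{\{|t|\le 1\}} + K\mathbf 1_{\{|t|>1\}}$, applying Hardy--Littlewood--Sobolev to the local piece, and letting the sum-space norm absorb the two pieces separately is the standard argument; the paper itself gives no proof and only refers to \cite[Lemma 5.1]{BG}.

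One simplification is available: your case distinction on the far piece is unnecessary. Set $\tilde\gamma := 1+\frac1{q_\infty}-\frac1{p_\infty}$. Then $0<\tilde\gamma\le\gamma_\infty<1$, and $|K(t)|\le C_2|t|^{-\gamma_\infty}\le C_2|t|^{-\tilde\gamma}$ for $|t|\ge 1$. Hardy--Littlewood--Sobolev with exponent $\tilde\gamma$ therefore gives the $L^{p_\infty}\to L^{q_\infty}$ bound for $K\mathbf 1_{\{|t|>1\}}$ in all cases at once, without Young's inequality.
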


We will apply Lemma \ref{L:HLS} with the choices $p = q'$ and $p_\infty = q'_\infty$. This means that, given $\gamma_1, \gamma_\infty$, the exponents $q, q_\infty$ are determined by the equations
\begin{equation}\label{qq}
\frac 2{q} = \gamma_1,\ \ \ \ \ \ \frac 2{q_\infty} = \gamma_\infty.
\end{equation}

\vskip 0.2in

\subsection{Band-limited functions}\label{S:band}

We close this section with an elementary decay property that will be used repeatedly in Section \ref{S:cont}. Recall from \cite[(2.3)]{GS} the modified Hankel transform of order $\nu = \frac{a-1}2$ on $(\R^+, z^a dz)$,
\[
\mathcal H_{\nu}(\psi)(\la) = \int_0^\infty (\la z)^{-\nu} J_{\nu}(\la z)\, \psi(z)\, z^a dz.
\]
By \cite[Theorem 2.1 \& (2.7)]{GS}, the transform $\mathcal H_\nu$ is an involution, $\mathcal H_\nu\circ \mathcal H_\nu = \operatorname{Id}$, and an isometry of $L^2_{a,z}$ onto itself, and it diagonalises the Bessel operator $\Ba$ with the Neumann condition \eqref{neu}, see \cite[Theorem 2.2]{GS}. We say that a function $\psi$ on $\R^+$ is \emph{band-limited} if $\mathcal H_{\nu}(\psi)\in C_0^\infty((0,\infty))$.

\begin{lemma}[Decay of band-limited functions]\label{L:band}
Let $-1<a<1$, $\nu = \frac{a-1}2$, and let $\psi$ be band-limited, with $\hat\psi := \mathcal H_\nu(\psi)$ supported in $[\alpha,\beta]\subset(0,\infty)$. Then $\psi$ is a smooth function of $z^2$ on $[0,\infty)$ and, for every $k, M\in \N\cup\{0\}$, one has
\begin{equation}\label{banddecay}
|\p^k_z\psi(z)| \le C\, (1+z)^{-M},\ \ \ \ \ \ z\ge 0,
\end{equation}
with $C = C(a,k,M,\alpha,\beta,||\hat\psi||_{C^{M+k+1}})>0$. In particular, $\psi$ is the restriction to $[0,\infty)$ of a function in $\mathscr S(\R)$, and $\psi\, h\in L^1_{a,z}$ for every measurable $h$ of polynomial growth. Finally, the same conclusions hold for $\mathcal H_\nu(g)$ whenever $g\in C_0^\infty((0,\infty))$.
\end{lemma}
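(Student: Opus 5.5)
Since $\mathcal H_\nu$ is an involution, I write $\psi = \mathcal H_\nu(\hat\psi)$, that is,
\[
\psi(z) = \int_\alpha^\beta (\la z)^{-\nu} J_\nu(\la z)\, \hat\psi(\la)\, \la^a d\la .
\]
The whole lemma follows from two properties of the function $G(s) := s^{-\nu}J_\nu(s)$, with $\nu = \frac{a-1}2\in(-1,0)$.

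\emph{First property: smoothness in $s^2$.} The power series
\[
J_\nu(s) = \sum_j \frac{(-1)^j (s/2)^{2j+\nu}}{j!\,\Gamma(j+\nu+1)}
\]
gives $G(s) = g(s^2)$, where $g$ is entire. This is consistent with \eqref{asy0}. Since $\la$ ranges over the compact set $[\alpha,\beta]$, I can differentiate under the integral sign. Hence $\psi(z) = \int g(\la^2 z^2)\hat\psi(\la)\la^a d\la$ is a smooth function of $z^2$ on $[0,\infty)$. In particular every $z$-derivative of $\psi$ is bounded on any interval $[0,R]$, with bounds depending only on $\alpha,\beta$ and $\|\hat\psi\|_{L^\infty}$. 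This settles \eqref{banddecay} for $z\le 1$.

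\emph{Second property: decay for $z\ge1$, by integration by parts in $\la$.} I use the Bessel recurrence $\frac{d}{ds}\big(s^{\nu+1}J_{\nu+1}(s)\big) = s^{\nu+1}J_\nu(s)$. It lets me write
\[
(\la z)^{-\nu}J_\nu(\la z)\la^a = \la^{a-2\nu-1} z^{-2\nu-1}\,\p_\la\big[ (\la z)^{\nu+1}J_{\nu+1}(\la z)\big] z^{-1}.
\]
Note that $a-2\nu-1 = 0$. Each integration by parts moves one $\la$-derivative onto $\hat\psi$ (times a smooth power of $\la$, harmless on $[\alpha,\beta]$). It also gains a factor $z^{-1}$ and raises the Bessel order by one. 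There are no boundary terms, because $\hat\psi$ has compact support in $(0,\infty)$. After $N$ steps the integrand is $z^{-N}$ times powers of $\la z$ times $J_{\nu+N}(\la z)$. The classical bound $|J_\mu(s)|\le C s^{-1/2}$ for $s\ge1$ then controls the leftover powers.

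To avoid growing powers of $\la z$, I plan the bookkeeping more cleanly as follows. Using $\mathcal H_\nu$ diagonalising $\Ba$, I have $\mathcal H_\nu(\la^{2}\hat\psi) = -\Ba\psi$. I then combine this with the large-argument asymptotics $J_\nu(s) = \sqrt{2/(\pi s)}\,[\cos(s-\phi)\,A(s) + \sin(s-\phi)\,B(s)]$, where $A,B$ are symbols of order $0$ and $-1$. With this, $\psi(z)$ for $z\ge1$ becomes a sum of oscillatory integrals
\[
z^{-\nu-1/2}\int e^{\pm i\la z} a_\pm(\la,z)\,\hat\psi(\la)\,d\la ,
\]
with $a_\pm$ smooth symbols in $z$ and smooth in $\la\in[\alpha,\beta]$. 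Integrating by parts $M$ times in $\la$ gives $O(z^{-M})$. The constant involves $\|\hat\psi\|_{C^{M}}$, and $\alpha$ enters through negative powers of $\la$. Differentiating in $z$ first, $k$ times, brings down at most $\la^k$ together with symbol derivatives. This accounts for the $C^{M+k+1}$ dependence in \eqref{banddecay}. The extra derivative absorbs the symbol remainder term, whose $z$-decay I handle by truncating the asymptotic expansion at a high enough order.

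\emph{Consequences.} The bounds \eqref{banddecay} for all $k,M$, together with the smooth dependence on $z^2$, show that the even extension $\psi(|z|)$ lies in $\mathscr S(\R)$. Integrability of $\psi h$ against $z^a dz$ near $0$ uses only $a>-1$, and at infinity it follows from the rapid decay. The final claim about $\mathcal H_\nu(g)$ for $g\in C_0^\infty((0,\infty))$ is literally the same computation with $\hat\psi$ replaced by $g$.

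The main obstacle is the uniform control of the remainder in the Bessel asymptotics when tracking $k$ $z$-derivatives. To handle it I will use the fact that the remainder terms are symbols, so each $z$-derivative either falls on the phase or improves the decay.
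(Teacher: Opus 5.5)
Your proposal is correct and follows essentially the paper's route. Smoothness in $z^2$ comes from the even entire function $w^{-\nu}J_\nu(w)$, and decay for $z\ge 1$ comes from writing $J_\nu$ through its large-argument (Hankel) representation with symbol-type amplitudes, which turns $\psi$ into oscillatory integrals $z^{-\nu-\frac12}\int e^{\pm i\la z}\,m_\pm(z,\la)\,d\la$ that are then integrated by parts in $\la$. The differences are cosmetic: the paper handles $\p^k_z\psi$ via the recurrence $\frac{d}{dw}\bigl(w^{-\nu}J_\nu(w)\bigr) = -w^{-\nu}J_{\nu+1}(w)$ rather than by differentiating the oscillatory representation, and its exact amplitudes $\omega^\pm_\nu$ make truncating the expansion unnecessary; your remark that $\mathcal H_\nu$ diagonalises $\Ba$ is never actually used.
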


\begin{proof}
By the involution property, $\psi(z) = \int_0^\infty (z\la)^{-\nu}J_\nu(z\la)\, \hat\psi(\la)\, \la^a d\la$. Since $w\mapsto w^{-\nu}J_\nu(w)$ is an even entire function, and since $\hat\psi$ has compact support, $\psi$ is a smooth function of $z^2$, differentiation under the integral sign being justified; this also gives \eqref{banddecay} for $0\le z\le 1$. For $z\ge 1$ we write $J_\nu = \frac12 \left(H^{(1)}_\nu + H^{(2)}_\nu\right)$ and use the classical asymptotic representations of the Hankel functions,
\[
H^{(1,2)}_\nu(w) = \left(\frac{2}{\pi w}\right)^{\frac 12} e^{\pm i \left(w - \frac{\nu\pi}2 - \frac{\pi}4\right)}\, \omega^{\pm}_\nu(w),\ \ \ \ \ \ \left|\frac{d^j}{dw^j}\,\omega^{\pm}_\nu(w)\right| \le C(j,\nu,\alpha)\, w^{-j},\ \ \ w\ge \alpha,
\]
see e.g. \cite[8.451]{GR}; the expansions may be differentiated term by term. Since $\la\in [\alpha,\beta]$ forces $z\la\ge \alpha z$, we obtain
\[
\psi(z) = \sum_{\pm} c_{\pm}(\nu)\, z^{-\nu-\frac12} \int_0^\infty e^{\pm i z\la}\, m_\pm(z,\la)\, d\la,\ \ \ \ \ \ m_\pm(z,\la) := \la^{a-\nu-\frac 12}\, \omega^{\pm}_\nu(z\la)\, \hat\psi(\la),
\]
where $|c_\pm(\nu)| = (2/\pi)^{1/2}/2$. The amplitudes $m_\pm(z,\cdot)$ are supported in $[\alpha,\beta]$ and satisfy $|\p^j_\la m_\pm(z,\la)|\le C(j,a,\alpha,\beta,||\hat\psi||_{C^j})$ uniformly in $z\ge1$, because each $\la$-derivative falling on $\omega^\pm_\nu(z\la)$ produces $z\, (\omega^\pm_\nu)'(z\la) = O(\la^{-1})$. Integrating by parts $M$ times in $\la$, all boundary terms vanishing, we conclude $|\psi(z)|\le C\, z^{-\nu-\frac12-M}$, which is \eqref{banddecay} for $k=0$. For the derivatives, the identity $\frac{d}{dw}\left(w^{-\nu}J_\nu(w)\right) = - w^{-\nu}J_{\nu+1}(w)$ gives $\p_z\psi(z) = -\int_0^\infty \la\, (z\la)^{-\nu}J_{\nu+1}(z\la)\, \hat\psi(\la)\,\la^ad\la$, and the same argument applies with $\nu$ replaced by $\nu+1$; higher derivatives are handled inductively. The membership $\psi\, h \in L^1_{a,z}$ follows from \eqref{banddecay} with $M$ large, since $a>-1$. Finally, if $g\in C_0^\infty((0,\infty))$, then $f:= \mathcal H_\nu(g)$ satisfies $\mathcal H_\nu(f) = g\in C_0^\infty((0,\infty))$ by the involution property, so that $f$ is band-limited and the previous conclusions apply to it.

\end{proof}

\begin{remark}\label{R:bandt}
The constant in \eqref{banddecay} depends on $\hat\psi$ only through its support and finitely many of its derivatives. Since $\mathcal H_\nu\left(S_a(t)\psi\right)(\la) = e^{-it\la^2}\, \mathcal H_\nu(\psi)(\la)$, see \cite{GS} and \eqref{hank} below, and since the $C^{j}$ norms of $\la \mapsto e^{-it\la^2}\hat\psi(\la)$ are bounded uniformly for $t$ in compact subsets of $\R$, the evolution $S_a(t)\psi$ of a band-limited function is band-limited with the same spectral support, and satisfies \eqref{banddecay} with constants which are locally uniform in $t$. The same applies to the bulk evolution $\mathbb S_a(t) = S(t)\circ S_a(t)$ of \eqref{prop2} acting on products $\vf(x)\psi(z)$ with $\vf\in \mathscr S(\Rd)$ and $\psi$ band-limited.
\end{remark}


\vskip 0.3in

\section{The Schr\"odinger propagator with a zero Neumann condition}\label{S:propa}

Given $a>-1$ and $d\in \mathbb N$, we next consider the following generalization of the Cauchy problem \eqref{cp0}, \eqref{neu}:
\begin{equation}\label{eppn}
\begin{cases}
\p_t U - i \left\{\mathscr B_z^{(a)} U +  \Delta_x U\right\} = F(X,t),\ \ \ (X,t)\in \R_+^{d+1}\times \R
\\
\underset{z\to 0^+}{\lim} z^a \p_z U(X,t) = 0,\ \  \ \ U(X,0) = u_0(X).
\end{cases}
\end{equation}

It is well-known (see for instance formula (2.10) in \cite{BDGP} for the case of the heat equation) that the fundamental solution $\mathbb S_a(X,Y,t)$ of the Cauchy problem \eqref{eppn} is given by  
\begin{equation}\label{Sa}
\mathbb S_a(X,Y,t) =  S_a(z,\zeta,t) S(x,y,t).
\end{equation}
In \eqref{Sa} we have denoted by $S_a(z,\zeta,t)$ the fundamental solution \eqref{SaS0}, whereas $S(x,y,t)$ is the kernel \eqref{S}.
Note that \eqref{neu} implies the Neumann condition
\begin{equation}\label{Sazeron}
\underset{z\to 0^+}{\lim} z^a \p_z \mathbb S_a(X,Y,t) = 0.
\end{equation}

From \eqref{SaS0} and \eqref{S} we have
\begin{align}\label{grandeSaS0}
\mathbb S_a(X,Y,t)
= \frac{e^{-i\,\operatorname{sgn}(t)\,\frac{(d+a+1)\pi}{4}}}{2^{\frac{2d+a+1}{2}}\pi^{\frac d2}}  
\frac{e^{i\,\frac{z^2+\zeta^2+|x-y|^2}{4t}}}{|t|^{\frac{d+a+1}{2}}}\left(\frac{z\zeta}{2|t|}\right)^{\frac{1-a}{2}}
J_{\frac{a-1}{2}}\!\left(\frac{z\zeta}{2|t|}\right),
\qquad t\neq 0.
\end{align}

We notice the following consequence of \eqref{grandeSaS0}:
\begin{equation}\label{conj}
\overline{\mathbb S_a(X,Y,t)} = \mathbb S_a(X,Y,-t).
\end{equation}
This identity is justified by the fact that $J_{\frac{a-1}2}(\frac{z\zeta}{2t})$ is real-valued for $z, \zeta, t>0$. 
Using \eqref{piccoloSa}, we obtain
\begin{equation}\label{grandeSa}
\mathbb S_a(X,Y_0,t) :=\underset{\zeta\to 0^+}{\lim} \mathbb S_a(X,Y,t)
=
\frac{ e^{-i\,\operatorname{sgn}(t)\,\frac{(d+a+1)\pi}{4}}}{2^{d+a}\pi^{\frac d2}\Gamma\!\left(\frac{a+1}{2}\right)}
\frac{e^{i \frac{z^2+|x-y|^2}{4t}}}{|t|^{\frac{d+a+1}{2}}},
\qquad t\neq 0.
\end{equation}

In the special case $a=0$,  \eqref{grandeSaS0} takes a simple form, see \cite[Prop. 2.6]{GS}.

\begin{proposition}\label{P:azero}
When $a = 0$ we have
\[
\mathbb S_0(X,Y,t) = \frac{e^{-i\operatorname{sgn}(t)\frac{(d+1)\pi}{4}}}{2^{\frac{2d+3}{2}}\pi^{\frac{d+1}{2}}}
\frac{e^{i\frac{z^2+\zeta^2+|x-y|^2}{4t}}}{|t|^{\frac{d+1}{2}}}
\left(
e^{i\frac{z\zeta}{2|t|}} + e^{-i\frac{z\zeta}{2|t|}}
\right),
\qquad t\neq 0.
\]
\end{proposition}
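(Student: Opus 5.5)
The plan is to specialize the general kernel \eqref{grandeSaS0} to $a=0$ and insert the closed form of the Bessel function of order $-\frac12$; no analysis beyond this identity is needed. With $a=0$ the order is $\nu=\frac{a-1}{2}=-\frac12$ and the power is $\left(\frac{z\zeta}{2|t|}\right)^{\frac12}$. I will use the elementary representation
\[
J_{-\frac12}(w)=\left(\frac{2}{\pi w}\right)^{\frac12}\cos w,
\]
which follows from the series for $J_{-\frac12}$ or from the Hankel decomposition $J_\nu=\frac12\left(H^{(1)}_\nu+H^{(2)}_\nu\right)$ used in Lemma \ref{L:band}; for $\nu=\pm\frac12$ the amplitudes $\omega^\pm_\nu$ are identically $1$. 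Writing $w=\frac{z\zeta}{2|t|}>0$, this gives
\[
w^{\frac12}J_{-\frac12}(w)=\left(\frac 2\pi\right)^{\frac12}\cos w=\frac{1}{(2\pi)^{\frac12}}\left(e^{iw}+e^{-iw}\right).
\]

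Next I substitute this into \eqref{grandeSaS0}. The phase factor $e^{-i\operatorname{sgn}(t)\frac{(d+1)\pi}{4}}$, the Gaussian oscillation $e^{i\frac{z^2+\zeta^2+|x-y|^2}{4t}}$ and the decay $|t|^{-\frac{d+1}{2}}$ pass through unchanged. The bracket $e^{i\frac{z\zeta}{2|t|}}+e^{-i\frac{z\zeta}{2|t|}}$ appears exactly as in the statement. What remains is the numerical prefactor, namely the product of $\left(2^{\frac{2d+1}{2}}\pi^{\frac d2}\right)^{-1}$ with the factor $(2\pi)^{-\frac12}$ produced by the Bessel identity. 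I will then collect the powers of $2$ and $\pi$ so as to match $2^{\frac{2d+3}{2}}\pi^{\frac{d+1}{2}}$.

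As an independent check I will rederive the kernel by the method of images. For $a=0$ the Neumann Bessel propagator on the half-line is $(4\pi i t)^{-\frac12}\left(e^{i\frac{(z-\zeta)^2}{4t}}+e^{i\frac{(z+\zeta)^2}{4t}}\right)$. Multiplying by $S(x,y,t)$ from \eqref{S}, and using $(i t)^{-\frac12}=|t|^{-\frac12}e^{-i\operatorname{sgn}(t)\frac\pi4}$, reproduces the phase and the two exponentials $e^{\pm i\frac{z\zeta}{2t}}$. Because the sum of the two is even in $t$, these can be rewritten with $|t|$, exactly as in the statement.

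The only real obstacle is the normalization of the constant. The powers of $2$ in the prefactor come from three places: the $(2|t|)^{-\frac{a+1}{2}}$ in \eqref{SaS0}, the $(4\pi)^{-\frac d2}$ in \eqref{S}, and the $(2\pi)^{-\frac12}$ above. I will track each of these explicitly in both derivations. The final step is to compare the resulting constant with the one in the statement.

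One caution: a first pass of this bookkeeping, using \eqref{SaS0} and \eqref{S} exactly as normalized in Section \ref{S:prep}, gives $2^{d+1}$ in both routes rather than $2^{\frac{2d+3}{2}}$. Before committing to the stated form, I would therefore go back and check the normalization in \cite[Prop. 2.6]{GS} and the conventions in \eqref{SaS0}, since the factor $2^{\frac{2d+3}{2}}$ must be reconciled with them.
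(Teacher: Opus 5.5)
Your route, specializing \eqref{grandeSaS0} with $J_{-\frac12}(w)=\sqrt{2/(\pi w)}\cos w$ and cross-checking by images, is exactly what underlies the statement. The paper gives no argument of its own and only quotes the formula from the authors' earlier work. Your bookkeeping is right. The discrepancy you flagged is not a normalization to be reconciled: the constant in the statement is a misprint, and as written the proposition is off by a factor $2^{-\frac12}$. The correct prefactor is $2^{-(d+1)}\pi^{-\frac{d+1}{2}}=(4\pi)^{-\frac{d+1}{2}}$, and you should commit to it.

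Three confirmations, all internal to the paper:

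(i) \eqref{grandeSaS0} at $a=0$ gives $2^{-\frac{2d+1}{2}}\pi^{-\frac d2}\cdot(2\pi)^{-\frac12}=2^{-(d+1)}\pi^{-\frac{d+1}{2}}$. The stated $2^{\frac{2d+3}{2}}$ is exactly what one gets by dropping the $\sqrt2$ in $\sqrt{2/(\pi w)}$.

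(ii) As $\zeta\to0^+$ the bracket tends to $2$. On the other hand, \eqref{grandeSa} at $a=0$, with $\Gamma(\frac12)=\sqrt\pi$, gives the constant $2^{-d}\pi^{-\frac{d+1}{2}}$. This equals $2\cdot 2^{-(d+1)}\pi^{-\frac{d+1}{2}}$, not $2\cdot 2^{-\frac{2d+3}{2}}\pi^{-\frac{d+1}{2}}$, so the statement contradicts the paper's own \eqref{grandeSa}.

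(iii) Your image computation shows that the correct kernel is
\[
\mathbb S_0(X,Y,t)=(4\pi i t)^{-\frac{d+1}{2}}\Big(e^{i\frac{|X-Y|^2}{4t}}+e^{i\frac{|X-Y^*|^2}{4t}}\Big),\qquad Y^*=(y,-\zeta),
\]
that is, the whole-space Schr\"odinger group acting on even extensions. The stated kernel is $2^{-\frac12}$ times this. It would give $\mathbb S_0(t)u_0\to 2^{-\frac12}u_0$ as $t\to0$ and $\|\mathbb S_0(t)u_0\|_{L^2(\RN)}=2^{-\frac12}\|u_0\|_{L^2(\RN)}$. That contradicts both the initial condition and Proposition \ref{P:Uni}.

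There is also nothing to re-check in the conventions you were worried about. At $a=0$, \eqref{SaS0} coincides exactly with the half-line image kernel $(4\pi i t)^{-\frac12}\big(e^{i\frac{(z-\zeta)^2}{4t}}+e^{i\frac{(z+\zeta)^2}{4t}}\big)$, and \eqref{S} is the standard normalization.

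The rest of your argument is fine: the phase $e^{-i\operatorname{sgn}(t)\frac{(d+1)\pi}{4}}$, the factor $|t|^{-\frac{d+1}{2}}$, and replacing $t$ by $|t|$ inside the even bracket. The proposition should read with $2^{d+1}$ in place of $2^{\frac{2d+3}{2}}$.
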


\subsection{Mild versus classical solutions}

We recall that $\mathscr S(\RN)$ and $\mathscr S(\RN\times \R)$ respectively indicate the restrictions to the half-spaces $\RN$ and $\RN\times \R$ of the Schwartz functions in $\R^{d+1}_{X}$ and $\R^{d+1}_X\times \R_t$. We also respectively indicate with $\Sigma(\RN)$ and $\Sigma(\RN\times \R)$ the spaces of $C^\infty$ functions $f:\RN\to \C$, and $F:\RN\times \R\to \C$,  such that for any $N\in \mathbb N_0$ one has
\[
||f||_{N} = \underset{k+|\alpha| \le N}{\sup}\ \ \underset{X\in\R^+}{\sup} (1+|X|^2)^{\frac N2} \left|\left(\frac 1z \p_z\right)^k \p^\alpha_x f(X)\right|<\infty,
\]
and similarly
\[
||F||_{N} = \underset{k+|\alpha| +\ell \le N}{\sup}\ \ \underset{(x,t)\in\R^+\times \R}{\sup} (1+|X|^2+t^2)^{\frac p2} \left|\left(\frac 1z \p_z\right)^k \p^\alpha_x \p_t^\ell F(X,t)\right|<\infty.
\]
One has 
\[
\Sigma(\RN)\subset \mathscr S(\RN),\ \ \ \ \text{and}\ \ \ \  \Sigma(\RN\times \R)\subset \mathscr S(\RN\times \R),
\]
but the inclusions are strict. For instance, the function $u_0$ in \eqref{piccolou} belongs to $\mathscr S(\RN)$, but it does not belong to $\Sigma(\RN)$. The reason for this is that for any $u\in \Sigma(\RN)$ we must have for $a>-1$
\[
\underset{X\in \RN}{\sup} |z^a \p_z u(X)|\ \underset{z\to 0^+}{\longrightarrow}\ 0,
\] 
but it is easy to see that $z^a \p_z u_0(X)\to 1$ as $z\to 0^+$.

The classes $\Sigma(\RN)$ and $\Sigma(\RN\times \R)$  can be characterized as the restrictions to the half-spaces $\RN$ and $\RN\times \R$ of Schwartz functions which are \emph{even} in the variable $z$. Although we will not use them in the present work, we have mentioned them since they are the appropriate spaces for \emph{classical} solutions. For the notion of mild solution see \cite[Definition 2.3, p.106]{Pa}.

\begin{theorem}\label{T:Uzero}
Let $a>-1$. Assume that $u_0 \in \mathscr S(\RN)$ and $F \in \mathscr S(\RN\times \R)$.
There exists a unique mild solution to the problem \eqref{eppn} given by the Duhamel formula
\begin{align}\label{seppn}
U(x,t) & = \int_{\RN} \mathbb S_a(X,Y,t) u_0(Y) d\omega_a(Y) + \int_0^t \int_{\RN} \mathbb S_a(X,Y,t-\tau) F(Y,\tau) d\omega_a(Y) d\tau.
\end{align}
If $u_0\in \Sigma(\RN)$ and $F\in \Sigma(\RN\times \R)$, then $U$ is also a classical solution. 
\end{theorem}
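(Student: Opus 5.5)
The plan is to reduce everything to the tensor structure \eqref{Sa}, $\mathbb S_a(X,Y,t)=S_a(z,\zeta,t)S(x,y,t)$, and so regard the propagator as the composition $\mathbb S_a(t)=S(t)\circ S_a(t)$ of two commuting groups. One group, $S(t)$, acts in $x$ and is unitary on $L^2(\Rd)$. The other, $S_a(t)$, acts in $z$ and is unitary on $L^2_{a,z}$ by Proposition \ref{P:uni}. It follows that $\mathbb S_a(t)$ is a strongly continuous unitary group on $L^2_a(\RN)$. Its generator is the closure of $i(\Delta_x+\Ba)$ with the Neumann condition \eqref{neu}, and we identify it through the Hankel transform in $z$ combined with the Fourier transform in $x$. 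Under $\mathcal F_x\otimes\mathcal H_\nu$ the group becomes multiplication by $e^{-it(|\xi|^2+\la^2)}$, since $\mathcal H_\nu(S_a(t)\psi)=e^{-it\la^2}\mathcal H_\nu\psi$ by Remark \ref{R:bandt}. The generator is therefore the multiplier $-i(|\xi|^2+\la^2)$, which is the Neumann realization of $i(\Delta_x+\Ba)$.

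For existence, we set $U(t)=\mathbb S_a(t)u_0+\int_0^t\mathbb S_a(t-\tau)F(\tau)\,d\tau$, which is exactly \eqref{seppn}. Because $u_0\in\mathscr S(\RN)\subset L^2_a$, $F\in C(\R,L^2_a)$, and the group is strongly continuous, the formula defines $U\in C(\R,L^2_a(\RN))$. This is the standard notion of mild solution in \cite[Definition 2.3]{Pa}. We also record that $U(0)=u_0$. Moreover, each kernel satisfies \eqref{Sazeron}, so differentiating under the integral (legitimate for $t\neq0$ by the explicit formula \eqref{grandeSaS0}, the bound \eqref{asy0}, and the Schwartz decay of the data) gives $\lim_{z\to0^+}z^a\p_zU=0$ for $t\ne0$.

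For uniqueness, suppose $U_1,U_2$ are two mild solutions and put $W=U_1-U_2$. Then $W$ is a mild solution with zero data, so $W(t)=\int_0^t\mathbb S_a(t-\tau)\,0\,d\tau=0$. Equivalently, by \cite[Ch.~4]{Pa}, a mild solution is uniquely determined by the semigroup and the data once the generator has been identified. The only point needing care is to confirm that the abstract generator is the Neumann one. This follows from the Hankel diagonalization \cite[Theorem 2.2]{GS}.

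For the classical part, take $u_0\in\Sigma(\RN)$ and $F\in\Sigma(\RN\times\R)$. These are restrictions of Schwartz functions even in $z$. On the transform side, $\mathcal F_x\mathcal H_\nu u_0$ is then a Schwartz-type function in $(\xi,\la)$. This is the key step, and I expect it to be the main obstacle: one must show that evenness in $z$ (smoothness in $z^2$) is preserved under $\mathcal H_\nu$ together with rapid decay, in the spirit of the proof of Lemma \ref{L:band}. We use that $w^{-\nu}J_\nu(w)$ is even and entire, and that $\frac1z\p_z$ corresponds to multiplication by $-\la^2$-type operators. Granting this, multiplication by $e^{-it(|\xi|^2+\la^2)}$ preserves the class, so $u_0$ and each $F(\tau)$ lie in $D(A^k)$ for every $k$. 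Standard semigroup theory \cite[Thm.~4.2.4]{Pa} then shows that $U$ is a strong, and in fact $C^1$ in $t$, solution with values in $\bigcap_k D(A^k)$. Inverting the transforms yields smoothness in $(X,t)$, with the Neumann condition holding classically. As a fallback, one can instead differentiate \eqref{seppn} directly under the integral, using that the kernel solves the equation for $t\neq0$ and an approximation argument near $\tau=t$.
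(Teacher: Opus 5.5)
The paper gives no proof of Theorem \ref{T:Uzero}. It records the factorization \eqref{Sa} as well known and relies on the one-dimensional Duhamel formula of Proposition \ref{P:repa} from \cite{GS}, with the notion of mild solution taken from \cite[Definition 2.3]{Pa}. Both the identification of the generator and the classical-solution claim are left implicit.

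Your proposal is a correct way to supply these details, and it takes a more functional-analytic route than the kernel picture the paper gestures at. You identify $\mathbb S_a(t)=S(t)\circ S_a(t)$, via the Fourier--Hankel diagonalization \eqref{hank}, as the unitary group generated by the Neumann realization of $i(\Delta_x+\Ba)$. This makes existence and uniqueness of the mild solution tautological. It also reduces the classical statement to two facts: invariance of the $\Sigma$ class under $\mathcal F_x\otimes\mathcal H_\nu$, and \cite[Ch.~4, Corollary 2.5]{Pa}. In this way you never have to differentiate the singular kernel near $\tau=t$, which a kernel-based proof would have to confront.

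The step you flag as the main obstacle is standard. The $z$-seminorms $\sup_z z^m|(z^{-1}\p_z)^k\psi|$ that define $\Sigma$ are preserved by $\mathcal H_\nu$. This follows by iterating two identities: $\mathcal H_\nu(\Ba\psi)=-\la^2\mathcal H_\nu(\psi)$ for Neumann data, and $(\la^{-1}\p_\la)\mathcal H_\nu(\psi)=-\mathcal H_{\nu+1}(\psi)$. The second identity is an order shift $\nu\to\nu+1$ (the measure $z^a$ becomes $z^{a+2}$), not literally multiplication by $-\la^2$ as your sketch suggests. For $-1<\nu<-\frac12$, the mild growth $|w^{-\nu}J_\nu(w)|\lesssim 1+w^{-\frac a2}$ is absorbed by the rapid decay of $(\Ba)^m\psi$.

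One point to fix is the aside in your existence paragraph. There you claim that differentiating under the integral, ``legitimate for $t\neq0$'', gives $\lim_{z\to0^+}z^a\p_zU=0$. For the Duhamel term, the relevant singularity is at $\tau=t$, not at $t=0$. The condition \eqref{Sazeron} gives the vanishing trace of $\mathbb S_a(t-\tau)F(\cdot,\tau)$ for each fixed $\tau<t$, but not uniformly as $\tau\to t$. Worse, when $-1<a<0$ and $F(\cdot,\tau)$ is not even in $z$, $z^a\p_zF(\cdot,\tau)$ itself blows up at $z=0$. To exchange $\lim_{z\to0^+}$ with $\int_0^t d\tau$, you need a majorant for $z^a\p_z[\mathbb S_a(t-\tau)F(\cdot,\tau)]$ that is uniform in small $z$ and integrable in $\tau$ up to $\tau=t$. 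You do not supply one.

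This does not damage the theorem. In the framework of \cite{Pa} the Neumann condition is encoded in the domain of the generator, so a mild solution need not satisfy it pointwise. For $\Sigma$ data, your classical argument gives the condition directly. You can drop the aside, or prove it separately with such a majorant.
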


\subsection{Conservation of mass} We define the Schr\"odinger propagator for the problem \eqref{eppn}, with forcing term $F(X,t) \equiv 0$, as the operator 
\begin{equation}\label{prop}
\mathbb S_a(t)u_0(X) := \int_{\RN} \mathbb S_a(X,Y,t) u_0(Y) d\omega_a(Y).
\end{equation} 
Keeping \eqref{Tstar},  \eqref{me} and \eqref{Sa} in mind, we see that we can write 
\begin{equation}\label{prop2}
\mathbb S_a(t)u_0(X) = S_a(t)(S(t)(u_0(\cdot,\cdot))(x))(z)= S(t)\big(S_a(t)(u_0(\cdot,\cdot))(z)\big)(x).
\end{equation}
The following result will be important in the sequel.

\begin{proposition} \label{P:Uni}
Let $a>-1$. For every $u_0\in L^2_a(\RN)$, and every $t\in \R$, we have
\[
||\mathbb S_a(t) u_0||_{L^2_a(\RN)} = ||u_0||_{L^2_a(\RN)}.
\]
\end{proposition}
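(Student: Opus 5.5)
The plan is to exploit the tensor-product structure \eqref{prop2}, $\mathbb S_a(t) = S(t)\circ S_a(t)$, in which $S(t)$ acts only in $x$ and $S_a(t)$ acts only in $z$. Each factor is unitary on its own $L^2$ space, and the mass identity follows by applying them one after the other through Fubini. We argue first for $u_0\in \mathscr S(\RN)$, or more conveniently for finite sums of products $\vf(x)\psi(z)$, and then extend by density.

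\textbf{Step 1 (dense class).} Let $u_0\in \mathscr S(\RN)$. For a.e.\ fixed $z>0$, the function $x\mapsto u_0(x,z)$ lies in $L^2(\Rd)$. Set $V(\cdot,z,t) := S(t)\big(u_0(\cdot,z)\big)$. Since $S(t)$ is unitary on $L^2(\Rd)$ (Plancherel, with the multiplier $e^{-it|\xi|^2}$), we have $\|V(\cdot,z,t)\|_{L^2_x} = \|u_0(\cdot,z)\|_{L^2_x}$ for every $z$. By \eqref{prop2}, $\mathbb S_a(t)u_0(x,z) = S_a(t)\big(V(x,\cdot,t)\big)(z)$, and for a.e.\ $x$ the function $z\mapsto V(x,z,t)$ lies in $L^2_a$. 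Proposition \ref{P:uni} then gives
\[
\int_0^\infty |\mathbb S_a(t)u_0(x,z)|^2 z^a dz = \int_0^\infty |V(x,z,t)|^2 z^a dz .
\]
Integrating in $x$ and using Tonelli to exchange the order of integration, we obtain
\[
\|\mathbb S_a(t)u_0\|^2_{L^2_a(\RN)} = \int_0^\infty \|V(\cdot,z,t)\|^2_{L^2_x} z^a dz = \int_0^\infty \|u_0(\cdot,z)\|^2_{L^2_x} z^a dz = \|u_0\|^2_{L^2_a(\RN)} .
\]

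\textbf{Step 2 (commutation and measurability).} We must check that the kernel formula \eqref{prop}, with kernel \eqref{Sa}, really equals this iterated application, and that the intermediate functions are jointly measurable. For Schwartz data the $x$-integral converges absolutely, so the iteration is justified directly. The $z$-integral is only oscillatory: $|S_a(z,\zeta,t)|$ grows like $(z\zeta)^{-a/2}$ when $-1<a<0$. It is therefore cleaner to work with $u_0 = \vf(x)\psi(z)$, where $\vf\in\mathscr S(\Rd)$ and $\psi$ is band-limited. For such data $\mathbb S_a(t)u_0 = S(t)\vf\otimes S_a(t)\psi$ is an exact product by Remark \ref{R:bandt}, and the norm identity reduces immediately to the product of the two one-factor isometries. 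By linearity, and since both operators are unitary on their factors, the same identity holds on finite linear combinations of such products. These combinations are dense in $L^2_a(\RN) = L^2(\Rd)\otimes L^2_a$, because $\mathcal H_\nu$ is a unitary involution and $C_0^\infty((0,\infty))$ is dense in $L^2(\la^a d\la)$.

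\textbf{Step 3 (extension).} The identity shows that $\mathbb S_a(t)$ extends uniquely to an isometry on $L^2_a(\RN)$. We must also identify this extension with the integral \eqref{prop} wherever that integral converges, for instance $u_0\in L^1\cap L^2$ with suitable weights. For this, take approximants converging both in $L^2_a$ and in a weighted $L^1$ norm: weighted $L^1$ convergence gives pointwise convergence of \eqref{prop}, while $L^2_a$ convergence gives $L^2_a$ convergence of the images, and the two limits agree a.e.

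The main obstacle is this identification step: it amounts to justifying that the oscillatory Bessel kernel defines the same operator as the Hankel-multiplier definition $\mathcal H_\nu(S_a(t)\psi) = e^{-it\la^2}\mathcal H_\nu\psi$. We will take it from \cite{GS} (Proposition \ref{P:uni}) rather than reprove it.
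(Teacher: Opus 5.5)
Your Step 1 is the paper's proof: factor $\mathbb S_a(t)=S(t)\circ S_a(t)$ via \eqref{prop2}, use unitarity of $S(t)$ on $L^2(\Rd)$ for each fixed $z$ and of $S_a(t)$ on $L^2_a$ for each fixed $x$ (Proposition \ref{P:uni}), and exchange the order of integration by Tonelli; the paper only applies the two unitarities in the opposite order. Your Steps 2--3 add a density and identification layer that the paper does not include; it is correct, though for Schwartz data the $\zeta$-integral is already absolutely convergent, so switching to band-limited tensors is a convenience rather than a necessity.
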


\begin{proof}
Using \eqref{prop2}, Proposition \ref{P:uni}, the classical result 
\[
||S(t) \psi||_{L^2(\Rd)} = ||\psi||_{L^2(\Rd)},
\]
and Tonelli's theorem, we easily obtain 
\begin{align*}
||\mathbb S_a(t) u_0||^2_{L^2_a(\RN)} & = \int_0^\infty \int_{\Rd} |S(t)\big(S_a(t)(u_0(\cdot,\cdot))(z)\big)(x)|^2 dx z^a dz
\\
& = \int_0^\infty \int_{\Rd} |S_a(t)(u_0(x,\cdot))(z)\big)|^2 dx z^a dz =   \int_{\Rd} \int_0^\infty |S_a(t)(u_0(x,\cdot))(z)\big)|^2 z^a dz  dx 
\\
& = \int_{\Rd} \int_0^\infty |u_0(x,z)|^2 z^a dz  dx = ||u_0||^2_{L^2_a(\RN)}.
\end{align*}

\end{proof}

\vskip 0.2in

\subsection{Admissible triples}\label{S:ad}

We next discuss the question of the admissible exponents in our Strichartz estimates. Since in the problem \eqref{nozeroin} the space variables $x\in \Rd$ and $z>0$ play different roles, we use the mixed Lebesgue spaces $L^m_{a,z} L^q_t L^r_x $ with the norms \eqref{Lqrp}. 

Given an initial datum $u_0\in \mathscr S(\RN)$, and forcing term $F\equiv 0$, consider the unique mild solution $U$ to \eqref{eppn}. Comparing \eqref{seppn} with \eqref{Tstar}, we see that $U = \mathbb T^\star_a(u_0)$. We ask for which exponents $1\le q, r, m\le \infty$ does a priori estimate \eqref{apriori0} hold.
If for $\la>0$ we consider the scaled function $U_\la(X,t) = U(\la X,\la^2 t)$, then $U_\la$ also solves \eqref{eppn}, with initial datum $u_{0,\la}(X) = u_0(\la X)$. Inserting $U_\la$ and $u_{0,\la}$ in \eqref{apriori0}, and using \eqref{rescale}, we reach the conclusion that 
\[
\la^{- (\frac 2q + \frac dr + \frac{a+1}m)}\ ||\mathbb T^\star_a(u_0)||_{L^m_{a,z} L^q_t L^r_x } \le C \la^{- \frac{d+a+1}2} ||u_0||_{L^2_a(\RN)}.
\]
From the arbitrariness of $\la>0$, we see that a necessary condition for \eqref{apriori0} to hold for any $a>-1$ is that the triple $(q,r,m)$ satisfy the equation \eqref{ac3} in Definition \ref{D:ac3}. However, as we will see, \eqref{ac3} is sufficient only in the range $a\ge 0$. As we have mentioned already, the propagator does not obey scalings in the anomalous range $-1<a<0$.

Before closing we note that, when $a = 0$ and $d = 1$ condition \eqref{ac3} reads 
\[
\frac 2q + \frac 1r +\frac 1m = 1.
\]
This is the admissibility hypothesis (1.2) for the triple $(q,r,m)$ in the paper \cite{OST3}.

\vskip 0.3in

\section{Cauchy problem with nonzero Neumann condition}\label{S:cpn}

One difficulty with the problem \eqref{cp02} is to have a good representation formula of Duhamel type for the solution 
of the linear problem with nonzero Neumann condition:
\begin{equation}\label{nozero}
\begin{cases}
\p_t U - i \left\{\mathscr B_z^{(a)} U +  \Delta_x U\right\} = F(X,t),\ \ \ (X,t)\in \R_+^{d+1}\times (0,\infty),
\\
\underset{z\to 0^+}{\lim} z^a \p_z U(X,t) = \Phi(x,t),\ \  \ \ U(X,0) = u_0(X).
\end{cases}
\end{equation}

In this section we solve this question by carefully exploiting the non-admissible solution $u(z) = z^{1-a}$ of the Bessel operator (we mean by this that $u$ fails to satisfy \eqref{neu}). Our main result is the following.

\begin{theorem}\label{T:U}
Given $-1<a<1$, let $u_0\in \mathscr S(\RN)$, $F \in \mathscr S(\RN\times \R)$, $\Phi\in \mathscr S(\R^{d+1})$. There exists a unique mild solution to the problem \eqref{nozero}. Such function is represented by the formula
\begin{align}\label{nicea1}
U(X,t) & = \int_{\RN} \mathbb S_a(X,Y,t) u_0(Y)d\omega_a(Y) +  \int_0^t \int_{\RN} \mathbb S_a(X,Y,t-\tau) F(Y,\tau) d\omega_a(Y) d\tau
\\
& - i \int_0^{t} \int_{\Rd}  S_a(z,0,t-\tau)S(x,y,t-\tau) \Phi(y,\tau) dy d\tau,
\notag
\end{align}
where $S_a(z,0,t-\tau)$ is as in \eqref{piccoloSa}.
\end{theorem}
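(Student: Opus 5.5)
The plan is to reduce the nonhomogeneous Neumann problem \eqref{nozero} to the homogeneous one \eqref{eppn}, as the section's introduction suggests, by subtracting a corrector built from the non-admissible solution $z^{1-a}$ of $\Ba$. Concretely, fix a cutoff $h\in C_0^\infty[0,\infty)$ with $h\equiv 1$ on $[0,1]$, as in \eqref{piccolou}, and set
\[
W(X,t) := \frac{z^{1-a}}{1-a}\, h(z)\, \Phi(x,t).
\]
Then $z^a\p_z W \to \Phi(x,t)$ as $z\to 0^+$, and $W$ is Schwartz-type in $(X,t)$ (though not in $\Sigma$). The difference $V = U - W$ must solve \eqref{eppn} with forcing
\[
\tilde F = F - \p_t W + i(\Delta_x W + \Ba W)
\]
and initial datum $\tilde u_0 = u_0 - W(\cdot,0)$. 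Since $\Ba(z^{1-a}) = 0$, the term $\Ba W$ involves only derivatives falling on $h$, so it is supported in $z\ge 1$ and smooth. Hence $\tilde F\in\mathscr S(\RN\times\R)$ and $\tilde u_0\in\mathscr S(\RN)$, and Theorem \ref{T:Uzero} gives a unique mild solution $V$ via \eqref{seppn}. Setting $U = V + W$ yields existence. For uniqueness, the difference of two mild solutions of \eqref{nozero} is a mild solution of \eqref{eppn} with zero data, so it vanishes by Theorem \ref{T:Uzero} (equivalently, by Proposition \ref{P:Uni}).

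The substantive step is to show that $V+W$ equals the right-hand side of \eqref{nicea1}, i.e. that the $W$-contributions collapse to the boundary integral. I would expand $\mathbb S_a(t)\tilde u_0 + \int_0^t \mathbb S_a(t-\tau)\tilde F(\tau)\,d\tau + W(t)$ and then integrate by parts in $\tau$ the term $-\int_0^t \mathbb S_a(t-\tau)\p_\tau W\,d\tau$. The boundary terms at $\tau=0$ and $\tau=t$ cancel $-\mathbb S_a(t)W(0)$ and $+W(t)$, using $\mathbb S_a(0)=\mathrm{Id}$ in the strong sense. What remains is
\[
\int_0^t \big[-\p_t\mathbb S_a(t-\tau) + i\,\mathbb S_a(t-\tau)(\Delta_x+\Ba)\big]W(\tau)\,d\tau .
\]
Next I use $\p_t \mathbb S_a = i(\Delta+\Ba)\mathbb S_a$ on the kernel and move $\Delta_x+\Ba$ onto the kernel by Green's identity with respect to $d\omega_a$. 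Writing $\Ba = z^{-a}\p_z(z^a\p_z)$, the $x$-part and the interior $z$-part cancel exactly. Only the boundary term at $\zeta=0$ survives:
\[
\big[\zeta^a\p_\zeta W\,\mathbb S_a - W\,\zeta^a\p_\zeta\mathbb S_a\big]_{\zeta=0}.
\]
The second piece vanishes by \eqref{dzpazero}, since $W\sim \zeta^{1-a}$ is bounded and $\zeta^a\p_\zeta\mathbb S_a\to 0$. The first piece gives $-\Phi(y,\tau)\,\mathbb S_a(X,Y_0,t-\tau)$ multiplied by $i$, with sign from the orientation. Using \eqref{grandeSa} together with $\mathbb S_a(X,Y_0,t) = S_a(z,0,t)S(x,y,t)$ then produces exactly the $-i$ boundary term in \eqref{nicea1}. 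Remark \ref{R:signcheck} serves as a consistency check on the sign.

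The main obstacle is justifying these manipulations, since $\mathbb S_a(X,Y,t-\tau)$ is singular as $\tau\to t$ and Green's identity is applied near $\zeta=0$, where $W$ is only $C^{1-a}$. I would handle this in two ways. First, truncate the time integral to $[0,t-\epsilon]$ and the $\zeta$-integral to $[\delta,\infty)$, and let $\delta\to0$ using \eqref{asy0} for uniform kernel bounds near $\zeta=0$. Second, let $\epsilon\to 0$ using the strong continuity of the unitary group (Proposition \ref{P:Uni}) applied to the Schwartz function $W(\tau)$. The resulting boundary integral is absolutely convergent because $|S_a(z,0,s)|\sim s^{-\frac{a+1}{2}}$ is integrable at $s=0$ for $a<1$, while $S(x,y,s)$ acting on $\Phi$ is handled as a unitary operator; this is where the restriction $a<1$ enters.
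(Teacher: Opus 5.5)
Your proposal is correct. It uses the same corrector as the paper, $\frac{z^{1-a}}{1-a}h(z)\Phi(x,t)$, and the same two integrations by parts: in $\tau$, and Green's identity in $\zeta$ with respect to $\zeta^a d\zeta$. However, you close the argument by a genuinely shorter route.

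\textbf{The paper's route.} The paper uses a moving cutoff $h_R$ and obtains, for each $R$, an identity with two remainders:
the Duhamel contribution of the cutoff forcing $k_R\Phi$, where $k_R=\mathscr B^{(a)}_\zeta\bigl(\frac{\zeta^{1-a}}{1-a}h_R\bigr)$ is supported in $[R,2R]$;
and the term $H(R)$ left over by the double integration by parts in Claim 2.
It then proves that each remainder tends to zero as $R\to\infty$ (Claims 1 and 2). This requires integrating by parts against the Fresnel phase $e^{i\zeta^2/4\tau}$ and using the large-argument asymptotics of $J_{\frac{a-1}2}$, because the crude bound $R^{-a/2}$ gives no decay when $-1<a\le 0$.

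\textbf{Your route.} You keep the cutoff fixed and apply Green's identity for the full operator $\Delta_x+\mathscr B^{(a)}_\zeta$ to the combined $\tau$-integrand. The cutoff terms therefore cancel before anything has to be estimated. This is consistent with the paper's own bookkeeping. Since $k_R\,\zeta^a=\frac{2-a}{1-a}h_R'+\frac{\zeta h_R''}{1-a}$, one has exactly $H(R)=\int_0^\infty k_R(\zeta)S_a(z,\zeta,t)\zeta^a\,d\zeta$. Moreover, the forcing generated by $U_R=U-\frac{z^{1-a}}{1-a}h_R\Phi$ is $+ik_R\Phi$, as in your $\tilde F$. The paper's display carries the opposite sign, which is harmless there only because each remainder is shown to vanish separately. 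With the correct sign the two remainders cancel identically for every $R$, so the limit $R\to\infty$ is unnecessary.

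\textbf{What each route buys.} Your argument needs only:
local information on the kernel at $\zeta=0$ (continuity up to $\zeta=0$, since $w\mapsto w^{-\nu}J_\nu(w)$ is even and entire, and $\zeta^a\partial_\zeta S_a=O(\zeta^{1+a})\to0$);
the compact $\zeta$-support of the corrector;
and the integrability of $|S_a(z,0,s)|\sim s^{-\frac{a+1}2}$ at $s=0$.
The paper instead spends its effort on large-$\zeta$ oscillatory estimates that turn out to be superfluous.

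\textbf{Two small points, both shared with the paper.}
First, for $a\neq 0$ the function $z^{1-a}h(z)$ is not smooth at $z=0$. So $\tilde u_0$ and $\tilde F$ are not in $\mathscr S$ in the strict sense, and you should invoke the Duhamel formula for $L^2_a$-valued data (which is what ``mild'' means here, the group being unitary) rather than Theorem \ref{T:Uzero} verbatim.
Second, the integration by parts in $\tau$ must be carried out on the kernel over $[0,t-\epsilon]$, as you propose, and not at the operator level. The reason is that $W(\tau)$ is not in the domain of the Neumann generator, which is exactly where the boundary term comes from. The limit $\epsilon\to0$ then uses the strong continuity of $\mathbb S_a(t)$ on $L^2_a(\RN)$. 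That continuity follows from the Fourier--Hankel diagonalization, not from the unitarity in Proposition \ref{P:Uni} alone.
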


\begin{proof}
In a nutshell, the main idea is to reduce the Cauchy problem \eqref{nozero} to one with homogeneous Neumann condition such as \eqref{eppn}. The implementation of this plan will entail some delicate steps. 
Suppose that $U$ solves \eqref{nozero}. For $R>0$ we fix  a function $h_R\in C^\infty[0,\infty)$ such that:
\begin{equation}\label{hR}
\begin{cases}
 h_R\equiv 1,\ \  \text{for}\ 0\le z\le R,\ \ \text{and}\ \ h_R\equiv 0\ \  \text{for}\ z\ge 2 R,
 \\
|h_R^{(k)}(z)|\le C(k) R^{-k}, \ \ \ \forall k\in \mathbb N.
\end{cases}
\end{equation}
We introduce the auxiliary function
\begin{equation}\label{UV}
U_R(X,t) := U(X,t) - \frac{z^{1-a}}{1-a} h_R(z) \Phi(x,t).
\end{equation}
A computation shows that 
\begin{align}\label{ele}
 \p_t U_R - i \left\{\mathscr B_z^{(a)} U_R +  \Delta_x U_R\right\} & = \p_t U - i \left\{\mathscr B_z^{(a)} U +  \Delta_x U\right\} - \frac{z^{1-a}}{1-a} h_R(z) \left[\p_t \Phi - i  \Delta_x \Phi\right]
\notag
\\
& - i \Phi\left[\frac{z^{1-a}}{1-a}\mathscr B_z^{(a)}h_R(z) + 2 z^{-a} h_R'(z)\right],
\notag
\end{align} 
and we have moreover
\begin{equation}\label{par}
\begin{cases}
\underset{z\to 0^+}{\lim} z^a \p_z U_R(X,t) = \underset{z\to 0^+}{\lim} z^a \p_z U(X,t) - \Phi(x,t) = 0,
\\
\\
U_R(X,0) = U(X,0) - \frac{z^{1-a}}{1-a} h_R(z) \Phi(x,0) = u_0(X) -  \frac{z^{1-a}}{1-a} h_R(z) \Phi(x,0).
\end{cases}
\end{equation}
From \eqref{UV}-\eqref{par} it ensues that if $U$ solves the problem \eqref{nozero}, 
then for any $R>0$ the function $U_R$ solves \eqref{eppn}, with forcing term given by
\begin{equation}\label{data}
F(X,t) - \frac{z^{1-a}}{1-a} h_R(z)\left[\p_t \Phi - i  \Delta_x \Phi\right](x,t) - i k_R(z) \Phi(x,t),
\end{equation} 
where we have let
\[
k_R(z) := \frac{z^{1-a}}{1-a}\mathscr B_z^{(a)}h_R(z) + 2 z^{-a} h_R'(z).
\]
Note that by the support property \eqref{hR} of $h_R$ and the decay of its derivatives, we know that 
\begin{equation}\label{decay}
\operatorname{supp}\ k_R\subset [R,2R]\ \ \ \ \ \text{and}\ \ \ \ \ \underset{z\ge 0}{\sup}\ |k_R(z)|\le C R^{-(a+1)}.
\end{equation} 

From \eqref{seppn}, \eqref{data} we thus obtain
\begin{align}\label{dataeppn}
& U(X,t) - \frac{z^{1-a}}{1-a} h_R(z) \Phi(x,t) = U_R(X,t)
\\
&  = \int_{\RN} \mathbb S_a(X,Y,t) u_0(Y)d\omega_a(Y) - \int_{\RN} \mathbb S_a(X,Y,t) \frac{\zeta^{1-a}}{1-a} h_R(\zeta)\Phi(y,0) d\omega_a(Y)
\notag
\\
&  + \int_0^t \int_{\RN} \mathbb S_a(X,Y,t-\tau) F(Y,\tau) d\omega_a(Y) d\tau
\notag\\
& - \int_0^t \int_{\RN} \mathbb S_a(X,Y,t-\tau) \frac{\zeta^{1-a}}{1-a} h_R(\zeta)\left[\p_\tau \Phi - i  \Delta_y \Phi\right](y,\tau) d\omega_a(Y) d\tau
\notag
\\
& - i \int_0^t \int_{\RN} \mathbb S_a(X,Y,t-\tau) k_R(\zeta) \Phi(y,\tau) d\omega_a(Y) d\tau.
\notag
\end{align}
Rearranging terms in \eqref{dataeppn}, we obtain the following representation for $U$:
\begin{align}\label{U}
U(X,t) & =  \int_{\RN} \mathbb S_a(X,Y,t) u_0(Y)d\omega_a(Y) +  \int_0^t \int_{\RN} \mathbb S_a(X,Y,t-\tau) F(Y,\tau) d\omega_a(Y) d\tau
\\
& - \int_{\RN} \mathbb S_a(X,Y,t) \frac{\zeta^{1-a}}{1-a} h_R(\zeta)\Phi(y,0) d\omega_a(Y) + \frac{z^{1-a}}{1-a} h_R(z) \Phi(x,t) 
\notag
\\
& - i \int_0^t \int_{\RN} \mathbb S_a(X,Y,t-\tau) k_R(\zeta) \Phi(y,\tau) d\omega_a(Y) d\tau
\notag\\
& - \int_0^t \int_{\RN} \mathbb S_a(X,Y,t-\tau) \frac{\zeta^{1-a}}{1-a} h_R(\zeta)\left[\p_\tau \Phi - i  \Delta_y \Phi\right](y,\tau) d\omega_a(Y) d\tau 
\notag
\end{align}

Our next step is to carefully remove the derivatives from $\Phi$ in the last integral in the right-hand side of \eqref{U}: 
\begin{align}\label{remove}
& - \int_0^t \int_{\RN} \mathbb S_a(X,Y,t-\tau) \frac{\zeta^{1-a}}{1-a} h_R(\zeta)\left[\p_\tau \Phi - i  \Delta_y \Phi\right](y,\tau) d\omega_a(Y) d\tau
\\
& = - \underset{\ve\to 0^+}{\lim} \int_{\RN} \frac{\zeta^{1-a}}{1-a} h_R(\zeta) \left\{\int_0^{t-\ve} \mathbb S_a(X,Y,t-\tau) \p_\tau \Phi(y,\tau) d\tau\right\} d\omega_a(Y)
\notag
\\
& + i  \int_0^\infty \frac{\zeta^{1-a}}{1-a} h_R(\zeta) \int_0^t S_a(z,\zeta,t-\tau) \int_{\Rd} S(x,y,t-\tau)\Delta_y \Phi(y,\tau) d\omega_a(Y)d\tau
\notag
\\
& = - \underset{\ve\to 0^+}{\lim} \int_{\RN}  \mathbb S_a(X,Y,\ve) \frac{\zeta^{1-a}}{1-a} h_R(\zeta) \Phi(y,t-\ve)  d\omega_a(Y)
\notag
\\
& + \int_{\RN}  \mathbb S_a(X,Y,t) \frac{\zeta^{1-a}}{1-a} h_R(\zeta) \Phi(y,0) d\omega_a(Y)
\notag
\\
& - \int_{\RN} \frac{\zeta^{1-a}}{1-a} h_R(\zeta) \left\{\int_0^{t} \p_\tau \mathbb S_a(X,Y,t-\tau)  \Phi(y,\tau) d\tau\right\} d\omega_a(Y)
\notag
\\
& + i  \int_0^\infty \frac{\zeta^{1-a}}{1-a} h_R(\zeta) \int_0^t S_a(z,\zeta,t-\tau) \int_{\Rd} \Delta_y S(x,y,t-\tau) \Phi(y,\tau) d\omega_a(Y)d\tau,
\notag
\end{align}
where the integration by parts
\[
\int_{\Rd} S(x,y,t-\tau) \Delta_y  \Phi(y,\tau)dy = \int_{\Rd} \Delta_y S(x,y,t-\tau) \Phi(y,\tau)dy
\]
is justified by the decay at infinity of $y\to \Phi(y,t)$, along with its derivatives.
Using the fact that $\mathbb S_a(X,Y,t)$ is the fundamental solution for the Cauchy problem \eqref{eppn}, we have
\begin{align}\label{term}
- \underset{\ve\to 0^+}{\lim} \int_{\RN}  \mathbb S_a(X,Y,\ve) \frac{\zeta^{1-a}}{1-a} h_R(\zeta)\Phi(y,t-\ve)  d\omega_a(Y) = - \frac{z^{1-a}}{1-a} h_R(z)\Phi(x,t).
\end{align}
Moreover, 
\begin{align*}
- \p_\tau \mathbb S_a(X,Y,t-\tau) & = - i (\mathscr B_\zeta^{(a)}  + \Delta_y) \mathbb S_{a}(X,Y,t-\tau)
\\
& = - i S(x,y,t-\tau) \mathscr B_\zeta^{(a)} S_{a}(z,\zeta,t-\tau) - i S_{a}(z,\zeta,t-\tau)
\Delta_y S(x,y,t-\tau).
\end{align*}
Using this identity, we can convert the last two integrals in \eqref{remove} in the following way
\begin{align}\label{oterm}
& - \int_0^\infty \int_{\Rd} \frac{\zeta^{1-a}}{1-a} h_R(\zeta) \int_0^{t} \p_\tau \mathbb S_a(X,Y,t-\tau)  \Phi(y,\tau) d\tau d\omega_a(Y)
\\
& + i  \int_0^\infty \frac{\zeta^{1-a}}{1-a} h_R(\zeta) \int_0^t S_a(z,\zeta,t-\tau) \int_{\Rd} \Delta_y S(x,y,t-\tau) \Phi(y,\tau) d\omega_a(Y)d\tau
\notag
\\
& = - i \int_0^\infty \int_{\Rd} \frac{\zeta^{1-a}}{1-a} h_R(\zeta) \int_0^{t} S(x,y,t-\tau) \mathscr B_\zeta^{(a)} S_{a}(z,\zeta,t-\tau)  \Phi(y,\tau) d\tau d\omega_a(Y)
\notag
\\
& = - i \int_0^{t} \int_{\Rd}    S(x,y,t-\tau) \Phi(y,\tau)\int_0^\infty \frac{\zeta^{1-a}}{1-a} h_R(\zeta) \mathscr B_\zeta^{(a)} S_{a}(z,\zeta,t-\tau)   d\omega_a(Y) d\tau.
\notag
\end{align}
Substituting \eqref{term}, \eqref{oterm} in \eqref{remove}, we obtain
\begin{align}\label{remove2}
& - \int_0^t \int_{\RN} \mathbb S_a(X,Y,t-\tau) \frac{\zeta^{1-a}}{1-a} h_R(\zeta)\left[\p_\tau \Phi - i  \Delta_y \Phi\right](y,\tau) d\omega_a(Y) d\tau
\\
& = - \frac{z^{1-a}}{1-a} h_R(z)\Phi(x,t)  + \int_{\RN}  \mathbb S_a(X,Y,t) \frac{\zeta^{1-a}}{1-a} h_R(\zeta) \Phi(y,0) d\omega_a(Y)
\notag
\\
&  - i \int_0^{t} \int_{\Rd}    S(x,y,t-\tau) \Phi(y,\tau)\int_0^\infty \frac{\zeta^{1-a}}{1-a} h_R(\zeta) \mathscr B_\zeta^{(a)} S_{a}(z,\zeta,t-\tau)   d\omega_a(Y) d\tau.
\notag
\end{align}

If we now replace \eqref{remove2} in the identity \eqref{U}, 
we finally obtain for every $R>0$
\begin{align}\label{nicea}
& U(X,t) = \int_{\RN} \mathbb S_a(X,Y,t) u_0(Y)d\omega_a(Y) +  \int_0^t \int_{\RN} \mathbb S_a(X,Y,t-\tau) F(Y,\tau) d\omega_a(Y) d\tau
\\
& - i \int_0^{t} \int_{\Rd}    S(x,y,t-\tau) \Phi(y,\tau)\left(\int_0^\infty \frac{\zeta^{1-a}}{1-a}  h_R(\zeta) \mathscr B_\zeta^{(a)} S_{a}(z,\zeta,t-\tau) \zeta^a d\zeta\right)  dy d\tau
\notag
\\
&   - i \int_0^t \int_{\RN} \mathbb S_a(X,Y,t-\tau) k_R(\zeta) \Phi(y,\tau) d\omega_a(Y) d\tau.
\notag
\end{align}

We now make two crucial claims.

\medskip

\noindent{\textbf{Claim 1:}} For every $X\in \RN$ and $t>0$ we have
\begin{equation}\label{claim1}
\underset{R\to \infty}{\lim}\ \int_0^t \int_{\RN} \mathbb S_a(X,Y,t-\tau) k_R(\zeta) \Phi(y,\tau) d\omega_a(Y) d\tau = 0.
\end{equation}

\noindent \emph{Proof of Claim 1}:
By a standard change of variable, we write 
\begin{align}\label{I_R}
I_R & :=\int_0^t \int_{\RN} \mathbb S_a(X,Y,t-\tau) k_R(\zeta) \Phi(y,\tau) d\omega_a(Y) d\tau
\\
& = \int_0^t\int_{\Rd}
S(x,y,\tau) \Phi(y,t-\tau)
\Bigg(\int_0^\infty S_a(z,\zeta,\tau) k_R(\zeta) \zeta^a d\zeta\Bigg)
dy d\tau.
\notag
\end{align}
Since $\Phi\in \mathscr S(\R^{d+1})$, the function $\tau\to \int_{\Rd}
S(x,y,\tau) \Phi(y,t-\tau) dy$ is in $L^1(0,t)$. To establish \eqref{claim1} it thus suffices to prove that 
\begin{equation}\label{supJ}
\underset{\tau\in [0,t]}{\sup}\ L_R(\tau) := \underset{\tau\in [0,t]}{\sup} \int_0^\infty S_a(z,\zeta,\tau) k_R(\zeta) \zeta^a d\zeta\ \underset{R\to \infty}{\longrightarrow}\ 0.
\end{equation}
If for fixed $z>0$, we set
\begin{equation}\label{Atau}
A_\tau(\zeta) := \frac{e^{i \frac{z^2}{4\tau}}}{(2i\tau)^{\frac{a+1}2}}  \left(\frac{z\zeta}{2\tau}\right)^{\frac{1-a}2}J_{\frac{a-1}2}(\frac{z\zeta}{2\tau}),
\end{equation}
then from \eqref{SaS0} and the support property of $k_R$, we can write
\begin{equation}\label{JR}
L_R(\tau)=\int_R^{2R} F_\tau(\zeta) e^{i \frac{\zeta^2}{4\tau}} d\zeta,
\end{equation}
where we have let  
\begin{equation}\label{Ftau}
F_\tau(\zeta) = A_\tau(\zeta)\,k_R(\zeta)\,\zeta^a.
\end{equation}
By the asymptotic  of $S_a(z,\zeta,\tau)$ for large $\zeta$ in \cite{GS}, we know that there exists a constant $C(a,z)>0$ such that when $\zeta>>1$ we have
\begin{equation}\label{Ataubound}
|A_\tau(\zeta)| \le C(a,z) \tau^{-1/2} \zeta^{-\frac a2}.
\end{equation}
From \eqref{decay}, \eqref{Ftau} and \eqref{Ataubound}, we infer for $\zeta\in [R,2R]$ 
\begin{equation}\label{Ftau2}
|F_\tau(\zeta)| \le C(a,z) \tau^{-1/2} R^{-\frac a2} R^{-(a+1)} R^a \cong C(a,z) \tau^{-1/2} R^{-(\frac{a}2+1)}. 
\end{equation}
Substituting this information in \eqref{JR}, we conclude 
\[
|L_R| \le C(a,z) \tau^{-1/2} R^{-\frac{a}2},
\]
but this estimate does not provide decay in the regime $-1<a\le 0$.
 To achieve a better decay of $L_R$, we integrate by parts in the oscillatory integral \eqref{JR}.
 We thus have
\begin{align*}
L_R(\tau) & =\int_R^{2R} \frac{1}{\frac{i\zeta}{2\tau} } F_\tau(\zeta) \frac{d}{d\zeta} e^{i \frac{\zeta^2}{4\tau}} d\zeta = \left[F_\tau(\zeta)  \frac{e^{i \frac{\zeta^2}{4\tau}}}{\frac{i\zeta}{2\tau} }\right]_{\zeta = R}^{\zeta = 2R} - \int_R^{2R} \left[\frac{F'_\tau(\zeta)}{\frac{i\zeta}{2\tau}} - \frac{F_\tau(\zeta)}{\frac{i\zeta^2}{2\tau}}\right] e^{i \frac{\zeta^2}{4\tau}}d\zeta.
\end{align*}
 
 Using \eqref{decay} and \eqref{Ataubound}, we obtain for large $R>>1$
\[
\left|\left[F_\tau(\zeta)  \frac{e^{i \frac{\zeta^2}{4\tau}}}{\frac{i\zeta}{2\tau} }\right]_{\zeta = R}^{\zeta = 2R}\right| \le C \tau^{1/2} R^{-\frac a2} R^{-(a+2)}R^a \cong R^{-(\frac a2+2)}\ \underset{R\to \infty}{\longrightarrow}\ 0.
\]

We next estimate 
\begin{align*}
& \left|\int_R^{2R} F'_\tau(\zeta) \frac{e^{i \frac{\zeta^2}{4\tau}}}{\frac{i\zeta}{2\tau} } d\zeta\right|\le \frac{C\tau}{R} \int_R^{2R} |F'_\tau(\zeta)| d\zeta.
\end{align*}
To estimate $|F'_\tau(\zeta)|$ we set $\nu = \frac{a-1}2\in (-1,0)$, and let $G_\nu(z) = z^{-\nu} J_{\nu}(z)$. Then we can write
\[
A_\tau(\zeta) = \frac{e^{i \frac{z^2}{4\tau}}}{(2i\tau)^{\frac{a+1}2}}  G_\nu\left(\frac{z\zeta}{2\tau}\right).
\]
Since by \cite[(5.3.5) on p.103]{Le} we have 
\[
G'_\nu(z) = - z^{-\nu} J_{\nu+1}(z),
\]
we infer that, for $\zeta>>1$
\begin{align}\label{A'}
A'_\tau(\zeta) & = - \frac{e^{i \frac{z^2}{4\tau}}}{(2i\tau)^{\frac{a+1}2}} \frac{z}{2\tau} \left(\frac{z\zeta}{2\tau}\right)^{\frac{1-a}2} J_{\frac{a+1}2}\left(\frac{z\zeta}{2\tau}\right)
\\
& \cong {(2\tau)^{\frac{a+1}2}} \frac{z}{2\tau} \left(\frac{z\zeta}{2\tau}\right)^{\frac{1-a}2} \left(\frac{z\zeta}{2\tau}\right)^{-\frac 12}
\notag
\\
& \cong C(a,z) \tau^{a-\frac 12} \zeta^{-\frac{a}2}. 
\notag
\end{align} 
From \eqref{decay}, \eqref{Ftau}, \eqref{Ataubound} and \eqref{A'}, we now have for any $-1<a<1$ and $\zeta\in [R,2R]$
\begin{align*}
|F'_\tau(\zeta)| & \le C(z,a) \left[R^{-\frac a2} R^{-(a+1)} R^{a-1} + R^{-\frac a2}R^{-(a+2)} R^{a} + R^{-\frac a2} R^{-(a+1)} R^a\right]
\\
& \cong R^{-(1+\frac{a}{2})}\ \longrightarrow\ 0.
\end{align*}
Finally, we obtain from \eqref{Ftau}
\begin{align*}
& \left|\int_R^{2R} F_\tau(\zeta) \frac{e^{i \frac{\zeta^2}{4\tau}}}{\frac{i\zeta^2}{2\tau} } d\zeta\right|\le \frac{C\tau}{R^2} \int_R^{2R} |F_\tau(\zeta)| d\zeta\cong  C(a,z) R^{-(\frac{a}2+2)}\ \underset{R\to \infty}{\longrightarrow}\ 0.
\end{align*}
These estimates prove \eqref{supJ}, thus completing the proof of \eqref{claim1}. This establishes Claim 1.

\vskip 0.2in

\noindent \textbf{Claim 2:} For every $z>0$ and $t>0$ we have
\begin{equation}\label{claim2}
\lim_{R\to\infty}
\int_0^\infty
\frac{\zeta^{1-a}}{1-a}
h_R(\zeta)
\Ba S_a(z,\zeta,t) \zeta^a d\zeta =
S_a(z,0,t).
\end{equation}

\noindent \emph{Proof of Claim 2}:
Writing the Bessel operator in divergence form
\[
\Ba f
= \zeta^{-a}\partial_\zeta \big(\zeta^a \partial_\zeta f\big),
\]
and by the support property of $h_R$, integrating by parts we have
\begin{align*}
& \int_0^\infty
\frac{\zeta^{1-a}}{1-a}
h_R(\zeta)
\Ba S_a(z,\zeta,t) \zeta^a d\zeta = \underset{\ve\to 0^+}{\lim} \int_\ve^{2R}
\frac{\zeta^{1-a}}{1-a}
h_R(\zeta) \partial_\zeta \big(\zeta^a \partial_\zeta S_a(z,\zeta,t)\big) d\zeta
\\
& = - \underset{\ve\to 0^+}{\lim} \frac{\zeta^{1-a}}{1-a}
h_R(\zeta) \zeta^a \partial_\zeta S_a(z,\zeta,t) - \underset{\ve\to 0^+}{\lim} \int_\ve^{2R}
\zeta^a \p_\zeta\left(\frac{\zeta^{1-a}}{1-a}
h_R(\zeta)\right)  \partial_\zeta S_a(z,\zeta,t) d\zeta,
\end{align*}
where we have used $h_R(2R) = 0$. We now observe that 
\[
\underset{\ve\to 0^+}{\lim} \left[\frac{\zeta^{1-a}}{1-a}
h_R(\zeta) \zeta^a \partial_\zeta S_a(z,\zeta,t)\right]_{\zeta=\ve} =0.
\]
This follows from the fact that $1-a>0$ and from the zero Neumann condition \eqref{dzpazero} satisfied by $S_a(z,\zeta,t)$,
\[
\underset{\zeta\to 0^+}{\lim}  \zeta^a \partial_\zeta S_a(z,\zeta,t) =0.
\]
We thus find
\begin{align*}
& \int_0^\infty
\frac{\zeta^{1-a}}{1-a}
h_R(\zeta)
\Ba S_a(z,\zeta,t) \zeta^a d\zeta = - \underset{\ve\to 0^+}{\lim} \int_\ve^{2R} \left(h_R(\zeta) + \frac{\zeta h'_R(\zeta)}{1-a} \right) \partial_\zeta S_a(z,\zeta,t)d\zeta
\\
& = - \underset{\ve\to 0^+}{\lim} \left[\left(h_R(\zeta) + \frac{\zeta h'_R(\zeta)}{1-a} \right) S_a(z,\zeta,t)\right]_{\zeta = \ve}^{\zeta = 2R} + \underset{\ve\to 0^+}{\lim} \int_\ve^{2R} \p_\zeta \left(h_R(\zeta) + \frac{\zeta h'_R(\zeta)}{1-a} \right)  S_a(z,\zeta,t) d\zeta.
\end{align*}
Since $h_R(2R) = h'_R(2R) = 0$, the contribution of the upper end-point vanishes, whereas at the lower end-point, using that for every fixed $R>0$ we have $h_R(\zeta)\to 1$ as $\zeta\to 0^+$, we obtain the critical information that 
\[
- \underset{\ve\to 0^+}{\lim} \left[\left(h_R(\zeta) + \frac{\zeta h'_R(\zeta)}{1-a} \right) S_a(z,\zeta,t)\right]_{\zeta = \ve}^{\zeta = 2R} =\ +\, S_a(z,0,t).
\]
In conclusion, we have proved that, for every $R>0$ we have
\begin{align}\label{key}
& \int_0^\infty
\frac{\zeta^{1-a}}{1-a}
h_R(\zeta)
\Ba S_a(z,\zeta,t) \zeta^a d\zeta =  S_a(z,0,t) + H(R),
\end{align}
where
\begin{align}\label{HR}
H(R) := \int_R^{2R} \left(\frac{2-a}{1-a}  h'_R(\zeta) + \frac{\zeta h''_R(\zeta)}{1-a} \right)  S_a(z,\zeta,t) d\zeta
\end{align}
Since $|h'(\zeta)|\le C R^{-1}$, and since \eqref{SaS0} gives for $\zeta$ large
\begin{equation}\label{SaS02}
S_a(z,\zeta,t) \cong 
\frac{1}{t^{\frac{a+1}2}}  \left(\frac{z\zeta}{2t}\right)^{\frac{1-a}2}\left|J_{\frac{a-1}2}(\frac{z\zeta}{2t})\right|\cong \frac{1}{t^{\frac{a+1}2}}  \left(\frac{z\zeta}{2t}\right)^{-\frac{a}2}\cong t^{-\frac 12} \zeta^{-\frac{a}2},
\end{equation}
we obtain from \eqref{HR}
\[
|H(R)| \le C(a,z,t) R^{-\frac a2}.
\]
As for the Claim 1, this estimate provides no decay in the regime $-1<a\le 0$. To remediate this aspect, we proceed as in the proof of Claim 1, and write 
\[
S_a(z,\zeta,t) = A_t(\zeta)\ e^{i \frac{\zeta^2}{4t}} = \frac{A_t(\zeta)}{\frac{i\zeta}{2t}}  \frac{d}{d\zeta} e^{i \frac{\zeta^2}{4t}},
\]
where $A_t(\zeta)$ is as in \eqref{Atau}. We thus find from \eqref{HR}
\begin{align*}
H(R) & = \int_R^{2R} \left(\frac{2-a}{1-a}  h'_R(\zeta) + \frac{\zeta h''_R(\zeta)}{1-a} \right) \frac{A_t(\zeta)}{\frac{i\zeta}{2t}}  \frac{d}{d\zeta} e^{i \frac{\zeta^2}{4t}} d\zeta
\\
& = \left[\left(\frac{2-a}{1-a}  h'_R(\zeta) + \frac{\zeta h''_R(\zeta)}{1-a} \right) \frac{A_t(\zeta)}{\frac{i\zeta}{2t}} e^{i \frac{\zeta^2}{4t}}\right]_{\zeta = R}^{\zeta = 2R}
\\
& - \int_R^{2R} \frac{d}{d\zeta} \left[\left(\frac{2-a}{1-a}  h'_R(\zeta) + \frac{\zeta h''_R(\zeta)}{1-a} \right) \frac{A_t(\zeta)}{\frac{i\zeta}{2t}}\right]  e^{i \frac{\zeta^2}{4t}} d\zeta
\end{align*}
By \eqref{Ataubound} we see that for any $-1<a<1$, we have
\[
\left|\left[\left(\frac{2-a}{1-a}  h'_R(\zeta) + \frac{\zeta h''_R(\zeta)}{1-a} \right) \frac{A_t(\zeta)}{\frac{i\zeta}{2t}} e^{i \frac{\zeta^2}{4t}}\right]_{\zeta = R}^{\zeta = 2R}\right| \le \frac{C(a)}{R^{2+\frac a2}}\ \underset{R\to \infty}{\longrightarrow}\ 0.
\]
In a similar fashion, we infer that 
\[
\left|\int_R^{2R} \frac{d}{d\zeta} \left[\left(\frac{2-a}{1-a}  h'_R(\zeta) + \frac{\zeta h''_R(\zeta)}{1-a} \right) \frac{A_t(\zeta)}{\frac{i\zeta}{2t}}\right]  e^{i \frac{\zeta^2}{4t}} d\zeta\right| \le \frac{C(a)}{R^{2+\frac a2}}\ \underset{R\to \infty}{\longrightarrow}\ 0.
\]
The latter two estimates prove that 
\[
H(R)\ \underset{R\to \infty}{\longrightarrow}\ 0.
\]
In view of \eqref{key}, we have established \eqref{claim2}, thus finishing the proof of Claim 2.

\vskip 0.2in

With Claims 1 \& 2 in hands, we now return to the identity \eqref{nicea} and, letting $R\to \infty$, we finally obtain \eqref{nicea1}. This completes the proof of Theorem \ref{T:U}.

\end{proof}


\section{Strichartz estimates in the bulk}\label{S:stri}

In this section we establish various Strichartz estimates in the bulk space $\RN\times \R$ for the  problem \eqref{eppn}. We recall the  operators $\mathbb T_a^\star$ and $\mathbb D_a$, respectively defined by \eqref{Tstar} and \eqref{Da}, and denote by $\mathbb T_a$ the dual of $\mathbb T^\star_a$. The operators $\mathbb T_a^\star$ and $\mathbb T_a$, respectively convert initial values $u_0(X)$ into solutions $U(X,t)$ to \eqref{eppn} in the bulk $\RN\times \R$, and vice-versa. In Section \ref{S:bdry} we will study the boundary counterparts $\Theta_a^\star$, $\Theta_a$ of the operators $\mathbb T_a^\star$, $\mathbb T_a$.

\subsection{The generalized Ginibre-Velo operators}  

 For any $a>-1$ we consider the operator $\mathbb T_a^\star$ in \eqref{Tstar}, mapping functions $u_0:\RN\to \C$ into functions on  $\RN\times\R$.
Since according to Definition \ref{D:ac3} the triple $(\infty,2,2)$ is admissible both for $a\ge 0$ and $-1<a<0$, it is natural to ask whether \eqref{apriori0} holds for such triple. We have the following basic result.   
 
\begin{theorem}\label{T:Unistar}
For any $a>-1$ we have  $\mathbb T^\star_a: L^2_a(\RN)\to L^\infty_t L^2_a(\RN)$, and moreover
\[
||\mathbb T^\star_a u_0||_{L^\infty_t L^2_a(\RN)} = ||u_0||_{L^2_a(\RN)}.
\]
We also have for any $F\in L^1_t L^2_a(\RN)$
\[
||\mathbb D_a(F)||_{L^\infty_t L^2_a(\RN)} \le ||F||_{L^1_t L^2_a(\RN)}.
\]
\end{theorem}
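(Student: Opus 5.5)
The first assertion is a direct consequence of Proposition \ref{P:Uni}. By \eqref{Tstar} and \eqref{prop}, for each fixed $t\in\R$ we have $\mathbb T^\star_a(u_0)(\cdot,t) = \mathbb S_a(t)u_0$. Hence
\[
||\mathbb T^\star_a(u_0)(\cdot,t)||_{L^2_a(\RN)} = ||u_0||_{L^2_a(\RN)}
\]
for every $t$. Taking the supremum over $t$ gives the equality $||\mathbb T^\star_a u_0||_{L^\infty_t L^2_a(\RN)} = ||u_0||_{L^2_a(\RN)}$. I would first prove this for $u_0\in \mathscr S(\RN)$, where the kernel representation is literal. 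Since $\mathscr S(\RN)$ is dense in $L^2_a(\RN)$ and the map is an isometry for each $t$, $\mathbb T^\star_a$ extends uniquely to all of $L^2_a(\RN)$. For the extension, measurability in $t$ is harmless: strong continuity of $t\mapsto \mathbb S_a(t)u_0$ holds on the dense class and transfers to the closure by the uniform bound.

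For the inhomogeneous term I would write $\mathbb D_a$ as a Bochner integral. By \eqref{Da} and \eqref{prop},
\[
\mathbb D_a(F)(\cdot,t) = \int_0^t \mathbb S_a(t-\tau)\big(F(\cdot,\tau)\big)\, d\tau .
\]
Minkowski's integral inequality then gives, for each $t$,
\[
||\mathbb D_a(F)(\cdot,t)||_{L^2_a(\RN)} \le \left|\int_0^t ||\mathbb S_a(t-\tau)F(\cdot,\tau)||_{L^2_a(\RN)}\,d\tau\right| = \left|\int_0^t ||F(\cdot,\tau)||_{L^2_a(\RN)}\,d\tau\right| \le ||F||_{L^1_tL^2_a(\RN)},
\]
where the middle equality again uses Proposition \ref{P:Uni} with time $t-\tau$. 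This is valid for negative times too, since Proposition \ref{P:Uni} holds for all $t\in\R$. Taking the supremum in $t$ yields the claimed bound.

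The only point needing care is justification rather than computation. One must interchange the $d\tau$ integration with the $L^2_a$ norm, and identify the kernel integral in \eqref{Da} with the Bochner integral of $\tau\mapsto \mathbb S_a(t-\tau)F(\cdot,\tau)$. I would do this first for $F\in\mathscr S(\RN\times\R)$. There $\tau\mapsto F(\cdot,\tau)$ is continuous into $L^2_a(\RN)$, and $\tau\mapsto \mathbb S_a(t-\tau)F(\cdot,\tau)$ is continuous by strong continuity of the group together with its uniform boundedness. The Riemann/Bochner integral therefore coincides pointwise with the kernel formula after testing against $C_0^\infty$ functions and using Fubini. The estimate then extends $\mathbb D_a$ by density of $\mathscr S(\RN\times\R)$ in $L^1_tL^2_a(\RN)$ to a bounded operator with the same constant $1$.
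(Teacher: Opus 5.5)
Your proposal is correct and follows the paper's proof. The first claim is the conservation of mass of Proposition \ref{P:Uni} applied at each fixed $t$. The second comes from writing $\mathbb D_a(F)(\cdot,t)=\int_0^t \mathbb S_a(t-\tau)(F(\cdot,\tau))\,d\tau$ and combining Minkowski's inequality with unitarity. Your additional care (density, Bochner-integral identification, measurability in $t$, and the absolute value for $t<0$) is justification the paper leaves implicit; it does not change the argument.
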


\begin{proof}
The former statement about $\mathbb T^\star_a$ is just a reformulation of the conservation of mass in Proposition \ref{P:Uni} and there is nothing to prove. The statement about $\mathbb D_a$ is a direct consequence of the former and of the definition \eqref{Da}. We have in fact:
\[
\mathbb D_a(F)(X,t) = \int_0^t \mathbb S_a(t-\tau)(F(\cdot,\tau))(X) d\tau.
\]
This gives for every $t\in \R$
\begin{align*}
& ||\mathbb D_a(F)(\cdot,t)||_{L^2_a(\RN)}  \le \int_0^t ||\mathbb S_a(t-\tau)(F(\cdot,\tau))||_{L^2_a(\RN)} d\tau
\\
& = \int_0^t ||F(\cdot,\tau)||_{L^2_a(\RN)} d\tau \le ||F||_{L^1_t L^2_a(\RN)}.
\end{align*}

\end{proof}

Next, we represent the operator $\mathbb T_a: L^1_t L^2_a(\RN)\to L^2_a(\RN)$, whose adjoint is $\mathbb T_a^\star$.

\begin{lemma}\label{L:T}
For every function $F\in \snn$, we have
\begin{equation}\label{Ta}
\mathbb T_a(F)(X) = \int_\R \mathbb S_a(-t)(F(\cdot,t))(X) dt.
\end{equation}
\end{lemma}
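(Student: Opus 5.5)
The plan is to identify $\mathbb T_a$ directly from the definition of the adjoint, using the sesquilinear pairing
\[
\langle F, G\rangle = \int_\R \int_{\RN} F(X,t)\,\overline{G(X,t)}\, d\omega_a(X)\,dt
\]
on $\RN\times\R$ and the inner product of $L^2_a(\RN)$ on initial data. By definition, $\mathbb T_a$ is the operator satisfying $\langle \mathbb T_a F, u_0\rangle_{L^2_a(\RN)} = \langle F, \mathbb T_a^\star u_0\rangle$ for all $u_0\in \mathscr S(\RN)$ and $F\in\snn$. Once the right-hand side is rewritten as an $L^2_a$ pairing against $u_0$, formula \eqref{Ta} follows by density of $\mathscr S(\RN)$ in $L^2_a(\RN)$.

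First I would insert \eqref{Tstar} into $\langle F, \mathbb T_a^\star u_0\rangle$, which gives the triple integral
\[
\int_\R\int_{\RN} F(X,t)\int_{\RN}\overline{\mathbb S_a(X,Y,t)}\,\overline{u_0(Y)}\,d\omega_a(Y)\,d\omega_a(X)\,dt .
\]
Next I would apply \eqref{conj} to replace $\overline{\mathbb S_a(X,Y,t)}$ by $\mathbb S_a(X,Y,-t)$. Since $S_a(z,\zeta,t)=S_a(\zeta,z,t)$ and $S(x,y,t)$ is symmetric in $(x,y)$, \eqref{Sa} gives $\mathbb S_a(X,Y,-t)=\mathbb S_a(Y,X,-t)$. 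Exchanging the order of integration so that the $X$-integral is taken first then yields
\[
\int_{\RN}\left(\int_\R \mathbb S_a(-t)(F(\cdot,t))(Y)\,dt\right)\overline{u_0(Y)}\,d\omega_a(Y),
\]
which is exactly \eqref{Ta}.

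The main obstacle is justifying Fubini, because the kernel $\mathbb S_a$ is not absolutely integrable: $|\mathbb S_a|\sim |t|^{-(d+a+1)/2}$, it oscillates, and for $-1<a<0$ it grows in $\zeta$. I would avoid working with the kernel in absolute value. Instead I would use \eqref{prop2} and Proposition \ref{P:Uni}, which give, for each fixed $t$, the operator identity $\langle F(\cdot,t),\mathbb S_a(t)u_0\rangle_{L^2_a} = \langle \mathbb S_a(-t)F(\cdot,t),u_0\rangle_{L^2_a}$. This is the statement that the adjoint of the unitary group $\mathbb S_a(t)$ is $\mathbb S_a(-t)$, obtained by polarization of unitarity together with the group property of Proposition \ref{P:uni} and of $S(t)$.

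With that identity in hand, only the interchange of the $t$-integral with the $L^2_a$ pairing remains. This is legitimate because $t\mapsto \|F(\cdot,t)\|_{L^2_a}$ is in $L^1_t$ for $F\in\snn$. The Bochner integral $\int_\R \mathbb S_a(-t)F(\cdot,t)\,dt$ therefore converges in $L^2_a(\RN)$ with norm at most $\|F\|_{L^1_tL^2_a}$, and the pairing commutes with it. Strong continuity of $t\mapsto\mathbb S_a(t)$ on $L^2_a$, which follows from the Hankel–Fourier diagonalization in Remark \ref{R:bandt}, ensures measurability of the integrand.
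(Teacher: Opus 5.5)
Your proposal is correct and follows the paper's route. Like the paper, you write $\langle \mathbb T_a(F),u_0\rangle=\langle\langle F,\mathbb T_a^\star(u_0)\rangle\rangle$, use \eqref{conj} together with the symmetry $\overline{\mathbb S_a(X,Y,t)}=\mathbb S_a(Y,X,-t)$, and interchange the order of integration. The paper makes that interchange at the kernel level without comment. Your justification is a worthwhile addition, since the triple integral is not absolutely convergent near $t=0$: the per-$t$ identity $\mathbb S_a(t)^\ast=\mathbb S_a(-t)$ (from unitarity and the group property) combined with the $L^2_a(\RN)$-valued Bochner integral bounded by $\|F\|_{L^1_tL^2_a}$ actually makes the step rigorous.
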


\begin{proof}
Denote by $\sa\sa\cdot,\cdot \da\da$ the inner product in $L^2_t L^2_a(\RN)$. For every $u_0\in \mathscr S(\RN)$ we obtain from \eqref{conj} and \eqref{Tstar}  
\begin{align*}
\sa \mathbb T_a(F),u_0\da & = \sa\sa F,\mathbb T_a^\star(u_0)\da\da = \int_\R \int_{\RN} F(X,t) \overline{\mathbb T_a^\star(u_0)(X,t)} d\omega_a(X) dt
\\
& = \int_\R  \int_{\RN} F(X,t) \int_{\RN} \overline{\mathbb S_a(X,Y,t)}\ \overline{u_0(Y)} d\omega_a(Y) d\omega_a(X) dt
\\
& = \int_\R  \int_{\RN} F(X,t) \int_{\RN} \mathbb S_a(Y,X,-t)\ \overline{u_0(Y)} d\omega_a(Y) d\omega_a(X) dt
\\
& = \int_{\RN} \overline{u_0(Y)} \int_\R \int_{\RN} \mathbb S_a(Y,X,-t)F(X,t)d\omega_a(X) dt\ d\omega_a(Y)
\\
& = \int_{\RN} \overline{u_0(Y)} \int_\R \mathbb S_a(-t)(F(\cdot,t))(Y) dt\ d\omega_a(Y)
\end{align*}
This proves \eqref{Ta}.

\end{proof}

We next have the following result which establishes the part of Theorem \ref{T:main} relative to the bulk. We emphasize that, in view of the discussion of \underline{Case} $a\ge 0$ and \eqref{a2} in the introduction, the hypothesis $q>2$ in the statement of  Theorem \ref{T:Tstarbulk} forces $2\le r<\frac{2d}{d+a-1}$. Also, we assume $q<\infty$ since the case $q = \infty$ has already been dealt with by Theorem \ref{T:Unistar}.  

\begin{remark}\label{R:duality}
In this section, and again in Sections \ref{S:bdry} and \ref{S:well}, we use the following duality facts for the mixed-norm classes of Section \ref{S:adapted}: for $1<q, q_\infty, r<\infty$,
\[
\left(L^1_{a,z}L^{q'}_tL^{r'}_x\right)^\star = L^\infty_z L^q_t L^r_x,\ \ \ \ \ \ \left(L^1_{a,z}L^{q'\cap q'_\infty}_tL^{r'}_x\right)^\star = L^\infty_z L^{q+q_\infty}_t L^r_x,
\]
and the norm of an element of either dual space is attained as a supremum of pairings against functions in $\mathscr S(\RN\times\R)$ of unit norm. These identifications follow from the separability and $\sigma$-finiteness of the underlying measure spaces, the Radon-Nikod\'ym property of the reflexive spaces $L^q_tL^r_x$ and $L^{q+q_\infty}_tL^r_x$, and the density of $\mathscr S(\RN\times \R)$ in $L^1_{a,z}L^{q'}_tL^{r'}_x$ and in $L^1_{a,z}L^{q'\cap q'_\infty}_tL^{r'}_x$; see \cite{DU}.
\end{remark}

\begin{theorem}\label{T:Tstarbulk}
Let $a\ge 0$. Suppose that the triple $(q,r,\infty)$ is admissible and that $2<q<\infty$. For any $u_0\in \mathscr S(\RN)$ and $F\in \mathscr S(\RN\times \R)$ the unique mild solution to the problem \eqref{eppn} satisfies the following a priori estimate of Strichartz type
\begin{align}\label{bulk}
||U||_{L^\infty_z L^q_t L^r_x } \le C(d,a,r)\left[||u_0||_{L^2_a(\RN)} + ||F||_{L^1_{a,z} L^{q'}_t L^{r'}_x }\right].
\end{align}
\end{theorem}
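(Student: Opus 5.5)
Since $U=\mathbb T^\star_a(u_0)+\mathbb D_a(F)$, I will use the $TT^\star$ method, with duality taken in the mixed spaces of Remark \ref{R:duality}. The first task is a dispersive estimate for the full propagator $\mathbb S_a(t)=S(t)\circ S_a(t)$ of \eqref{prop2}. It should map $L^1_{a,z}L^{r'}_x$ into $L^\infty_zL^r_x$ with bound $C|t|^{-\frac{a+1}2-d(\frac12-\frac1r)}$. To get it, I freeze $x$-variables and apply Proposition \ref{P:fundis} to $S(t)$ for each fixed $\zeta$. I then use the pointwise kernel bound $|S_a(z,\zeta,t)|\le C|t|^{-\frac{a+1}2}$, valid for $a\ge0$ because $w^{\frac{1-a}2}J_{\frac{a-1}2}(w)$ is bounded when $a\ge 0$; this is what underlies \eqref{good}. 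Minkowski's inequality in $\zeta$ then gives
\[
\|\mathbb S_a(t)G\|_{L^\infty_zL^r_x}\le C|t|^{-\frac{a+1}2-d(\frac12-\frac1r)}\|G\|_{L^1_{a,z}L^{r'}_x}.
\]
By the first equation in \eqref{acinfty}, the exponent here is exactly $\frac2q$, and $q>2$ means $\frac 2q<1$.

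Next I treat $\mathbb T_a\mathbb T_a^\star(F)=\int_\R\mathbb S_a(t-s)F(\cdot,s)\,ds$, using Lemma \ref{L:T} and the group property. Taking the $L^\infty_zL^r_x$ norm at each fixed $t$ gives a pointwise-in-$z$ bound. For each $z$, the $L^r_x$ norm is controlled by $\int_0^\infty\int_\R |t-s|^{-2/q}\|F(\cdot,\zeta,s)\|_{L^{r'}_x}\,ds\,\zeta^a d\zeta$. Minkowski's inequality moves the $\zeta$-integral outside the $t$-norm, and the one-dimensional Hardy--Littlewood--Sobolev inequality (with $1-\frac2q=\frac1{q'}-\frac1q$, valid since $2<q<\infty$) then gives
\[
\|\mathbb T_a\mathbb T_a^\star F\|_{L^\infty_zL^q_tL^r_x}\le C\|F\|_{L^1_{a,z}L^{q'}_tL^{r'}_x}.
\]
The standard $TT^\star$ argument converts this into $\langle\langle \mathbb T^\star_a u_0,F\rangle\rangle=\langle u_0,\mathbb T_aF\rangle$ with $\|\mathbb T_aF\|_{L^2_a}^2=\langle\langle F,\mathbb T_a^\star\mathbb T_aF\rangle\rangle\le C\|F\|^2_{L^1_{a,z}L^{q'}_tL^{r'}_x}$. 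By Remark \ref{R:duality}, this yields $\|\mathbb T^\star_a u_0\|_{L^\infty_zL^q_tL^r_x}\le C\|u_0\|_{L^2_a}$.

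For the Duhamel term $\mathbb D_a(F)$, the kernel is the truncated operator $\int_0^t\mathbb S_a(t-s)F(s)\,ds$. I will bound it directly by the same dispersive estimate and Hardy--Littlewood--Sobolev argument, since $|\chi_{[0,t]}(s)|\le 1$ leaves the pointwise bound of the integral kernel unchanged. This gives $\|\mathbb D_a F\|_{L^\infty_zL^q_tL^r_x}\le C\|F\|_{L^1_{a,z}L^{q'}_tL^{r'}_x}$ without a Christ--Kiselev lemma, because the source and target spaces here are dual to each other, so no retarded truncation issue arises.

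The main obstacle is the first step. I must verify the uniform bound $\sup_{z,\zeta}|S_a(z,\zeta,t)|\lesssim|t|^{-\frac{a+1}2}$ for $a\ge0$ and combine it cleanly with the $x$-dispersion in mixed norms, taking care with measurability and the order of integration. Density of $\mathscr S$ then extends the estimate to the stated class.
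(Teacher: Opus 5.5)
Your proposal is correct and follows essentially the same route as the paper: $TT^\star$ duality for $\mathbb T^\star_a$, the composed dispersive bound $|t|^{-\frac 2q}$ obtained from Proposition \ref{P:fundis} together with the uniform bound on $S_a$ for $a\ge 0$, Minkowski in $\zeta$, and one-dimensional Hardy--Littlewood--Sobolev in $t$, with $\mathbb D_a(F)$ handled directly by the same absolute-value majorant, which is the actual reason no Christ--Kiselev argument is needed. One notational slip: the operator $\int_\R \mathbb S_a(t-s)F(\cdot,s)\,ds$ is $\mathbb T_a^\star\mathbb T_a$, not $\mathbb T_a\mathbb T_a^\star$; your subsequent pairing already uses it correctly.
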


\begin{proof}
According to Theorem \ref{T:Uzero} the solution $U$ is represented by 
\begin{equation}\label{U00}
U(X,t) = \mathbb T^\star_a(u_0)(X,t) + \mathbb D_a(F)(X,t),
\end{equation}
see also \eqref{U0}. The estimate  \eqref{bulk} will follow from the following a priori inequalities:
\begin{equation}\label{bulk00}
||\mathbb T^\star_a(u_0)||_{L^\infty_z L^q_t L^r_x } \le C(d,a,r)\ ||u_0||_{L^2_a(\RN)},
\end{equation}
and 
\begin{equation}\label{Dabound}
||\mathbb D_a(F)||_{L^\infty_z L^q_t L^r_x } \le C(d,a,r)\  ||F||_{L^1_{a,z} L^{q'}_t L^{r'}_x }.
\end{equation}

To prove \eqref{bulk00}, we fix $u_0\in \mathscr S(\RN)$. For any $F\in \mathscr S(\RN\times \R)$, the Cauchy-Schwarz inequality gives
\begin{align}\label{check}
\sa\sa \mathbb T^\star_a(u_0),F\da\da & = \sa u_0,\mathbb T_a(F)\da \le ||u_0||_{L_a^2(\RN)} ||\mathbb T_a(F)||_{L_a^2(\RN)}.
\end{align}
Next, H\"older inequality gives
\begin{align}\label{pop}
||\mathbb T_a(F)||^2_{L_a^2(\RN)} & = \sa \mathbb T_a(F),\mathbb T_a(F)\da = \sa\sa \mathbb T_a^\star \mathbb T_a(F),F\da\da \le ||\mathbb T_a^\star \mathbb T_a(F)||_{L^\infty_z L^q_t L^r_x } ||F||_{L^1_{a,z} L^{q'}_t L^{r'}_x }.
\end{align}
We now use \eqref{Tstar}, \eqref{Ta} to write
\begin{align}\label{stars}
\mathbb T_a^\star \mathbb T_a(F)(X,t) & = \int_\R \mathbb S_a(t-\tau)(F(\cdot,\tau))(X) d\tau.
\end{align}
This gives for every $z>0$ and $t\in \R$,
\begin{align}\label{opa}
& ||\mathbb T_a^\star \mathbb T_a(F)(\cdot,z,t)||_{L^r_x}  \le \int_{\R} ||\mathbb S_a(t-\tau)(F(\cdot,\tau))(\cdot,z)||_{L^r_x} d\tau 
\\
& = \int_{\R} ||S(t-\tau)(S_a(t-\tau)(F(\cdot,\tau))(\cdot,z)||_{L^r_x} d\tau
\notag
\\
& \le C(d,r) \int_{\R} \frac{||S_a(t-\tau)(F(\cdot,\tau))(\cdot,z)||_{L^{r'}_x}}{|t-\tau|^{d(\frac 12 - \frac 1r)}} d\tau, 
\notag
\end{align}
where in the last inequality we have used Proposition \ref{P:fundis}.
Keeping \eqref{Tstar} and \eqref{good} in mind, we have for any $z>0$
\begin{align*}
||S_a(t-\tau)(F(\cdot,\tau))(\cdot,z)||_{L^{r'}_x} \le \frac{C(a)}{|t-\tau|^{\frac{a+1}2}} \int_0^\infty ||F(\cdot,\zeta,\tau)||_{L^{r'}_x}\  \zeta^a d\zeta.
\end{align*}
Substituting this estimate in \eqref{opa}, we find for every $z>0$ and $t\in \R$,
\begin{align}\label{opa2}
& ||\mathbb T_a^\star \mathbb T_a(F)(\cdot,z,t)||_{L^r_x} \le C(d,a,r) \int_0^\infty \int_{\R} \frac{||F(\cdot,\zeta, \tau)||_{L^{r'}_x}}{|t-\tau|^{\frac{a+1}2+d(\frac 12 - \frac 1r)}} d\tau \zeta^a d\zeta
\\
& = C(d,a,r)\ \int_0^\infty I_\beta(h(\zeta,\cdot))(t) \zeta^a d\zeta,
\notag
\end{align}
where we have set
\begin{equation}\label{hzetatau}
h(\zeta, \tau) = ||F(\cdot,\zeta,\tau)||_{L^{r'}_x},
\end{equation}
and for a function $f$ on $\R$ we have denoted by
\begin{equation}\label{marcel}
I_\beta(f) = f\star |\cdot |^{-(1-\beta)},\ \ \ \ \ \ 0<\beta<1,   
\end{equation} 
the M. Riesz operator of fractional integration on $\R$, in which we are letting
\begin{equation}\label{unomenobeta}
1-\beta = \frac{a+1}2+d(\frac 12 - \frac 1r) = \frac{d+a+1}2 - \frac dr.
\end{equation}
By the Hardy-Littlewood-Sobolev theorem (see \cite{St}) we know that, if $0<\beta<1$, and $q$ is such that $1<q'<\beta^{-1}$, then we have
\begin{equation}\label{marcello}
I_\beta : L^{q'}_t\ \longrightarrow\ L^q_t,\ \ \ \ \text{provided that}\ \ \ \ \frac{1}{q'} - \frac 1q = 1- \frac 2q = \beta.
\end{equation}
Notice that, since $a\ge 0$ and $(q,r,\infty)$ is admissible, by \eqref{acinfty} we have
\[
1-\beta = \frac 2q = \frac{d+a+1}2 - \frac dr,
\]
which is exactly 
\eqref{unomenobeta}. Since we are assuming $2<q<\infty$, we have $0<\beta<1$, which is equivalent to 
\[
0<1- \frac{d+a+1}2 + \frac dr <1\ \Longleftrightarrow\ \frac{2d}{d+a+1}< r < \frac{2d}{d+a-1}.
\]
Note that, since $r\ge 2$, it is automatic that $\frac{2d}{d+a+1}< r$.
 
From all this and from \eqref{opa2} we conclude that
\begin{align}\label{opa3}
& ||\mathbb T_a^\star \mathbb T_a(F)(\cdot,z,\cdot)||_{L^q_t L^r_x} \le C(d,a,r) \int_0^\infty ||I_\beta(h(\zeta,\cdot)||_{L^{q}_t} \zeta^a d\zeta
\\
& \le C(d,a,r) \int_0^\infty ||h(\zeta,\cdot)||_{L^{q'}_t} \zeta^a d\zeta = C(d,a,r) \int_0^\infty ||F(\cdot,\zeta,\cdot)||_{L^{q'}_t L^{r'}_x} \zeta^a d\zeta
\notag\\
& = C(d,a,r) ||F||_{L^1_{a,z} L^{q'}_t L^{r'}_x}.
\notag
\end{align}
Taking the supremum in $z>0$ we obtain from \eqref{opa3} the following crucial estimate:
\begin{equation}\label{opa4}
||\mathbb T_a^\star \mathbb T_a(F)||_{L^\infty_z L^q_t L^r_x} \le C(d,a,r) ||F||_{L^1_{a,z} L^{q'}_t L^{r'}_x}.
\end{equation}
Substitution of \eqref{opa3} in \eqref{pop} gives for any $F\in \mathscr S(\RN\times \R)$,
\begin{align}\label{pop2}
||\mathbb T_a(F)||_{L_a^2(\RN)} & \le C(d,a,r)\ ||F||_{L^1_{a,z} L^{q'}_t L^{r'}_x }.
\end{align}
Inserting \eqref{pop2} into \eqref{check}, and taking the supremum on all $F\in \mathscr S(\RN\times \R)$,  completes the proof of \eqref{bulk00}.

To establish \eqref{Dabound}, we use \eqref{Da} which gives for $F\in \mathscr S(\RN\times \R)$:
\begin{equation}\label{Daa}
\mathbb D_a(F)(X,t) = \int_0^t \mathbb S_a(t-\tau)(F(\cdot,\tau))(X) d\tau.
\end{equation}
We next want to show that for any $t\in \R$ and $z>0$ we have
\begin{equation}\label{opa5}
||\mathbb D_a(F)(\cdot,z,t)||_{L^r_x} \le C(d,a,r) \int_0^\infty I_\beta(h(\zeta,\cdot))(t) \zeta^a d\zeta, 
\end{equation}
with $h(\zeta,\tau)$ as in \eqref{hzetatau}. Once \eqref{opa5} is proved, we proceed as in the proof of \eqref{opa3}, \eqref{opa4}, and obtain \eqref{Dabound}. To prove \eqref{opa5}, we use \eqref{Sa} and Proposition \ref{P:fundis} to find
\begin{equation}\label{opa6}
||\mathbb D_a(F)(\cdot,z,t)||_{L^r_x} \le C(d,a,r) \int_0^t \frac{||S_a(t-\tau)(F(\cdot,\cdot,\tau))(\cdot,z)||_{L^{r'}_x}}{|t-\tau|^{d(\frac 12 - \frac 1r)}} d\tau.
\end{equation}
Next, we apply \eqref{good} to find for every $x\in \Rd$ and $z>0$
\begin{align*}
& |S_a(t-\tau)(F(\cdot,\cdot,\tau))(x,z)| \le ||S_a(t-\tau)(F(\cdot,\cdot,\tau))(x,\cdot)||_{L^\infty_z} \le \frac{C(a)}{|t-\tau|^{\frac{a+1}2}} \int_0^\infty |F(x,\zeta,\tau)| \zeta^a d\zeta.
\end{align*}
Applying Minkowski's integral inequality, we obtain from this estimate
\begin{align*}
& ||S_a(t-\tau)(F(\cdot,\cdot,\tau))(\cdot,z)||_{L^{r'}_x} \le \frac{C(a)}{|t-\tau|^{\frac{a+1}2}} \int_0^\infty ||F(\cdot,\zeta,\tau)||_{L^{r'}_x} \zeta^a d\zeta
\end{align*}
Inserting this estimate in \eqref{opa6}, we find
\begin{align*}
& ||\mathbb D_a(F)(\cdot,z,t)||_{L^r_x} \le C(d,a,r) \int_0^t \int_0^\infty \frac{||F(\cdot,\zeta,\tau)||_{L^{r'}_x}}{|t-\tau|^{\frac{a+1}2 + d(\frac 12 - \frac 1r)}} \zeta^a d\zeta d\tau
\\
& \le C(d,a,r)  \int_0^\infty \int_\R \frac{||F(\cdot,\zeta,\tau)||_{L^{r'}_x}}{|t-\tau|^{\frac{a+1}2 + d(\frac 12 - \frac 1r)}} d\tau \zeta^a d\zeta,
\end{align*}
which establishes \eqref{opa5}. This completes the proof.

\end{proof}

We next discuss the regime $-1<a<0$. We have the following.

\begin{theorem}\label{T:bulknega}
Let $-1<a<0$ and suppose that the exponents $q, r$ and $q_\infty$ satisfy the conditions
\begin{equation}\label{uffa}
\frac 2{q} +\frac dr = \frac{d+1}2,\ \ \ \ \ \frac{2}{q_\infty} +\frac dr \le \frac{d+a+1}2.
\end{equation}
Moreover, we assume that $q>2$ and that $q_\infty<\infty$. For any $u_0\in \mathscr S(\RN)$ and $F\in \mathscr S(\RN\times \R)$ the unique mild solution to the problem \eqref{eppn} satisfies the following a priori estimate of Strichartz type
\begin{align}\label{bulk22}
||U k||_{L^\infty_z L^{q+q_\infty}_t L^r_x} \le C(d,a,r,q_\infty)\left[||u_0||_{L^2_a(\RN)} + ||F k^{-1}||_{L^1_{a,z} L^{q'\cap q'_\infty}_t L^{r'}_x }\right].
\end{align}
\end{theorem}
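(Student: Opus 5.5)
We follow the proof of Theorem \ref{T:Tstarbulk} step by step, with three changes. The weighted dispersive estimate \eqref{weight} replaces \eqref{good}. Lemma \ref{L:HLS} replaces the classical Hardy--Littlewood--Sobolev theorem. The duality of Remark \ref{R:duality} for sum and intersection spaces replaces ordinary $L^q$ duality. Write $U=\mathbb T^\star_a(u_0)+\mathbb D_a(F)$ as in \eqref{U00}. It then suffices to prove
\[
\|k\,\mathbb T^\star_a(u_0)\|_{L^\infty_z L^{q+q_\infty}_t L^r_x}\le C\|u_0\|_{L^2_a(\RN)},\qquad \|k\,\mathbb D_a(F)\|_{L^\infty_z L^{q+q_\infty}_t L^r_x}\le C\|Fk^{-1}\|_{L^1_{a,z}L^{q'\cap q'_\infty}_tL^{r'}_x}.
\]

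\emph{The $TT^\star$ kernel bound.} Fix $z>0$ and $t$, and use \eqref{stars} together with \eqref{prop2}. Proposition \ref{P:fundis} in $x$ gives
\[
\|\mathbb T_a^\star\mathbb T_a(F)(\cdot,z,t)\|_{L^r_x}\le C\int_\R |t-\tau|^{-d(\frac12-\frac1r)}\|S_a(t-\tau)F(\cdot,\cdot,\tau)(\cdot,z)\|_{L^{r'}_x}\,d\tau .
\]
Next we multiply by $k(z)$. For each fixed $x$ we apply \eqref{weight} to $\zeta\mapsto F(x,\zeta,\tau)$, and then use Minkowski's inequality in $x$. This gives
\[
k(z)\|S_a(s)F(\cdot,\cdot,\tau)(\cdot,z)\|_{L^{r'}_x}\le C\big(|s|^{-\frac{a+1}2}+|s|^{-\frac12}\big)\int_0^\infty \|F(\cdot,\zeta,\tau)\|_{L^{r'}_x}k(\zeta)^{-1}\zeta^a d\zeta .
\]
The resulting kernel is
\[
K(s)=|s|^{-d(\frac12-\frac1r)}\big(|s|^{-\frac{a+1}2}+|s|^{-\frac12}\big).
\]
Since $-1<a<0$, we have $|s|^{-\frac12}\ge |s|^{-\frac{a+1}2}$ for $|s|\le1$, and the reverse for $|s|\ge1$. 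Hence
\[
|K(s)|\lesssim |s|^{-\gamma_1}\ \text{for } |s|\le1,\qquad |K(s)|\lesssim |s|^{-\gamma_\infty}\ \text{for } |s|\ge1,
\]
where
\[
\gamma_1=\tfrac{d+1}2-\tfrac dr=\tfrac2q,\qquad \gamma_\infty=\tfrac{d+a+1}2-\tfrac dr\ge\tfrac2{q_\infty},
\]
by \eqref{uffa}. We apply Lemma \ref{L:HLS} with $p=q'$ and $p_\infty=q'_\infty$, as in \eqref{qq}. Then $\gamma_1=1+\frac1q-\frac1{q'}$ holds exactly, and $\gamma_\infty\ge 1+\frac1{q_\infty}-\frac1{q'_\infty}$ holds as an inequality. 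We need $0<\gamma_1,\gamma_\infty<1$. The condition $q>2$ gives $\gamma_1<1$, and $\gamma_\infty<\gamma_1<1$ because $a<0$. The condition $\gamma_\infty>0$ holds since $\gamma_\infty\ge 2/q_\infty>0$ (recall $q_\infty<\infty$). If the lemma needs $\gamma_\infty$ strictly below $1$ with equality in the second relation, we may simply weaken the bound for $|s|\ge1$ to $|s|^{-2/q_\infty}$. Lemma \ref{L:HLS} then yields, for each $\zeta$,
\[
\|h(\zeta,\cdot)\star K\|_{L^{q+q_\infty}_t}\le C\|h(\zeta,\cdot)\|_{L^{q'\cap q'_\infty}_t},\qquad h(\zeta,\tau)=\|F(\cdot,\zeta,\tau)\|_{L^{r'}_x}.
\]
Integrating in $\zeta$ with respect to $k(\zeta)^{-1}\zeta^a d\zeta$ (Minkowski) and taking the supremum over $z$, we obtain
\[
\|k\,\mathbb T^\star_a\mathbb T_a(F)\|_{L^\infty_zL^{q+q_\infty}_tL^r_x}\le C\|Fk^{-1}\|_{L^1_{a,z}L^{q'\cap q'_\infty}_tL^{r'}_x}.
\]

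\emph{Duality.} We need the weighted analogue of \eqref{pop}. Put $G=Fk^{-1}$, so that $F=kG$. The pairing $\langle\langle \mathbb T_a^\star\mathbb T_a(kG),kG\rangle\rangle=\langle\langle k\,\mathbb T_a^\star\mathbb T_a(kG),G\rangle\rangle$ is bounded by Remark \ref{R:duality} and the previous step. This gives
\[
\|\mathbb T_a(kG)\|^2_{L^2_a}\le C\|G\|^2_{L^1_{a,z}L^{q'\cap q'_\infty}_tL^{r'}_x}.
\]
Pairing $k\,\mathbb T_a^\star u_0$ against $G$ then gives
\[
\langle\langle k\,\mathbb T^\star_a u_0,G\rangle\rangle=\langle u_0,\mathbb T_a(kG)\rangle\le C\|u_0\|_{L^2_a}\|G\|,
\]
and taking the supremum over Schwartz $G$ of unit norm proves the homogeneous estimate.

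\emph{The Duhamel term.} For $\mathbb D_a$ we use \eqref{Daa}. We repeat the pointwise bound above with the time integral restricted to $[0,t]$ and bound it by the integral over all of $\R$ with $|K|$. This avoids any Christ--Kiselev argument, exactly as in \eqref{opa5}, and Lemma \ref{L:HLS} applies again.

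The main obstacle is the bookkeeping that places the weights correctly. The weight $k(z)$ must sit on the output while $k(\zeta)^{-1}$ is absorbed into the measure, and this has to be done consistently with \eqref{weight} applied fiberwise in $x$. In the duality step, the weights must be moved across the pairing in a way compatible with the sum/intersection duality $(L^{q+q_\infty})'=L^{q'\cap q'_\infty}$. Checking the hypotheses of Lemma \ref{L:HLS} is where \eqref{uffa} and $q>2$ are used.
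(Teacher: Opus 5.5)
Your proposal is correct and follows essentially the same route as the paper: the weighted $TT^\star$ identity $\|\mathbb T_a(F)\|^2_{L^2_a}=\langle\langle k\,\mathbb T_a^\star\mathbb T_a(F),Fk^{-1}\rangle\rangle$, the fiberwise use of \eqref{weight} followed by Minkowski, Lemma \ref{L:HLS} with $\gamma_1=\frac2q$ and $\gamma_\infty=\frac{d+a+1}2-\frac dr\ge\frac2{q_\infty}$, and the same crude bound of the Duhamel integral over $[0,t]$ by the full-line convolution with $|K|$. Your fallback of weakening the large-$|s|$ bound to $|s|^{-2/q_\infty}$ is harmless but unnecessary, since Lemma \ref{L:HLS} already allows the inequality $\gamma_\infty\ge 1+\frac1{q_\infty}-\frac1{q'_\infty}$.
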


\begin{proof}
We proceed as in the beginning of the proof of Theorem \ref{T:Tstarbulk}. In view of \eqref{U00}, the estimate \eqref{bulk22} will follow from the two inequalities
\begin{equation}\label{bulksum}
||\mathbb T^\star_a(u_0)k||_{L^\infty_z L^{q+q_\infty}_t L^r_x } \le C(d,a,r,q_\infty)\ ||u_0||_{L^2_a(\RN)},
\end{equation}
and 
\begin{equation}\label{crucial}
||\mathbb D_a(F) k||_{L^\infty_z L^{q+q_\infty}_t L^r_x} \le C(d,a,r,q_\infty)\  ||F k^{-1}||_{L^1_{a,z} L^{q'\cap q'_\infty}_t L^{r'}_x},
\end{equation}
for any $u_0\in \mathscr S(\RN)$ and $F\in \mathscr S(\RN\times \R)$.

To establish \eqref{bulksum}, we use duality  to estimate
\begin{align}\label{popo}
||\mathbb T_a(F)||^2_{L_a^2(\RN)} & = \sa \mathbb T_a(F),\mathbb T_a(F)\da = \sa\sa \mathbb T_a^\star \mathbb T_a(F) k,F k^{-1}\da\da
\\
&  \le ||\mathbb T_a^\star \mathbb T_a(F) k||_{L^\infty_z L^{q+q_\infty}_t L^r_x } ||F k^{-1}||_{L^1_{a,z} L^{q' \cap q'_\infty}_t L^{r'}_x },
\notag
\end{align}
where $k(z)$ is the weight in Definition \ref{D:mn}. Using \eqref{stars}, we now obtain for any $X = (x,z)$ and $t\in \R$
\begin{align*}
& \mathbb T_a^\star \mathbb T_a(F)(X,t) k(z)  = \int_\R \mathbb S_a(t-\tau)(F(\cdot,\tau))(X) k(z) d\tau
\\
& = \int_\R S(t-\tau)(S_a(t-\tau)(F(\cdot,\tau))(\cdot,z) k(z))(x) d\tau,
\end{align*}
where in the second equality we have used \eqref{prop2}. As in \eqref{opa}, this gives for every $z>0$ and $t\in \R$
\begin{align}\label{opa7}
& ||\mathbb T_a^\star \mathbb T_a(F)(\cdot,z,t) k(z)||_{L^r_x} \le C(d,r) \int_\R \frac{||S_a(t-\tau)(F(\cdot,\tau))(\cdot,z) k(z))||_{L^{r'}_x}}{|t-\tau|^{d(\frac 12 - \frac 1r)}} d\tau,
\end{align}
where as before we have used Proposition \ref{P:fundis}.
By \eqref{weight} in Proposition \ref{P:dis}, for any $x\in \Rd$ and $z>0$ we find 
\begin{align*}
& |S_a(t-\tau)(F(\cdot,\cdot,\tau))(x,z) k(z)|\le ||S_a(t-\tau)(F(\cdot,\cdot,\tau))(x,\cdot) k||_{L^\infty_z}
\\
& \le C(a)\left\{|t-\tau|^{-\frac{a+1}2} + |t-\tau|^{-\frac{1}2}\right\} \int_0^\infty |F(x,\zeta,\tau)| k^{-1}(\zeta) \zeta^a d\zeta.
\end{align*}
Minkowski's integral inequality gives
\begin{align*}
& ||S_a(t-\tau)(F(\cdot,\tau))(\cdot,z) k(z))||_{L^{r'}_x} \le C(a)\left\{(|t-\tau|^{-\frac{a+1}2} + |t-\tau|^{-\frac{1}2})\right\} \int_0^\infty ||F(\cdot,\zeta,\tau)||_{L^{r'}_x} k^{-1}(\zeta) \zeta^a d\zeta.
\end{align*}
Inserting this estimate in \eqref{opa7}, gives
\begin{align}\label{opa7bis}
& ||\mathbb T_a^\star \mathbb T_a(F)(\cdot,z,t) k(z)||_{L^r_x}
\\
& \le C(d,a,r) \int_0^\infty  \int_\R \frac{||F(\cdot,\zeta,\tau)||_{L^{r'}_x} d\tau}{|t-\tau|^{d(\frac 12 - \frac 1r)}\left\{(|t-\tau|^{\frac{a+1}2} + |t-\tau|^{\frac{1}2})\right\}} k^{-1}(\zeta) \zeta^a d\zeta
\notag\\
& = C(d,a,r)\ \int_0^\infty (K\star h(\zeta,\cdot))(t) k^{-1}(\zeta) \zeta^a d\zeta,
\notag
\end{align}
where $h(\zeta,\tau)$ is as in \eqref{hzetatau}, and we have set
\begin{equation}\label{K}
K(t) := \left\{|t|^{-\frac{a+1}2} + |t|^{-\frac{1}2}\right\} |t|^{-d(\frac 12 - \frac 1r)}.
\end{equation}

We now apply Lemma \ref{L:HLS} with 
\[
\gamma_1 = \frac{1}2 + d(\frac 12 - \frac 1r) = \frac{d+1}2 - \frac dr = \frac 2q,\ \ \ \ \ \ \gamma_\infty = \frac{d+a+1}2 - \frac dr,
\]
where the last equality in the expression of $\gamma_1$ is justified by the first equation in \eqref{uffa}.
To apply Lemma \ref{L:HLS} we must know that $0<\gamma_1, \gamma_\infty <1$.  
Note that, since $-1<a<0$, we have $0<\frac{a+1}2 < \frac 12$, and so $\gamma_\infty<\gamma_1$. Since $\gamma_1 = \frac 2q$, the hypothesis $q>2$ implies $\gamma_1<1$, and therefore also $\gamma_\infty<1$. On the other hand, in view of the second inequality in \eqref{uffa}, the hypothesis $q_\infty <\infty$ implies $\gamma_\infty \ge  \frac{2}{q_\infty} >0$, and therefore we also have $\gamma_1>0$. From $\gamma_\infty<\gamma_1$ and \eqref{K}, we have: 
\[
K(t) \le C \begin{cases} 
|t|^{-\gamma_1},  \ \ \ \ |t|\le 1,
\\
|t|^{-\gamma_\infty},\ \ \ \ |t|\ge 1.
\end{cases}
\]
By Lemma \ref{L:HLS} we infer that there exists a constant $C=C(d,a,r,q_\infty)>0$ such that  
\begin{equation}\label{Kstar}
||K\star h(\zeta,\cdot)||_{L^{q+q_\infty}_t}\ \le C\ ||h||_{L^{q'\cap q'_\infty}_t} = C\ ||F(\cdot,\zeta,\cdot)||_{L^{q'\cap q'_\infty}_t L^{r'}_x},
\end{equation}
where in the last equality we have used \eqref{hzetatau}. Inserting \eqref{Kstar} into \eqref{opa7bis}, we find for any $z>0$
\[
||\mathbb T_a^\star \mathbb T_a(F)(\cdot,z,\cdot) k(z)||_{L^{q+q_\infty}_t L^r_x} \le C(d,a,r,q_\infty) \int_0^\infty ||F(\cdot,\zeta,\cdot)||_{L^{q'\cap q'_\infty}_t L^{r'}_x} k^{-1}(\zeta) \zeta^a d\zeta.
\]
By taking the supremum in $z>0$, we finally obtain the following basic estimate
\begin{align}\label{opa8}
||\mathbb T_a^\star \mathbb T_a(F) k||_{L^\infty_z L^{q+q_\infty}_t L^r_x} & \le C(d,a,r,q_\infty)  \int_0^\infty ||F(\cdot,\zeta,\cdot)||_{L^{q'\cap q'_\infty}_t L^{r'}_x} k^{-1}(\zeta) \zeta^a d\zeta
\\
& = C(d,a,r,q_\infty) ||F k^{-1}||_{L^1_{a,z} L^{q'\cap q'_\infty}_t L^{r'}_x}.
\notag
\end{align}

Returning to \eqref{popo}, and using \eqref{opa8},  we finally obtain 
\begin{align}\label{popopo}
||\mathbb T_a(F)||_{L_a^2(\RN)} & \le C(d,a,r,q_\infty) ||F k^{-1}||_{L^1_{a,z} L^{q' \cap q'_\infty}_t L^{r'}_x}.
\end{align}
From \eqref{check} and \eqref{popopo} we find
\begin{align}\label{check3}
\sa\sa \mathbb T^\star_a(u_0),F \da\da & \le\  C(d,a,r,q_\infty) ||u_0||_{L_a^2(\RN)}  ||F k^{-1}||_{L^1_{a,z} L^{q' \cap q'_\infty}_t L^{r'}_x}.
\end{align}
If in \eqref{check3} we now take the supremum on all measurable $F$ such that $||F k^{-1}||_{L^1_{a,z} L^{q' \cap q'_\infty}_t L^{r'}_x}\le 1$, we finally obtain \eqref{bulksum} from Remark \ref{R:duality}.

We next establish \eqref{crucial}. From \eqref{Daa} and Proposition \ref{P:fundis}, arguing as in \eqref{opa6}, for any $t\in \R$ and $z>0$ we find
\[
||\mathbb D_a(F)(\cdot,z,t) k(z)||_{L^r_x} \le C(d,r) \int_0^t \frac{||S_a(t-\tau)(F(\cdot,\cdot,\tau))(\cdot,z) k(z)||_{L^{r'}_x}}{|t-\tau|^{d(\frac 12 - \frac 1r)}}\, d\tau.
\]
By \eqref{weight} in Proposition \ref{P:dis}, for any $x\in \Rd$ and $z>0$,
\[
|S_a(t-\tau)(F(\cdot,\cdot,\tau))(x,z)\, k(z)| \le C(a)\left\{|t-\tau|^{-\frac{a+1}2} + |t-\tau|^{-\frac 12}\right\} \int_0^\infty |F(x,\zeta,\tau)|\, k^{-1}(\zeta)\, \zeta^a d\zeta,
\]
and therefore Minkowski's integral inequality gives
\[
||S_a(t-\tau)(F(\cdot,\tau))(\cdot,z)k(z)||_{L^{r'}_x} \le C(a)\left\{|t-\tau|^{-\frac{a+1}2} + |t-\tau|^{-\frac 12}\right\} \int_0^\infty ||F(\cdot,\zeta,\tau)||_{L^{r'}_x}\, k^{-1}(\zeta)\, \zeta^a d\zeta.
\]
Combining the latter two displays, with $K$ as in \eqref{K} and $h(\zeta,\tau)$ as in \eqref{hzetatau} we obtain, for every $z>0$ and $t\in \R$,
\[
||\mathbb D_a(F)(\cdot,z,t) k(z)||_{L^r_x} \le C(d,a,r) \int_0^\infty (K\star h(\zeta,\cdot))(t)\, k^{-1}(\zeta)\, \zeta^a d\zeta.
\]
The right-hand side is independent of $z$. Taking the $L^{q+q_\infty}_t$ norm, using Minkowski's integral inequality once more and then \eqref{Kstar} -- which rests on Lemma \ref{L:HLS}, applied with the same exponents $\gamma_1, \gamma_\infty$ as above -- we conclude
\[
||\mathbb D_a(F)(\cdot,z,\cdot)k(z)||_{L^{q+q_\infty}_t L^r_x} \le C(d,a,r,q_\infty) \int_0^\infty ||F(\cdot,\zeta,\cdot)||_{L^{q'\cap q'_\infty}_t L^{r'}_x}\, k^{-1}(\zeta)\, \zeta^a d\zeta = C\, ||F k^{-1}||_{L^1_{a,z} L^{q'\cap q'_\infty}_t L^{r'}_x},
\]
and \eqref{crucial} follows by taking the supremum over $z>0$.

\end{proof}


\section{Boundary Strichartz estimates}\label{S:bdry}

In this section we assume that $-1<a<1$ and establish Strichartz estimates for the following problem
\begin{equation}\label{nozero2}
\begin{cases}
\p_t V - i \left\{\mathscr B_z^{(a)} V +  \Delta_x V\right\} = 0,\ \ \ (X,t)\in \R_+^{d+1}\times (0,\infty)
\\
\underset{z\to 0^+}{\lim} z^a \p_z V(X,t) = \Phi(x,t),\ \  \ \ V(X,0) = 0,
\end{cases}
\end{equation}
which corresponds to taking $u_0(X) \equiv 0$, $F(X,t) \equiv 0$ in problem \eqref{nozero}. According to Theorem \ref{T:U}, see also \eqref{Thetastar},
 the mild solution of \eqref{nozero2} is given by the integral operator
\begin{align}\label{V}
V(X,t) & =  \Theta_a^\star(\Phi)(X,t) := - i \int_0^{t} \int_{\Rd}  S_a(z,0,t-\tau)S(x,y,t-\tau) \Phi(y,\tau) dy d\tau
\\
& = - i \int_0^{t} S_a(z,0,t-\tau) \int_{\Rd}  S(x,y,t-\tau) \Phi(y,\tau) dy d\tau,
\notag
\end{align}
where $S_a(z,0,t)$ is as in \eqref{piccoloSa}, and where the constant $-i$ is the one already fixed in \eqref{Thetastar}, see Remark \ref{R:signcheck}. Since every estimate established in this section is one on moduli, this unimodular factor plays no role in the proofs; we nonetheless carry it explicitly, so that the normalisation of the boundary Duhamel operator is the same throughout the paper.
One notable aspect of the results in this section is that, because of the special form \eqref{piccoloSa}, \eqref{V} of the boundary propagator, we will not be forced to distinguish between the regimes $1>a\ge 0$ and $-1<a<0$, as we did for the Strichartz estimates in the bulk in Section \ref{S:stri}. 
It is useful to obtain a representation of the operator $\Theta_a$ of which $\Theta^\star_a$ is the adjoint. 

\begin{proposition}\label{P:rep}
For every $V\in \mathscr S(\RN\times \R)$ we have
\begin{align}\label{T}
\Theta_a(V)(y,\tau) & = i \int_\tau^{\infty} \int_0^\infty   S_a(z,0,\tau-\sigma) \int_{\Rd}   S(y,x,\tau-\sigma) V(x,z,\sigma) dx d\sigma  z^a dz
\\
& = i \int_\tau^{\infty} \int_{\RN} \mathbb S_a(Y_0,X,\tau-\sigma) V(X,\sigma) d\omega_a(X) d\sigma
\notag
\\
& = i \int_\tau^{\infty} \mathbb S_a(\tau-\sigma)(V(\cdot,\sigma))(Y_0) d\sigma.
\notag
\end{align}
\end{proposition}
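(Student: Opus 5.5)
We compute $\Theta_a$ by duality, the same way $\mathbb T_a$ was obtained in Lemma \ref{L:T}. The pairing is $\sa\sa V,\Theta_a^\star(\Phi)\da\da$ in $L^2_t L^2_a(\RN)$ on one side and $\sa \Theta_a(V),\Phi\da$ in $L^2_t L^2_x(\Rd)$ on the other. We fix $V\in\mathscr S(\RN\times\R)$ and $\Phi\in\mathscr S(\R^{d+1})$. Since $\Theta^\star_a$ is the adjoint of $\Theta_a$, the defining identity is
\[
\sa \Theta_a(V),\Phi\da=\sa\sa V,\Theta_a^\star(\Phi)\da\da=\int_\R\int_{\RN} V(X,\sigma)\,\overline{\Theta_a^\star(\Phi)(X,\sigma)}\,d\omega_a(X)\,d\sigma .
\]
We substitute \eqref{V} for $\Theta_a^\star(\Phi)$. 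Conjugation turns $-i$ into $i$. By \eqref{conj} and \eqref{grandeSa}, $\overline{S_a(z,0,\sigma-\tau)S(x,y,\sigma-\tau)}=S_a(z,0,\tau-\sigma)S(y,x,\tau-\sigma)$, using also the symmetry $S(x,y,t)=S(y,x,t)$ of \eqref{S}. This yields
\[
\int_\R\int_{\RN}V(X,\sigma)\, i\int_0^{\sigma}\int_{\Rd} S_a(z,0,\tau-\sigma)S(y,x,\tau-\sigma)\overline{\Phi(y,\tau)}\,dy\,d\tau\,d\omega_a(X)\,d\sigma .
\]

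Next we exchange the order of integration with Fubini, putting $(y,\tau)$ outermost. The range $0<\tau<\sigma$, for $\sigma>0$, becomes $\sigma>\tau$ with $\tau>0$. The oriented integral $\int_0^\sigma$ for $\sigma<0$ is handled the same way: it gives $\sigma<\tau<0$ with a sign, i.e. the same formal expression $\int_\tau^\infty$ read with the orientation convention of \eqref{Thetastar}, as for $t<0$. Reading off the coefficient of $\overline{\Phi(y,\tau)}$ then gives the first line of \eqref{T}. The second line follows from \eqref{Sa} and \eqref{grandeSa}, i.e. $\mathbb S_a(Y_0,X,t)=S_a(0,z,t)S(y,x,t)$ together with the symmetry $S_a(0,z,t)=S_a(z,0,t)$. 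The third line is then just the definition \eqref{prop} of $\mathbb S_a(t)$, evaluated at $Y_0$ as in \eqref{Sat0}.

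The only nontrivial step is justifying Fubini. The kernel is not absolutely integrable near $\sigma=\tau$: $|S_a(z,0,t)S(x,y,t)|\sim|t|^{-\frac{d+a+1}2}$, which is not integrable in $t$ when $d\ge1$. To handle this, we first carry out the inner $x$-integration, $\int_{\Rd} S(y,x,\tau-\sigma)V(x,z,\sigma)\,dx=S(\tau-\sigma)V(\cdot,z,\sigma)(y)$. This is a unitary Schr\"odinger flow applied to a Schwartz function, hence bounded uniformly in time. The $z$-integral $\int_0^\infty S_a(z,0,\tau-\sigma)(\cdots)z^a\,dz$ equals $S_a(\tau-\sigma)(\cdot)(0)$ by \eqref{Sat0}. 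It is bounded uniformly in $\tau-\sigma$, since the evolution of a Schwartz function under the Neumann flow stays bounded. Alternatively, one can bound it by $|\tau-\sigma|^{-\frac{a+1}2}$, which is locally integrable because $a<1$. After this regrouping all remaining integrands are absolutely integrable on compact $\sigma$-ranges. For large $|\tau-\sigma|$ we use the decay of $V$ in $\sigma$, together with the $|t|^{-\frac{a+1}2}$ factor if needed. If necessary we cut off $|\sigma-\tau|>\ve$ and let $\ve\to0$, so the remaining steps are routine.
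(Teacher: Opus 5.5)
Your proposal follows the paper's proof: pair $V$ with $\Theta^\star_a(\Phi)$, use the conjugation identities for $S_a(z,0,\cdot)$ and $S$, exchange the order of integration to read off the kernel, and obtain the other two lines from \eqref{Sat0} and \eqref{prop2}. Your justification of the exchange supplies a step the paper leaves implicit: you perform the $x$-integral first, so that the only remaining singular factor is $|S_a(z,0,\tau-\sigma)|\sim|\tau-\sigma|^{-\frac{a+1}2}$, which is integrable since $a<1$.

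One correction concerns your aside on $\sigma<0$. The paper pairs only over $t\in(0,\infty)$, the time range of \eqref{nozero2}, so \eqref{T} is meant for $\tau>0$. For $\tau<0$ the same exchange of integrals yields $-i\int_{-\infty}^{\tau}$, not $i\int_\tau^\infty$.
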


\begin{proof}
Keeping in mind that, by \eqref{V}, the operator $\Theta_a^\star$ carries the factor $-i$, so that its complex conjugate contributes the factor $\overline{-i} = i$, we have
\begin{align*}
& \sa \Theta_a(V),\Phi\da = \sa\sa V,\Theta_a^\star(\Phi)\da\da = \int_0^\infty \int_{\RN} V(X',t) \overline{\Theta_a^\star(\Phi)(X',t)} d\omega_a(X') dt
\\
& = i \int_0^\infty \int_{\RN} V(X',t) \int_0^{t} \int_{\Rd}  \overline{S_a(z,0,t-\tau)}\ \overline{S(x',y,t-\tau)}\ \overline{\Phi(y,\tau)} dy d\tau d\omega_a(X') dt
\\
& = i \int_0^\infty \int_{\Rd} \overline{\Phi(y,\tau)}  \int_0^\infty  \int_\tau^{\infty} S_a(z,0,\tau-t) \int_{\Rd}   S(y,x',\tau-t) V(x',z,t) dx' dt  z^a dz dy d\tau
\\
& = \int_0^\infty \int_{\Rd} \overline{\Phi(y,\tau)} \left(i \int_\tau^{\infty} \int_0^\infty   S_a(z,0,\tau-\sigma) \int_{\Rd}   S(y,x',\tau-\sigma) V(x',z,\sigma) dx' d\sigma  z^a dz\right) dy d\tau,
\end{align*}
where we have used that $\overline{S_a(z,0,t-\tau)} = S_a(z,0,\tau-t)$ and $\overline{S(x',y,t-\tau)} = S(y,x',\tau-t)$. The passage from the first to the third line of \eqref{T} is the boundary evaluation \eqref{Sat0}, applied in the $z$ variable, together with the factorisation $\mathbb S_a(t) = S(t)\circ S_a(t)$ of \eqref{prop2}.
This identity proves \eqref{T}.

\end{proof}

Before proving the next result, we recall two classical integrals, see, e.g., \cite[formulas 3.761.4 \& 3.761.9, p.420-1]{GR}.

\begin{lemma}\label{L:cis}
Let $b>0$ and $0<\Re \mu<1$. Then, as improper Riemann integrals,
\begin{equation}\label{sincosy}
\int_0^\infty y^{\mu-1} \sin(by) dy = \frac{\G(\mu)}{b^\mu} \sin \frac{\mu \pi}{2},\ \ \ \ \ \ \int_0^\infty y^{\mu-1} \cos(by) dy = \frac{\G(\mu)}{b^\mu} \cos \frac{\mu \pi}{2}.
\end{equation}
Therefore,
\begin{equation}\label{ei}
\int_0^\infty y^{\mu-1} e^{\pm i by} dy = \frac{\G(\mu)}{b^\mu} e^{\pm i\frac{\mu \pi}{2}}.
\end{equation} 
\end{lemma}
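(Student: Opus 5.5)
The statement to prove is Lemma \ref{L:cis}, two classical Fresnel/Mellin-type integrals; (\ref{ei}) follows from (\ref{sincosy}) by Euler's formula. So the work is to establish the sine and cosine formulas for $b>0$, $0<\Re\mu<1$. The substitution $y\mapsto y/b$ removes $b$ and produces the factor $b^{-\mu}$, so I reduce to $b=1$. I will obtain both formulas at once by computing
\[
I(\mu):=\lim_{R\to\infty}\int_0^R y^{\mu-1}e^{iy}\,dy
\]
and showing that $I(\mu)=\Gamma(\mu)e^{i\mu\pi/2}$. The conjugate formula then follows in the same way, with the contour rotated the other way. Taking half-sums and half-differences of the two formulas gives the cosine and sine identities.

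First, convergence as an improper Riemann integral. Near $0$ the integrand is bounded by $y^{\Re\mu-1}$, which is integrable since $\Re\mu>0$. At infinity I integrate by parts, writing $e^{iy}=-i\,\frac{d}{dy}e^{iy}$. This produces a boundary term $O(R^{\Re\mu-1})\to0$ plus an integral of $(\mu-1)y^{\mu-2}e^{iy}$, which converges absolutely because $\Re\mu<1$.

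Next, the evaluation, by Cauchy's theorem applied to $f(w)=w^{\mu-1}e^{-w}$ on the principal branch. I integrate over the boundary of the sector $\{\varepsilon<|w|<R,\ 0<\arg w<\pi/2\}$, where $f$ is holomorphic.
\begin{itemize}
\item On the positive real axis the integral tends to $\Gamma(\mu)$.
\item On the imaginary axis I parametrize $w=iy$, so that $w^{\mu-1}=e^{i\pi(\mu-1)/2}y^{\mu-1}$ and $dw=i\,dy$. This segment therefore contributes $-e^{i\pi\mu/2}\int_\varepsilon^R y^{\mu-1}e^{-iy}\,dy$. Orienting the sector the other way, equivalently using $w=-iy$, gives the $e^{+iy}$ version with $e^{-i\pi\mu/2}$. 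I will simply track whichever sign convention matches (\ref{ei}).
\item The small arc contributes $O(\varepsilon^{\Re\mu})\to0$.
\end{itemize}

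The main obstacle is the large arc, where $e^{-w}$ gives no decay near $\arg w=\pi/2$. There I apply Jordan's lemma. On $w=Re^{i\theta}$ we have $|e^{-w}|=e^{-R\cos\theta}$, and $\cos\theta\ge 1-2\theta/\pi$ on $[0,\pi/2]$. Hence the arc contribution is bounded by
\[
C R^{\Re\mu}\int_0^{\pi/2}e^{-R(1-2\theta/\pi)}\,d\theta\le C' R^{\Re\mu-1},
\]
and this tends to $0$ precisely because $\Re\mu<1$. The factor $|w^{\mu-1}|$ also carries $e^{-\Im\mu\,\theta}$, which is bounded on the arc and so harmless.

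Letting $\varepsilon\to0$ and $R\to\infty$ yields $\int_0^\infty y^{\mu-1}e^{\pm iy}\,dy=\Gamma(\mu)e^{\pm i\mu\pi/2}$. Rescaling by $b$ and taking real and imaginary combinations gives (\ref{sincosy}). Alternatively one may simply cite \cite{GR}, as the paper does.
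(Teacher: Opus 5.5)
Your proof is correct, and it takes a different route from the paper, which gives no argument at all: it simply quotes formulas 3.761.4 and 3.761.9 of Gradshteyn--Ryzhik.

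You instead give a self-contained derivation by rotating the contour of $\int_0^\infty w^{\mu-1}e^{-w}\,dw$ onto the imaginary axis, using the bound $\cos\theta\ge 1-2\theta/\pi$ to control the large arc. This shows exactly where each hypothesis enters. The condition $\Re\mu>0$ handles the small arc and the integrability at $0$. The condition $\Re\mu<1$ gives the $O(R^{\Re\mu-1})$ decay on the large arc and the convergence at infinity. Your argument also covers complex $\mu$ in the strip at no extra cost; the citation is shorter but opaque.

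Three small points of bookkeeping:
\begin{itemize}
\item The first-quadrant sector yields, as you correctly computed, $\int_0^\infty y^{\mu-1}e^{-iy}\,dy=\Gamma(\mu)e^{-i\mu\pi/2}$. That is not the $e^{+iy}$ formula you announced at the outset.
\item The other sign comes from the fourth-quadrant sector rather than from a change of orientation. There the segment $w=-iy$ carries the prefactor $e^{-i\pi\mu/2}$, so the integral itself picks up $e^{+i\pi\mu/2}$. It follows more quickly by conjugating the first identity and replacing $\mu$ by $\bar\mu$, which stays in the strip.
\item Since $\mu$ may be complex, the last step must be the half-sum/half-difference you describe earlier, not ``real and imaginary parts.''
\end{itemize}
For the paper's uses, in Remark \ref{R:signcheck} and Lemma \ref{L:piccoloSa}, only real $\mu=\frac{a+1}2\in(0,1)$ is needed.
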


With the aid of Lemma \ref{L:cis}, we obtain the following auxiliary result.

\begin{lemma}\label{L:piccoloSa}
Let $-1<a<1$, and suppose that $\tau$ and $\sigma$ both lie strictly between $0$ and $t$, i.e., that either $0<\tau,\sigma<t$, or $t<\tau,\sigma<0$. Then, as an improper (Fresnel) integral,
\begin{align}\label{ohoh}
& \int_0^\infty S_a(z,0,t-\tau)\overline{S_a(z,0,t-\sigma)} z^a dz =
\frac{2^{-a}}{\Gamma\!\left(\frac{a+1}{2}\right)}
\,\frac{\exp\!\left(i\,\frac{(a+1)\pi}{4}\,\operatorname{sgn}(\tau-\sigma)\right)}{|\tau-\sigma|^{\frac{a+1}{2}}}.
\end{align}
\end{lemma}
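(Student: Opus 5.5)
Since $S_a(z,0,t)$ is given explicitly by \eqref{piccoloSa}, the integral in \eqref{ohoh} is a one–dimensional Fresnel integral in $z$, and the plan is to reduce it by a change of variables to Lemma \ref{L:cis}. We treat the case $0<\tau,\sigma<t$ in detail; the case $t<\tau,\sigma<0$ follows by the identical computation, or by conjugation via $\overline{S_a(z,0,s)} = S_a(z,0,-s)$. If $\tau=\sigma$ the right-hand side is infinite, so we assume $\tau\neq\sigma$.

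\textbf{Step 1: the integrand.} Set $A=t-\tau>0$ and $B=t-\sigma>0$. Both times have the same sign, so the unimodular phases $e^{\mp i\frac{(a+1)\pi}{4}}$ in \eqref{piccoloSa} cancel in the product $S_a(z,0,A)\overline{S_a(z,0,B)}$. We obtain
\[
S_a(z,0,A)\overline{S_a(z,0,B)} = \frac{2^{-2a}}{\Gamma(\frac{a+1}2)^2}\,(AB)^{-\frac{a+1}2}\, e^{i\frac{z^2}{4}\left(\frac1A-\frac1B\right)} .
\]
Note that $\frac1A-\frac1B = \frac{B-A}{AB} = \frac{\tau-\sigma}{AB}$, whose sign is $\operatorname{sgn}(\tau-\sigma)$. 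Write $b := \frac{|\tau-\sigma|}{4AB}$.

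\textbf{Step 2: reduction to \eqref{ei}.} Substitute $y=z^2$, so that $z^a\,dz = \frac12 y^{\frac{a+1}2-1}dy$. The integral becomes
\[
\frac{2^{-2a}}{2\,\Gamma(\frac{a+1}2)^2}(AB)^{-\frac{a+1}2}\int_0^\infty y^{\mu-1}e^{\pm i b y}\,dy, \qquad \mu=\frac{a+1}2\in(0,1),
\]
where the sign is $\operatorname{sgn}(\tau-\sigma)$. The condition $-1<a<1$ is exactly what places $\mu$ in the range $0<\mu<1$ required by Lemma \ref{L:cis}. The Fresnel integral is understood as an improper Riemann integral, consistent with the statement. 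Applying \eqref{ei}, it equals $\Gamma(\mu)\,b^{-\mu}e^{\pm i\mu\pi/2}$.

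\textbf{Step 3: simplification.} First, $b^{-\mu} = 4^{\mu}(AB)^{\mu}|\tau-\sigma|^{-\mu}$, so the factor $(AB)^{\mu}$ cancels the prefactor $(AB)^{-\frac{a+1}2}$. This cancellation is the key point: the result depends only on $\tau-\sigma$. Next, one factor $\Gamma(\mu)$ cancels. For the constants, $4^{\mu}=2^{a+1}$, hence
\[
2^{-2a}\cdot\tfrac12\cdot 2^{a+1} = 2^{-a}.
\]
Finally, the phase is $e^{i\frac{(a+1)\pi}{4}\operatorname{sgn}(\tau-\sigma)}$. Together these give \eqref{ohoh}.

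\textbf{Main obstacle.} There is no genuine analytic difficulty. The points needing care are the bookkeeping of signs and constants, and confirming that the phases in \eqref{piccoloSa} cancel, which uses that $t-\tau$ and $t-\sigma$ have the same sign. One should also note that the integral is only conditionally convergent, so Lemma \ref{L:cis} must be invoked in its improper-integral form. The substitution $y=z^2$ is legitimate for improper integrals because it is monotone on $(0,\infty)$.
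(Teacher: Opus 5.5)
Your proposal is correct and follows the paper's proof essentially step for step. You cancel the constant phases of \eqref{piccoloSa} using that $t-\tau$ and $t-\sigma$ have the same sign, then rescale $z^2$ to reduce to the Fresnel integral \eqref{ei} with $\mu=\frac{a+1}2\in(0,1)$; the only cosmetic difference is that you substitute $y=z^2$ and apply \eqref{ei} with a general $b$, whereas the paper absorbs $b$ into the substitution $\zeta = z^2\frac{\tau-\sigma}{4(t-\tau)(t-\sigma)}$ and treats the two signs of $\tau-\sigma$ separately.
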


\begin{proof}
The hypothesis guarantees that $t-\tau$ and $t-\sigma$ are both nonzero and of the same sign, so that in the product $S_a(z,0,t-\tau)\overline{S_a(z,0,t-\sigma)}$ the two constant phases $e^{\mp i\operatorname{sgn}(t-\tau)\frac{(a+1)\pi}4}$, $e^{\pm i \operatorname{sgn}(t-\sigma)\frac{(a+1)\pi}4}$ of \eqref{piccoloSa} cancel, and that $(t-\tau)(t-\sigma)>0$. Using \eqref{piccoloSa}, we thus find
\begin{align*}
& \int_0^\infty S_a(z,0,t-\tau)\overline{S_a(z,0,t-\sigma)} z^a dz
\\
& = \frac{2^{-2a}}{\G(\frac{a+1}2)^2|t-\tau|^{\frac{a+1}2}|t-\sigma|^{\frac{a+1}2}} \int_0^\infty  e^{i z^2\left[\frac{1}{4(t-\tau)} - \frac{1}{4(t-\sigma)}\right]} z^{a+1} \frac{dz}z
\\
&  =  \frac{2^{-2a}}{\G(\frac{a+1}2)^2|t-\tau|^{\frac{a+1}2}|t-\sigma|^{\frac{a+1}2}} \int_0^\infty  e^{i z^2\left[\frac{\tau-\sigma}{4(t-\tau)(t-\sigma)}\right]} z^{a+1} \frac{dz}z.
\end{align*}
Suppose $\tau-\sigma>0$. Then the change of variable $\zeta = z^2 \frac{\tau-\sigma}{4(t-\tau)(t-\sigma)}$, gives
\begin{align*}
& \int_0^\infty  e^{i z^2\left[\frac{\tau-\sigma}{4(t-\tau)(t-\sigma)}\right]} z^{a+1} \frac{dz}z = \frac{2^{a}|t-\tau|^{\frac{a+1}2}|t-\sigma|^{\frac{a+1}2}}{(\tau-\sigma)^{\frac{a+1}2}}\int_0^\infty  e^{i \zeta} \zeta^{\frac{a+1}2-1}d\zeta
\\
& = \frac{2^{a}|t-\tau|^{\frac{a+1}2}|t-\sigma|^{\frac{a+1}2}}{(\tau-\sigma)^{\frac{a+1}2}} \G(\frac{a+1}2) e^{i\frac{(a+1)\pi}{4}},
\end{align*}
where in the last equality we have applied \eqref{ei} in Lemma \ref{L:cis} with $\mu = \frac{a+1}2$ (the change of variable $\zeta = z^2\frac{\tau-\sigma}{4(t-\tau)(t-\sigma)}$ produces the factor $\frac 12$, since $z^a dz = \frac 12 \zeta^{\frac{a+1}2 -1}\left(\frac{4(t-\tau)(t-\sigma)}{\tau-\sigma}\right)^{\frac{a+1}{2}} d\zeta$). Notice that, since we are assuming $-1<a<1$, we have $0<\mu<1$. If instead $\tau-\sigma<0$, then we make the change of variable $\zeta = z^2 \frac{\sigma-\tau}{4(t-\tau)(t-\sigma)}$, obtaining 
\begin{align*}
& \int_0^\infty  e^{i z^2\left[\frac{\tau-\sigma}{4(t-\tau)(t-\sigma)}\right]} z^{a+1} \frac{dz}z =  \int_0^\infty  e^{- i z^2\left[\frac{\sigma-\tau}{4(t-\tau)(t-\sigma)}\right]} z^{a+1} \frac{dz}z
\\
&  = \frac{2^{a}|t-\tau|^{\frac{a+1}2}|t-\sigma|^{\frac{a+1}2}}{(\sigma-\tau)^{\frac{a+1}2}} \G(\frac{a+1}2) e^{-i\frac{(a+1)\pi}{4}},
\end{align*}
again by \eqref{ei}.
Combining these results, we obtain \eqref{ohoh}.

\end{proof}

We next establish the main result of this section. Before stating it, we stress that the assumption \eqref{bdryad} coincides with the admissibility condition \eqref{acinfty} only when $a\ge 0$. 

\begin{theorem}\label{T:V}
Let $-1<a<1$ and suppose that $q, r$
satisfy
\begin{equation}\label{bdryad}
\frac 2q + \frac dr = \frac{d+a+1}2,
\end{equation}
with $r\ge 2$ and 
 $q>2$. Then there exists $C(d,a,r)>0$ such that for any $\Phi\in \mathscr S(\R^{d+1})$ one has
\begin{equation}\label{ok}
||\Theta_a^\star(\Phi)||_{L^\infty_t L^2_a(\RN)} \le C(d,a,r)\ ||\Phi||_{L^{q'}_t L^{r'}_x}.
\end{equation}
Furthermore, we also have
\begin{equation}\label{okk}
||\Theta_a^\star(\Phi)||_{L^\infty_{z} L^q_t L^r_x } \le C(d,a,r)\ ||\Phi||_{L^{q'}_t L^{r'}_x}.
\end{equation}
\end{theorem}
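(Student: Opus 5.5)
For \eqref{ok} I will compute $\|\Theta_a^\star(\Phi)(\cdot,t)\|^2_{L^2_a(\RN)}$ directly rather than go through $TT^\star$ in the abstract. Fix $t>0$ (the case $t<0$ is symmetric) and write
\[
\Theta_a^\star(\Phi)(X,t)=-i\int_0^t S_a(z,0,t-\tau)\,g_\tau(x)\,d\tau,\qquad g_\tau:=S(t-\tau)\Phi(\cdot,\tau).
\]
Expanding the square, I will use Plancherel in $x$ and unitarity of $S(t)$ to get $\langle g_\tau,g_\sigma\rangle_{L^2_x}=\langle S(\sigma-\tau)\Phi(\cdot,\tau),\Phi(\cdot,\sigma)\rangle$. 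I then integrate first in $z$ with the weight $z^a$, invoking Lemma \ref{L:piccoloSa}; here $\tau,\sigma\in(0,t)$, exactly as that lemma requires. This yields
\[
\|\Theta_a^\star\Phi(t)\|^2_{L^2_a}=c_a\int_0^t\!\!\int_0^t \frac{e^{i\frac{(a+1)\pi}4\operatorname{sgn}(\tau-\sigma)}}{|\tau-\sigma|^{\frac{a+1}2}}\,\langle S(\sigma-\tau)\Phi(\cdot,\tau),\Phi(\cdot,\sigma)\rangle\,d\tau\,d\sigma .
\]
To bound the right-hand side I will apply Hölder in $x$ and Proposition \ref{P:fundis}, which controls the pairing by $|\tau-\sigma|^{-d(\frac12-\frac1r)}\|\Phi(\tau)\|_{L^{r'}}\|\Phi(\sigma)\|_{L^{r'}}$. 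The total kernel is then $|\tau-\sigma|^{-(\frac{d+a+1}2-\frac dr)}=|\tau-\sigma|^{-2/q}$ by \eqref{bdryad}. Since $q>2$ the exponent lies in $(0,1)$, so Hardy--Littlewood--Sobolev in time (as in \eqref{marcello}) bounds the double integral by $\|\Phi\|^2_{L^{q'}_tL^{r'}_x}$, uniformly in $t$. This gives \eqref{ok}.

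The step needing the most care is justifying the Fubini exchange that moves the $z$-integral inside, because the $z$-integral in Lemma \ref{L:piccoloSa} is only an improper Fresnel integral. I will handle this by inserting a cutoff $e^{-\varepsilon z^2}$ (equivalently, computing $\|\cdot\|_{L^2_a(z<R)}$), doing the Gaussian-Fresnel integral explicitly, and passing to the limit $\varepsilon\to0$ by dominated convergence. The regularized kernel is dominated by $C|\tau-\sigma|^{-\frac{a+1}2}$ uniformly in $\varepsilon$, since $|(\varepsilon-i\beta)^{-\mu}|\le|\beta|^{-\mu}$; since $\Phi\in\mathscr S$, this domination is integrable. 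I expect this justification, together with the fact that the estimate holds on the diagonal region $\tau\approx\sigma$, to be the main technical obstacle.

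For \eqref{okk} I will argue pointwise in $z$, using $|S_a(z,0,s)|=c_a|s|^{-\frac{a+1}2}$ from \eqref{piccoloSa}, which is independent of $z$. Minkowski's inequality together with Proposition \ref{P:fundis} gives
\[
\|\Theta_a^\star\Phi(\cdot,z,t)\|_{L^r_x}\le C\int_{\R}|t-\tau|^{-\frac{a+1}2-d(\frac12-\frac1r)}\|\Phi(\cdot,\tau)\|_{L^{r'}_x}\,d\tau=C\,I_\beta(\|\Phi\|_{L^{r'}_x})(t),
\]
with $1-\beta=2/q$, uniformly in $z\ge0$. Hardy--Littlewood--Sobolev in $t$, exactly as in \eqref{opa3}, then gives the $L^q_t$ bound, and taking the supremum over $z$ yields \eqref{okk}. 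The restriction to $0<\tau<t$ only decreases the integrand, so no Christ--Kiselev lemma is needed. Note that this argument never uses the sign of $a$, which is what makes the boundary estimates uniform in $-1<a<1$.
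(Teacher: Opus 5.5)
Your proposal is correct.

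\textbf{The estimate \eqref{ok}.} Here you follow essentially the paper's argument. You expand the square and insert $e^{-\varepsilon z^2}$ to justify Fubini. You then evaluate the Gaussian--Fresnel integral and let $\varepsilon\to0$ by dominated convergence, with majorant $C|\tau-\sigma|^{-\frac{a+1}2}\|\Phi(\cdot,\tau)\|_{L^2_x}\|\Phi(\cdot,\sigma)\|_{L^2_x}$. You finish with H\"older, Proposition \ref{P:fundis} and Hardy--Littlewood--Sobolev. A sharp cutoff $z<R$ is not literally ``equivalent'' to the Gaussian, but it also works, since $\sup_{M>0}|\int_0^M e^{iu}u^{\mu-1}du|<\infty$ for $0<\mu<1$.

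\textbf{The estimate \eqref{okk}.} Here you take a different route from the paper.
\begin{itemize}
\item The paper argues by duality. It pairs $\Theta_a^\star(\Phi)$ with $V\in\mathscr S(\RN\times\R)$ and passes to the adjoint $\Theta_a$ of Proposition \ref{P:rep}. It bounds $\|S_a(\sigma-\tau)(\overline{V(\cdot,\cdot,\sigma)})(0)\|_{L^{r'}_x}$ through the boundary evaluation \eqref{Sat0}, using that $|S_a(z,0,s)|=c(a)|s|^{-\frac{a+1}2}$ does not depend on $z$. It then applies HLS for each fixed $z$. Finally it recovers the $L^\infty_zL^q_tL^r_x$ norm from the identification $(L^1_{a,z}L^{q'}_tL^{r'}_x)^\star=L^\infty_zL^q_tL^r_x$ of Remark \ref{R:duality}.
\item You instead use the same $z$-independence pointwise in $z$, via Minkowski. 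This is exactly what the paper itself does for $\mathbb D_a$ in \eqref{Dabound}.
\end{itemize}

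Once moduli are taken, the two computations are dual to one another. The paper's final bound is your pointwise bound integrated against $\|V(\cdot,z,\sigma)\|_{L^{r'}_x}z^a$, so the essential input is identical. Your version has three advantages:
\begin{itemize}
\item it is shorter;
\item it needs neither the adjoint representation nor the duality and density facts;
\item it gives the bound for \emph{every} $z\ge0$, including $z=0$, rather than for a.e.\ $z$.
\end{itemize}
The last point is one the paper has to supply separately, through continuity in $z$, in the proof of Theorem \ref{T:gap}. The duality route mainly buys uniformity of presentation with the $TT^\star$ framework used for the bulk operators.
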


\begin{proof}

We first establish \eqref{ok}. Since, by \eqref{piccoloSa}, the modulus of $S_a(z,0,s)$ does not depend on $z$, the double $z$-integral which arises when the square $|\Theta_a^\star(\Phi)(X,t)|^2$ is expanded is \emph{not} absolutely convergent, and the identity \eqref{ohoh} itself holds only as an improper integral. To justify the computation we insert a Gaussian regularization. For $\ve>0$ we consider
\begin{align}\label{thetareg}
Q_\ve(t) := \int_{\Rd} \int_0^\infty e^{-\ve z^2}\, |\Theta_a^\star(\Phi)(X,t)|^2\, z^a\, dz\, dx.
\end{align}
By \eqref{piccoloSa}, Cauchy-Schwarz in $x$ and the unitarity of $S(\cdot)$ on $L^2(\Rd)$, for every fixed $z>0$
\begin{align*}
& \int_0^t \int_0^t \int_{\Rd} |S_a(z,0,t-\tau)|\, |S_a(z,0,t-\sigma)|\, |S(t-\tau)(\Phi(\cdot,\tau))(x)|\, |S(t-\sigma)(\Phi(\cdot,\sigma))(x)|\, dx\, d\tau d\sigma
\\
& \le C(a) \left(\int_0^t \frac{||\Phi(\cdot,\tau)||_{L^2_x}}{|t-\tau|^{\frac{a+1}2}}\, d\tau\right)^2 < \infty,
\end{align*}
since $\frac{a+1}2<1$ and $\Phi\in \mathscr S(\R^{d+1})$. As $\int_0^\infty e^{-\ve z^2} z^a dz<\infty$, the five-fold integral obtained by expanding the square in \eqref{thetareg} is absolutely convergent, and Fubini's theorem, together with the group property of the Schr\"odinger propagator $S(t)$, gives
\begin{align*}
Q_\ve(t)
& = \int_0^t \int_0^t \left(\int_0^\infty e^{-\ve z^2}\, S_a(z,0,t-\tau)\overline{S_a(z,0,t-\sigma)}\, z^a dz\right) \int_{\Rd} S(\sigma-\tau)(\Phi(\cdot,\tau))(x) \overline{\Phi(x,\sigma)}\, dx\, d\tau d\sigma.
\end{align*}
By \eqref{piccoloSa}, and since for $0<\tau, \sigma<t$ the numbers $t-\tau$ and $t-\sigma$ have the same sign, so that the two constant phases $e^{-i\operatorname{sgn}(t-\tau)\frac{(a+1)\pi}4}$, $e^{+i\operatorname{sgn}(t-\sigma)\frac{(a+1)\pi}4}$ of \eqref{piccoloSa} cancel -- exactly as in the proof of Lemma \ref{L:piccoloSa} -- the inner integral equals
\[
\frac{2^{-2a}}{\Gamma\!\left(\frac{a+1}2\right)^2 \left[(t-\tau)(t-\sigma)\right]^{\frac{a+1}2}}\, \int_0^\infty e^{-(\ve - i \omega) z^2}\, z^{a+1}\, \frac{dz}z,\ \ \ \ \ \ \omega = \omega(\tau,\sigma) := \frac{\tau-\sigma}{4(t-\tau)(t-\sigma)},
\]
and the substitution $u = z^2$ gives, for every $\ve>0$,
\[
\int_0^\infty e^{-(\ve-i\omega)z^2}\, z^{a+1}\, \frac{dz}z = \frac{\Gamma(\frac{a+1}2)}{2}\, (\ve - i \omega)^{-\frac{a+1}2},
\]
the principal branch being understood. Since $|(\ve-i\omega)^{-\frac{a+1}2}|\le |\omega|^{-\frac{a+1}2}$, and
\[
(\ve-i\omega)^{-\frac{a+1}2}\ \underset{\ve\to 0^+}{\longrightarrow}\ |\omega|^{-\frac{a+1}2}\, e^{\,i\,\frac{(a+1)\pi}{4}\operatorname{sgn}(\omega)},
\]
with $\operatorname{sgn}(\omega) = \operatorname{sgn}(\tau-\sigma)$ and $\left[(t-\tau)(t-\sigma)\right]^{-\frac{a+1}2}|\omega|^{-\frac{a+1}2} = 2^{a+1}\, |\tau-\sigma|^{-\frac{a+1}2}$, dominated convergence applies in the $\tau, \sigma$ integrals -- an admissible majorant being $C(a)\, |\tau-\sigma|^{-\frac{a+1}2}\, ||\Phi(\cdot,\tau)||_{L^2_x} ||\Phi(\cdot,\sigma)||_{L^2_x}$ -- while, by monotone convergence, $Q_\ve(t)\uparrow ||\Theta_a^\star(\Phi)(\cdot,t)||_{L^2_a(\RN)}^2$ as $\ve\downarrow 0$. We conclude that
\begin{align}\label{theta}
& ||\Theta_a^\star(\Phi)(\cdot,t)||_{L^2_a(\RN)}^2 = \frac{2^{-a}}{\Gamma\!\left(\frac{a+1}{2}\right)}
\int_0^t \int_0^t  \frac{\exp\!\left(i\,\frac{(a+1)\pi}{4}\,\operatorname{sgn}(\tau-\sigma)\right)}{|\tau-\sigma|^{\frac{a+1}{2}}} \int_{\Rd} S(\sigma-\tau)(\Phi(\cdot,\tau))(x) \overline{\Phi(x,\sigma)} dx d\tau d\sigma,
\end{align}
in accordance with \eqref{ohoh} in Lemma \ref{L:piccoloSa}. Assume now that $q, r$ satisfy \eqref{bdryad} with $r\ge 2$. Since $a>-1$, we have
\begin{equation}\label{qzero}
\frac 2q = \frac{a+1}2 + d(\frac 12 - \frac 1r) > 0.
\end{equation}
H\"older inequality gives 
\begin{align}\label{ohohoh}
\left|\int_{\Rd} S(\sigma-\tau)(\Phi(\cdot,\tau))(x) \overline{\Phi(x,\sigma)} dx\right| \le ||S(\sigma-\tau)(\Phi(\cdot,\tau))||_{L^r_x} ||\Phi(\cdot,\sigma)||_{L^{r'}_x}.
\end{align}
Using \eqref{ohohoh}, we finally obtain from \eqref{theta}
\begin{align}\label{theta2}
& ||\Theta_a^\star(\Phi)(\cdot,t)||_{L^2_a(\RN)}^2  \le C(a) \int_0^t \int_0^t \frac{1}{|\tau-\sigma|^{\frac{a+1}2}} ||S(\sigma-\tau)(\Phi(\cdot,\tau))||_{L^r_x} ||\Phi(\cdot,\sigma)||_{L^{r'}_x} d\tau d\sigma
\\
& \le C(a,d) \int_0^t \int_0^t \frac{1}{|\tau-\sigma|^{\frac{a+1}2}} \frac{1}{|\tau-\sigma|^{d(\frac{1}2-\frac 1r)}}||\Phi(\cdot,\tau)||_{L^{r'}_x} ||\Phi(\cdot,\sigma)||_{L^{r'}_x} d\tau d\sigma
\notag
\\
& \le C(a,d) \int_0^t \left(\int_{\R}  \frac{||\Phi(\cdot,\tau)||_{L^{r'}_x}}{|\tau-\sigma|^{\frac{a+1}2+ d(\frac{1}2-\frac 1r)}} d\tau\right) ||\Phi(\cdot,\sigma)||_{L^{r'}_x}  d\sigma,
\notag
\end{align}
where in the second to the last inequality we have used Proposition \ref{P:fundis}.
If we let
\[
h(\tau) = ||\Phi(\cdot,\tau)||_{L^{r'}_x},
\]
and consider the fractional integration operator \eqref{marcel},
then from \eqref{theta2} we have
\begin{align}\label{theta22}
||\Theta_a^\star(\Phi)(\cdot,t)||_{L^2_a(\RN)}^2  & \le C(a,d) \int_0^t I_\beta(h)(\sigma) ||\Phi(\cdot,\sigma)||_{L^{r'}_x}  d\sigma,
\end{align}
where we have let
\begin{equation}\label{beta}
1-\beta = \frac{a+1}2+ d(\frac{1}2-\frac 1r) = \frac{d+a+1}2 - \frac dr = \frac 2q,
\end{equation}
where in the last equality we have used \eqref{bdryad}. Since we are assuming $q>2$, from \eqref{qzero} and \eqref{beta} we obtain $0<\beta<1$.
Exactly as in \eqref{marcello} we thus conclude that 
\begin{equation}\label{marcellino}
\|I_\beta(h)||_{L^q(\R)} \le C(\beta,q) ||h||_{L^{q'}(\R)} = C(d,a,r) ||\Phi||_{L^{q'}_t L^{r'}_x}.
\end{equation}

Finally, applying H\"older inequality to \eqref{theta22}, we obtain
\begin{align}\label{theta3}
||\Theta_a^\star(\Phi)(\cdot,t)||_{L^2_a(\RN)}^2  & \le C(a,d) \int_\R I_\beta(h)(\sigma) ||\Phi(\cdot,\sigma)||_{L^{r'}_x}  d\sigma
\\
& \le ||I_\beta(h)||_{L^q_t} ||\Phi||_{L^{q'}_t L^{r'}_x} \le C(d,a,r) ||\Phi||^2_{L^{q'}_t L^{r'}_x},
\notag
\end{align}
where in the last inequality we have used \eqref{marcellino}. This establishes \eqref{ok}.

To complete the proof of the theorem we are left with proving \eqref{okk}. We use one more time the duality argument. Let $V\in \mathscr S(\RN\times \R)$, then using \eqref{T} in Proposition \ref{P:rep}, we find
\begin{align*}
\left|\sa\sa \Theta_a^\star(\Phi),V\da\da\right| & = \left|\sa \Phi,\Theta_a(V)\da\right| = \left|\int_{\R} \int_{\Rd} \Phi(y,\tau) \overline{\Theta_a(V)(y,\tau)} dy d\tau\right|
\\
& = \left|\int_{\R} \int_{\Rd} \Phi(y,\tau) \int_\tau^{\infty} \overline{\mathbb S_a(\tau-\sigma)(V(\cdot,\sigma))(Y_0)} d\sigma dy d\tau\right|
\\
& = \left|\int_{\R} \int_\tau^{\infty} \int_{\Rd}\Phi(y,\tau) S(\sigma-\tau)\left(S_a(\sigma-\tau)\left(\overline{V(\cdot,\cdot,\sigma)}\right)(0)\right)(y) dy d\sigma d\tau\right|
\\
& \le \int_{\R} \int_{\R} ||\Phi(\cdot,\tau)||_{L^{r'}_x} ||S(\sigma-\tau)\left(S_a(\sigma-\tau)\left(\overline{V(\cdot,\cdot,\sigma)}\right)(0)\right)||_{L^{r}_x} d\sigma d\tau
\\
& \le C(d,r) \int_{\R} \int_{\R} ||\Phi(\cdot,\tau)||_{L^{r'}_x} \frac{1}{|\sigma-\tau|^{d(\frac 12 - \frac 1r)}} ||S_a(\sigma-\tau)\left(\overline{V(\cdot,\cdot,\sigma)}\right)(0)||_{L^{r'}_x} d\sigma d\tau,
\end{align*}
where in the second equality the unimodular factor $\overline{i} = -i$ of \eqref{T} has been absorbed by the modulus, in the third one we have used again the boundary evaluation \eqref{Sat0} and the factorisation \eqref{prop2}, as in the proof of Proposition \ref{P:rep}, and where in the last inequality we have used  Proposition \ref{P:fundis}.
Since \eqref{Sat0} gives
\begin{align*}
& S_a(\sigma-\tau)\left(\overline{V(\cdot,\cdot,\sigma)}\right)(0) = \int_0^\infty S_a(z,0,\sigma-\tau) \overline{V(\cdot,z,\sigma)} z^a dz,
\end{align*}
from \eqref{piccoloSa} and Minkowski integral inequality we obtain the critical estimate
\begin{align*}
& ||S_a(\sigma-\tau)\left(\overline{V(\cdot,\cdot,\sigma)}\right)(0)||_{L^{r'}_x} \le \frac{C(a)}{|\sigma-\tau|^{\frac{a+1}2}}  \int_0^\infty ||V(\cdot,z,\sigma)||_{L^{r'}_x} z^a dz. 
\end{align*}

We emphasize that this is the second time in the proof where, because the point $Y_0 = (y,0)\in \p \RN$, we are not forced to distinguish between $-1<a<0$ and $a\ge 0$ (the first small miracle has occurred in the proof of \eqref{theta}). 
Substituting this estimate in the above, we infer
\begin{align}\label{dualstar}
\left|\sa\sa \Theta_a^\star(\Phi),V\da\da\right| & \le C(d,a,r) \int_0^\infty \int_{\R} ||\Phi(\cdot,\tau)||_{L^{r'}_x}  \int_{\R} \frac{||V(\cdot,z,\sigma)||_{L^{r'}_x}}{|\sigma-\tau|^{\frac{a+1}2+ d(\frac 12 - \frac 1r)}} d\sigma d\tau z^a dz
\\
& = C(d,a,r) \int_0^\infty \int_{\R} ||\Phi(\cdot,\tau)||_{L^{r'}_x} I_\beta(h(z,\cdot))(\tau) d\tau z^a dz,
\notag
\end{align}
where we have let 
\[
h(z,\sigma) = ||V(\cdot,z,\sigma)||_{L^{r'}_x},
\]
and $\beta$ is the same as in \eqref{beta}. Since as in the proof of \eqref{ok}, we find
\[
||I_\beta(h(z,\cdot))||_{L^q_t} \le C(\beta)\ ||h(z,\cdot)||_{L^{q'}_t} = ||V(\cdot,z,\cdot)||_{L^{q'}_t L^{r'}_x},
\]
applying H\"older inequality to \eqref{dualstar}, we finally have
\begin{align*}
\left|\sa\sa \Theta_a^\star(\Phi),V\da\da\right| &  \le C(d,a,r) ||\Phi||_{L^{q'}_t L^{r'}_x} \int_0^\infty ||I_\beta(h(z,\cdot))||_{L^q_t} z^a dz
\\ 
& \le C(d,a,r) ||\Phi||_{L^{q'}_t L^{r'}_x} \int_0^\infty ||V(\cdot,z,\cdot)||_{L^{q'}_t L^{r'}_x} z^a dz
\\
& = C(d,a,r) ||\Phi||_{L^{q'}_t L^{r'}_x} ||V||_{L^1_{a,z} L^{q'}_t L^{r'}_x}.
\end{align*}
Taking the supremum on all $V\in \mathscr S(\RN\times \R)$, by the density of $\mathscr S(\RN\times \R)$ in $L^1_{a,z} L^{q'}_t L^{r'}_x $ and duality,  we finally obtain \eqref{okk}.

\end{proof}

\section{Traces on the boundary}\label{S:cont}

The boundary Strichartz estimate \eqref{okk} in Theorem \ref{T:V} establishes the uniform boundedness in the orthogonal variable $z>0$ of the solution $U$ to the problem \eqref{nozeroin}. This result does not suffice, however, to guarantee the existence of a trace on the boundary, a property which plays a critical role in the analysis of the nonlinear problem \eqref{cp02}. 

In this section we close this gap, for the boundary term $\Theta^\star_a(\Phi)$ in Theorem \ref{T:gap}, and for the bulk terms $\mathbb T^\star_a(u_0)$ and $\mathbb D_a(F)$ in Theorem \ref{T:gapbulk} below. Together, these results show that the solution $U$ admits a strong trace on $\p\RN\times\R$. Our first result is the following.

\begin{theorem}[Existence of traces]\label{T:gap}
Let $-1<a<1$ and suppose that $q, r$ satisfy \eqref{bdryad}, with $r\ge 2$ and $q>2$. Given $\Phi\in L^{q'}_t L^{r'}_x$,  
the function $\Theta^\star_a(\Phi)$ belongs to the space $C^b_z L^q_t L^r_x$. By this we mean that for every $z_0\ge 0$
\begin{equation}\label{cont}
\underset{z\to z_0}{\lim}\ ||\Theta^\star_a(\Phi)(\cdot,z,\cdot) - \Theta^\star_a(\Phi)(\cdot,z_0,\cdot)||_{L^q_t L^r_x} = 0.
\end{equation}
\end{theorem}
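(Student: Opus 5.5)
The strategy is density plus a uniform bound. By \eqref{okk} in Theorem \ref{T:V}, the linear map $\Phi\mapsto\Theta^\star_a(\Phi)$ is bounded from $L^{q'}_tL^{r'}_x$ into $L^\infty_zL^q_tL^r_x$. We can therefore define $\Theta^\star_a(\Phi)$ for general $\Phi$ by extension from $\mathscr S(\R^{d+1})$, which is dense in $L^{q'}_tL^{r'}_x$ since $q',r'<\infty$. The subspace $C^b_zL^q_tL^r_x$ is closed in $L^\infty_zL^q_tL^r_x$ under the (true) supremum norm, because a uniform limit of continuous Banach-valued maps is continuous. Moreover, \eqref{okk} holds for each fixed $z\ge0$, not merely almost everywhere: the duality argument bounds $\|\Theta^\star_a(\Phi)(\cdot,z,\cdot)\|_{L^q_tL^r_x}$ pointwise in $z$ once continuity is known. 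So it suffices to prove \eqref{cont} for Schwartz $\Phi$. The general case then follows by a standard $3\varepsilon$ argument:
\[
\|\Theta^\star_a(\Phi)(z)-\Theta^\star_a(\Phi)(z_0)\|\le 2C\|\Phi-\Phi_n\|_{L^{q'}_tL^{r'}_x}+\|\Theta^\star_a(\Phi_n)(z)-\Theta^\star_a(\Phi_n)(z_0)\|.
\]
This also defines the trace at $z_0=0$ as a genuine limit in $L^q_tL^r_x$.

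For Schwartz $\Phi$, I would write $\Theta^\star_a(\Phi)(\cdot,z,t)=-i\int_0^t S_a(z,0,t-\tau)\,S(t-\tau)\Phi(\cdot,\tau)\,d\tau$ with $S_a(z,0,s)=c_a\,e^{\mp i\frac{(a+1)\pi}4}|s|^{-\frac{a+1}2}e^{i\frac{z^2}{4s}}$. The difference at $z$ and $z_0$ is then an operator of the same type, with kernel multiplied by $m_{z,z_0}(s):=e^{i\frac{z^2}{4s}}-e^{i\frac{z_0^2}{4s}}$, which satisfies $|m|\le 2$ and $m\to0$ pointwise in $s\ne0$ as $z\to z_0$. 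The plan is to rerun the proof of \eqref{okk} pointwise in $z$, but without duality. Using Proposition \ref{P:fundis}, I would get
\[
\|\Theta^\star_a(\Phi)(\cdot,z,t)-\Theta^\star_a(\Phi)(\cdot,z_0,t)\|_{L^r_x}\le C\int_{\R}\frac{|m_{z,z_0}(t-\tau)|\wedge 2}{|t-\tau|^{1-\beta}}\,\|\Phi(\cdot,\tau)\|_{L^{r'}_x}\,d\tau,
\]
with $1-\beta=2/q$ as in \eqref{beta}. Here I must be careful: the unitary factor in $x$ must not be lost, so the dispersive estimate $L^{r'}\to L^r$ is applied to $S(t-\tau)\Phi(\cdot,\tau)$ directly. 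This is legitimate because $m$ depends only on $t-\tau$ and not on $x$, and Minkowski's inequality in $\tau$ then gives the display.

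The right-hand side is dominated by $2C\,I_\beta(h)(t)$ with $h=\|\Phi(\cdot,\tau)\|_{L^{r'}_x}\in L^{q'}$, and $I_\beta(h)\in L^q_t$ by \eqref{marcello}. Meanwhile, for each $t$ the integrand tends to zero as $z\to z_0$, dominated by the integrable function $2|t-\tau|^{\beta-1}h(\tau)$; this is integrable for a.e. $t$, in fact for every $t$ when $h$ is Schwartz. Two applications of dominated convergence, first in $\tau$ and then in $t$ with the $L^q_t$ majorant $(2C\,I_\beta h)^q$, give \eqref{cont}.

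The main subtle point is precisely that this pointwise route must work without the oscillation that rescued the $L^2$ bound. Fortunately, \eqref{okk} was itself proved by absolute-value bounds, since $|S_a(z,0,s)|$ is independent of $z$, so no cancellation is needed and the dominating kernel is uniform in $z$. The only other thing to check is that the pointwise-in-$z$ inequality also holds at $z_0=0$, where $m$ involves $e^{i\cdot 0}=1$; this is harmless.
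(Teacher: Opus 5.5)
Your proposal is correct. Its density half is exactly what the paper does: extend from Schwartz data using the uniform bound \eqref{okk}, upgrade the essential supremum in $z$ to a genuine supremum via the $z$-continuity of the Schwartz approximants, and use the closedness of $C^b_zL^q_tL^r_x$.

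The difference is in the Schwartz case.

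\textbf{The paper's route.} It isolates the difference kernel $K_h(\tau)=\tau^{-2/q}\,|e^{i\omega(h)/(4\tau)}-1|\,\mathbf 1_{(0,\infty)}(\tau)$ and proves $\|K_h\|_{L^1}\le C\,\omega(h)^{1-2/q}$ by splitting at $\tau=\omega(h)$. It then applies Young's inequality with $g=\|\Phi(\cdot,t)\|_{L^{r'}_x}\in L^1\cap L^\infty$, together with the interpolation $\|U_h\|^q_{L^q_tL^r_x}\le\|U_h\|^{q-1}_{L^\infty_tL^r_x}\|U_h\|_{L^1_tL^r_x}$.

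\textbf{Your route.} You keep the $z$-independent majorant $2C\,I_\beta(h)\in L^q_t$ supplied by the Hardy--Littlewood--Sobolev inequality, and apply dominated convergence twice, first in $\tau$ and then in $t$.

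\textbf{What each buys.} The paper's argument gives an explicit modulus of continuity in $z$, of order $\omega(h)^{1-2/q}$. However, it genuinely needs $g\in L^1\cap L^\infty$, which is why the density step cannot be avoided there. Your argument is only qualitative, but it uses nothing beyond $h\in L^{q'}$. As you note, the same absolute-value bound, with $S_a(z,0,\cdot)$ in place of the difference kernel, gives \eqref{okk} at every $z$ without duality. Moreover, the integral defining $\Theta^\star_a(\Phi)$ converges absolutely in $L^r_x$ for a.e.\ $t$ whenever $\Phi\in L^{q'}_tL^{r'}_x$. So your argument could be run directly on general data, making both the density step and the essential-supremum issue unnecessary.
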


\begin{proof}
For the definition of the space $C^b_z L^q_t L^r_x$ see \eqref{Cz}. We divide the proof into two steps, by first showing that the desired conclusion \eqref{cont} is true when $\Phi\in \mathscr S(\R^{d+1})$. To simplify the notation, we set $V_z(x,t) := \Theta^\star_a(\Phi)(x,z,t)$. In view of \eqref{Thetastar}, given $z\ge 0$ we have
\begin{align*}
V_z(x,t) & = - i \int_0^{t} \int_{\Rd}  \mathbb S_a(X,Y_0,t-\tau) \Phi(y,\tau) dy d\tau = - i \int_0^{t} S_a(z,0,t-\tau) S(t-\tau)(\Phi(\cdot,\tau))(x) d\tau
\\
& = - i \int_0^{t} S_a(z,0,\tau) S(\tau)(\Phi(\cdot,t-\tau))(x) d\tau.
\end{align*}
Keeping \eqref{piccoloSa} in mind, we see that for $t>0$ and every $z, z_0\ge 0$, we have
\begin{equation}\label{Uz}
V_z(x,t) - V_{z_0}(x,t) = c(a) \int_0^t \tau^{-\frac{a+1}{2}} e^{i\frac{z_0^2}{4\tau}}[e^{i \frac{\omega(h)}{4\tau}} - 1] S(\tau)(\Phi(\cdot,t-\tau))(x) d\tau,
\end{equation}
where $c(a)\in \C$ is a number whose explicit value is irrelevant in what follows, and we have let
\[
\omega(h) = h^2 + 2 z_0 h,\ \ \ \ \ \ h = z-z_0.
\]
Without restriction, we can assume that $h\ge 0$. If we set
\[
U_h(x,t) := V_z(x,t) - V_{z_0}(x,t),
\]
it is clear that, to establish \eqref{cont}, we need to prove that 
\begin{equation}\label{Uh}
\underset{h\to 0}{\lim}\ ||U_h||_{L^q_t L^r_x} = 0.
\end{equation}
With this objective in mind, we define for $\tau\ge 0$
\[
g(\tau):=\|\Phi(\cdot,\tau)\|_{L_x^{r'}}.
\]
Because $\Phi\in\mathscr S(\R^{d+1})$, we have  $g\in L^1(\R)\cap L^\infty(\R)$. This is easily seen as follows. Since $2\le r \le \infty$, we know $\frac 12 \le \frac{1}{r'} \le 1$. Fix $N\in \mathbb N$ such that $N> d\ge \frac{d}{r'}$. Then 
\begin{align*}
g(t)^{r'} & = \int_{\Rd} \frac{\left[(1+|x|^2)^{\frac N2} |\Phi(x,t)|\right]^{r'}}{(1+|x|^2)^{\frac{r' N}{2}}} dx \le C(N,r) \int_{\Rd} \frac{dx}{(1+|x|^2)^{\frac{r' N}{2}}} \le C(d,N,r).
\end{align*}
This shows that $g\in L^\infty(\R)$, and similar arguments prove that $g\in L^1(\R)$. 

Next, applying Proposition \ref{P:fundis}, for every $t>0$ we obtain from \eqref{Uz}:
\begin{align}\label{Uzero}
||U_h(\cdot,t)||_{L_x^r} & \le C(a)
\int_0^t \tau^{-\frac{a+1}{2}}
|e^{i\frac{\omega(h)}{4\tau}}-1|
||S(\tau)(\Phi(\cdot,t-\tau))||_{L_x^r} d\tau 
\\
& \le C(d,a,r) \int_0^t \tau^{-\frac{a+1}{2} -d(\frac 12 - \frac 1r)}
|e^{i \frac{\omega(h)}{4\tau}}-1| g(t-\tau) d\tau
\notag 
\\
& \le C(d,a,r) \int_0^\infty \tau^{-\frac 2q} |e^{i \frac{\omega(h)}{4\tau}}-1|  g(t-\tau) d\tau
\notag 
\\
&  = C(d,a,r) K_h \star g(t),
\notag
\end{align}
where we have let 
\[
K_h(\tau):= \tau^{-\frac 2q}|e^{i\frac{\omega(h)}{4\tau}}-1|\,\mathbf 1_{(0,\infty)}(\tau).
\]
We now claim that:
\begin{equation}\label{Kzero}
\underset{h\to 0}{\lim}\ ||K_h||_{L^1(\R)} = 0.
\end{equation}
To prove \eqref{Kzero} we argue as follows:
\begin{align*}
||K_h||_{L^1(\R)} & = \int_0^{\omega(h)} \tau^{-2/q}|e^{i\frac{\omega(h)}{4\tau}}-1| d\tau + \int_{\omega(h)}^\infty \tau^{-2/q}|e^{i\frac{\omega(h)}{4\tau}}-1| d\tau = I_h(\tau) + II_h(\tau).
\end{align*}
To estimate $I_h(\tau)$ we use the trivial bound
$|e^{i\frac{\omega(h)}{4\tau}}-1|\le 2$,
to obtain
\[
I_h(\tau) \le 2 \int_0^{\omega(h)} \tau^{-2/q} d\tau = \frac{2q}{q-2} \omega(h)^{1-\frac 2q}\ \underset{h\to 0}{\longrightarrow}\ 0,
\]
since by assumption $q>2$. To estimate $II_h(\tau)$, instead, we use the bound $|e^{i \theta} - 1| \le |\theta|$, valid for every $\theta\in \R$, to estimate
\[
II_h(\tau) \le \int_{\omega(h)}^\infty \tau^{-2/q} \frac{\omega(h)}{4\tau} d\tau = \frac{\omega(h)}4 \int_{\omega(h)}^\infty \tau^{-2/q -1}  d\tau = \frac q8 \omega(h)^{1-\frac 2q}\ \underset{h\to 0}{\longrightarrow}\ 0.
\]
This proves \eqref{Kzero}. 
Since $g\in L^1(\R)\cap L^\infty(\R)$, Young's inequality now gives
\begin{equation}\label{Y}
\|K_h\star g\|_{L_t^1}\le \|K_h\|_{L^1}\|g\|_{L^1},
\qquad
\|K_h\star g\|_{L_t^\infty}\le \|K_h\|_{L^1}\|g\|_{L^\infty}.
\end{equation}
From \eqref{Uzero}, \eqref{Kzero} and \eqref{Y}, we infer
\begin{equation}\label{UUzero}
\|U_h\|_{L_t^1L_x^r}
\le C\|K_h\|_{L^1}\|g\|_{L^1}\
\underset{h\to 0}{\longrightarrow}\ 0,\ \ \ \ \ \|U_h\|_{L_t^\infty L_x^r}
\le C\|K_h\|_{L^1}\|g\|_{L^\infty}\
\underset{h\to 0}{\longrightarrow}\ 0.
\end{equation}
Finally, we have from \eqref{UUzero}:
\begin{align*}
\|U_h\|^q_{L_t^qL_x^r} = \int_\R \|U_h(\cdot,t)\|^q_{L_x^r} dt = \int_\R \|U_h(\cdot,t)\|^{q-1}_{L_x^r} \|U_h(\cdot,t)\|_{L_x^r}dt \le \|U_h\|^{q-1}_{L_t^\infty L_x^r} \|U_h\|_{L_t^1 L_x^r}\
\underset{h\to 0}{\longrightarrow}\ 0.
\end{align*} 

This proves \eqref{Uh} - and therefore \eqref{cont} -  when $\Phi\in \mathscr S(\R^{d+1})$. Assume now that $\Phi\in L^{q'}_tL^{r'}_x$ and choose $\Phi_n\in \mathscr S(\R^{d+1})$ such that
$\Phi_n\to \Phi$ in $L_t^{q'}L_x^{r'}$.
Denoting $V_z(x,t) = \Theta^\star_a(\Phi)(x,z,t)$ and $V^n_z(x,t) = \Theta^\star_a(\Phi_n)(x,z,t)$, we have
\begin{equation}\label{Vz}
||V_z - V_{z_0}||_{L^q_t L^r_x}\le ||V_z - V^n_z||_{L^q_t L^r_x} + ||V^n_z - V^n_{z_0}||_{L^q_t L^r_x} + ||V^n_{z_0} -  V_{z_0}||_{L^q_t L^r_x}.
\end{equation}

By the boundary Strichartz estimate \eqref{okk} there exists a constant $C(d,a,r)>0$ such that, a priori for a.e.\ $z\ge 0$,
\begin{align*}
||V_z - V_z^n||_{L^q_t L^r_x} & = ||\Theta^\star_a(\Phi - \Phi_n)(\cdot,z,\cdot)||_{L^q_t L^r_x} \le C(d,a,r) ||\Phi - \Phi_n||_{L^{q'}_t L^{r'}_x}.
\end{align*}
We claim that this bound holds in fact for \emph{every} $z\ge0$. Indeed, for $n, m\in \N$ the function $V^n_z - V^m_z = \Theta^\star_a(\Phi_n - \Phi_m)(\cdot,z,\cdot)$ has Schwartz datum, and is therefore continuous in $z$ by the first part of the proof; for a $z$-continuous function the essential supremum in $z$ coincides with the genuine supremum. Consequently \eqref{okk}, applied to $\Phi_n - \Phi_m$, shows that $(V^n)_{n\in\N}$ is a Cauchy sequence with respect to the supremum over $z\ge0$ of the $L^q_tL^r_x$ norm. Its uniform limit is continuous in $z$ and agrees with $\Theta^\star_a(\Phi)(\cdot,z,\cdot)$ for a.e.\ $z\ge0$; it is this representative of $\Theta^\star_a(\Phi)$ that we denote by $V_z$, and for it the above bound holds at every $z\ge0$, by letting $m\to\infty$ in the bound for $V^m_z - V^n_z$.
Given $\ve>0$, we thus choose $n(\ve)\in \mathbb N$ such that
\[
2 C(d,a,r) ||\Phi - \Phi_n||_{L^{q'}_t L^{r'}_x} \le \frac{\ve}2,\ \ \ \ \ \ \forall n\ge n(\ve).
\]
We infer that for every $z, z_0\ge 0$ 
\begin{equation}\label{Vzz}
||V_z - V_z^{n(\ve)}||_{L^q_t L^r_x} + ||V^{n(\ve)}_{z_0} -  V_{z_0}||_{L^q_t L^r_x} \le \frac{\ve}2\ \ \ \ \ \ \forall z, z_0 \ge 0.
\end{equation}
Using \eqref{Uh}, we now choose $\delta(\ve)>0$ such that for every $z, z_0\ge 0$, with $|z-z_0|\le \delta(\ve)$, we have
\begin{equation}\label{Vzos}
||V^{n(\ve)}_z - V^{n(\ve)}_{z_0}||_{L^q_t L^r_x} \le \frac{\ve}2.
\end{equation}
Combining \eqref{Vz}, with \eqref{Vzz} and \eqref{Vzos}, we reach the desired conclusion \eqref{cont}.

\end{proof}

\vskip 0.2in

We next establish the corresponding property for the bulk terms $\mathbb T^\star_a(u_0)$ and $\mathbb D_a(F)$ in \eqref{U0}. In the following statement, when $0\le a<1$ we assume that $(q,r,\infty)$ is admissible with $q>2$ (and $r<\frac{2d}{d+a-1}$ when $d+a-1>0$), whereas when $-1<a<0$ we assume that $(q_\infty,r,\infty)$ and $(q,r,\infty)$ satisfy \eqref{uffaa} with $q>2$ and $q_\infty<\infty$, and $k$ denotes the weight in Definition \ref{D:mn}.

\begin{theorem}[Traces of the bulk terms]\label{T:gapbulk}
Let $0\le a<1$. Then for every $u_0\in L^2_a(\RN)$ and every $F$ with $||F||_{L^1_{a,z}L^{q'}_tL^{r'}_x}<\infty$ we have
\[
\mathbb T^\star_a(u_0),\ \mathbb D_a(F)\ \in\ C^b_zL^q_tL^r_x.
\]
If instead $-1<a<0$, then for every $u_0\in L^2_a(\RN)$ and every $F$ with $||Fk^{-1}||_{L^1_{a,z}L^{q'\cap q'_\infty}_tL^{r'}_x}<\infty$, the maps $z\to \mathbb T^\star_a(u_0)(\cdot,z,\cdot)k(z)$ and $z\to \mathbb D_a(F)(\cdot,z,\cdot)k(z)$ belong to $C^b_zL^{q+q_\infty}_tL^r_x$.
\end{theorem}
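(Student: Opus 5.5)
The plan mirrors the two-step scheme of Theorem \ref{T:gap}: continuity in $z$ for a dense class of data, then extension by the uniform bounds already established. The latter are Theorem \ref{T:Tstarbulk} when $0\le a<1$ and Theorem \ref{T:bulknega} when $-1<a<0$, which control $\mathbb T^\star_a$ and $\mathbb D_a$ in $L^\infty_zL^q_tL^r_x$, respectively in the weighted sum space. Boundedness in $z$ is therefore already known, and only continuity into $L^q_tL^r_x$ (resp. $L^{q+q_\infty}_tL^r_x$) must be proved. Once continuity holds for a dense class, the approximation argument at the end of the proof of Theorem \ref{T:gap} applies verbatim. Cauchy sequences of $z$-continuous functions in the supremum norm have continuous uniform limits, and these agree a.e. with $\mathbb T^\star_a(u_0)$ and $\mathbb D_a(F)$; this also produces the canonical representative, for which the bounds hold at every $z\ge0$. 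For $-1<a<0$ one additionally notes that $k$ is continuous and bounded, so multiplying by $k(z)$ preserves continuity.

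For the dense class I would take, for $\mathbb T^\star_a$, finite sums of products $u_0(x,z)=\vf(x)\psi(z)$ with $\vf\in\mathscr S(\Rd)$ and $\psi$ band-limited in the sense of Section \ref{S:band}. Their span is dense in $L^2_a(\RN)$, because $\mathcal H_\nu$ is unitary and $C_0^\infty((0,\infty))$ is dense in $L^2$. By Remark \ref{R:bandt}, $\mathbb T^\star_a(u_0)(x,z,t)=S(t)\vf(x)\,S_a(t)\psi(z)$. Here $S_a(t)\psi(z)=\int_0^\infty(z\la)^{-\nu}J_\nu(z\la)e^{-it\la^2}\hat\psi(\la)\la^a\,d\la$ is smooth in $z$, with $|\p_z S_a(t)\psi(z)|\le C$ uniformly in $t\in\R$ and $z$, since the integrand's $z$-derivative is bounded by $C\la^{1+a}|\hat\psi|$ on a compact support. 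Then
\[
\|\mathbb T^\star_a(u_0)(\cdot,z,\cdot)-\mathbb T^\star_a(u_0)(\cdot,z_0,\cdot)\|_{L^q_tL^r_x}\le C|z-z_0|\,\|S(t)\vf\|_{L^q_tL^r_x}.
\]
The trouble is that $\|S(t)\vf\|_{L^q_tL^r_x}$ need not be finite, since $(q,r)$ is not Schr\"odinger-admissible in $\Rd$. I would fix this with a time split. For $|t|\ge T$, both terms are small uniformly in $z$: the integrand has the form $S(t)\vf\cdot S_a(t)\psi$, and stationary phase gives $|S_a(t)\psi(z)|\lesssim |t|^{-\frac{a+1}{2}}$ uniformly in $z$ (this is the dispersive estimate \eqref{good} applied to $\psi\in L^1_a$, in weighted form when $a<0$). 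Combined with $\|S(t)\vf\|_{L^r_x}\lesssim|t|^{-d(\frac12-\frac1r)}$, the product decays like $|t|^{-2/q}$. That rate is not $q$-integrable by itself, so I would gain extra decay instead by integrating by parts in $\la$, using the support of $\hat\psi$ away from $0$, which yields $|t|^{-N}$ for $z$ in compacts. On $|t|\le T$ the Lipschitz bound applies directly. For large $z$ (relevant to continuity at infinity) nothing is required, since only continuity at each $z_0\ge0$ is asked.

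For $\mathbb D_a(F)$, the cleanest route avoids a new computation. By density, take $F=\sum\chi_j(t)\vf_j(x)\psi_j(z)$ with $\chi_j\in C_0^\infty$, which is dense in $L^1_{a,z}L^{q'}_tL^{r'}_x$ (resp. the weighted intersection space). Then $\mathbb D_a(F)(t)=\int_0^t\chi(\tau)\,\mathbb S_a(t-\tau)(\vf\psi)\,d\tau$, an average over compact $\tau$ of translates of the previous case, and Minkowski's inequality transfers both the Lipschitz-in-$z$ bound and the large-$t$ decay. The main obstacle is the large-time tail: showing $\sup_{z\le z_0+1}\|\mathbb T^\star_a(u_0)(\cdot,z,\cdot)\|_{L^q(|t|>T)L^r_x}\to0$ for the dense class. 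I would try the $\la$-integration by parts first. If it proves delicate near $z=0$, I would fall back on choosing $\vf$ with $\hat\vf$ compactly supported away from $0$, so that $S(t)\vf$ itself decays rapidly in $L^r_x$ along with its space-time tails.
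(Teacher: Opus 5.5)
\paragraph{Gap in the inhomogeneous term.} Your argument for $\mathbb T^\star_a$ is sound. The final Cauchy-in-the-supremum density step is also exactly the paper's. The gap is in $\mathbb D_a$. You reduce it to ``translates of the previous case'', and the large-time decay you transfer comes from non-stationary phase on the support of $\mathcal H_\nu\psi$, which lies away from $\la=0$. So your $z$-factors $\psi_j$ must be band-limited. Such tensors are \emph{not} dense in $L^1_{a,z}L^{q'}_tL^{r'}_x$. If $\mathcal H_\nu\psi\in C_0^\infty((0,\infty))$, then $\psi$ decays rapidly and $w^{-\nu}J_\nu(w)\to 2^{-\nu}/\Gamma(\nu+1)$ as $w\to 0$, so
\[
0=\mathcal H_\nu(\psi)(0)=\frac{1}{2^{\nu}\Gamma(\nu+1)}\int_0^\infty \psi(z)\,z^a\,dz .
\]
Now fix $G\in L^q_tL^r_x$ and consider $F\mapsto\int_0^\infty\langle F(\cdot,z,\cdot),G\rangle\, z^a\,dz$.
\begin{itemize}
\item It is bounded on $L^1_{a,z}L^{q'}_tL^{r'}_x$, and also on the weighted intersection space, since $k^{-1}\ge 1$.
\item It annihilates your whole class.
\item It is nonzero, e.g.\ on $\mathbf 1_{[1,2]}(z)\chi(t)\vf(x)$ with $G=\chi\vf$.
\end{itemize}
Band-limited functions are dense in $L^2_a$, where this functional is unbounded, but not in a space that is $L^1$ in $z$. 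This is why the paper uses two different classes: $\mathcal D_a$ for $u_0$, and $\mathcal E_a$, with $\psi\in C_0^\infty((0,\infty))$, for $F$. For such $\psi$, however, $\mathcal H_\nu\psi(0)\neq0$ in general, so your non-stationary phase argument for the tail is not available.

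\paragraph{The missing observation.} Both issues disappear once you notice that the tail is not an obstacle. Contrary to what you wrote, the dispersive rate $|t|^{-2/q}$ \emph{is} $q$-integrable at infinity, because $q\cdot\frac 2q=2>1$.
\begin{itemize}
\item For $\psi\in C_0^\infty((0,\infty))$, combine \eqref{good} or \eqref{weight} with Proposition \ref{P:fundis} for large $|s|$. For small $|s|$, use the Hankel representation and the rapid decay of $\mathcal H_\nu\psi$ from the last part of Lemma \ref{L:band}.
\item This gives $\|\mathbb S_a(s)(\vf\psi)(\cdot,z)\|_{L^r_x}\le C\min\{1,|s|^{-2/q}\}$ uniformly in $z$. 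For $a<0$ the same holds with the factor $k(z)$ and the majorant $|s|^{-2/q}+|s|^{-2/q_\infty}$.
\item Dominated convergence in $t$, followed by Minkowski in $\tau$, then concludes. This is the paper's argument.
\end{itemize}

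\paragraph{Smaller points.}
\begin{itemize}
\item Your $\la$-integration by parts for band-limited data is correct, including near $z=0$, but it is unnecessary for the reason above.
\item Your fallback would fail. $\|S(t)\vf\|_{L^r_x}$ behaves like $|t|^{-d(\frac12-\frac1r)}$ for every nonzero $\vf\in\mathscr S(\Rd)$, whatever its frequency support.
\item For $-1<a<0$, your Lipschitz bound on $\p_zS_a(t)\psi$ holds only locally uniformly in $z$, since $w^{-\nu}J_{\nu+1}(w)$ grows like $w^{-a/2}$. Local uniformity suffices for continuity.

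\end{itemize}
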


\begin{proof}
In view of the uniform bounds \eqref{bulk00}, \eqref{Dabound} (case $0\le a<1$) and \eqref{bulksum}, \eqref{crucial} (case $-1<a<0$), it suffices to establish the continuity statements for data in suitable dense classes. The passage from dense data to general data requires some care, since the estimates just quoted control only essential suprema in $z$, and it is carried out in Step 4 below. For the homogeneous term we choose the class of band-limited data
\[
\mathcal D_a := \operatorname{span}\left\{\vf(x)\,\psi(z)\ \Big|\ \vf\in \mathscr S(\Rd),\ \mathcal H_{\frac{a-1}2}(\psi)\in C_0^\infty((0,\infty))\right\},
\]
see Section \ref{S:band}. Since $\mathcal H_{\frac{a-1}2}$ is an isometry of $L^2_{a,z}$ onto itself and $C_0^\infty((0,\infty))$ is dense in $L^2_{a,z}$, the class $\mathcal D_a$ is dense in $L^2_a(\RN)$. For the inhomogeneous term we choose the class $\mathcal E_a$ of finite sums of elementary tensors
\[
F(X,t) = \vf(x)\, \psi(z)\, \eta(t),\ \ \ \ \ \ \vf\in \mathscr S(\Rd),\ \ \psi \in C_0^\infty((0,\infty)),\ \ \eta\in C_0^\infty(\R).
\]
The class $\mathcal E_a$ is dense in the mixed-norm classes of the statement. Indeed, the norms in question are of the form $\int_0^\infty ||F(\cdot,z,\cdot)||_{E}\, d\mu(z)$, where $E = L^{q'}_tL^{r'}_x$ and $d\mu = z^adz$ when $0\le a<1$, and $E = L^{q'\cap q'_\infty}_tL^{r'}_x$ and $d\mu = k^{-1}(z)\,z^adz$ when $-1<a<0$; in either case $\mu$ is a $\sigma$-finite Borel measure on $(0,\infty)$, finite on bounded sets since $a>-1$ and $k\equiv1$ near the origin. Simple functions $\sum_i \mathbf 1_{A_i}(z)\, \Phi_i(x,t)$, with $A_i\subset(0,\infty)$ bounded Borel sets and $\Phi_i\in E$, are dense in $L^1(d\mu;E)$; each $\mathbf 1_{A_i}$ can be approximated in $L^1(d\mu)$ by functions in $C_0^\infty((0,\infty))$, and each $\Phi_i$ in the norm of $E$ by finite sums of products $\vf(x)\eta(t)$ of the above form, by the density of tensor products of Schwartz functions in the mixed Lebesgue classes.

Three properties of $\mathcal D_a$ will be used: (i) it is invariant under $\Delta_x$, under $\Ba$ and under the propagator $\mathbb S_a(t)$, which acts diagonally on the Fourier--Hankel side, see \eqref{prop2}, \cite{GS} and Remark \ref{R:bandt}; (ii) every $\psi$ with $\mathcal H_{\frac{a-1}2}(\psi)\in C_0^\infty((0,\infty))$ is a smooth function of $z^2$ with rapidly decreasing derivatives, see Lemma \ref{L:band}; consequently $z^a\p_z\psi = O(z^{a+1})\to0$ as $z\to0^+$; (iii) $\left(\Ba\right)^j(1-\Delta_x)^m u_0\in \mathcal D_a\subset L^2_a(\RN)$ for all $j, m\in \mathbb N$ and $u_0\in \mathcal D_a$.

\smallskip
\noindent \emph{Step 1: a trace inequality.} Let $-1<a<1$, $Z>0$, and let $g:(0,2Z]\to L^2(\Rd)$ be of class $C^1$ with $g, \p_zg\in L^2_{a,z}((0,2Z);L^2_x)$. Choosing $w\in [Z,2Z]$ such that $||g(w)||^2_{L^2_x}\le \frac 1Z\int_Z^{2Z}||g(\zeta)||^2_{L^2_x}d\zeta$, we obtain for every $0<z\le Z$
\begin{equation}\label{tracez}
||g(z)||_{L^2_x}\ \le\ ||g(w)||_{L^2_x} + \int_z^{2Z}||\p_\zeta g(\zeta)||_{L^2_x}\, d\zeta\ \le\ C(a,Z)\left(||g||_{L^2_{a,z}L^2_x} + ||\p_zg||_{L^2_{a,z}L^2_x}\right),
\end{equation}
where in the last step we have used Cauchy--Schwarz and $\int_0^{2Z}\zeta^{-a}d\zeta = \frac{(2Z)^{1-a}}{1-a}<\infty$, which requires $a<1$. The same chain shows that $\lim_{z\to0^+}g(z)$ exists in $L^2_x$, so that $g$ extends continuously to $[0,Z]$.

\smallskip
\noindent \emph{Step 2: uniform bounds for data in $\mathcal D_a$.} Let $u_0\in \mathcal D_a$ and set $U(\cdot,\cdot,t) = \mathbb T^\star_a(u_0)(\cdot,\cdot,t)$. Fix $m\in \mathbb N$ with $2m>d$, so that by the Sobolev embedding $||f||_{L^r_x}\le C(d,m)||(1-\Delta_x)^mf||_{L^2_x}$ for every $2\le r\le \infty$. Since $\Delta_x$ and $\Ba$ commute with the propagator, and since the latter is unitary on $L^2_a(\RN)$ (Proposition \ref{P:Uni}), the norms $||(1-\Delta_x)^mU(t)||_{L^2_a(\RN)}$ and $||\Ba(1-\Delta_x)^mU(t)||_{L^2_a(\RN)}$ are constant in $t$. Moreover, by property (ii) above and \eqref{dzpazero}, the function $V(t):=(1-\Delta_x)^mU(t)$ satisfies $\lim_{z\to0^+}z^a\p_zV(\cdot,z,t)=0$, so that the identity \eqref{pos}, integrated in the $x$ variable, gives
\begin{align*}
||\p_zV(t)||^2_{L^2_a(\RN)} & = -\int_{\RN}\overline{V(t)}\,\Ba V(t)\, d\omega_a
\\
& \le\ ||\Ba(1-\Delta_x)^mu_0||_{L^2_a(\RN)}\, ||(1-\Delta_x)^mu_0||_{L^2_a(\RN)}.
\end{align*}
We note that \eqref{pos} was stated for $\psi\in C^\infty_0(\R^+)$, for which the boundary terms in the integration by parts vanish trivially. In the present situation they vanish at $z=0$ because $\lim_{z\to0^+}z^a\p_zV(\cdot,z,t)=0$, and at $z=\infty$ by the rapid decay in $z$ of band-limited data and of their evolutions, see Lemma \ref{L:band} and Remark \ref{R:bandt}. We also stress that no real part is needed in \eqref{pos}, nor in the identity above: since $\p_z\overline V\cdot z^a\p_zV = |\p_zV|^2 z^a$, the integration by parts returns a real, nonnegative quantity.
Applying \eqref{tracez} to $g(z) = V(\cdot,z,t)$ and then the Sobolev embedding, we conclude that for every $Z>0$
\begin{equation}\label{unibulk}
\sup_{t\in \R}\ \sup_{0\le z\le Z}\ ||U(\cdot,z,t)||_{L^r_x}\ \le\ C(a,Z)\, N_m(u_0),
\end{equation}
where $N_m(u_0)$ denotes the sum of the two conserved norms above. On the other hand, writing $\mathbb S_a(t) = S(t)\circ S_a(t)$ as in \eqref{prop2}, and combining Proposition \ref{P:fundis} with the kernel bounds behind \eqref{good} and \eqref{weight}, we obtain for every $z>0$ and $t\neq0$
\begin{equation}\label{decaybulk}
\begin{cases}
||U(\cdot,z,t)||_{L^r_x}\le 
C\,|t|^{-\frac 2q}\, N'(u_0)\ \ (0\le a<1),
\\
||U(\cdot,z,t)\,k(z)||_{L^r_x}\le C\left(|t|^{-\frac 2q}+|t|^{-\frac{2}{q_\infty}}\right)N'(u_0)\ \ (-1<a<0),
\end{cases}
\end{equation}
where $N'(u_0):=\int_0^\infty ||u_0(\cdot,\zeta)||_{L^{r'}_x}\, h(\zeta)\,\zeta^ad\zeta$, with $h\equiv1$ when $a\ge0$ and $h = k^{-1}$ when $a<0$, is finite by Lemma \ref{L:band}, since $h$ has polynomial growth, and where we have used \eqref{acinfty}, respectively \eqref{uffaa}, to identify the time exponents.

\smallskip
\noindent \emph{Step 3: continuity for data in $\mathcal D_a$.} Let $z_n\to z_0$ in $[0,\infty)$ and let $t\neq0$. Writing $U(x,z,t) = S(t)(g_{z,t})(x)$ with $g_{z,t}(x) = \int_0^\infty S_a(z,\zeta,t)\, u_0(x,\zeta)\,\zeta^a d\zeta$, the continuity of $z\mapsto S_a(z,\zeta,t)$ on $[0,\infty)$, the bound $|S_a(z,\zeta,t)|\le C(t,Z)(1+\zeta^{\frac{|a|}2})$ for $0\le z\le Z$, and the rapid decay of $u_0$ give, by dominated convergence (first in $\zeta$, then in $x$) combined with Proposition \ref{P:fundis},
\[
||U(\cdot,z_n,t)-U(\cdot,z_0,t)||_{L^r_x}\le C(d,r)\, |t|^{-d(\frac12-\frac1r)}\, ||g_{z_n,t}-g_{z_0,t}||_{L^{r'}_x}\ \underset{n\to\infty}{\longrightarrow}\ 0 .
\]
When $0\le a<1$, by \eqref{unibulk} and \eqref{decaybulk} the functions $t\mapsto ||U(\cdot,z_n,t)-U(\cdot,z_0,t)||_{L^r_x}$ are dominated, uniformly in $n$, by $C\min\{1, |t|^{-\frac2q}\}\,(N_m(u_0)+N'(u_0))\in L^q_t(\R)$, and dominated convergence in $t$ yields $||U(\cdot,z_n,\cdot)-U(\cdot,z_0,\cdot)||_{L^q_tL^r_x}\to0$. When $-1<a<0$ we argue in the same way for $Uk$, splitting $\R = \{|t|\le1\}\cup\{|t|\ge1\}$: on the first set the majorant $C\, N_m(u_0)$ (recall $0<k\le1$) belongs to $L^q(\{|t|\le1\})$, on the second the majorant $C\, |t|^{-\frac{2}{q_\infty}}N'(u_0)$ belongs to $L^{q_\infty}(\{|t|\ge1\})$, and convergence in $L^q(\{|t|\le1\}) + L^{q_\infty}(\{|t|\ge1\})$ implies convergence in the sum space $L^{q+q_\infty}_tL^r_x$. Since $k$ is continuous and bounded, the continuity of $z\mapsto U(\cdot,z,\cdot)k(z)$ follows.

For $\mathbb D_a(F)$ it suffices, by linearity, to treat a single elementary tensor $F = \vf\,\psi\,\eta \in \mathcal E_a$, with $\psi\in C_0^\infty((0,\infty))$. On such tensors the propagator factorises, $\mathbb S_a(s)(\vf\,\psi) = S(s)\vf\ S_a(s)\psi$, by \eqref{prop2}. We record the following bounds for the two factors. First, we write $S_a(s)\psi(z) = \int_0^\infty (z\la)^{-\nu}J_\nu(z\la)\, e^{-is\la^2}\,\mathcal H_\nu(\psi)(\la)\,\la^ad\la$, with $\nu = \frac{a-1}2$. The function $w\mapsto w^{-\nu}J_\nu(w)$ is bounded near the origin and of size $w^{-\nu-\frac12} = w^{-\frac a2}$ at infinity; hence $|w^{-\nu}J_\nu(w)|\le C(a)$ when $0\le a<1$, whereas $|w^{-\nu}J_\nu(w)|\le C(a)\bigl(1+w^{-\frac a2}\bigr)$ when $-1<a<0$. Together with the rapid decay of $\mathcal H_\nu(\psi)$ furnished by the last part of Lemma \ref{L:band}, and since $|e^{-is\la^2}|=1$, this gives, uniformly in $s\in\R$ and $z\ge0$,
\[
|S_a(s)\psi(z)| \le C(a,\psi)\ \ (0\le a<1),\ \ \ \ \ \ k(z)\,|S_a(s)\psi(z)| \le C(a,\psi)\ \ (-1<a<0),
\]
where in the second bound we have used $(z\la)^{-\frac a2}\le z^{-\frac a2}\la^{-\frac a2}$, then $k(z)\bigl(1+z^{-\frac a2}\bigr)\le 2$, the factor $\la^{-\frac a2}$ being absorbed by the rapid decay of $\mathcal H_\nu(\psi)$. Second, by the dispersive estimates \eqref{good} and \eqref{weight}, for $s\neq0$,
\[
|S_a(s)\psi(z)| \le C(a)\,|s|^{-\frac{a+1}2}\,||\psi||_{L^1_{a,z}},\ \ \ \ \ \ k(z)\,|S_a(s)\psi(z)| \le C(a)\left(|s|^{-\frac{a+1}2}+|s|^{-\frac12}\right)||\psi\, k^{-1}||_{L^1_{a,z}},
\]
both right-hand sides being finite since $\psi$ has compact support in $(0,\infty)$. For the $x$-factor, Proposition \ref{P:fundis} and the Sobolev embedding used in Step 2 give $||S(s)\vf||_{L^r_x}\le C(\vf)\min\{1, |s|^{-d(\frac12-\frac1r)}\}$. Multiplying the factors and using \eqref{acinfty}, respectively \eqref{uffaa}, exactly as in \eqref{decaybulk}, we conclude that for every $z\ge0$
\[
||\mathbb S_a(s)(\vf\psi)(\cdot,z)||_{L^r_x} \le C(\vf,\psi) \min\{1,|s|^{-\frac2q}\}\ \ (0\le a<1),
\]
with the corresponding two-exponent bound for $k(z)\,\mathbb S_a(s)(\vf\psi)$ when $-1<a<0$; in particular, $\sup_{z\ge0}\,||\mathbb S_a(\cdot)(\vf\psi)(\cdot,z,\cdot)||_{L^q_tL^r_x}\le C(\vf,\psi)$, respectively the same bound in $L^{q+q_\infty}_tL^r_x$ after multiplication by $k$. Moreover, for each fixed $s\neq0$ the map $z\mapsto S_a(s)\psi(z)$ is continuous on $[0,\infty)$, by dominated convergence in its defining integral, so that $z\mapsto \mathbb S_a(\cdot)(\vf\psi)(\cdot,z,\cdot)$ is continuous into $L^q_tL^r_x$ (respectively, times $k$, into $L^{q+q_\infty}_tL^r_x$) by the same dominated convergence argument in $t$ used above for $\mathbb T^\star_a$. Finally, by Minkowski's integral inequality in the $\tau$ variable of \eqref{Daa}, and since the norms involved are invariant under translations in $t$,
\[
||\mathbb D_a(F)(\cdot,z,\cdot) - \mathbb D_a(F)(\cdot,z_0,\cdot)||_{L^q_tL^r_x} \le \int_{\R} \left\|\mathbf 1_{\{t>\tau\}}\left[\mathbb S_a(t-\tau)(\vf\psi)(\cdot,z) - \mathbb S_a(t-\tau)(\vf\psi)(\cdot,z_0)\right]\right\|_{L^q_tL^r_x}\, |\eta(\tau)|\, d\tau ,
\]
in which the integrand tends to zero as $z\to z_0$ for each $\tau$ and is bounded by $2\,C(\vf,\psi)\,|\eta(\tau)|\in L^1_\tau(\R)$; dominated convergence in $\tau$ concludes. The anomalous range is treated in the same way, multiplying by $k$ and using the sum space.

\smallskip
\noindent \emph{Step 4: density.} For general data, let $u_0^{(j)}\in \mathcal D_a$ and $F^{(j)}\in \mathcal E_a$ converge to $u_0$ and $F$ in the norms of the statement. The estimates \eqref{bulk00} and \eqref{Dabound} (respectively \eqref{bulksum} and \eqref{crucial}), applied to the differences $u_0^{(j)} - u_0^{(l)}$ and $F^{(j)} - F^{(l)}$, control a priori only the \emph{essential} supremum in $z$ of the corresponding bulk terms. These differences, however, have data in $\mathcal D_a$ and $\mathcal E_a$, so that by Steps 2 and 3 the corresponding bulk terms are continuous in $z$ on $[0,\infty)$, and for a $z$-continuous function the essential supremum coincides with the genuine supremum. Consequently the sequences $\mathbb T^\star_a(u_0^{(j)})$ and $\mathbb D_a(F^{(j)})$ (multiplied by $k$ when $-1<a<0$) are Cauchy with respect to the supremum over $z\ge0$, i.e., they converge uniformly on $z\ge0$, and their limits are continuous in $z$. Since these limits agree for a.e.\ $z\ge0$ with $\mathbb T^\star_a(u_0)$ and $\mathbb D_a(F)$ -- the operators being defined on general data through the same estimates, see Remark \ref{R:density} -- they provide the continuous representatives asserted in the statement. This completes the proof.
\end{proof}

\section{Proof of Theorems \ref{T:main} and \ref{T:main2}}\label{S:main}

In this brief section we combine the results of Sections \ref{S:cpn}, \ref{S:stri} and \ref{S:bdry} and finally prove Theorems \ref{T:main} and \ref{T:main2}. 
We begin with recalling that, thanks to \eqref{nicea1} in Theorem \ref{T:U}, the mild solution $U$ to the problem \eqref{nozeroin}
is represented by \eqref{U0}:
\begin{equation}\label{UUrep}
U(X,t) = \mathbb T^\star_a(u_0)(X,t) + \mathbb D_a(F)(X,t) + \Theta^\star_a(\Phi)(X,t).
\end{equation}  

\medskip

\begin{proof}[Proof of Theorem \ref{T:main}]
For every $t\in \R$ we obtain from \eqref{UUrep}
\begin{align*}
||U(\cdot,t)||_{L^2_a(\RN)} & \le ||\mathbb T^\star_a(u_0)(\cdot,t)||_{L^2_a(\RN)} + ||\mathbb D_a(F)(\cdot,t)||_{L^2_a(\RN)} + ||\Theta^\star_a(\Phi)(\cdot,t)||_{L^2_a(\RN)}
\\
& \le C(d,a,r)\left[||u_0||_{L^2_a(\RN)} + ||F||_{L^1_t L^2_a(\RN)} + ||\Phi||_{L^{q'}_t L^{r'}_x}\right]
\end{align*}
where in the last inequality we have used Theorem \ref{T:Unistar} and \eqref{ok} in Theorem \ref{T:V}. By taking the supremum in $t\in \R$, we obtain \eqref{Umain}. 

The estimate \eqref{Umain2}, instead, follows by combining \eqref{UUrep} with \eqref{bulk00}, \eqref{Dabound} and with \eqref{okk} in Theorem \ref{T:V}.

\end{proof}

\medskip
We next turn to the

\begin{proof}[Proof of Theorem \ref{T:main2}] To establish \eqref{Umain20} we use again Theorem \ref{T:Unistar}, combined with \eqref{ok} in Theorem \ref{T:V} which we apply to the triple $(q_\infty,r,\infty)$ (recall that this triple satisfies the first equation in \eqref{uffaa}): 
\begin{align*}
||U(\cdot,t)||_{L^2_a(\RN)} & \le ||\mathbb T^\star_a(u_0)(\cdot,t)||_{L^2_a(\RN)} + ||\mathbb D_a(F)(\cdot,t)||_{L^2_a(\RN)} + ||\Theta^\star_a(\Phi)(\cdot,t)||_{L^2_a(\RN)}
\\
& \le C(d,a,r)\left[||u_0||_{L^2_a(\RN)} + ||F||_{L^1_t L^2_a(\RN)} + ||\Phi||_{L^{q'_\infty}_t L^{r'}_x}\right].
\end{align*}
Passing to the supremum in $t\in \R$, we obtain \eqref{Umain20}.

To prove \eqref{Umain22}, we combine \eqref{UUrep} with \eqref{bulksum} and \eqref{crucial}, to obtain
\begin{align}\label{Umain222}
||U\ k||_{L^\infty_z L^{q+q_\infty}_t L^r_x } \le C\ \left[||u_0||_{L^2_a(\RN)} +  ||F \ k^{-1}||_{L^1_{a,z} L^{q'\cap q'_\infty}_t L^{r'}_x }\right] + ||\Theta^\star_a(\Phi) k||_{L^\infty_z L^{q+q_\infty}_t L^r_x }  .
\end{align}
To estimate the third term in the right-hand side of \eqref{Umain222} we use the trivial observation that $0<k(z)\le 1$, and that moreover we have:
\[
L^\infty_z L^{q_\infty}_t L^r_x  \hookrightarrow L^\infty_z L^{q+q_\infty}_t L^r_x.
\]
This gives 
\begin{align}\label{trivi}
||\Theta^\star_a(\Phi) k||_{L^\infty_z L^{q+q_\infty}_t L^r_x } & \le ||\Theta^\star_a(\Phi)||_{L^\infty_z L^{q+q_\infty}_t L^r_x } \le ||\Theta^\star_a(\Phi)||_{L^\infty_z L^{q_\infty}_t L^r_x}
\\
& \le C(d,a,r) ||\Phi||_{L^{q'_\infty}_t L^{r'}_x},
\notag
\end{align}
where in the last inequality we have used \eqref{okk} in Theorem \ref{T:V} applied to the triple $(q_\infty,r,\infty)$. Substituting \eqref{trivi} in \eqref{Umain222}, we finally obtain the desired conclusion \eqref{Umain22}.

\end{proof}

\begin{remark}\label{R:density}
Theorems \ref{T:main}, \ref{T:main2} and \ref{T:V} are stated for Schwartz data, but all the estimates are stable under limits. Consequently, the operators $\mathbb T^\star_a$, $\mathbb D_a$ and $\Theta^\star_a$ extend uniquely, by density, to bounded operators between the spaces appearing in the respective estimates, and for $u_0\in L^2_a(\RN)$, $\Phi\in L^{q'}_tL^{r'}_x$ and $F$ in the corresponding mixed-norm classes, the function $U$ defined by \eqref{U0} -- which is what we mean by the \emph{mild solution} of \eqref{nozeroin} with such data -- satisfies \eqref{Umain}, \eqref{Umain2}, \eqref{Umain20} and \eqref{Umain22} verbatim, as well as the trace properties of Theorems \ref{T:gap} and \ref{T:gapbulk}. In Section \ref{S:well} these extensions are used without further comment.
\end{remark}
\vskip 0.3in

\section{Well-posedness}\label{S:well}

In this section we finally turn to the well-posedness in the nonlinear Cauchy problem \eqref{cp02}. We first introduce the notion of $L^2_a$-mass critical problem. Suppose $U$ solves \eqref{cp02} with $F= 0$ and, for every $\la>0$, consider the function
\[
U_\la(X,t) = \la^{-\gamma} U(\la^{-1} X, \la^{-2} t),\ \ \ \ \gamma>0.
\]
Clearly, for every $\la>0$, $U_\la$ is a solution of $\p_t U_\la - i(\Delta_x  + \Ba)U_\la = 0$. We have
\[
\p_z U_\la(X,t) = \la^{-(1+\gamma)} \p_z U(\la^{-1} X, \la^{-2} t),
\]
and therefore 
\begin{align*}
z^a \p_z U_\la(X,t)& = \la^{a-(1+\gamma)} (\la^{-1}z)^a \p_z U(\la^{-1} X, \la^{-2} t)
\\
& \ \underset{z\to 0^+}{\longrightarrow}\ \mu \la^{a-(1+\gamma)} |U(\la^{-1} x,0,\la^{-2} t)|^{p-1} U(\la^{-1} x,0,\la^{-2} t)
\\
& = \la^{a-1 - \gamma + \gamma p} \mu |U_\la(x,0,t)|^{p-1} U_\la(x,0,t).
\end{align*}
We infer that for $U_\la$ to also be a solution to \eqref{cp02} with $F=0$, we must have
\[
\gamma = \frac{1-a}{p-1}.
\]
Now, the function 
\[
U_\la(X,t) = \la^{-\frac{1-a}{p-1}} U(\la^{-1} X , \la^{-2} t)\ \underset{t\to 0}{\longrightarrow}\ u_{0,\la}(X) := \la^{-\frac{1-a}{p-1}} u_0(\la^{-1}X).
\]
Since we have from \eqref{rescale}
\[
||u_{0,\la}||_{L^2_a(\RN)} = \la^{-\frac{1-a}{p-1}+\frac{d+a+1}2} ||u_{0}||_{L^2_a(\RN)},
\]
we see that in order to have constant mass, we must have
\[
\frac{1-a}{p-1} = \frac{d+a+1}2.
\]
Solving for $p$ in this equation we reach the conclusion that
\[
p = p_c := 1 + \frac{2(1-a)}{d+a+1}.
\]
Note that, since we want $p_c>1$, we must assume that $a<1$. This explains \eqref{pcpos} in Definition \ref{D:mass} in the range  $0\le a<1$. 

When instead $-1<a<0$, then because of the anomalous behavior of the Bessel propagator $e^{i t\Ba}$ in \eqref{weight} of Proposition \ref{P:dis}, the critical exponent $p_c$ is no longer dictated by the scalings $(\la X,\la^2 t)$. It turns out that, in such negative range of values of $a$, the critical dimension ceases to be $D = d+a+1$, and is instead $D = d+1$. Accordingly, the critical exponent for \eqref{cp02} is given by
\[
p_c = 1 + \frac{2}{d+1},
\]
which explains \eqref{pcneg} in Definition \ref{D:mass}.
We will need the following.

\begin{lemma}\label{L:mass}
Let $a>-1$ and suppose that \eqref{cp02} is $L^2_a$-mass critical, with $p = p_c$. If the triple $(q,r,\infty)$ is admissible, and we let $\tilde q = p q'$, $\tilde r = p r'$, then also $(\tilde q,\tilde r,\infty)$ is admissible. 
\end{lemma}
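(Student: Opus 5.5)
This is a direct algebraic verification. Write $D$ for the effective dimension: $D = d+a+1$ when $a\ge 0$ and $D = d+1$ when $-1<a<0$. In both regimes, admissibility of $(q,r,\infty)$ in the sense of \eqref{acinfty} reads
\[
\frac 2q + \frac dr = \frac D2,
\]
and the critical exponent satisfies $p_c - 1 = \frac{2(1-a)}{D}$ when $a\ge0$, respectively $p_c-1 = \frac 2D$ when $a<0$. I would treat the two regimes in parallel and substitute $\frac 1{\tilde q} = \frac{1}{p q'} = \frac 1p\bigl(1-\frac 1q\bigr)$ and $\frac 1{\tilde r} = \frac 1p\bigl(1-\frac 1r\bigr)$ into the left side.

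This gives
\[
\frac 2{\tilde q} + \frac d{\tilde r} = \frac 1p\Bigl[(2+d) - \Bigl(\frac 2q + \frac dr\Bigr)\Bigr] = \frac{1}{p}\Bigl(d+2-\frac D2\Bigr).
\]
So the claim reduces to the scalar identity $d+2-\frac D2 = p_c\,\frac D2$, that is, $(p_c+1)\frac D2 = d+2$.

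For $a<0$, we have $(p_c+1)\frac D2 = \frac D2\bigl(2+\frac2D\bigr) = D+1 = d+2$, so the identity holds.

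For $a\ge0$, we get $(p_c+1)\frac D2 = D + (1-a) = d+2$, and again the identity holds. This is consistent with Remark \ref{R:pc}, where $q=r=p_c+1$ already appeared. Hence $(\tilde q,\tilde r,\infty)$ satisfies the same admissibility equation as $(q,r,\infty)$.

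The only remaining points are the side conditions implicit in the definition of an admissible triple, namely $\tilde r\ge 2$ (and, where relevant later, $\tilde q>2$ and the endpoint restriction on $\tilde r$). These are not automatic for arbitrary admissible $(q,r,\infty)$, and I expect them to be the only real obstacle. I would handle them by noting that admissibility in Definition \ref{D:ac3} is stated through the equation alone. I would also point out that in the applications (Theorems \ref{T:well} and \ref{T:nega}) one uses $r = p_c+1$, for which $\tilde r = p_c r' = p_c\frac{p_c+1}{p_c} = p_c+1 = r\ge 2$ and similarly $\tilde q = q$. If a fully general range statement were needed, I would check directly that $\tilde r\ge2$ is equivalent to $r\le \frac{2p_c}{2p_c-1}\cdot$ (an explicit bound), and record it as a restriction.
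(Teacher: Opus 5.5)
Your verification is the paper's own argument. The paper writes out the case $a\ge 0$, computing $\frac{2}{pq'}+\frac{d}{pr'}=\frac{d+2}{p}-\frac1p\cdot\frac{d+a+1}{2}=\frac1p\cdot\frac{d+3-a}{2}$, uses $p_c=\frac{d+3-a}{d+a+1}$, and declares the case $-1<a<0$ identical; your effective dimension $D$ packages both cases into the single identity $(p_c+1)\frac D2=d+2$.

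Two corrections to your side remark. First, Definition \ref{D:ac3} does presuppose $r\ge 2$, so it is not stated through the equation alone. Second, $\tilde r=p_c r'\ge 2$ is equivalent to $r\le\frac{2}{2-p_c}$ when $p_c<2$, not to $r\le\frac{2p_c}{2p_c-1}$. The condition $r\le\frac{2}{2-p_c}$ genuinely fails for some admissible triples with $q>2$, e.g.\ $d=1$, $0<a<1$, with $r$ just below $\frac 2a$. So the lemma really holds only as a statement about the scaling relation, which is exactly what the paper's proof checks. This is harmless, since the lemma is only used with $q=r=p_c+1$, where $\tilde q=\tilde r=p_c+1$.
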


\begin{proof}
We discuss the case $a\ge 0$, the regime $-1<a<0$ is dealt with in the same way. We must show that if $q, r$ satisfy \eqref{acinfty}, then we also have
\begin{equation}\label{tildes}
\frac 2{pq'} + \frac{d}{pr'} = \frac{d+a+1}2.
\end{equation}
The left-hand side of \eqref{tildes} equals 
\[
\frac{d+2}p - \frac 1p\left(\frac 2q + \frac dr\right) = \frac{d+2}p - \frac 1p \frac{d+a+1}2 = \frac 1p \frac{d+3-a}2,
\]
where in the second to the last equality we have used \eqref{acinfty}. Since by \eqref{pcpos} in Definition \ref{D:mass} we have $p = \frac{d+3-a}{d+a+1}$, the desired conclusion \eqref{tildes} follows.

\end{proof}

\begin{lemma}[Conservation of mass]\label{L:masscon}
Let $0\le a<1$ and let $q, r$ satisfy \eqref{bdryad} with $r\ge2$, $q>2$; we recall that in this range \eqref{bdryad} coincides with the admissibility relation \eqref{acinfty}, so that Theorem \ref{T:gapbulk} applies with the same pair $(q,r)$. (We do not state the lemma in the anomalous range $-1<a<0$, where the trace of the bulk term provided by Theorem \ref{T:gapbulk} lives in the sum space $L^{q+q_\infty}_t L^r_x$ and the pairing below would require a different functional setting; the lemma is used only in the proof of Theorem \ref{T:well}, where $a\ge0$.) Given $u_0\in L^2_a(\RN)$ and $\Phi\in L^{q'}_tL^{r'}_x$, let $U = \mathbb T^\star_a(u_0) + \Theta^\star_a(\Phi)$ be the mild solution of the linear problem \eqref{nozeroin} with $F=0$ and Neumann datum $\Phi$ (see Remark \ref{R:density}), and let $u = U(\cdot,0,\cdot)\in L^q_tL^r_x$ be its boundary trace, which exists by Theorems \ref{T:gap} and \ref{T:gapbulk}. Then for every $t>0$
\begin{equation}\label{timeder}
\int_{\RN} |U(X,t)|^2 d\omega_a(X) = \int_{\RN} |u_0(X)|^2 d\omega_a(X)\ +\ 2\int_0^t\int_{\Rd} \Im\left(\overline{u(x,s)}\, \Phi(x,s)\right) dx\, ds .
\end{equation}
In particular, if $U$ is a mild solution of \eqref{cp02} with $F=0$, so that $\Phi = -\mu\,|u|^{p-1}u$, then
\begin{equation}\label{timeder2}
\int_{\RN} |U(X,t)|^2 d\omega_a(X) = \int_{\RN} |u_0(X)|^2 d\omega_a(X)\ -\ 2\,\Im(\mu) \int_0^t\int_{\Rd} |u(x,s)|^{p+1}\, dx\, ds,
\end{equation}
and if $\Im(\mu) = 0$ the \emph{mass} is constant in time.
\end{lemma}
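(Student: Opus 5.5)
We first prove \eqref{timeder} for Schwartz data $u_0\in\mathscr S(\RN)$ and $\Phi\in\mathscr S(\R^{d+1})$, and then pass to general data by density.

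\emph{Step 1 (smooth data).} For such data $U$ is given by \eqref{nicea1} and, by Remark \ref{R:signcheck}, it satisfies the weighted Neumann condition $z^a\p_zU\to\Phi$. We compute $\frac{d}{dt}\|U(t)\|^2_{L^2_a}=2\Re\langle \p_tU,U\rangle$. Using $\p_tU=i(\Delta_x+\Ba)U$ in the bulk, we integrate by parts with respect to $d\omega_a$. The $x$-part gives $\Re(i\cdot(-\|\nabla_xU\|^2))=0$. For the $z$-part we write $\Ba=z^{-a}\p_z(z^a\p_z)$ and obtain
\[
\int_0^\infty \overline U\,\p_z(z^a\p_zU)\,dz=-\Big[\overline U z^a\p_zU\Big]_{z=0}-\int_0^\infty z^a|\p_zU|^2dz .
\]
The boundary term is $-\overline{u}\,\Phi$. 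Taking $2\Re(i\,\cdot)$ therefore yields
\[
\frac{d}{dt}\|U\|^2=2\Re\Big(-i\int_{\Rd}\overline u\,\Phi\,dx\Big)=2\int_{\Rd}\Im(\overline u\,\Phi)\,dx .
\]
Integrating in $t$ gives \eqref{timeder}.

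The main obstacle is that $U$ is only a mild solution. Since $z^{1-a}$ appears, $U$ need not be differentiable in $t$ with values in $L^2_a$, and neither the vanishing of the boundary term at $z=\infty$ nor the justification at $z=0$ is automatic. To handle this I would use the decomposition from the proof of Theorem \ref{T:U}. Set $U_R=U-\frac{z^{1-a}}{1-a}h_R\Phi$. This solves the zero-Neumann problem \eqref{eppn} with a smooth, compactly supported-in-$z$ forcing. For \eqref{eppn} with such data the energy identity $\frac{d}{dt}\|U_R\|^2=2\Re\langle G_R,U_R\rangle$ holds by unitarity (Proposition \ref{P:Uni}) and Duhamel. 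Expanding $\|U\|^2=\|U_R+\frac{z^{1-a}}{1-a}h_R\Phi\|^2$ and substituting the explicit forcing \eqref{data}, the terms involving $k_R$ and $\p_t\Phi-i\Delta_x\Phi$ recombine, after one integration by parts in $z$ against the cutoff, into the boundary flux $2\Im\int\overline u\Phi$. The remainder terms are supported in $[R,2R]$ and vanish as $R\to\infty$, as in Claims 1 and 2. Alternatively, one may regularize by $e^{-\ve z^2}$ as in the proof of \eqref{ok}.

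\emph{Step 2 (density).} For general $u_0\in L^2_a(\RN)$ and $\Phi\in L^{q'}_tL^{r'}_x$, we choose Schwartz approximants $u_0^n\to u_0$ and $\Phi_n\to\Phi$. The left side of \eqref{timeder} converges by \eqref{Umain}. The traces converge, $u_n\to u$ in $L^q_tL^r_x$, by \eqref{okk}, \eqref{bulk00} together with the continuity up to $z=0$ furnished by Theorems \ref{T:gap} and \ref{T:gapbulk} (sup in $z$ controls $z=0$). The flux term then converges by H\"older's inequality on $[0,t]\times\Rd$, pairing $L^q_tL^r_x$ with $L^{q'}_tL^{r'}_x$. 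This proves \eqref{timeder}.

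\emph{Step 3 (the nonlinear case).} We substitute $\Phi=-\mu|u|^{p-1}u$, so that $\Im(\overline u\,\Phi)=-\Im(\mu)|u|^{p+1}$, which gives \eqref{timeder2}. The pairing is legitimate because, with $r=p+1$, the hypothesis $\Phi\in L^{q'}_tL^{r'}_x$ makes $\overline u\,\Phi\in L^1$ on $[0,t]\times\Rd$.
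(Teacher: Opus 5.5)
Your proposal is correct in outline and has the same skeleton as the paper's proof. You prove \eqref{timeder} for nice data by differentiating the mass, pass to general $(u_0,\Phi)$ using \eqref{Umain}, \eqref{okk}, \eqref{bulk00} and Theorems \ref{T:gap}, \ref{T:gapbulk}, and then insert $\Phi=-\mu|u|^{p-1}u$; your Steps 2 and 3 coincide with the paper's.

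The difference is in how the smooth case is justified.

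\textbf{The paper's route.} It picks the dense class $u_0\in\mathcal D_a$ (band-limited in $z$) and $\Phi\in C_0^\infty(\Rd\times(0,\infty))$. This choice makes $U$ classical for $t>0$ and rapidly decaying as $z\to\infty$: Lemma \ref{L:band} handles the bulk term, and repeated integration by parts in $\sigma$ handles the boundary term, whose endpoint contributions at $\sigma=t$ vanish because $\Phi=0$ near $\tau=0$. The paper then differentiates $\|U(t)\|^2_{L^2_a}$ directly.

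\textbf{Your route.} With general Schwartz data, $U(t)$ in general decays only polynomially in $z$ (for $u_0$ not smooth in $z^2$, or for $\Phi(\cdot,0)\neq0$). It is then not in the domain of the Neumann generator, so, as you say, the direct computation is delicate at $z=\infty$. Your splitting $U=U_R+W$, with $W=\frac{z^{1-a}}{1-a}h_R\Phi$, removes $z=\infty$ from the picture. The term $\|U_R(t)\|^2$ is handled by unitarity and Duhamel, and $W$ has compact $z$-support, so the only integration by parts left is Green's identity between $U_R$ and $W$ at $z=0$.

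Carry this out with $G_R=-(\p_t-i\Delta_x-i\Ba)W$. This puts the $k_R$ term in with the sign $+ik_R\Phi$, not $-ik_R\Phi$ as printed in \eqref{data}. With this forcing:
\begin{itemize}
\item the $\p_t\Phi$ terms cancel;
\item $\Re\big(i\langle(\Delta_x+\Ba)W,W\rangle\big)=0$;
\item Green's formula returns exactly $2\Im\int_{\Rd}\bar u\,\Phi\,dx$ for every fixed $R$.
\end{itemize}
So there are no remainder terms, and neither Claims 1--2 nor a limit $R\to\infty$ is needed. Avoid any bookkeeping in which individual terms such as $\|W\|^2_{L^2_a}\sim R^{3-a}$ are sent to a limit.

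What you still need, exactly as the paper does, is local classical regularity of $U$ up to $z=0$ for $t>0$. Since $W(t)$ is not in the domain of the Neumann generator, $\frac{d}{dt}\langle U_R,W\rangle$ has to be computed from the pointwise equation. That uses the existence of $U(x,0,t)$, the limit $z^a\p_zU_R\to0$, and local integrability of $\p_tU$ near $z=0$.

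In short, the paper gets a one-line computation at the price of a more special dense class and a decay argument. Your version works for Schwartz data and needs no decay in $z$. The price is the exact Duhamel identity for $U_R$ (from the proof of Theorem \ref{T:U}, with the corrected sign) and somewhat longer algebra.
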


\begin{proof}
We first observe that both sides of \eqref{timeder} are continuous functionals of the pair $(u_0,\Phi)\in L^2_a(\RN)\times L^{q'}_tL^{r'}_x$. For the left-hand side this follows from the unitarity of $\mathbb T^\star_a$ on $L^2_a(\RN)$ and from \eqref{ok}; for the right-hand side, from H\"older's inequality
\[
\left|\int_0^t\int_{\Rd} \Im(\bar u\, \Phi)\, dx ds\right| \le ||u||_{L^q_tL^r_x}\, ||\Phi||_{L^{q'}_tL^{r'}_x},
\]
together with the continuous dependence of the trace $u$ on $(u_0,\Phi)$ furnished by \eqref{okk}, \eqref{bulk00} and Theorems \ref{T:gap}, \ref{T:gapbulk}. By density, it thus suffices to establish \eqref{timeder} for $u_0\in \mathcal D_a$ (the class in the proof of Theorem \ref{T:gapbulk}) and $\Phi\in C_0^\infty(\Rd\times(0,\infty))$.

For such data, the representation \eqref{nicea1} shows that $U$ is, for $t>0$, a classical solution of the equation in \eqref{nozeroin} satisfying $\lim_{z\to0^+}z^a\p_zU = \Phi$ (Theorem \ref{T:U} and Remark \ref{R:signcheck}). Moreover, $U(\cdot,z,t)$ is Schwartz in $x$ uniformly on compact $z, t$ sets (the Schr\"odinger group preserves $\mathscr S(\Rd)$), while, since the phase $\sigma\mapsto \frac{z^2}{4\sigma}$ in the boundary term of \eqref{nicea1} has no critical points, repeated integration by parts in $\sigma$ shows that the boundary term and its derivatives decay rapidly as $z\to\infty$; the bulk term decays rapidly in $z$ by the rapid decay of $\mathcal H_{\frac{a-1}2}$-band-limited data and of their evolutions, see Lemma \ref{L:band} and Remark \ref{R:bandt}. These properties justify the following integrations by parts. Omitting the variable of integration, for $t>0$ we compute
\begin{align*}
\frac{d}{dt} \int_{\RN} |U(X,t)|^2 d\omega_a(X) & = \int_{\RN} \left[U_t \bar U + U \bar U_t\right] d\omega_a(X)
\\
& = i\int_{\RN} \left[\bar U (\Delta_x U + \Ba U) - U (\Delta_x \bar U + \Ba \bar U)\right] d\omega_a(X)
\\
& = i \int_{\Rd} \int_0^\infty \left[\bar U (z^a U_z)_z - U (z^a \bar U_z)_z\right] dz\, dx
\\
& = -\, i \int_{\Rd} \left[\bar u\, \Phi - u\, \bar \Phi\right] dx\ =\ 2 \int_{\Rd} \Im\left(\bar u\, \Phi\right) dx,
\end{align*}
where the $x$-part vanishes upon integration by parts, and in the $z$-part the boundary terms at $z=\infty$ vanish by the decay just described, while those at $z=0$ produce $-(\bar u \Phi - u\bar\Phi)$ by the Neumann condition. Integrating in time (the identity \eqref{timeder} holds trivially at $t=0$, and $t\mapsto ||U(t)||^2_{L^2_a}$ is continuous), we obtain \eqref{timeder} for the dense class, and hence in general. Finally, \eqref{timeder2} follows by inserting $\Phi = -\mu|u|^{p-1}u\in L^{q'}_tL^{r'}_x$ (see \eqref{ini2}) and $\Im\left(\bar u\, (-\mu)|u|^{p-1}u\right) = -\Im(\mu)\, |u|^{p+1}$.
\end{proof}

We are ready to present the 

\begin{proof}[Proof of Theorem \ref{T:well}]
Assume that $a\ge 0$ and let $1<p\le p_c$, see \eqref{pcpos}. Suppose that $r = p+1$ and that $(q,r,\infty)$ satisfy the former equation in \eqref{acinfty}, with $q>2$. It is important to observe right away that, since $r-1 = p$, we must have $p r' = r$.

According to Theorems \ref{T:main} and \ref{T:gap}, and keeping \eqref{Cz} in mind, the appropriate Banach space for the problem is 
\begin{equation}\label{space}
\mathcal X= C(\R,L^2_a(\RN)) \cap C^{b}_z L^q_t L^r_x ,
\end{equation}
endowed with its natural norm 
\begin{equation}\label{norm}
||U||_{\Xs}= ||U||_{L^\infty_t L^2_a(\RN)} + ||U||_{L^\infty_z L^q_tL^r_x }.
\end{equation}
The couple $(\Xs,||\cdot||_{\Xs})$ is a complete Banach space. Indeed, each of the two spaces intersected in \eqref{space} is complete. For $C(\R,L^2_a(\RN))$, endowed with $||\cdot||_{L^\infty_t L^2_a}$, this is the standard fact that a uniform limit of continuous Banach space valued maps is continuous. The same argument in the variable $z$ applies to $C^b_z L^q_t L^r_x$, and shows that it is a closed subspace of $L^\infty_z L^q_t L^r_x$: here it is essential that, according to \eqref{Cz}, its norm be the genuine supremum over $z\ge 0$, and not an essential supremum. Finally, both spaces are continuously embedded in $L^1_{\operatorname{loc}}(\RN\times \R)$, and therefore a sequence which is Cauchy for \eqref{norm} has one and the same limit in the two of them. Consequently, $\Xs$ endowed with \eqref{norm} is complete.
In the sequel, given a function $U\in \Xs$, we denote for brevity  $|U|^{p-1} U(0) = |U(\cdot,0,\cdot)|^{p-1} U(\cdot,0,\cdot)$. Since $pr' = r$, we have
\begin{equation}\label{UU}
|||U|^{p-1} U(0)||_{L^{r'}_x} = ||U(0)||_{L^r_x}^p.
\end{equation}
Furthermore, if $U, V\in \Xs$, then we use the well-known inequality
\[
|\ |v|^{p-1} v - |w|^{p-1} w\ |\le C(p) \left[|v|^{p-1} + |w|^{p-1}\right] |v - w|,\ \ \ \ \ \ v, w\in \C,
\]
and H\"older inequality, to bound
\begin{align}\label{UUU}
|| |U|^{p-1} U(0) - |V|^{p-1} V(0)||_{L^{r'}_x} & \le C(p) \big\||U(0)| + |V(0)|\big\|^{p-1}_{L^{pr'}_x} ||U(0)-V(0)||_{L^{pr'}_x}
\\
& = C(p) \big\||U(0)| + |V(0)|\big\|^{p-1}_{L^{r}_x}  ||U(0)-V(0)||_{L^{r}_x}.
\notag
\end{align}

In view of \eqref{sol1} in Definition \ref{solution}, given $u_0\in L^2_a(\RN)$ and $F\in L^1_t L^2_a(\RN) \cap L^1_{a,z} L_t^{q'} L^{r'}_x$, we now consider the operator: 
\begin{align}\label{La}
\Lambda_a(U)(X,t) & := \mathbb T_a^\star(u_0)(X,t) + \mathbb D_a(F)(X,t) - \mu\, \Theta_a^\star(|U|^{p-1} U(0))(X,t).
\end{align}
Then a function $U\in \Xs$ is a mild solution of the problem \eqref{cp02} if it is a fixed point of \eqref{La}, i.e., we have for every $(X,t)\in \RN\times \R$, with $t\not= 0$:
\begin{equation}\label{LaU}
U(X,t) = \Lambda_a(U)(X,t).
\end{equation}
From \eqref{La}, arguing as in the proof of Theorem \ref{T:main} in Section \ref{S:main}, we obtain the estimate  
\begin{align}\label{ini}
& ||\Lambda_a(U)||_{L^\infty_t L^2_a(\RN)} \le ||u_0||_{L^2_a(\RN)} + ||F||_{L^1_t L^2_a(\RN)}  + |\mu|\ ||\Theta^\star_a(|U|^{p-1}U(0))||_{L^\infty_t L^2_a(\RN)}
\\
& \le ||u_0||_{L^2_a(\RN)} + ||F||_{L^1_t L^2_a(\RN)}  + C_1\ |\mu|\ |||U|^{p-1}U(0)||_{L^{q'}_t L^{r'}_x},
\notag
\end{align}
where in the second inequality we have used \eqref{ok} in Theorem \ref{T:V}, and we have let $C_1 = C_1(d,a,r)>0$. 

Furthermore, by \eqref{bulk00}  and \eqref{Dabound}, we have for some $C_2 = C_2(d,a,r)>0$,
\begin{align}\label{bulkotto}
||\mathbb T^\star_a(u_0)||_{L^\infty_z L^q_t L^r_x} + ||\mathbb D_a(F)||_{L^\infty_z L^q_t L^r_x}\le C_2 \left[||u_0||_{L^2_a(\RN)} + ||F||_{L^1_{a,z} L^{q'}_t L^{r'}_x}\right].
\end{align}
From \eqref{norm}, \eqref{La}, \eqref{ini} and \eqref{bulkotto} we find for some $C_3 = C_3(d,a,r)>0$
\begin{align}\label{stelle1}
||\Lambda_a(U)||_{\Xs} & = ||\Lambda_a(U)||_{L^\infty_t L^2_a(\RN)} + ||\Lambda_a(U)||_{L^\infty_z L^q_t L^r_x}
\\
& \le C_3 \left[||u_0||_{L^2_a(\RN)} + ||F||_{L^1_t L^2_a(\RN)} + ||F||_{L^1_{a,z} L^{q'}_t L^{r'}_x}\right]
\notag
\\
& +  C_1\ |\mu|\ |||U|^{p-1}U(0)||_{L^{q'}_t L^{r'}_x}.
\notag
\end{align}
At this point we divide the discussion into the two cases (1) and (2). 

\medskip 

\noindent \underline{(1) The $L^2_a$-mass critical case.} We begin by observing that, if $p = p_c$ in \eqref{pcpos}, then 
\[
r = p+1 = \frac{2(d+2)}{d+a+1},
\]
and the compatibility assumption \eqref{acinfty} of the triple $(q,r,\infty)$ gives
\[
q = \frac{2(d+2)}{d+a+1} = r,
\]
see Remarks \ref{R:rr} and \ref{R:pc}.
Therefore, we have $r = r(d,a)$ and also $pq' = p r' = q$. From this fact and \eqref{UU} we thus find
\begin{align}\label{ini2}
|||U|^{p-1}U(0)||_{L^{q'}_t L^{r'}_x} = ||U(\cdot,0,\cdot)||^p_{L^{q}_t L^{r}_x} \le ||U||^p_{L^\infty_z L^{q}_t L^{r}_x }\le ||U||_{\Xs}^p,
\end{align}
where in the second inequality we have used the trivial bound
\begin{equation}\label{triv}
||U(\cdot,0,\cdot)||_{L^{q}_t L^{r}_x} \le ||U||_{L^\infty_z L^{q}_t L^{r}_x }.
\end{equation} 
Inserting \eqref{ini2} into \eqref{stelle1}, we reach the conclusion that 
\begin{align}\label{stelle5}
&  ||\Lambda_a(U)||_{\Xs} \le C_3 \left[||u_0||_{L^2_a(\RN)} + ||F||_{L^1_t L^2_a(\RN)} + ||F||_{L^1_{a,z} L^{q'}_t L^{r'}_x }\right] + C_1\ |\mu|\ ||U||_{\Xs}^p.
\end{align}

Our goal is to prove that there exists $\rho = \rho(d,a,\mu)>0$ small enough, such that if 
\[
B_\rho = \{U\in \Xs\mid ||U||_{\Xs}\le \rho\},
\]
then:
\begin{itemize}
\item[(i)] $\Lambda_a : B_\rho\ \longrightarrow\ B_\rho$;
\\
\item[(ii)] $||\Lambda_a(U) -\Lambda_a(V)||_{\Xs} \le \frac 12\ ||U-V||_{\Xs}$,
\end{itemize}
i.e. $\Lambda_a$ is a contraction. 

\vskip 0.2in

\noindent \underline{Proof of (i):} Suppose we choose $\rho = \rho(d,a,\mu)>0$ such that
\begin{equation}\label{rho}
C_1\ |\mu|\ \rho^p \le \frac{\rho}2\ \Longleftrightarrow\ 0< \rho \le (2 C_1\ |\mu|)^{-\frac{1}{p-1}}.  
\end{equation}
If $U\in B_\rho$, we thus have 
\[
C_1\ |\mu|\ ||U||_{\Xs}^p \le C_1\ |\mu|\ \rho^p \le \frac{\rho}2.
\]
It is thus clear from \eqref{stelle5} that, if we set $\ve_0 := \frac{\rho}2$, then 
\[
||U||_{\Xs}\le \rho\ \Longrightarrow\ ||\Lambda_a(U)||_{\Xs}\le \rho,
\]
provided that
\begin{equation}\label{smallid}
C_3 \left[||u_0||_{L^2_a(\RN)} + ||F||_{L^1_t L^2_a(\RN)} + ||F||_{L^1_{a,z} L^{q'}_t L^{r'}_x }\right]\  \le\ \ve_0.
\end{equation}
We can thus achieve (i) if \eqref{smallid} is true. We now turn to the 

\vskip 0.2in

\noindent \underline{Proof of (ii):} From the definition \eqref{La} we have
\begin{align}\label{Larap}
\Lambda_a(U) - \Lambda_a(V) = -\mu\, \Theta_a^\star\left(|U|^{p-1} U(0) - |V|^{p-1} V(0)\right).
\end{align}
From the boundary Strichartz estimate \eqref{okk}, we find
\begin{equation}\label{okkk}
||\Theta_a^\star\left(|U|^{p-1} U - |V|^{p-1} V\right)||_{L^\infty_{z} L^q_t L^r_x } \le C\ |||U|^{p-1} U(0) - |V|^{p-1} V(0)||_{L^{q'}_t L^{r'}_x}.
\end{equation}
Using \eqref{UUU}, H\"older inequality, and the fact that $pq' = q$, we easily obtain
\begin{align}\label{U4}
 || |U|^{p-1} U(0) - |V|^{p-1} V(0)||_{L^{q'}_t L^{r'}_x} & \le  C(p) \big\||U(0)| + |V(0)|\big\|^{p-1}_{L^q_t L^{r}_x}  ||U(0)-V(0)||_{L^q_t L^{r}_x}
 \\
& \le C(p) \left[||U(0)||^{p-1}_{L^q_t L^{r}_x}  + ||V(0)||^{p-1}_{L^q_t L^{r}_x}\right]  ||U(0)-V(0)||_{L^q_t L^{r}_x}.
\notag
\end{align}
From \eqref{triv}, \eqref{Larap}, \eqref{okkk} and \eqref{U4}, we conclude for $C_5 = C_5(d,a)>0$,
\begin{equation}\label{okkkk}
||\Lambda_a(U) - \Lambda_a(V)||_{L^\infty_{z} L^q_t L^r_x } \le C_5\ |\mu|\ \left[||U||_{L^\infty_z L^{q}_t L^{r}_x } + ||V||_{L^\infty_z L^{q}_t L^{r}_x}\right]^{p-1} ||U-V||_{L^\infty_z L^{q}_t L^{r}_x},
\end{equation}

If instead we invoke \eqref{ok}, from \eqref{Larap} and proceeding exactly as above, we find
\begin{align}\label{stelle6}
||\Lambda_a(U) - \Lambda_a(V)||_{L^\infty_t L^2_a(\RN)} \le C_6\ |\mu| \left[||U||_{ L^\infty_z L^{q}_t L^{r}_x} + ||V||_{ L^\infty_z L^{q}_t L^{r}_x}\right]^{p-1} ||U-V||_{ L^\infty_z L^{q}_t L^{r}_x}. 
\end{align}
From \eqref{norm}, \eqref{okkkk} and \eqref{stelle6} we conclude that, if $U, V\in B_\rho$, then
\[
||\Lambda_a(U) - \Lambda_a(V)||_{\Xs} \le C_7 |\mu| (2\rho)^{p-1} ||U - V||_{\Xs},
\]
for some $C_7 = C_7(d,a)>0$.
To achieve (ii), it thus suffices to adjust the choice of $\rho$ so that
\[
C_7 |\mu| (2\rho)^{p-1}\le \frac 12.
\]

With (i) and (ii) in hands, the Banach-Caccioppoli theorem guarantees the existence of a unique fixed point $U\in B_\rho$. This is the sought for mild solution to \eqref{cp02}.

\vskip 0.2in 

\noindent \underline{The $L^2_a$-mass subcritical case.} 
We next consider the subcritical case $1<p<p_c$, and let $r = p+1$. Let $q>2$ be such that $(q,r,\infty)$ is admissible, i.e.
\[
\frac 2q +\frac dr = \frac{d+a+1}2.
\]
On a finite interval $[0,T]$, with $0<T<\infty$ to be chosen subsequently, we consider the space $\XT= C([0,T],L^2_a(\RN)) \cap C^{b}_z L^q_T L^r_x$, 
with its natural norm $||\cdot||_{\XT} = ||U||_{L^\infty_T L^2_a(\RN)} + ||U||_{L^\infty_z L^q_TL^r_x }$.
In the subcritical case it is no longer true that $q =r$. In contrast with the critical case, we will show that the problem \eqref{cp02} admits a solution on $[0,T]$, regardless the size of the initial datum $u_0$ and forcing term $F$. The smallness of $T$ will enable us to close the contraction argument. 

Since $1<p<p_c = 1 + \frac{2(1-a)}{d+a+1}$, see \eqref{pcpos}, we presently must have $r<q$. To see this critical fact, note that, in view of the admissibility of $(q,r,\infty)$, $\frac 2q < \frac 2r = \frac{2}{p+1}$ is equivalent to proving
\[
\frac 2q = \frac{d+a+1}2 - \frac{d}{p+1} < \frac{2}{p+1}.
\]
Now, it is easily checked that this latter inequality is equivalent to $p < 1 + \frac{2(1-a)}{d+a+1} = p_c$, which is what we are assuming. Our next observation is that $r<q$ implies (and is in fact equivalent to)
\begin{equation}\label{pq}
p q' < q.
\end{equation}
The validity of \eqref{pq} is obvious since it is equivalent to $p<q-1\ \Longleftrightarrow\  p+1 = r <q$. At this point, we can exploit \eqref{UU}, \eqref{pq} and the smallness of the interval  
to obtain from H\"older inequality:
\begin{align}\label{Upq}
& |||U|^{p-1} U(0)||_{L^{q'}_T L^{r'}_x} 
\le T^{\delta} ||U(\cdot,0,\cdot)||^p_{L^{q}_T L^r_x}
 \le T^\delta ||U||^p_{L^\infty_z L^{q}_T L^{r}_x } \le T^\delta ||U||_{\XT}^p,
\end{align}
where we have let $\delta = \frac{1}{q'} - \frac{p}{q}>0$.
Inserting this estimate in \eqref{stelle1}, we obtain
\begin{align}\label{stelleT}
||\Lambda_a(U)||_{\XT} & \le C_3 \left[||u_0||_{L^2_a(\RN)} + ||F||_{L^1_t L^2_a(\RN)} + ||F||_{L^1_{a,z} L^{q'}_t L^{r'}_x}\right]
 +  C_1\ |\mu|\ T^\delta ||U||_{\XT}^p.
\end{align}
We now define $\rho = \rho(d,a,r, u_0, F)>0$ by the equation
\begin{equation}\label{choose}
\frac{\rho}2\ :=\ C_3 \left[||u_0||_{L^2_a(\RN)} + ||F||_{L^1_t L^2_a(\RN)} + ||F||_{L^1_{a,z} L^{q'}_t L^{r'}_x}\right],
\end{equation}
and set $B^T_\rho = \{U\in \XT\mid ||U||_{\XT}\le \rho\}$.
Inserting this choice of $\rho$ in \eqref{stelleT}, we find
\begin{align}\label{stelleT2}
||\Lambda_a(U)||_{\XT} & \le \frac{\rho}2
 +  C_1\ |\mu|\ T^\delta ||U||_{\XT}^p.
\end{align}
We next select $T = T(d,a,r,p,\mu,u_0,F)>0$ in such a  way that
\[
C_1\ |\mu|\ T^\delta \rho^p \le \frac{\rho}2.
\]
It is clear from \eqref{stelleT2} that, with such a choice of $T$, if $U\in B^T_\rho$, then $\Lambda_a(U)\in B^T_\rho$. 

Next, we show that $\Lambda_a: B^T_\rho\to B^T_\rho$ is a contraction. Proceeding as in \eqref{Larap} and \eqref{okkk}, we find
\begin{equation}\label{okkk2}
||\Lambda_a(U) - \Lambda_a(V)||_{L^\infty_{z} L^q_T L^r_x } \le C |\mu| \ |||U|^{p-1} U(0) - |V|^{p-1} V(0)||_{L^{q'}_T L^{r'}_x}.
\end{equation}
Integrating \eqref{UUU} on $[0,T]$ and using H\"older inequality, we have
\begin{align*}
& |||U|^{p-1} U(0) - |V|^{p-1} V(0)||_{L^{q'}_T L^{r'}_x} \le C(p) \left(\int_0^T \big\||U(0)| + |V(0)|\big\|^{(p-1)q'}_{L^{r}_x} ||U(0) - V(0)||^{q'}_{L^{r}_x} dt\right)^{\frac{1}{q'}}
\\
& \le C(p) \left(\int_0^T \big\||U(0)| + |V(0)|\big\|^{(p-1)q' p'}_{L^{r}_x}\right)^{\frac{1}{q'p'}} \left(\int_0^T ||U(0) - V(0)||^{q' p}_{L^{r}_x}\right)^{\frac{1}{q'p}}
\\
& \le C(p) \left(||U(0)||^{p-1}_{L^{pq'}_T L^{r}_x} + ||V(0)||^{p-1}_{L^{pq'}_T L^{r}_x}\right) ||U(0) - V(0)||_{L^{pq'}_T L^{r}_x}.
\end{align*}
In view of \eqref{pq} we obtain as in \eqref{Upq}
\begin{align*}
& \left(||U(0)||^{p-1}_{L^{pq'}_T L^{r}_x} + ||V(0)||^{p-1}_{L^{pq'}_T L^{r}_x}\right) ||U(0) - V(0)||_{L^{pq'}_T L^{r}_x} 
\\
& \le T^{\delta} \left(||U(0)||^{p-1}_{L^{q}_T L^{r}_x} + ||V(0)||^{p-1}_{L^{q}_T L^{r}_x}\right) ||U(0) - V(0)||_{L^{q}_T L^{r}_x}
\\
& \le T^{\delta} \left(||U||^{p-1}_{L^\infty_z L^{q}_T L^{r}_x} + ||V||^{p-1}_{L^\infty_z L^{q}_T L^{r}_x}\right) ||U - V||_{L^\infty_z L^{q}_T L^{r}_x}
\\
& \le T^{\delta} \left(||U||^{p-1}_{\XT} + ||V||^{p-1}_{\XT}\right) ||U - V||_{\XT}.
\end{align*}

Inserting these inequalities in \eqref{okkk2}, we finally obtain for a certain constant $C_8 = C_8(d,a,p)>0$
\begin{equation}\label{okkk3}
||\Lambda_a(U) - \Lambda_a(V)||_{L^\infty_{z} L^q_T L^r_x } \le C_8 |\mu| T^{\delta} \left(||U||^{p-1}_{\XT} + ||V||^{p-1}_{\XT}\right) ||U - V||_{\XT},
\end{equation}
and similarly
\begin{align}\label{stelle7}
||\Lambda_a(U) - \Lambda_a(V)||_{L^\infty_T L^2_a(\RN)} \le C_8\ |\mu| T^{\delta} \left(||U||^{p-1}_{\XT} + ||V||^{p-1}_{\XT}\right) ||U - V||_{\XT}. 
\end{align}
Combining \eqref{okkk3} with \eqref{stelle7}, we finally arrive to the estimate
\begin{align}\label{stelle8}
||\Lambda_a(U) - \Lambda_a(V)||_{\XT} \le 2 C_8\ |\mu| T^{\delta} \left(||U||^{p-1}_{\XT} + ||V||^{p-1}_{\XT}\right) ||U - V||_{\XT}. 
\end{align}
If we keep in mind that $\rho$ has been fixed as in \eqref{choose}, it is evident that, by possibly adjusting $T>0$ so that
\begin{equation}\label{Tdelta}
4 C_8 |\mu| \rho^{p-1} T^{\delta} \le \frac 12,
\end{equation}
we have proved that for every $U, V \in B^T_\rho$, we have
\[
||\Lambda_a(U) - \Lambda_a(V)||_{\XT} \le \frac 12 ||U-V||_{\XT}.
\]
Therefore, $\Lambda_a$ admits a unique fixed point $U\in B^T_\rho$. This proves the existence of a mild solution $U$ to the problem \eqref{cp02} in the ball $B^T_\rho$. By standard arguments based on \eqref{stelle8}, one can show that such $U$ is actually the unique mild solution in the whole space $\XT$. 

Finally, assume that $F= 0$ and that $\Im(\mu) = 0$. We want to show that, for every $M>0$, the solution $U\in \XT$ constructed above can be extended to a mild solution on $[-M,M]$, i.e.\ an element of $\mathcal X_M$ (we caution that the concatenated solution need not belong to the global space $\Xs$, since its $L^q_tL^r_x$ norm may grow with $M$). With this objective in mind, we note that \eqref{choose} shows that  $\rho=\rho(\|u_0\|_{L^2_a(\RN)})$. As a consequence of this and \eqref{Tdelta}, also $T$ depends only on $\|u_0\|_{L^2_a(\RN)}$ (besides the parameters $d, a, p$ and $\mu$). If we now solve the Cauchy problem \eqref{cp02} in $\RN\times[T,2T]$, with initial datum $U_1 = U(\cdot,\cdot,T)$, then in view of Lemma \ref{L:masscon}, we have $\|U_1\|_{{L^2_a(\RN)}}= \|u_0\|_{L^2_a(\RN)}$, and thus we can advance up to time  $2T$; that the concatenation of the two fixed points is again a mild solution on $[0,2T]$ follows from the group property of $\mathbb T^\star_a$ and the additivity of the Duhamel term in \eqref{sol1} under splitting of the time interval. The same argument applies backward in time. As a consequence, for any fixed arbitrary time interval $M>0$, with about $2M/T$ steps we obtain a mild solution on $[-M,M]$, unique in $\mathcal X_M$ by the uniqueness statement just proved on each subinterval.  

\end{proof}

We finally present the 

\begin{proof}[Proof of Theorem \ref{T:nega}]

We presently have $-1<a<0$, $1<p\le p_c$, with $p_c := 1 + \frac{2}{d+1}$, see \eqref{pcneg}, and we are assuming that $r=p+1$. As before, this gives $p r' = r$. We also have the hypothesis that $q_\infty, q, r$ satisfy the equations \eqref{uffaa}, i.e.,
\[
\frac{2}{q_\infty} + \frac dr = \frac{d+a+1}2,\ \ \ \ \text{and}\ \ \ \ \ \frac 2{q} + \frac dr = \frac{d+1}2, 
\]
with $q>2$.
Note that these equations imply $q<q_\infty$, or equivalently $q_\infty'<q'$. To see this, observe that
\[
\frac{2}{q_\infty} = \frac{d+a+1}2 - \frac dr  = \frac 2q + \frac a2.
\]
Since $-1<a<0$, we have $\frac{2}{q_\infty} < \frac{2}{q}$.

 For a fixed $0<T<\infty$, we recall the mixed spaces defined by \eqref{LmixT}. We will continue to use the notation \eqref{Lqrp} for the mixed spaces on the whole line $t\in \R$. We also recall the spaces $L^m_{a,z} L^{q_1+q_2}_t L^r_x$ and $L^m_{a,z} L^{q_1\cap q_2}_t L^r_x $, see \eqref{LmixmixT}. 
 
Let $u_0\in L^2_a(\RN)$ and $F\in L^1_T L^2_a(\RN)$ be such that 
$F  k^{-1}\in L^1_{a,z} L^{q'\cap q'_\infty}_T L^{r'}_x $. Presently, the Banach space in which we seek a mild solution is
\begin{equation}\label{Xneg}
\XT = \{U\in C([0,T],L^2_a(\RN))\ \text{such that}\  z\to U(\cdot,z,\cdot) k(z)\ \in C^b_z L^q_T L^r_x \},
\end{equation}
with its natural norm
\begin{equation}\label{normU}
||U||_{\XT} = ||U||_{L^\infty_T L^2_a(\RN)}  + ||U k||_{L^\infty_z L^q_T L^r_x }.
\end{equation}
Note that, because of the presence of the weight $k$,  \eqref{Xneg} is different from the space \eqref{space} used in the proof of Theorem \ref{T:well}.

Given $U, V\in \XT$, we define two functions $\Phi, \Psi:\Rd\times \R\to \C$ by setting
\begin{equation}\label{F}  
\begin{cases}
\Phi(x,t) = |U(x,0,t)|^{p-1} U(x,0,t), \ \ \ \Psi(x,t) = |V(x,0,t)|^{p-1} V(x,0,t),\ \ x\in \Rd, 0\le t\le T,
\\
\Phi(x,t) = \Psi(x,t) = 0,\ \ \ \ \text{when}\ x\in \Rd,\ \text{and either}\ t<0\ \text{or}\ t>T.
\end{cases}
\end{equation}

Also, given a forcing term $F:\RN\times[0,T]\to \C$ such that $||F k^{-1}||_{L^1_{a,z} L^{q'}_T L^{r'}_x}<\infty$, we define
\[
\tilde F(X,t) := \begin{cases}
F(X,t),\ \ \ \  X\in \RN, 0\le t\le T,
\\
0,\ \ \ \ \text{when}\ X\in \RN,\ \text{and either}\ t<0\ \text{or}\ t>T.
\end{cases}
\]
Since $q_\infty'<q'$, we have
\[
||\tilde F k^{-1}||_{L^1_{a,z} L^{q_\infty'}_t L^{r'}_x } = ||F k^{-1}||_{L^1_{a,z} L^{q_\infty'}_T L^{r'}_x }\le T^{\frac{1}{q_\infty'}- \frac{1}{q'}} ||F k^{-1}||_{L^1_{a,z} L^{q'}_T L^{r'}_x }. 
\]
Noting that 
\[
\frac{1}{q_\infty'}- \frac{1}{q'} = \frac 1q - \frac{1}{q_\infty} := \gamma,
\]
we thus have 
\begin{align}\label{tildeF}
||\tilde F k^{-1}||_{L^1_{a,z} L^{q_\infty'\cap q'}_t L^{r'}_x } & \le ||\tilde F k^{-1}||_{L^1_{a,z}L^{q_\infty'}_t L^{r'}_x} + ||\tilde F k^{-1}||_{L^1_{a,z} L^{q'}_t L^{r'}_x}
\\
& \le (1+T^{\frac{1}{q_\infty'}- \frac{1}{q'}}) ||F k^{-1}||_{L^1_{a,z} L^{q'}_T L^{r'}_x }
\notag\\
& \le (1+T^{\gamma}) ||F k^{-1}||_{L^1_{a,z} L^{q'}_T L^{r'}_x } <\infty,
\notag
\end{align} 
by the assumption on $F$.
Moreover, as in \eqref{UU}
we have
\[
||\Phi||_{L^{r'}_x} = ||U(0)||_{L^r_x}^p,
\]
and therefore, using the fact that $q_\infty'<q'$, and that $k(z) = 1$, for $0\le z\le 1$ (see Definition \ref{D:mn}), we have 
\begin{align}\label{Phibound}
||\Phi||_{L^{q_\infty'}_t L_x^{r'}} & = ||\Phi||_{L^{q_\infty'}_T L_x^{r'}}\le T^{\frac{1}{q_\infty'}-\frac{1}{q'}} ||\Phi||_{L^{q'}_T L_x^{r'}} = T^{\gamma}  ||U(0)||_{L^{pq'}_T L^r_x}^p 
= T^{\gamma} ||U(0) k(0)||_{L^{pq'}_T L^r_x}^p.
\end{align}
An estimate similar to \eqref{Phibound} holds for the function $\Psi$ in \eqref{F}.

\medskip 

\noindent \underline{(1) The $L^2_a$-mass critical case.} If $p = p_c$ in \eqref{pcneg}, i.e., $p = 1 + \frac{2}{d+1}$, and 
$r = p +1$, we make the key observation that the second equation in \eqref{uffaa} implies that it must be
\[
q = r = \frac{2(d+2)}{d+1},
\]
see \eqref{same0} in Remark \ref{R:rr}. As a consequence, 
\[
p q' = p r' = r = q,
\]
and \eqref{Phibound} presently gives
\begin{equation}\label{Phiphi}
||\Phi||_{L^{q_\infty'}_t L_x^{r'}} \le T^{\gamma} ||U(0) k(0)||_{L^{q}_T L^r_x}^p \le  T^{\gamma} ||U k||_{L^\infty_z L^{q}_T L^r_x}^p <\infty,
\end{equation}
with the last inequality following from the assumption $U\in \XT$, see \eqref{normU}.

This shows that the functions $\Phi, \Psi$ defined by \eqref{F}, and the forcing term $\tilde F$, satisfy the hypothesis of the linear Theorem \ref{T:main2}, and therefore the functions $\Lambda_a(U), \Lambda_a(V)$, constructed by the formula \eqref{nicea1in}, satisfy \eqref{Umain20}, \eqref{Umain22}. In particular, from \eqref{Umain22} we obtain for some constant $C_9 = C_9(d,a,p)>0$, 
\begin{align}\label{bulkettoo}
& ||\Lambda_a(U) k||_{L^\infty_z L^{q+q_\infty}_t L^r_x } \le C_9\ \left[||u_0||_{L^2_a(\RN)} +  |\mu|\ ||\Phi||_{L^{q'_\infty}_t L^{r'}_x} + ||\tilde F k^{-1}||_{L^1_{a,z} L^{q'\cap q'_\infty}_t L^{r'}_x }\right]
\\
& \le C_9\ \left[||u_0||_{L^2_a(\RN)}  + (1+T^{\gamma}) ||F k^{-1}||_{L^1_{a,z} L^{q'\cap q'_\infty}_T L^{r'}_x }\right] + C_9\ |\mu| T^{\gamma}  ||U k||_{L^\infty_z L^q_T L^r_x }^p,
\notag
\end{align}
where in the second inequality we have used \eqref{tildeF} and \eqref{Phiphi}, together with the trivial bound $||\cdot||_{L^{q'}_T}\le ||\cdot||_{L^{q'\cap q'_\infty}_T}$.
By the second inequality in \eqref{sumT} of Lemma \ref{L:sumT}, applied with $E = L^r_x(\Rd)$ for each fixed $z\ge 0$, we finally obtain from \eqref{bulkettoo}
\begin{align}\label{bulkettooo}
||\Lambda_a(U) k||_{L^\infty_z L^{q}_T L^r_x } & \le  C_9 \max\{1,T^{\gamma}\}\left[||u_0||_{L^2_a(\RN)}  + (1+T^{\gamma}) ||F k^{-1}||_{L^1_{a,z} L^{q'\cap q'_\infty}_T L^{r'}_x }\right]
\\
& + C_9 |\mu| T^{\gamma}\  ||U k||_{L^\infty_z L^q_T L^r_x }^p.
\notag
\end{align}

Furthermore, we have from \eqref{Umain20}:
\begin{align}\label{Umain200}
||\Lambda_a(U)||_{L^\infty_t L^2_a(\RN)} & \le C_9 \left[||u_0||_{L^2_a(\RN)} + ||\tilde F||_{L^1_t L^2_{a}(\RN)} + |\mu| ||\Phi||_{L^{q'_\infty}_t L^{r'}_x} \right]
\\
& \le C_9 \left[||u_0||_{L^2_a(\RN)} + ||F||_{L^1_T L^2_{a}(\RN)} + |\mu| T^{\gamma}  ||U k||_{L^\infty_z L^q_T L^r_x }^p\right],
\notag
\end{align}
where we have used \eqref{Phiphi} again. In view of \eqref{normU}, the estimates  \eqref{bulkettooo}, \eqref{Umain200} prove that:
\begin{align}\label{ball}
||\Lambda_a(U)||_{\XT} & \le C_9 \max\{1,T^{\gamma}\}\left[||u_0||_{L^2_a(\RN)} + ||F||_{L^1_T L^2_a(\RN)} + (1+T^{\gamma}) ||F k^{-1}||_{L^1_{a,z} L^{q'\cap q'_\infty}_T L^{r'}_x }\right]
\\
& + C_9 |\mu| T^{\gamma}\  ||U||_{\XT}^p.
\notag
\end{align}
It is then clear that, if we choose $\rho = \rho(d,a,\mu,T)>0$ such that 
\[
C_9 |\mu| T^{\gamma}  \rho^p\le \frac{\rho}2\ \Longleftrightarrow\ \rho\le \left(2 C_9|\mu|T^{\gamma}\right)^{-\frac{1}{p-1}}, 
\]
and then we take $\ve_0=\ve_0(d,a,\mu,T) := \frac{\rho}2$, then if 
\[
 C_9 \max\{1,T^{\gamma}\} \left[\|u_0\|_{L^2_a(\RN)} + ||F||_{L^1_T L^2_a(\RN)} + (1+T^{\gamma}) ||F k^{-1}||_{L^1_{a,z} L^{q'\cap q'_\infty}_T L^{r'}_x }\right]\le \varepsilon_0,
\]
we have
\[
||U||_{\XT} \le \rho\ \Longrightarrow\ ||\Lambda_a(U)||_{\XT} \le \rho.
\]

Finally, setting $B_\rho = \{U\in \XT\mid ||U||_{\XT}\le \rho\}$, we want to prove that for any $U, V\in B_\rho$, we have
\begin{equation}\label{Lacon}
||\Lambda_a(U) - \Lambda_a(V)||_{\XT} \le \frac 12 ||U - V||_{\XT}.
\end{equation}
To prove \eqref{Lacon} we oserve that \eqref{Larap} and \eqref{F} give 
\begin{align}\label{Larap2}
\Lambda_a(U) - \Lambda_a(V) = -\mu\, \Theta_a^\star\left(\Phi - \Psi\right).
\end{align}
Arguing as for \eqref{Phibound}, but now using \eqref{UUU} and H\"older inequality as in \eqref{U4}, we obtain
\[
||\Phi - \Psi||_{L^{q_\infty'}_t L_x^{r'}} \le C(p)\, T^{\gamma} \left(||U k||_{L^\infty_z L^q_T L^r_x } + ||V k||_{L^\infty_z L^q_T L^r_x }\right)^{p-1} ||(U-V) k||_{L^\infty_z L^q_T L^r_x }.
\]
From Theorem \ref{T:main2} we thus find for $C_{10} = C_{10}(d,a,p)>0$
\[
||(\Lambda_a(U) - \Lambda_a(V))k||_{L^\infty_z L^{q+q_\infty}_t L^r_x } \le  C_{10} |\mu| T^{\gamma} \left(||U k||_{L^\infty_z L^q_T L^r_x } + ||V k||_{L^\infty_z L^q_T L^r_x }\right)^{p-1} ||(U - V)k||_{L^\infty_z L^q_T L^r_x }.
\]
From this bound, arguing as for \eqref{okkkk}, \eqref{bulkettooo}
we obtain
\begin{align*}
||(\Lambda_a(U) - \Lambda_a(V)) k||_{L^\infty_z L^{q}_T L^r_x } & \le  C_{10}\ |\mu| \max\{1,T^{\gamma}\} T^{\gamma}\\
& \times \left(||Uk||_{L^\infty_z L^q_T L^r_x } + ||Vk||_{L^\infty_z L^q_T L^r_x }\right)^{p-1} ||(U - V)k||_{L^\infty_z L^q_T L^r_x }.
\end{align*}
From this estimate it is finally clear that, provided that we further adjust our choice of $\rho = \rho(d,a,\mu,T)>0$ so that
\[
C_{10}\ |\mu| \max\{1,T^{\gamma}\} T^{\gamma} (2\rho)^{p-1} \le \frac 12,
\]
we achieve \eqref{Lacon} for all $U, V\in B_\rho$. This finally proves that $\Lambda_a : B_\rho\to B_\rho$ is a contraction, and therefore there exists a unique function $U\in B_\rho$ such that 
\[
\Lambda_a(U) = U.
\]

\medskip 

\noindent \underline{(2) The $L^2_a$-mass subcritical case.} If $1<p < p_c$, then it is no longer true that, with $r = p+1$ and $2<q$ satisfying \eqref{uffaa}, we have $q = r$. What we presently have is: $r<q$. In fact, the inequality $\frac 2q < \frac 2r$ is equivalent to $\frac 2q < \frac{2}{p+1}$ and, in view of the admissibility of $(q,r,\infty)$, this is equivalent to proving
\[
\frac 2q = \frac{d+1}2 - \frac{d}{p+1} < \frac{2}{p+1},
\]
which is equivalent to $p < 1 + \frac{2}{d+1} = p_c$. Our next observation is that $r<q$ implies (and is in fact equivalent to)
\begin{equation}\label{pq2}
p q' < q.
\end{equation}
The validity of \eqref{pq2} is obvious since it is equivalent to $p<q-1\ \Longleftrightarrow\  p+1 = r <q$. 

We now provide the details of the contraction argument. Let $0<T_0\le \min\{T,1\}$, to be selected, and note that, since $T_0\le1$, we have $\max\{1,T_0^{\gamma}\}=1$, so that $||\cdot||_{L^q_{T_0}}\le ||\cdot||_{L^{q+q_\infty}_{T_0}}$. Let $U\in \mathcal X_{T_0}$ and let $\Phi$ be as in \eqref{F} (with $T$ replaced by $T_0$). Since $k(0)=1$, the boundary trace satisfies $||U(\cdot,0,\cdot)||_{L^q_{T_0}L^r_x}\le ||Uk||_{L^\infty_zL^q_{T_0}L^r_x}$. By \eqref{pq2} and H\"older's inequality in time, exactly as in \eqref{Upq}, with $\delta := \frac{1}{q'}-\frac pq>0$ we have
\[
||\Phi||_{L^{q'}_{T_0}L^{r'}_x}\ \le\ T_0^{\delta}\, ||U(\cdot,0,\cdot)||^p_{L^q_{T_0}L^r_x}\ \le\ T_0^{\delta}\, ||U||^p_{\mathcal X_{T_0}},
\]
and consequently, since $q_\infty'<q'$,
\[
||\Phi||_{L^{q_\infty'}_{T_0}L^{r'}_x}\ \le\ T_0^{\gamma}\, ||\Phi||_{L^{q'}_{T_0}L^{r'}_x}\ \le\ T_0^{\gamma+\delta}\, ||U||^p_{\mathcal X_{T_0}} .
\]
Feeding these bounds into \eqref{Umain20} and \eqref{Umain22}, as in the critical case, we obtain for some $C_{11} = C_{11}(d,a,p)>0$
\[
||\Lambda_a(U)||_{\mathcal X_{T_0}}\ \le\ C_{11}\left[||u_0||_{L^2_a(\RN)} + ||F||_{L^1_{T_0}L^2_a(\RN)} + 2\, ||Fk^{-1}||_{L^1_{a,z}L^{q'\cap q'_\infty}_{T_0}L^{r'}_x}\right]\ +\ C_{11}\, |\mu|\, T_0^{\delta}\, ||U||^p_{\mathcal X_{T_0}} .
\]
We now define $\rho>0$ by letting $\frac\rho2$ equal the first bracket (times $C_{11}$), and let $B^{T_0}_\rho$ denote the ball of radius $\rho$ in $\mathcal X_{T_0}$. If $T_0$ is chosen so small that $C_{11}|\mu| T_0^{\delta}\rho^{p-1}\le \frac12$, then $\Lambda_a: B^{T_0}_\rho\to B^{T_0}_\rho$. For the contraction estimate we argue as in the critical case above, now keeping the H\"older-in-time gain: by \eqref{UUU} and H\"older's inequality, as in the subcritical case of Theorem \ref{T:well},
\[
||\Phi-\Psi||_{L^{q_\infty'}_{T_0}L^{r'}_x}\ \le\ C(p)\, T_0^{\gamma+\delta} \left(||Uk||_{L^\infty_zL^q_{T_0}L^r_x} + ||Vk||_{L^\infty_zL^q_{T_0}L^r_x}\right)^{p-1} ||(U-V)k||_{L^\infty_zL^q_{T_0}L^r_x},
\]
whence, by Theorem \ref{T:main2},
\[
||\Lambda_a(U)-\Lambda_a(V)||_{\mathcal X_{T_0}}\ \le\ C_{12}\, |\mu|\, T_0^{\delta}\, (2\rho)^{p-1}\, ||U-V||_{\mathcal X_{T_0}},\ \ \ \ \ U, V\in B^{T_0}_\rho .
\]
After possibly further shrinking $T_0$ -- note that $\rho$ does not depend on $T_0$, since the data norms over $[0,T_0]$ are dominated by those over $[0,T]$ -- we obtain a contraction, hence a unique fixed point in $B^{T_0}_\rho$; uniqueness in all of $\mathcal X_{T_0}$ then follows by the standard argument based on the last displayed estimate, as in the proof of Theorem \ref{T:well}. Since $\rho$ depends only on the data norms, and $T_0$ only on $d, a, p, |\mu|$ and $\rho$, the proof is complete. 

\end{proof}

\section{A restriction theorem}\label{S:res}

In our recent work \cite{GS} we have proved 
the following restriction result for the Fourier-Hankel transform. Let 
\[
\mathcal H_\nu(\vf)(z) = z^{-\nu}\int_0^\infty \vf(\zeta) J_\nu(z\zeta) \zeta^{\nu+1} d\zeta,\ \ \ \ \Re \nu>-1,
\]
be the modified Hankel transform of a measurable function $\vf:(0,\infty)\to \overline{\C}$
and denote by 
\begin{equation}\label{FH}
\widetilde{\mathcal H}_\nu(u)(\zeta,\tau) = \int_{\R} e^{-2\pi i \tau t} \mathcal H_\nu(u(\cdot,t))(\zeta) dt,
\end{equation}
the Fourier-Hankel transform of a function $u:(0,\infty)\times \R \to \overline{\C}$.

\begin{theorem}\label{T:restriction}
Let $a\ge 0$ and suppose that $q, r$ satisfy 
\[
\frac 2q + \frac{a+1}r = \frac{a+1}2.
\]
For every $u\in L^{q'}_t L^{r'}_a$, we have 
\begin{equation}\label{re1}
\left(\int_0^\infty |\widetilde{\mathcal{H}}_{\frac{a-1}2}(u)(\zeta,-\frac{\zeta^2}{2\pi})|^2 d\omega_a(\zeta) \right)^{1/2}  \le C(a,r)\ ||u||_{L^{q'}_t L^{r'}_a}.
\end{equation}
In particular, when $q'=r' = \frac{2(a+3)}{a+5}$, we obtain the following fractal version of the \emph{Tomas-Stein restriction inequality} for the half-parabola
$P = \{(\zeta,\tau)\in \R^2\mid \tau = - 2\pi \zeta^2,\ \zeta\ge 0\}$
\begin{equation}\label{re2}
\left(\int_0^\infty |\widetilde{\mathcal{H}}_{\frac{a-1}2}(u)(\zeta,-\frac{\zeta^2}{2\pi})|^2 d\omega_a(\zeta) \right)^{1/2}  \le C(a)\ \left(\int_{\R} \int_0^\infty |u(z,t)|^{\frac{2(a+3)}{a+5}} d\omega_a(z) dt\right)^{\frac{a+5}{2(a+3)}}.
\end{equation}
\end{theorem}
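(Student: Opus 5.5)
The plan is the classical Tomas--Stein duality: the restriction inequality \eqref{re1} is the adjoint of a Strichartz estimate for the one-dimensional Bessel--Schr\"odinger group $S_a(t)$ on $\R^+_z$ (i.e. the case $d=0$ of the theory in Section \ref{S:stri}, with $L^r_a$ in $z$ replacing $L^\infty_z$). First I will identify the left-hand side of \eqref{re1} with $\|\mathcal T_a(u)\|_{L^2_a}$, where $\mathcal T_a(u) := \int_\R S_a(-t)(u(\cdot,t))\,dt$ is the one-dimensional analogue of \eqref{Ta}. Indeed, since $\mathcal H_\nu(S_a(t)\psi)(\zeta) = e^{-it\zeta^2}\mathcal H_\nu(\psi)(\zeta)$ (Remark \ref{R:bandt}), we get
\[
\mathcal H_\nu(\mathcal T_a u)(\zeta) = \int_\R e^{it\zeta^2}\mathcal H_\nu(u(\cdot,t))(\zeta)\,dt = \widetilde{\mathcal H}_\nu(u)\Big(\zeta,-\frac{\zeta^2}{2\pi}\Big),
\]
by \eqref{FH}, since $-2\pi i\tau t = it\zeta^2$ when $\tau = -\zeta^2/(2\pi)$. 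The Plancherel property of $\mathcal H_\nu$ on $L^2_{a}$ (Section \ref{S:band}) then gives that the left-hand side of \eqref{re1} equals $\|\mathcal T_a u\|_{L^2_a}$. I will do this computation for $u\in C_0^\infty((0,\infty)\times\R)$ and extend by density at the end.

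Next, I will run the $TT^\star$ argument exactly as in \eqref{pop}:
\[
\|\mathcal T_a u\|_{L^2_a}^2 = \langle\langle \mathcal T_a^\star\mathcal T_a u, u\rangle\rangle,\qquad \mathcal T_a^\star\mathcal T_a u(t) = \int_\R S_a(t-\tau)u(\tau)\,d\tau .
\]
Interpolating the unitarity in Proposition \ref{P:uni} with the dispersive bound \eqref{good}, which is valid because $a\ge0$, via Riesz--Thorin gives
\[
\|S_a(s)\vf\|_{L^r_a}\le C|s|^{-(a+1)(\frac12-\frac1r)}\|\vf\|_{L^{r'}_a}.
\]
Hence $\|\mathcal T_a^\star\mathcal T_a u(t)\|_{L^r_a}\le C\,I_\beta(h)(t)$, with $h(\tau)=\|u(\tau)\|_{L^{r'}_a}$ and $1-\beta = (a+1)(\frac12-\frac1r) = \frac2q$ by hypothesis. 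The Hardy--Littlewood--Sobolev bound \eqref{marcello} gives $\|I_\beta h\|_{L^q}\le C\|h\|_{L^{q'}}$, and H\"older's inequality then yields $\|\mathcal T_a u\|_{L^2_a}^2\le C\|u\|^2_{L^{q'}_tL^{r'}_a}$, which is \eqref{re1}.

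For \eqref{re2}, I will set $q=r$; the relation becomes $\frac{a+3}{r}=\frac{a+1}{2}$, so $r=\frac{2(a+3)}{a+1}$ and $r'=\frac{2(a+3)}{a+5}$, as stated.

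The main obstacle is the range of exponents. HLS requires $0<\beta<1$, i.e. $q>2$; the endpoint $q=2$ (which can occur only when $a+1\ge 2$) needs to be excluded or handled by a Keel--Tao type argument. I would state that the result is understood for $2<q<\infty$, which covers \eqref{re2} because there $q=r>2$. A smaller point is justifying the Fourier--Hankel interchange for general $u$: I would verify it on the dense class and then extend both sides by continuity, since the right-hand side controls $\mathcal T_a u$ in $L^2_a$.
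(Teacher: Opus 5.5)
Your proposal is correct and follows essentially the paper's approach. The paper does not reprove Theorem \ref{T:restriction} (it is quoted from the authors' earlier work), but its proof of the generalization, Theorem \ref{T:resgen} together with Corollary \ref{C:TS}, is your scheme with $d=0$: $TT^\star$, Riesz--Thorin between unitarity and the dispersive bound \eqref{good}, Hardy--Littlewood--Sobolev in $t$, and then \eqref{hank} with Hankel--Plancherel to identify $\|\mathcal H_{\frac{a-1}2}(\mathcal T_a u)\|_{L^2_a}$ with the restriction to the parabola. Your caveat about the endpoint $q=2$ (possible only for $a\ge 1$) is apt, since the paper's argument likewise needs $q,r>2$, as assumed in Theorem \ref{T:resgen}, while the case $q=\infty$, $r=2$ is trivial.
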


In this section we generalize Theorem \ref{T:restriction} as follows.

\begin{theorem}\label{T:resgen}
Let $a\ge 0$ and assume that the triple $(q,r,r)$ is \emph{admissible} according to \eqref{ac3}, i.e.,
\begin{equation}\label{tre}
\frac 2q = (d+a+1)(\frac 12 - \frac 1r),
\end{equation}
with $q, r >2$.
Then  $\mathbb T_a: L^{q'}_t L^{r'}_a(\RN) \to L^2_a(\RN)$, and moreover there exists $C(d,a,r)>0$ such that for every $F\in L^{q'}_t L^{r'}_a(\RN)$ one has
\begin{equation}\label{resgen}
||\mathbb T_a(F)||_{L^2_a(\RN)} \le C(d,a,r) ||F||_{L^{q'}_t L^{r'}_a(\RN)}.
\end{equation} 
In particular, when $(q,q)$ satisfy \eqref{tre}, we obtain for any $F\in L^{\frac{2(d+a+3)}{d+a+5}}_a(\RN\times \R)$
\begin{equation}\label{resgen2}
||\mathbb T_a(F)||_{L^2_a(\RN)} \le C(d,a) ||F||_{L^{\frac{2(d+a+3)}{d+a+5}}_a(\RN\times \R)},
\end{equation} 
for some $C(d,a)>0$.
\end{theorem}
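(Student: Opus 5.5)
We will prove \eqref{resgen} by the $TT^\star$ argument of Theorem \ref{T:Tstarbulk}, now in the diagonal spaces $L^{q'}_tL^{r'}_a(\RN)$ rather than in $L^1_{a,z}L^{q'}_tL^{r'}_x$. As in \eqref{pop},
\[
||\mathbb T_a(F)||^2_{L^2_a(\RN)} = \sa\sa \mathbb T^\star_a\mathbb T_a(F),F\da\da \le ||\mathbb T^\star_a\mathbb T_a(F)||_{L^q_tL^r_a(\RN)}\,||F||_{L^{q'}_tL^{r'}_a(\RN)},
\]
so it suffices to bound $\mathbb T^\star_a\mathbb T_a: L^{q'}_tL^{r'}_a\to L^q_tL^r_a$. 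By \eqref{stars}, $\mathbb T^\star_a\mathbb T_a(F)(\cdot,t)=\int_\R \mathbb S_a(t-\tau)F(\cdot,\tau)\,d\tau$, and Minkowski's inequality gives $||\mathbb T^\star_a\mathbb T_a(F)(\cdot,t)||_{L^r_a}\le\int_\R ||\mathbb S_a(t-\tau)F(\cdot,\tau)||_{L^r_a}d\tau$.

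The core step is a full $(d+1)$-dimensional weighted dispersive estimate
\[
||\mathbb S_a(s)f||_{L^r_a(\RN)}\le C\,|s|^{-(d+a+1)(\frac12-\frac1r)}\,||f||_{L^{r'}_a(\RN)},\qquad a\ge 0,\ 2\le r\le\infty .
\]
Interpolation between the endpoints gives it. At $r=2$ it is the unitarity in Proposition \ref{P:Uni}. At $r=\infty$ we take absolute values in the kernel \eqref{grandeSaS0}. For $a\ge0$ the function $w\mapsto w^{\frac{1-a}2}J_{\frac{a-1}2}(w)$ is bounded on $[0,\infty)$: it is bounded near $0$ by \eqref{asy0} and is $O(w^{-a/2})$ at infinity. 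This boundedness is exactly what underlies \eqref{good}, and it gives $|\mathbb S_a(X,Y,s)|\le C|s|^{-\frac{d+a+1}2}$, hence $L^1_a\to L^\infty$ with the rate $|s|^{-\frac{d+a+1}2}$. Riesz–Thorin on the measure space $(\RN,d\omega_a)$ yields the displayed bound, with exponent $(d+a+1)(\frac12-\frac1r)=\frac2q$ by \eqref{tre}.

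Inserting this estimate, we get $||\mathbb T^\star_a\mathbb T_a(F)(\cdot,t)||_{L^r_a}\le C\, I_\beta(h)(t)$ with $h(\tau)=||F(\cdot,\tau)||_{L^{r'}_a}$ and $1-\beta=\frac2q$. Since $q>2$ we have $0<\beta<1$, and \eqref{marcello} (Hardy–Littlewood–Sobolev) gives $||I_\beta h||_{L^q_t}\le C||h||_{L^{q'}_t}$. This closes $||\mathbb T_a(F)||^2_{L^2_a}\le C||F||^2_{L^{q'}_tL^{r'}_a}$ for $F\in\mathscr S(\RN\times\R)$. The bound then extends by density to $L^{q'}_tL^{r'}_a$, since $q',r'<\infty$.

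For \eqref{resgen2} we set $q=r$ in \eqref{tre}. This gives $\frac2q=(d+a+1)(\frac12-\frac1q)$, that is $q=\frac{2(d+a+3)}{d+a+1}$ and $q'=\frac{2(d+a+3)}{d+a+5}$. Since $L^{q'}_tL^{q'}_a=L^{q'}_a(\RN\times\R)$, the second estimate is a special case of the first.

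The main obstacle is the $L^1_a\to L^\infty$ kernel bound. It uses $a\ge0$ essentially: for $-1<a<0$ the Bessel factor grows like $w^{-a/2}$, which is the source of the weight $k$ in \eqref{weight}, and the argument above would then fail. The interpolation itself is routine, since both endpoints live on the same weighted measure space.
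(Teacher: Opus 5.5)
Your proposal is correct and follows the paper's proof essentially step for step. You use the same $TT^\star$ reduction, the pointwise kernel bound $|\mathbb S_a(X,Y,t)|\le C|t|^{-\frac{d+a+1}2}$ for $a\ge 0$ interpolated by Riesz--Thorin against the unitarity of Proposition \ref{P:Uni}, the Hardy--Littlewood--Sobolev inequality in $t$, and the same specialization $q=r=\frac{2(d+a+3)}{d+a+1}$ for \eqref{resgen2}.
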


\begin{proof}
Let $F\in \snn$. Arguing as in \eqref{pop}, using H\"older inequality, we have

\begin{align}\label{popone}
||\mathbb T_a(F)||^2_{L_a^2(\RN)} & = \sa \mathbb T_a(F),\mathbb T_a(F)\da = \sa\sa \mathbb T_a^\star \mathbb T_a(F),F\da\da \le ||\mathbb T_a^\star \mathbb T_a(F)||_{L^q_t L_a^r(\RN)} ||F||_{L^{q'}_t L^{r'}_a(\RN)}.
\end{align}
To complete the proof of \eqref{resgen} it suffices to show that there exists $C(d,a,r)>0$ such that 
\begin{equation}\label{popone2}
||\mathbb T_a^\star \mathbb T_a(F)||_{L^q_t L_a^r(\RN)} \le C(d,a,r) ||F||_{L^{q'}_t L^{r'}_a(\RN)}.
\end{equation}
Keeping \eqref{stars} in mind, we see that, for any $2\le r\le \infty$ and every $t\in \R$, we have 
\begin{equation}\label{nova}
||\mathbb T_a^\star \mathbb T_a(F)(\cdot,t)||_{L^r_a(\RN)} \le \int_\R ||\mathbb S_a(t-\tau)(F(\cdot,\tau))||_{L^r_a(\RN)} d\tau.
\end{equation}
Since $a\ge 0$, from \eqref{grandeSaS0}, \eqref{prop2} and \eqref{good}, we obtain
\begin{align}\label{nova2}
|\mathbb S_a(X,Y,t)|
\le 
\frac{C(d,a)}{|t|^{\frac{d+a+1}{2}}},
\qquad \ \ \ \ t\neq 0.
\end{align}
In view of \eqref{prop}, the estimate \eqref{nova2}  immediately gives for every $t\not= 0$
\begin{equation}\label{nova3}
||\mathbb S_a(t)(F)||_{L^\infty(\RN)} \le \frac{C(d,a)}{|t|^{\frac{d+a+1}{2}}} ||F||_{L^1_a(\RN)}.
\end{equation}
Combining Proposition \ref{P:Uni} with \eqref{nova3} and the Riesz-Thorin interpolation theorem, we conclude that for any $2\le r\le \infty$ and every $t\not= 0$, we have
\begin{equation}\label{nova4}
||\mathbb S_a(t)(F)||_{L^r_a(\RN)} \le \frac{C(d,a,r)}{|t|^{(d+a+1)(\frac{1}{2}-\frac 1r)}} ||F||_{L^{r'}_a(\RN)}.
\end{equation}
Using \eqref{nova4} in \eqref{nova}, we find
\begin{equation}\label{nova5}
||\mathbb T_a^\star \mathbb T_a(F)(\cdot,t)||_{L^r_a(\RN)} \le C(d,a,r) \int_\R \frac{||F(\cdot,\tau)||_{L^{r'}_a(\RN)}}{|t-\tau|^{(d+a+1)(\frac{1}{2}-\frac 1r)}} d\tau = C(d,a,r)\ I_\beta(h)(t),
\end{equation}
where we have let $h(\tau) = ||F(\cdot,\tau)||_{L^{r'}_a(\RN)}$, and $I_\beta$ denotes as before the fractional integration operator \eqref{marcel} on $\R$, with 
\[
1-\beta = (d+a+1)(\frac{1}{2}-\frac 1r)= \frac 2q,
\] 
where in the last equality we have used \eqref{tre}. The assumption $q, r>2$ implies $0<\beta<1$, and we thus obtain \eqref{popone2} from \eqref{nova5} and the Hardy-Littlewood-Sobolev theorem. This proves \eqref{resgen}. The estimate \eqref{resgen2} is an immediate corollary of \eqref{resgen} and of the observation that $(q,q,q)$ satisfies \eqref{ac3} if and only if 
\[
q = \frac{2(d+a+3)}{d+a+1}\ \ \ \ \text{and}\ \ \ \ q' = \frac{2(d+a+3)}{d+a+5}.
\]

\end{proof}

We close the section by showing that, similarly to the proof of Theorem \ref{T:restriction} in \cite{GS}, the inequalities \eqref{resgen} and \eqref{resgen2} imply a generalization of the Tomas-Stein restriction theorem for the Fourier-Hankel transform in $\RN\times \R$. Given $U\in C^\infty_0(\RN\times \R)$, we define
\[
\tilde{\mathcal H}_\nu(U)(\xi,\zeta,\tau) = \mathcal H_{\nu}(\mathscr F_{(x,t)\to(\xi,\tau)} U)(\zeta),
\]
where $\mathscr F_{(x,t)\to(\xi,\tau)} U$ indicates the Fourier transform in the variables $(x,t)\in\Rd\times \R$. 
Consider the portion of paraboloid 
\[
\Sigma = \{(\xi,\zeta,\tau)\in \RN\times \R\mid \tau = - \frac{4\pi^2 |\xi|^2 +\zeta^2}{2\pi}\}.
\]
Finally, we set
\[
\hat U(\xi,z,t) = \int_{\Rd} e^{-2\pi i\sa\xi,x\da} U(x,z,t) dx.
\]
With these notations, the restriction of $\tilde{\mathcal H}_\nu(U)$ to $\Sigma$ is given by
\begin{equation}\label{restrSigma}
\tilde{\mathcal H}_\nu(U)(\xi,\zeta,- \frac{4\pi^2 |\xi|^2 +\zeta^2}{2\pi}) = \int_{\R} e^{2\pi i t (\frac{4\pi^2 |\xi|^2 +\zeta^2}{2\pi})}  \mathcal H_{\nu}(\hat U(\xi,\cdot,t))(\zeta) dt.
\end{equation}
We can now state the announced result, which is the analogue in the present setting of the Tomas-Stein restriction theorem.

\begin{corollary}[A Tomas-Stein restriction theorem]\label{C:TS}
Let $a\ge 0$ and let $(q,r,r)$ be admissible as in \eqref{tre}, with $q,r>2$. Then, with $\nu = \frac{a-1}2$, the restriction to the paraboloid $\Sigma$ of the Fourier-Hankel transform $\tilde{\mathcal H}_{\nu}(U)$ is a bounded operator from $L^{q'}_t L^{r'}_a(\RN)$ into $L^2_a(\RN)$. More precisely, there exists $C(d,a,r)>0$ such that for every $U\in C^\infty_0(\RN\times \R)$ one has
\begin{equation}\label{resgen4}
\Big\|\tilde{\mathcal H}_{\frac{a-1}2}(U)\Big(\cdot,\cdot,- \frac{4\pi^2 |\cdot|^2 +(\cdot)^2}{2\pi}\Big)\Big\|_{L^2_a(\RN)}\le C(d,a,r) ||U||_{L^{q'}_t L^{r'}_a(\RN)}.
\end{equation}
In particular, when $q = r = \frac{2(d+a+3)}{d+a+1}$, the right-hand side of \eqref{resgen4} can be replaced by $C(d,a) ||U||_{L^{\frac{2(d+a+3)}{d+a+5}}_a(\RN\times \R)}$.
\end{corollary}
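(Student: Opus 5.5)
The plan is to reduce \eqref{resgen4} to Theorem \ref{T:resgen} by identifying the restricted Fourier--Hankel transform with $\mathbb T_a(U)$ up to a unitary change of variables on the spectral side. By Lemma \ref{L:T},
\[
\mathbb T_a(U)(X) = \int_\R \mathbb S_a(-t)(U(\cdot,t))(X)\,dt,
\]
and the factorisation \eqref{prop2} gives $\mathbb S_a(-t) = S(-t)\circ S_a(-t)$. The next step is to diagonalise both factors. On the $x$-side, with the Fourier transform normalised as $\hat U$, one has $\widehat{S(-t)\vf}(\xi) = e^{4\pi^2 i t|\xi|^2}\hat\vf(\xi)$. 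On the $z$-side, by \cite{GS} (the identity $\mathcal H_\nu(S_a(t)\psi)(\la) = e^{-it\la^2}\mathcal H_\nu(\psi)(\la)$ recalled in Remark \ref{R:bandt}), we get $\mathcal H_\nu(S_a(-t)\psi)(\zeta) = e^{it\zeta^2}\mathcal H_\nu(\psi)(\zeta)$ with $\nu=\frac{a-1}2$. Applying $\mathscr F_x$ and $\mathcal H_\nu$ to $\mathbb T_a(U)$, and interchanging with the $t$-integral (legitimate for $U\in C^\infty_0(\RN\times\R)$ by Fubini, since everything is compactly supported in $t$), we obtain
\[
\mathcal H_\nu\big(\widehat{\mathbb T_a(U)}(\xi,\cdot)\big)(\zeta) = \int_\R e^{it(4\pi^2|\xi|^2+\zeta^2)}\,\mathcal H_\nu(\hat U(\xi,\cdot,t))(\zeta)\,dt .
\]
Comparing with \eqref{restrSigma}, the right-hand side is exactly the restriction of $\tilde{\mathcal H}_\nu(U)$ to $\Sigma$, possibly after a harmless dilation of the variables in the time phase. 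That dilation ($2\pi$ versus $1$) comes from the normalisation convention in \eqref{FH}.

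Then we use Plancherel in $x$, $\|\hat f\|_{L^2_\xi}=\|f\|_{L^2_x}$, together with the fact that $\mathcal H_\nu$ is an isometry of $L^2_{a,z}$ (\cite[Theorem 2.1]{GS}). The composite map $f\mapsto \mathcal H_\nu(\mathscr F_x f)$ is therefore unitary on $L^2_a(\RN)$, and so
\[
\Big\|\tilde{\mathcal H}_\nu(U)\big|_\Sigma\Big\|_{L^2_a(\RN)} = \|\mathbb T_a(U)\|_{L^2_a(\RN)}\le C(d,a,r)\|U\|_{L^{q'}_tL^{r'}_a(\RN)},
\]
where the last step is \eqref{resgen}. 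The special case $q=r=\frac{2(d+a+3)}{d+a+1}$ follows likewise from \eqref{resgen2}.

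The main obstacle is bookkeeping the constants in the phases. The paper mixes the normalisation $e^{-2\pi i\tau t}$ in \eqref{FH} with the kernel $S$ in \eqref{S}, which is normalised for $\p_t-i\Delta$. I would first check the one-dimensional identity used in the proof of Theorem \ref{T:restriction} in \cite{GS} and adopt the same scaling. If the parametrisation of $\Sigma$ turns out to differ from the phase $t(4\pi^2|\xi|^2+\zeta^2)$ by a fixed factor, a linear change of variables in $t$ fixes it. That substitution only multiplies the $L^{q'}_t$ norm by a constant depending on $q$, so the estimate is unaffected up to the constant $C(d,a,r)$.
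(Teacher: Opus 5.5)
Your proposal is correct and follows the paper's own proof. You identify the restriction to $\Sigma$ with $\mathcal H_{\frac{a-1}2}\big(\widehat{\mathbb T_a(U)}\big)$ via Lemma \ref{L:T}, \eqref{prop2} and \eqref{hank}, then use Plancherel for the Fourier--Hankel transform and \eqref{resgen} (resp.\ \eqref{resgen2}). No dilation in $t$ is actually needed: at $\tau=-\frac{4\pi^2|\xi|^2+\zeta^2}{2\pi}$ the factor $e^{-2\pi i\tau t}$ from \eqref{FH} is exactly the phase $e^{it(4\pi^2|\xi|^2+\zeta^2)}$ you computed.
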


\begin{proof}
Using \eqref{Ta} in Lemma \ref{L:T}, the formula from \cite{GS}
\begin{equation}\label{hank}
\mathcal H_{\frac{a-1}2}(S_a(t) \vf)(\zeta) = e^{-i t \zeta^2} \mathcal H_{\frac{a-1}2}(\vf)(\zeta),
\end{equation}
and \eqref{prop2}, we obtain
\[
\mathcal H_{\frac{a-1}2}(\widehat{\mathbb T_a(U)})(\xi,\zeta) = \int_\R e^{4\pi^2 i t |\xi|^2 + i t \zeta^2} \mathcal H_{\frac{a-1}2}(\hat U(\cdot,\cdot,t))(\xi,\zeta) dt,
\]
and the right-hand side is precisely the right-hand side of \eqref{restrSigma} with $\nu = \frac{a-1}2$. This proves the identity
\begin{equation}\label{resgen30}
\mathcal H_{\frac{a-1}2}(\widehat{\mathbb T_a(U)})(\xi,\zeta) = \tilde{\mathcal H}_{\frac{a-1}2}(U)\Big(\xi,\zeta,- \frac{4\pi^2 |\xi|^2 +\zeta^2}{2\pi}\Big),
\end{equation}
which identifies the restriction to $\Sigma$ with the Fourier-Hankel transform of $\mathbb T_a(U)$. On the other hand, by the Plancherel theorem for the Fourier-Hankel transform, see e.g. \cite{GS}, we have
\begin{equation}\label{resgen3}
||\mathbb T_a(U)||_{L^2_a(\RN)} = ||\mathcal H_{\frac{a-1}2}(\widehat{\mathbb T_a(U)})||_{L^2_a(\RN)}.
\end{equation}
Combining \eqref{resgen30} and \eqref{resgen3} with the estimate \eqref{resgen} of Theorem \ref{T:resgen}, we obtain \eqref{resgen4}. The final assertion follows in the same way from \eqref{resgen2}.
\end{proof}

\vskip 0.3in

\noindent \textbf{Declarations:}
\begin{itemize}
\item[(1)]  This manuscript does not report data generation or analysis.
\item[(2)] Both authors contributed equally to this work.
\item[(3)] G. Staffilani is funded in part by the NSF grant DMS-2306378 and the Simons Foundation through the
Simons Collaboration Grant on Wave Turbulence.
\item[(4)] Corresponding author: Nicola Garofalo
\end{itemize}




\bibliographystyle{amsplain}

\end{document}